\theoremstyle{plain}
\numberwithin{equation}{section}
\newtheorem{hyp}{Hypothesis}[section]
\newtheorem{theo}{Theorem}[section]
\newtheorem{de}[theo]{Definition}
\newtheorem{lemma}[theo]{Lemma}
\newtheorem{prop}[theo]{Proposition}
\newtheorem{rmk}[theo]{Remark}
\newcommand{\R}{\mathbb{R}}
\newcommand{\N}{\mathbb{N}}
\newcommand{\C}{\mathbb{C}}
\newcommand{\norm}[2]{\left \lVert {#1} \right \rVert_{{#2}}}
\newcommand{\asso}[1]{\left \lvert {#1} \right \rvert}
\newcommand{\set}[1]{\left\{{#1}\right\}}
\newcommand{\ZZZ}{\mathds{Z}}
\newcommand{\CCC}{\mathds{C}}
\newcommand{\NNN}{\mathds{N}}
\newcommand{\RRR}{\mathds{R}}
\newcommand{\TTT}{\mathbb{T}}
\newcommand{\uno}{\mathds{1}}
\newcommand{\BB}{{\mathcal B}}
\newcommand{\calF}{{\mathcal F}}
\newcommand{\calG}{{\mathcal G}}
\newcommand{\calL}{{\mathcal L}}
\newcommand{\MM}{{\mathcal M}}
\newcommand{\NN}{{\mathcal N}}
\newcommand{\calO}{{\mathcal O}}
\newcommand{\RR}{{\mathcal R}}
\newcommand{\calU}{{\mathcal U}}
\newcommand{\gotR}{{\mathfrak R}}
\newcommand{\weyl}{{\rm Op}^{W}}
\newcommand{\bony}{{\rm Op}^{\mathcal{B}}}
\newcommand{\bonyw}{{\rm Op}^{\mathcal{B}W}}
\newcommand{\ol}{\overline}
\newcommand{\e}{\varepsilon}
\newcommand{\al}{\alpha}
\newcommand{\be}{\beta}
\newcommand{\x}{\xi}
\newcommand{\g}{\gamma}
\newcommand{\h}{\eta}
\newcommand{\la}{\lambda}
\newcommand{\f}{\varphi}
\newcommand{\s}{\sigma}
\newcommand{\del}{\partial}
\newcommand{\ii}{{\rm i}}
\newcommand{\hcic}{{\bf{H}}}
\newcommand{\jap}[1]{\left \langle {#1} \right \rangle}
\newcommand{\pro}[1]{\llbracket {#1} \rrbracket}
\def\hat{\widehat}
\def\bar{\overline}
\providecommand{\vect}[2]{{\bigl[\begin{smallmatrix}#1\\#2\end{smallmatrix}\bigr]}}   
\providecommand{\sm}[4]{{\bigl[\begin{smallmatrix}#1&#2\\#3&#4\end{smallmatrix}\bigr]}}
\def\l@subsection{\@tocline{2}{0pt}{2.5pc}{5pc}{}}
\def\l@subsubsection{\@tocline{3}{0pt}{4.5pc}{5pc}{}}
\renewcommand\tocchapter[3]{%
  \indentlabel{\@ifnotempty{#2}{\ignorespaces#2.\quad}}#3%
}
\newcommand\@dotsep{4.5}
\def\@tocline#1#2#3#4#5#6#7{\relax
  \ifnum #1>\c@tocdepth 
  \else
    \par \addpenalty\@secpenalty\addvspace{#2}%
    \begingroup \hyphenpenalty\@M
    \@ifempty{#4}{%
      \@tempdima\csname r@tocindent\number#1\endcsname\relax
    }{%
      \@tempdima#4\relax
    }%
    \parindent\z@ \leftskip#3\relax \advance\leftskip\@tempdima\relax
    \rightskip\@pnumwidth plus1em \parfillskip-\@pnumwidth
    #5\leavevmode\hskip-\@tempdima{#6}\nobreak
    \leaders\hbox{$\m@th\mkern \@dotsep mu\hbox{.}\mkern \@dotsep mu$}\hfill
    \nobreak
    \hbox to\@pnumwidth{\@tocpagenum{#7}}\par
    \nobreak
    \endgroup
  \fi}
\renewcommand\csname r@tocindent0\endcsname{0pt}
\def\l@subsection{\@tocline{2}{0pt}{2.5pc}{5pc}{}}
\begin{document}

\title[Long time existence for fully nonlinear NLS on $\mathbb{T}$]
{Long time existence for fully nonlinear NLS \\with small Cauchy data on the circle}

\date{}

\author{Roberto Feola}
\email{roberto.feola@univ-nantes.fr}

\author{Felice Iandoli}
\email{fiandoli@unice.fr}

\thanks{This research was 
supported by PRIN 2015 ``Variational methods, with applications to problems in 
mathematical physics and geometry''; the second author has been supported also by 
ERC grant ANADEL 757996.
}

\begin{abstract}
In this paper we prove long time existence for a large class of fully nonlinear, reversible and parity preserving Schr\"odinger equations on the one dimensional torus. 
We show that, if some non-resonance conditions 
are fulfilled,  for any $N\in \mathbb{N}$ and for any initial condition,
which is even in $x$ and size $\e$ in an appropriate Sobolev space, the lifespan of the solution
is of order $\e^{-N}$.
After a paralinearization of the equation we perform several 
para-differential changes of variables which diagonalize 
the system up to a very regularizing term. 
Once achieved the diagonalization, we construct modified 
energies for the solution by means of Birkhoff normal forms techniques.
\end{abstract}

\maketitle

\tableofcontents

\section{Introduction}

This paper is devoted to get lower bounds for the lifespan of small solutions of the following fully nonlinear  Schr\"odinger type equation
\begin{equation}\label{NLS}
\ii \partial_t u+\del_{xx} u+P_{\vec{m}}*u+f(u,u_x,u_{xx})=0,
\quad u=u(x,t), \quad x\in \TTT,\\
\end{equation}
where $\TTT:=\RRR/2\pi\ZZZ$. The nonlinearity  $f$ is  a polynomial of degree $\bar{q}\geq 2$ defined on $\CCC^3$  vanishing at order $2$ near the origin  of the form
\begin{equation}\label{funz1}
f(z_{0},z_{1},z_{2})=
\sum_{p=2}^{\bar{q}}\sum_{(\alpha,\be)\in A_{p} }
C_{\al,\be}z_{0}^{\al_{0}}\ol{z}_{0}^{\be_{0}}z_{1}^{\al_{1}}\ol{z}_{1}^{\be_{1}}z_{2}^{\al_{2}}
\ol{z}_{2}^{\be_{2}}, \quad C_{\al,\be}\in\CCC,
\end{equation} 
where 
\begin{equation}\label{nonlinear2}
A_{p}:=\big\{ (\al,\be):=(\al_0,\be_0,\al_1,\be_1,\al_2,\be_2)\in \NNN^{6} \; {\rm s. t. }\; 
\sum_{i=0}^{2}\al_i+\be_i=p 
\big\}\,.
\end{equation}
The potential $P_{\vec{m}}(x)=(\sqrt{2\pi})^{-1}\sum_{j\in\ZZZ} \hat{p}(j)e^{\ii jx}$ is a \emph{real} function with 
\emph{real} Fourier coefficients  and the term $P_{\vec{m}}* u$ denotes the convolution between the potential $P_{\vec{m}}(x)$ and $u(x)=(\sqrt{2\pi})^{-1}\sum_{j\in\ZZZ}\hat{u}(j)e^{\ii jx}$
\begin{equation*}
P_{\vec{m}}*u(x)=\int_{\TTT}P_{\vec{m}}(x-y)u(y)dy=\sum_{j\in\ZZZ}\hat{p}(j)\hat{u}(j)e^{\ii j x}.
\end{equation*}
Concerning the convolution potential $P_{\vec{m}}(x)$ we define its $j$-th Fourier coefficient as follows. Fix $M>0$ and set
\begin{equation}\label{potenziale1}
\hat{p}(j):=\hat{p}_{\vec{m}}({j})=\sum_{k=1}^{M}\frac{m_{k}}{\langle j\rangle^{2k+1}},
\end{equation}
where $\vec{m}=(m_1,\ldots,m_M)$ is a vector in $\calO:=[-1/2,1/2]^{M}$ and $\langle j\rangle=\sqrt{1+|j|^2}$. 
We shall assume some extra structure on the polynomial nonlinearity $f$. Setting $z=\xi+\ii\eta$ in $\CCC$ 
(with $\xi$ and $\eta$ in $\RRR$) we define the Wirtinger derivatives 
$\del_{z}=\frac{1}{2}(\del_{\x}-\ii\del_{\eta}) $, $\del_{\ol{z}}=\frac{1}{2}(\del_{\x}+\ii\del_{\eta})$ 
and we assume the following
\begin{hyp}\label{parity}
The function $f$ in \eqref{NLS} and in \eqref{funz1} satisfies the following 
:
\begin{enumerate}
\item  {\bf parity-preserving}: $f(z_0,z_1,z_{2})=f(z_0,-z_1,z_{2})$;
\item {\bf Schr\"odinger-type}: $(\del_{z_{2}}f)(z_0,z_1,z_{2})\in \RRR$;
\item {\bf reversibility-preserving}: $f(z_0,z_1,z_{2})=\ol{f(\bar{z}_0,\bar{z}_1,\bar{z}_{2})},$
\end{enumerate}
for any $(z_0,z_1,z_2)$ in $\CCC^3$.
\end{hyp}

We shall study equation \eqref{NLS}
on the  Sobolev space
\begin{equation}\label{1.1}
H^{s}:=H^{s}(\TTT; \CCC):=
\left\{
u(x)=\sum_{j\in \ZZZ} \hat{u}(j)\frac{e^{\ii j  x}}{\sqrt{2\pi}} :
\|u\|^{2}_{H^{s}}:=\sum_{j\in\ZZZ}|\hat{u}(j)|^{2}\langle j\rangle^{2s}<+\infty
\right\}
\end{equation}
with $s$ to be chosen big enough.

The goal of this article is to show that solutions of \eqref{NLS} produced by  initial data even in $x$ of size $\varepsilon\ll 1$ are defined over a time interval of length $c_N\varepsilon^{-N}$ for any $N$ smaller than $M$ and for a large set of parameters $\vec{m}$ in $[-1/2,1/2]^M$.

The main result of the paper is the following.
\begin{theo}[{\bf Long time existence}]\label{teototale}
Fix $M\in \NNN$ and consider equation \eqref{NLS}.
Assume that $f$ satisfies Hypothesis \ref{parity}.
Then there is a zero Lebesgue measure set $\NN \subset \calO$ such that for any  integer $0\leq N\leq M$ 
and any $\vec{m}\in \calO\setminus \NN$
there exists $s_0\in \RRR$ such that for any $s\geq s_0 $ there are constants $r_0\in (0,1)$,  $c_N>0$ and $C_N>0$
such that the following holds true.
For any $0<r\leq r_0$ and any even function $u_0$ in the ball of radius $r$ 
of $H^{s}(\TTT;\CCC)$,  the equation \eqref{NLS} with initial datum $u_0$ has a unique solution,
which is even in $x\in \TTT$, and 
$$
u(t,x) \in C^{0}\Big([-T_r,T_{r}]; H^{s}(\TTT)\Big),
\;\;\; {\rm with} \;\;\; T_{r}\geq c_Nr^{-N}.
$$
Moreover one has that 
$$
\sup_{t\in (-T_{r},T_{r})}\|u(t,\cdot)\|_{H^{s}}\leq C_Nr. 
$$
\end{theo}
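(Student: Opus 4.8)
The plan is to paralinearize \eqref{NLS}, reduce the resulting system to a \emph{diagonal}, \emph{constant--coefficient} paradifferential operator modulo an arbitrarily smoothing remainder, and then control the $H^s$ norm of solutions by a modified energy built with Birkhoff normal form techniques; the free parameters $\vec m$ of the convolution potential play the role of external parameters used to ensure the relevant non--resonance conditions. \emph{Step 1 (paralinearization, structure, reduction of the problem).} Set $U=\vect{u}{\ol u}$. Using Bony's paralinearization theorem one rewrites $f(u,u_x,u_{xx})$ as a Bony--Weyl operator $\bonyw\big(A_2(U;x)\x^2+A_1(U;x,\x)+A_0(U;x,\x)\big)U$, where the $A_j$ are $2\times2$ matrix symbols whose entries are smooth functions of $U$ and of $u_x,u_{xx}$ and their conjugates, plus a remainder mapping $H^s$ into $H^{s+\rho}$ for any prescribed $\rho$. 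Thus \eqref{NLS} becomes $\del_tU=\ii E\big(\Om(D)+\bonyw(A_2\x^2+A_1+A_0)\big)U+\RR_\rho(U)U$, with $E=\mathrm{diag}(1,-1)$ and $\Om(D)$ the Fourier multiplier associated with $-\x^2+\hat p_{\vec m}(\x)$. Hypothesis \ref{parity} enters structurally: item (2) forces the scalar diagonal part of $A_2$ to be real and $O(\|U\|)$, so the principal symbol is $\ii(1+a_2(U;x))\x^2$ with $1+a_2>0$, a well--posed Schr\"odinger symbol; item (1) makes the closed subspace $\{u\text{ even in }x\}$ invariant; item (3) gives a reversibility structure (invariance under $t\mapsto -t$ together with complex conjugation). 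Together with the energy estimates of Steps 2--3, this paradifferential formulation yields local existence and uniqueness in $H^s$ for $s$ large, the persistence of evenness and of the constraint that the two components of $U$ are complex conjugate, so the theorem reduces to the a priori bound $\sup_{|t|\le c_Nr^{-N}}\|U(t)\|_{H^s}\le C_Nr$ for data of size $r$.

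\emph{Step 2 (reduction to constant coefficients modulo a smoothing remainder).} I would perform a finite chain of para--differential changes of variable $Z=\Phi(U)U$, each bounded and invertible on $H^s$ with $\|\Phi(U)^{\pm1}-\mathrm{Id}\|_{\calL(H^s)}\lesssim\|U\|_{H^{s_0}}$: (i) the quasilinear change associated with a torus diffeomorphism $x\mapsto x+\be(U;t,x)$ making the top coefficient $1+a_2(U;x)$ depend only on $t$; (ii) conjugations by flows $\exp\big(\bonyw(\text{order}\le1)\big)$ that remove, one order at a time, the $x$--dependence of the symbols of order $1,0,-1,\dots,1-\rho$; (iii) a block--diagonalization cancelling the off--diagonal symbols of order $\le2$ (admissible because the two diagonal principal symbols differ by $\sim2\ii\x^2$, so the generating symbols have order $\le0$). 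After this chain the system reads $\del_tZ=\ii\tLm(U;t)Z+\calQ_\rho(U)Z$, with $\tLm(U;t)$ a diagonal Fourier multiplier with \emph{real} principal symbol, $\calQ_\rho(U)$ smoothing by $\rho$ derivatives, and $\|Z\|_{H^s}\simeq\|U\|_{H^s}$. Since $\tLm$ is skew--adjoint up to order $0$, one already gets $\tfrac{d}{dt}\|Z\|_{H^s}^2\lesssim\|U\|_{H^{s_0}}\|Z\|_{H^s}^2$, i.e.\ lifespan $\gtrsim r^{-1}$; Step 3 upgrades this to $r^{-N}$.

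\emph{Step 3 (modified energy by Birkhoff normal form, and bootstrap).} Expanding the contributions to $\tfrac{d}{dt}\|Z\|_{H^s}^2$ coming from the order $\le0$ part of $\tLm$ and from $\calQ_\rho$ as multilinear forms $\sum_{p=3}^{N+1}\MM_p[U,\dots,U]$ up to an $O\big(\|U\|_{H^{s_0}}^{N}\|Z\|_{H^s}^2\big)$ remainder, I would seek for each $p$ a $p$--linear correction $\calE^{(p)}_s(U)$ solving the homological equation: in Fourier variables this amounts to dividing the coefficients of $\MM_p$ by the small divisor $\sum_{i=1}^p\s_i\,\om_{j_i}(\vec m)$, where $\om_j(\vec m)=j^2-\hat p_{\vec m}(j)$ and $\sum_i\s_ij_i=0$ by momentum conservation. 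The tuples on which the divisor vanishes are, by momentum conservation and reversibility, exactly the ``paired'' ones, and on those the corresponding contribution to $\tfrac{d}{dt}\|Z\|_{H^s}^2$ is symmetric and produces no growth --- this is precisely where items (1) and (3) of Hypothesis \ref{parity} are used. On the remaining tuples one divides, using the lower bound below, at the cost of finitely many derivatives, absorbed by taking $\rho$ (hence $s_0,s$) large compared with $N$. One obtains $\calE_s(U)\simeq\|U\|_{H^s}^2$ with $|\tfrac{d}{dt}\calE_s(U)|\lesssim\|U\|_{H^{s_0}}^{N}\|U\|_{H^s}^2$; a standard continuity argument then shows that if $\|u_0\|_{H^s}\le r$ then $\|U(t)\|_{H^s}\le C_Nr$ for $|t|\le c_Nr^{-N}$, giving the stated lifespan, the bound, and (with Step 1) the $C^0$--in--time regularity. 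The non--resonance input is: outside a zero--measure set $\NN\subset\calO$, for every $0\le N\le M$ there are $\ga,\tau>0$ with $\big|\sum_{i=1}^p\s_i\om_{j_i}(\vec m)\big|\ge\ga\,\max_i\jap{j_i}^{-\tau}$ for all $p\le N+2$, all $\s\in\{\pm1\}^p$ and all non--paired $(j_1,\dots,j_p)$ with $\sum_i\s_ij_i=0$; since $\om_j$ is affine in $\vec m$ with gradient $-\big(\sum_i\s_i\jap{j_i}^{-2k-1}\big)_{k=1}^{M}$, a Vandermonde--type computation on the distinct values of $\jap{j_i}^{-2}$ bounds this gradient from below by a negative power of $\max_i\jap{j_i}$ whenever $p\le M+2$, so the classical sub--level set estimate together with summation over tuples yields the null set $\NN$. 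The restriction $N\le M$ is exactly what keeps the Vandermonde step valid.

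\emph{Main obstacle.} Two points are delicate. First, the reduction of Step 2 must be carried out for a genuinely \emph{fully} nonlinear equation: the transformations themselves depend on $U$ through two derivatives, and one must verify that every conjugation preserves the three properties of Hypothesis \ref{parity}, so that the final $\tLm(U;t)$ really has real principal symbol and is skew--adjoint up to order $0$ --- without this the energy estimate collapses outright. Second, one must prove that the resonant (non--removable) multilinear terms produced by the normal form contribute nothing to the growth of $\|U\|_{H^s}^2$, simultaneously at all homogeneities $\le N+1$; this is the structural reason the reversibility-- and parity--preserving hypotheses are imposed, and making the cancellation manifest while tracking the action on $H^s$ and the derivative losses from the small divisors is the technical heart of the argument.
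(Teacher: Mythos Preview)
Your outline matches the paper's strategy: paralinearization, reduction to a diagonal constant--coefficient system modulo smoothing remainders, a Birkhoff normal form step, and a bootstrap based on a modified energy, with the non--resonance conditions obtained from the parameters $\vec m$ via a Vandermonde argument. Two differences with the paper are worth flagging.

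First, in your Step~2 you perform the paracomposition \emph{before} the block--diagonalization. The paper does the opposite (Sections~4.1--4.2 diagonalize exactly at the matrix level, producing the single real eigenvalue $\lambda^+(U;t,x)=\sqrt{(1+a_2)^2-|b_2|^2}$, and only then Section~4.3 applies a paracomposition to make $\lambda^+$ independent of $x$). In your order, the scalar paracomposition flattens $a_2$ but leaves the off--diagonal $b_2$ $x$--dependent; the subsequent perturbative diagonalization by an order--$0$ anti--diagonal generator feeds back an $x$--dependent $O(|b_2|^2)$ correction into the order--$2$ diagonal, so you do not end up with constant coefficients after one pass. This is repairable (iterate, pushing the $x$--dependence to homogeneity $\ge N$, or simply reorder as the paper does), but as written your (i)--(iii) do not yield the claimed constant--coefficient $\tLm(U;t)$.

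Second, your Step~3 treats the order--$0$ part of $\tLm$ and the smoothing remainder $\calQ_\rho$ uniformly via energy corrections. The paper separates the two: Theorem~\ref{BNFiniziale} eliminates the non--resonant part of the order--$0$ symbol by an honest conjugation $W=\Theta(U)V$ (the generator is order $0$ in $\xi$, so $\Theta$ is bounded on $H^s$; the $N_0$ loss from small divisors lands only on the internal frequencies of $U$), and Lemma~\ref{FEI} shows that the surviving ``paired'' part $\pro{\mathtt m_0^{(1)}}$ is real thanks to reversibility and parity, hence harmless. Only the smoothing remainders are then handled by the multilinear energy corrections of Lemma~\ref{fattiSulleForme}. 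Your unified modified--energy approach is not wrong in principle, but you should check that the order--$0$ contribution really fits the class $\widetilde{\calL}^{s,-\rho}_{p,-}$ with $\rho>0$ large (it does not: those contributions are not smoothing in the outer frequency), which is why the paper treats it separately by conjugation rather than by energy correction.
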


As far as we know this is the first long time existence result 
for a fully nonlinear Schr\"odinger equation on a compact manifold.
We remark that, besides the mathematical interest, fully nonlinear Schr\"odinger type equations 
 often appear in the description of phenomena 
 in which the wave packet disperses in media, see for instance \cite{zach}. 
We quote moreover the paper \cite{LLNR} in which fully nonlinear Schr\"odinger equations
(see for instance eq. (8) therein)
appear in the study of Kelvin waves in the superfluid turbulence.

\subsection*{Comments on the hypotheses}
Since the Fourier coefficients in \eqref{potenziale1} decay as $\langle j\rangle^{-3}$ as $j$ goes to $\infty$,  the potential $P_{\vec{m}}(x)$ is a function in $H^{s}$ for any $s<5/2$ (in particular it is  of class $C^1(\TTT;\RRR)$).
In  \cite{FIloc} (see Theorem 1.2 therein) it is shown that, if $P_{\vec{m}}(x)$ is a function of class $C^{1}$ with real Fourier coefficients, under the Hypothesis \ref{parity} (in such a theorem the reversibility structure of the nonlinearity in item 3 of Hyp. \ref{parity} is not needed) for any even function $u_0$ in the Sobolev space $H^s$
the Cauchy problem associated to the equation \eqref{NLS} with initial datum $u_0$ is locally in time well posed in $H^s(\TTT)$ if $s$ is big enough and the Sobolev norm of $u_0$ is small enough.
In order to treat more general initial data (not even in $x$)
one has to require a different algebraic structure on the equation because, in general, 
problem \eqref{NLS} is not well posed. For more details  we refer to the introduction 
of \cite{FIloc}.
An important case in which the equation \eqref{NLS} is well-posed on the whole $H^{s}(\mathbb{T})$
is the \emph{Hamiltonian} one.
Therefore it is interesting to understand whether a result similar to Theorem \ref{teototale}
holds in the Hamiltonian case.
This does not follow straightforward from the arguments  developed in this paper.
We shall give some ideas on this in the last lines of the introduction.

  In more natural problems the convolution potential is replaced by a multiplicative one. Since the convolution 
  is a diagonal operator on the Fourier space it is  easier the study of the resonances of the equation.
The particular structure of the Fourier coefficients of the convolution potential in \eqref{potenziale1} is inspired by
the Dirichlet spectrum  of $-\partial_{xx}+V(x)$. Indeed  for any $\rho\in\N^{*}$ it admits an asymptotic expansion of the form
\begin{equation*}
\lambda_j = j^2+c_0(V)+c_1(V)j^{-2}+\ldots+c_{\rho}(V)j^{-2\rho}+C_{\rho}(j,V)j^{-2-2\rho},
\end{equation*}
where $c_k(V)$ are certain multilinear  functions of the Fourier coefficients of $V(x)$,
$C_{\rho}(j,V)$ is  a constant uniformly bounded in $j$ 
depending on the derivatives of $V(x)$. For more details 
we refer to Section 5.3 of \cite{BG} and the references therein.
In order to treat the case of the multiplicative potential, for instance 
if it is 
smooth and nonnegative,
one should use the basis of $L^{2}(\mathbb{T})$ given  by the eigenfunctions
of the operator $-\del_{xx}+V(x)$ instead of the Fourier basis.
This would be a possible extension  of our result
by adapting the ideas introduced in \cite{BDGS}, \cite{BG}.

Item 1 in Hypothesis \ref{parity} implies that if $u(x)$ is even in $x$ then so is the function $f(u,u_x, u_{xx})$. Since the Fourier coefficients of $P_{\vec{m}}(x)$ in \eqref{potenziale1} are even in $j$,
 the flow of the equation \eqref{NLS} leaves invariant the space of even functions.

We assume item 2 in order to avoid the presence of parabolic terms in the nonlinearity, so that the equation \eqref{NLS} is a Schr\"odinger type one.

Item 3, together with the fact that the convolution potential $P_{\vec{m}}(x)$ is real valued, makes the equation \eqref{NLS} \emph{reversible} with respect to the involution 
\begin{equation}\label{involuzione}
S:u(x)\mapsto\bar{u}(x),
\end{equation}
in the sense that it has  the form $\partial_t u=X(u)$ with  $S\circ X=-X\circ S$. 
Since $f$ is assumed  to be a polynomial function as in  \eqref{funz1},  item 3 of the hypothesis is equivalent to requiring  that the coefficients $C_{\alpha,\beta}$ are real.
 One of the important dynamical consequences of the \emph {reversible} structure of the equation is that if $u(t,x)$ is a solution of the equation with initial condition $u_{0}$ then $S(u(-t,x))=\ol{u}(-t,x)$ solves the same equation with initial condition $\ol{u}_0$. This symmetry of the 
 equation is essential for our strategy and  will play a fundamental 
 role in the paper.

 We have chosen to study a polynomial nonlinearity in order to avoid extra technicalities.

\subsection*{Birkhoff Normal Form approach and some related literature}
Equation \eqref{NLS} 
 belongs to the following general class of problems:   
%
%
\begin{equation}\label{esempio}
u_t=Lu+\mathtt{f}(u),
\end{equation}
where 
$L$ is an unbounded linear operator with discrete spectrum made of purely imaginary eigenvalues $\la_{j}\in \ii\R$, $\mathtt{f}(u)$ is a non linear
function  having a zero of order at least two in the origin
and $u$ belongs to some Sobolev space. 
In the last years several authors investigated whether there is a stable behavior  of solutions of small amplitude.
By stable solution we mean that    its Sobolev norms $\|\cdot\|_{H^{s}}$ 
are bounded from above, up to moltiplicative constants, by the Sobolev norms 
of the initial datum
for long times.

This problem is non trivial when  the system \eqref{esempio} 
does not enjoy conservation laws able to control  Sobolev norms with high index $s$. 
In such a case the only general fruitful approach seems to be 
the Birkhoff Normal Form (BNF) procedure. 
Below we briefly describe the basic ideas and the difficulties that arise 
in implementing such a procedure.

According to the local existence  theory (at a sufficiently large order of regularity), 
if it exists,
we deduce that if the size of the initial datum is $\varepsilon\ll 1$ then the corresponding solution may be extended up to a time of magnitude $1/\varepsilon$.
The basic  idea to prove a longer time of 
existence using  a BNF approach is to reduce the \emph{size} of the non linearity near the origin.
In other words one looks for  a change of coordinates in order to cancel out,  from the non linearity, when possible, all the monomials of  homogeneity less than 
$N$ for some $N\geq2$. 
In this way, in the 
new coordinate system, 
one has that $\mathtt{f}(u)\sim u^{N}$, and 
hence the lifespan  is of order $\e^{-N+1}$.
In performing such changes of coordinates 
 non trivial problems arise:
\begin{itemize}
\item[$(i)$]  \emph{small divisors} appear: the small divisors involve 
linear combinations of the eigenvalues $\la_{j}$, $j\in\NNN$,
 of the linear operator $L$ in \eqref{esempio}
 of the form
  \begin{equation}\label{lowerLambda}
\lambda_{j_1}+\dots+\lambda_{j_{\ell}}-\lambda_{j_{\ell+1}}-\ldots-\lambda_{j_{N}}
\end{equation}
 for $0\leq \ell\leq N$ with $N\in \NNN$.
 One must impose 
  \emph{non-resonance} conditions, i.e. lower bounds on the quantity in \eqref{lowerLambda} 
  whenever  possible.

 \item[$(ii)$]  one has to check that the changes of coordinates are well-defined and bounded, on sufficiently regular Sobolev spaces,
 even if some loss of regularity appears due to the small divisors;
  
\item[$(iii)$] it is not possible to cancel out \emph{all} the monomials of low degree of homogeneity from the non linearity
but, starting form \eqref{esempio}, one obtains a system of the form
\[
u_t=Lu+Z(u)+P(u),
\] 
where $P(u)\sim u^{N}$ and  the non linear term $Z$ (which is usually called ``resonant normal form'') 
commutes with the operator $L$. 
Under some algebraic assumptions on the nonlinearity $\mathtt{f}(u)$ 
 the dynamics generated by the resonant term $Z(u)$
 is stable.
The most studied models in literature are the \emph{Hamiltonian} and the 
  \emph{reversible} PDEs. 
\end{itemize}

Without trying to be exhaustive we quote below some relevant contributions to this subject.

Concerning  \emph{semi-linear} PDEs (i.e. when the non linearity $\mathtt{f}(u)$ does not contain derivatives of $u$)
the long time existence problem has been extensively studied in literature in the case of \emph{Hamiltonian} PDEs.
We quote for instance the papers by Bambusi \cite{Bambu}, Bambusi-Gr\'ebert \cite{BG}, 
 Delort-Szeftel \cite{DelortSzeft1, DelortSzeft2}, and the more recent result 
 by Bernier-Faou-Gr\'ebert \cite{BFG}.

 Regarding BNF theory for \emph{reversible} PDEs we mention 
\cite{grefaou} by Gr\'ebert-Faou.
The paper \cite{BDGS}  regards long time existence of solutions for the semi-linear Klein-Gordon equation on Zoll manifolds, here are collected all the ideas of the preceding (and aforementioned) literature.

 In the case that the non linearity $\mathtt{f}$ contains derivatives of $u$ 
  if one would  follow the strategy used in the semilinear case, 
  one would end up with only formal results in the following sense:
the change of coordinates would be unbounded because one faces   
the well known problem of \emph{loss of derivatives}.
We remark that this loss of derivatives is  originated by  the presence of derivatives in the nonlinearity and it is not a 
small divisors problem.
In this direction we quote the early paper concerning the \emph{pure-gravity water waves} (WW) equation by Craig-Worfolk \cite{Craigworfo}.

 In the case that $\mathtt{f}(u)$ in \eqref{esempio} contains
 derivatives of $u$ of order strictly less than the order of $L$, we quote  the paper by Yuan-Zhang \cite{YuZhan}.
 They studied  an equation of the form \eqref{NLS} with the particular nonlinearity $f(u,u_x)=-(\ii/2\pi)(|u|^{2}u)_{x}$
 exploiting its   Hamiltonian  structure.

The first rigorous long time existence result concerning $\emph{quasi-linear}$ equations, i.e.
when  $\mathtt{f}$ contains derivatives of $u$ of the same order of  $L$
has been obtained by Delort. 
In
\cite{Delort-2009} the author studied 
quasi-linear Hamiltonian perturbations of  the \emph{Klein-Gordon} (KG) equation
on the circle, and in \cite{Delort-sphere} the same equation on higher dimensional spheres.
Here the author  introduces some  classes of multilinear maps which
 defines \emph{para-differential} operators (in the case of (KG) operators of order $1$) enjoying a \emph{symbolic} calculus. 
  We remark that in such papers  the author  deeply use the fact that the (KG) has a linear \emph{dispersion law} (i.e. the operator $L$
 in this case has order $1$).

A  different approach in the case of \emph{super-linear } dispersion law (i.e. $L$ has order $> 1$) has been proposed by Berti-Delort in \cite{maxdelort} for the \emph{capillary water waves} equation. 
In this paper we adopt the strategy proposed in \cite{maxdelort}.
In the following we briefly explain this approach. In the next paragraph 
we shall enter more in detail introducing the  appropriate notation.

The starting point is to rewrite the equation as a \emph{para-differential} system which involves a \emph{para-differential} term (see Definition \ref{quantizationtotale})
and a smoothing remainder (see Definition \ref{smoothoperator}). This 
procedure is known in literature as the \emph{Bony paralinearization} of the equation 
(see section
\ref{siparalin}).
Consecutively the BNF procedure is divided into two steps:
 \begin{enumerate}
 \item
 Instead of reducing directly the \emph{size} of the non linearity (as done in \cite{Delort-2009} for (KG) or formally in \cite{CraigSulem} and \cite{Craigworfo} for the (WW))
we perform
 some 
 para-differential reductions in order   
 to conjugate the para-differential term 
 to an other one which is diagonal 
 with constant coefficients in $x$ up to 
 a remainder which is 
 a very regularizing term. 
 In this procedure it is fundamental that the symbols of positive order are 
 purely imaginary, in such a way that the associated para-differential 
 operator is skew self-adjoint. 
 This condition is ensured by Hypothesis \ref{parity}.
 A related regularization procedure 
 of the unbounded terms of the equations has been 
 previously developed in order to study the linearized equation associated to a quasi-linear system in the context of a Nash-Moser iterative scheme (see for instance \cite{alaz-baldi-periodic,BM1,BBM,BBM1,FP,FP2}).

 \item The second part of the procedure consists  in two sub-steps. In the first one a BNF procedure is used in order to reduce the size of the paradifferential term. The loss of derivatives appearing in the BNF  procedure affects only the coefficients of the equations which are low frequencies thanks to the paradifferential structure; we may afford to loose a large number of derivatives on these coefficients since we are working with very smooth functions. 
Concerning the reduction in size of the smoothing remainder we construct 
some \emph{modified energies} by means of, again, a BNF-type procedure.
By modified energy at order $N\in \mathbb{N}$ we mean
a quantity $E_s(U)$ such that $E_s(U)\sim\norm{U(t,\cdot)}{H^s}^2$ and 
 \begin{equation}\label{botte10}
 E_s(U(t,\cdot))\leq E_s(U(0,\cdot))+\asso{\int_0^t\norm{U(\tau,\cdot)}{H^s}^{N+2}d \tau}.
 \end{equation}
 The loss of derivatives due to the small divisors, in this case,  
 is compensated by the fact that the  remainder is 
 a very smoothing operator.
 \end{enumerate}
For more details on this strategy we refer the reader to the introduction of \cite{maxdelort}.

We mention that in \cite{FP2}, \cite{FP} it has been shown that  a large class of   \emph{fully non linear}   Schr\"odinger type equations
admits quasi-periodic in time, and hence globally defined and stable, small amplitude solutions.
Hence it would be interesting to study whether other stability phenomena appear.

The goal of this paper is to extend the BNF theory to a class of   \emph{fully non linear}   Schr\"odinger equations by adapting the ideas of \cite{maxdelort}.

%
%

\subsection*{Plan of the paper}
First of all it is convenient to work on product spaces and consider instead of \eqref{NLS} the so called \emph{vector NLS}. We need some further notation. We define, for $s>0$, the following Sobolev spaces
\begin{equation}\label{Hcic}
\begin{aligned}
&{\bf{ H}}^s:={\bf{ H}}^s(\TTT,\CCC^2):=\big(H^{s}\times H^{s}\big)\cap \gotR, \qquad  \gotR:=\{(u^{+},u^{-})\in L^{2}(\TTT;\CCC)\times L^{2}(\TTT;\CCC)\; : \; u^{+}=\ol{u^{-}}\},
\end{aligned}
\end{equation}
endowed with the product topology. We set ${\bf{ H}}^{\infty}:=\cap_{s\in\R} {\bf{ H}}^s$. On ${\bf{ H}}^0$ we define the scalar product
\begin{equation}\label{comsca}
(U,V)_{{\bf{ H}}^0}:=\int_{\TTT}U\cdot\ol{V}dx.
\end{equation}
We introduce also the following subspace of even functions of $x$ in $\hcic^{s}$:
\begin{equation}\label{spazipari}
\begin{aligned}
{\bf{ H}}_{e}^s={\bf H}^{s}_{e}(\TTT;\CCC^{2})&:=(H^{s}_{e}\times H^{s}_{e})\cap \hcic^{0},\;\;\;\;\;\;
H^{s}_{e}=H^{s}_{e}(\TTT;\CCC):=\{u\in H^{s}\; : \; u(x)=u(-x)\}.
\end{aligned}
\end{equation}

We 
define 
the operator $\lambda$ as $\lambda[u]:=\del_{xx}u+P_{\vec{m}}(x)*u$. One has that  
\begin{equation}\label{NLS1000}
\lambda [e^{\ii jx}]:= \lambda_{j} e^{\ii jx}, \qquad \lambda_{j}:=(\ii j)^{2}+\hat{p}({j}), \quad \;\; j\in \ZZZ,
\end{equation}
where $\hat{p}(j)$ are defined in \eqref{potenziale1}. 
Let us introduce the following matrices
\begin{equation}\label{matrici}
E:=\sm{1}{0}{0}{-1}
\quad \uno:=\sm{1}{0}{0}{1}
\end{equation}
and
we define the operator $\Lambda$ on $H^{s}\times H^{s}$ as 
\begin{equation}\label{DEFlambda}
 \Lambda:=\sm{\lambda}{0}{0}{\lambda}.
\end{equation}
With this notation equation \eqref{NLS} is equivalent to the system

\begin{equation}\label{6.666}
\dot{U}:=\ii E\left[\Lambda U
+
\left(\begin{matrix} f(u,{ u}_{x},{ u}_{xx})\vspace{0.4em}\\ 
\ol{f({u },{u}_{x},{u}_{xx})}
\end{matrix}
\right)\right], \quad U=(u,\bar{u})\in \hcic^{s}.
\end{equation}
From now on we will study the system \eqref{6.666} instead of equation \eqref{NLS}.

We describe here the ideas of the proof of Theorem \ref{teototale}. In Section \ref{paracalc} we define the classes of operators and of symbols we need and we develop some composition theorems for classes of para-differential and smoothing operators. Such classes of operators have been introduced and widely studied in \cite{maxdelort}.
 
The first step is to rewrite the system \eqref{6.666} as a para-differential system of the form \eqref{sistemainiziale}
 with $U=U(t,x)=(u,\bar{u})$ by using the results of Section \ref{paracalc}.
 This is the content of Theorem \ref{paralineariza} in Section 
  \ref{siparalin}. 
Let us describe briefly the  structure of the system obtained in Theorem \ref{paralineariza}. 
Consider a symbol $a(x,\x)$ having finite regularity in $x$.  
Let $\chi$ be a $C^{\infty}_0$ cut-off function with sufficiently small support and equal to $1$ close to $0$, 
then we set
$a_{\chi}(x,\xi)=\mathcal{F}_{\hat{x}}^{-1}\big(\hat{a}(\hat{x},\xi)\chi(\hat{x}/\langle\xi\rangle)\big)$.  
In other words the new symbol $a_{\chi}(x,\xi)$ is a localization in the Fourier space, 
and therefore a regularization in the physical space, of the symbol $a(x,\xi)$.
 Then we can define the Bony-Weyl quantization of the symbol $a$ as follows
\begin{equation*}
\bonyw(a(x,\xi))\varphi=\frac{1}{2\pi}\int e^{\ii(x-y)\xi}a_{\chi}\Big(\frac{x+y}{2},\xi\Big)\varphi(y)dyd\xi.
\end{equation*}
Theorem \ref{paralineariza} ensures that
the original system is equivalent to the following para-differential one
\begin{equation}\label{carta}
\partial_t U= \ii E(\Lambda U+\bonyw(A(U;x,\xi))U+R(U)U)
\end{equation}
where $R(U)$ is a $2\times 2$ matrix of smoothing remainder, $A(U;x,\x)$ is a $2\times2$ matrix of symbol with the following properties:
\begin{itemize}
\item the map $(x,\xi)\mapsto A(U;x,\xi)$ depends in a non linear way on the function $U$ solution of \eqref{6.666};
\item for any $N>1$ the matrix of symbols $A(U;x,\xi)$ 
admits an expansion in homogeneous matrices of  symbols
up to a non homogeneous one of size $O(\|U\|^{N}_{H^s})$;
\item the operator $\bonyw(A(U;x,\x))$ maps $\hcic^{s}$ in $\hcic^{s-2}$ for any $s$, provided that
$U$ belongs to $\hcic^{s_0}$ for $s_0$ large enough;
\item $R(U)$ maps $\hcic^{s}$ in $\hcic^{s+\rho}$ for $s$ large enough and $\rho\sim s$;

\item the operators $\bonyw(A(U;x,\x))$, $R(U)$ are \emph{reversibility} and \emph{parity} preserving
according to Definitions \ref{riassunto-mappe} and \ref{riassunto-simbo}.
This structure is inherited by Hypothesis \ref{parity}.

\end{itemize}

We remark that we cannot  use directly  the paralinearization performed in Section $4$ of \cite{FIloc} 
since we need to adapt it to symbols and operators which admit multilinear expansions.

  As first step 
  in subsections \ref{diagosecondord}, \ref{diagosecondord2}
  we perform several changes of coordinates which diagonalize the matrix $A(U;x,\x)$. 
   Since the non zero terms  of the new diagonal matrix of symbols
  depends on $x$, this system does not admit ${\bf H}^s$-energy estimates (i.e. an estimate 
  of the form \eqref{botte11}).
  Therefore subsections \ref{diagosecondord3}, 
  \ref{diagosecondord4}, \ref{diagosecondord5} are devoted to conjugate this matrix to another one 
  whose symbols are constant in $x$.
%
 All these results are collected in Theorem \ref{regolarizza}, where we exhibit a nonlinear map 
 $\Phi(U)U$ with the following properties:
 \begin{itemize}
 \item[(a)] for any fixed $U$ in $\hcic^{s_0}$, $s_0$ large enough, the map
 $\Phi(U)[\cdot]$ is a bounded linear map form $\hcic^{s}$ to $\hcic^{s}$ for any $s\geq0$;
 
 \item[(b)] set $V:=\Phi(U)U$, then one has
 $\|V\|_{\hcic^s}\sim \|U\|_{\hcic^{s}}$;
 
 \item[(c)] the map $\Phi(U)$ is \emph{reversibility} and \emph{parity} preserving;
 
 \item[(d)] the function $U$ solves \eqref{carta} if and only if $V=\Phi(U)U$ solves a system of the form
 (see \eqref{problemafinale})
 \begin{equation}\label{carta2}
 \del_{t}V=\ii E( \Lambda V+\bonyw(L(U;\x))V+Q(U)U),
 \end{equation}
for some diagonal and constant coefficients in $x$ matrix of 
symbol $L(U;\x)$ (see \eqref{constCoeff})
and where $Q(U)$ is a $\rho$-smoothing 
remainder for some 
$\rho\gg N$ large.
  \end{itemize}
 The function $V$ solving \eqref{carta2} satisfies
\begin{equation}\label{botte11}
 \del_{t}\|V(t)\|^{2}_{\hcic^s}\leq C\|U(t)\|_{\hcic^{s_0}}\|V(t)\|^2_{\hcic^s},
 \end{equation}
 therefore, as a consequence of Theorem \ref{regolarizza}, we have obtained 
 \begin{equation}\label{stimabella}
 \|U(t)\|_{\hcic^s}^2\leq C\|U(0)\|^2_{\hcic^s}+C\int_0^t\|U(\tau)\|_{\hcic^{s_0}}\|U(\tau)\|^2_{\hcic^s}d\tau, \quad
 s\geq s_0\gg1.
 \end{equation}
  The estimate \eqref{botte11} is a consequence of the fact that
  the symbol $\mathtt{m}_{2}(U;t)$ in \eqref{constCoeff}
  is real and that there are  not symbols of order $1$ in $\mathtt{m}(U;t)$.
  This follows from the parity structure of the equation and the parity preserving structure of the 
  map $\Phi(U)$.

There are two key differences between this paper and the procedure followed in \cite{FIloc}.
In the quoted paper we are only  interested in giving 
some energy estimates on the solution in order to prove a local existence result. 
Here the situation is more complicated and we need further information in order  to obtain a much longer time of existence. 
First of all in Theorem \ref{regolarizza}  we  take into account that our operators
and symbols admit multilinear expansions. This justify our definition of operators and symbols
in Definitions \ref{smoothoperator} and \ref{simbototali}. On the contrary in \cite{FIloc} we use classes more similar to 
the non homogeneous classes defined in Definitions \ref{nonomoop} and \ref{nonomorest}. 
  The second fundamental difference is that the final system in \eqref{problemafinale}
  is diagonal, constant coefficients in $x\in\TTT$, up to terms which are $\rho$-smoothing operators 
  with $\rho$ arbitrary large. We remark that in \cite{FIloc} we only need 
  \emph{bounded} remainders.

   In Section \ref{sezBNF} we  give the proof of Theorem \ref{teototale}. 
   Notice that the r.h.s. in  \eqref{stimabella} is linear in $\|U(t)\|_{\hcic^{s_0}}$ 
   since both 
   $\|\bonyw\big({\rm Im}(\mathtt{m}_0(U;t,\x))\big)\|_{\mathcal{L}({\bf H}^{s},{\bf H}^{s-2})}$
   (see \eqref{constCoeff})
    and 
   $\|Q(U)\|_{\mathcal{L}({\bf H}^{s},{\bf H}^{s+\rho})}$
   are $O(\|U\|_{{\bf H}^{s_0}})$.
   The aim of Sec. \ref{sezBNF} is to prove an estimate of the form
    \begin{equation}\label{stimabella2}
 \|U(t)\|_{\hcic^s}^2\leq C\|U(0)\|^2_{\hcic^s}+C\int_0^t\|U(\tau)\|^{{N}}_{\hcic^{s_0}}\|U(\tau)\|^2_{\hcic^s}d\tau, \quad
 s\geq s_0\gg1, \;\;\; N>2.
 \end{equation}
  After the reduction performed in Theorem \ref{regolarizza}, we have that the system \eqref{carta2} is very similar to a semi-linear one. Therefore we construct modified energies (see \eqref{botte10}) 
  to prove 
  the bound \eqref{stimabella2}.  In such construction we exploit the reversibility and parity preserving structures
   in order to prove that the resonant terms do not contribute to the energy estimates
   (see Definition \ref{kernel} and Lemma \ref{FEI}).
   As explained before (below formula \eqref{botte10}) to get the \eqref{stimabella2}
   we shall face a loss of derivatives 
   of magnitude $ N\cdot N_0$, due to small divisors appearing in the BNF procedure, 
   where $N_0$ is a fixed quantity
   given by Proposition
   \ref{stimemisura}. This is the reason for which we fix $\rho\gg N$ at the beginning of the 
procedure.
In the  Proposition \ref{stimemisura} we show that
 the $\la_j$'s (defined in \eqref{NLS1000}) satisfy the needed  non resonance conditions.
%
  
  We conclude the introduction discussing  briefly why this strategy does not 
  straightforward apply to the \emph{Hamiltonian} case.
As explained above we need to exploit some algebraic structures to ensure that the resonant terms
do not contribute to energy inequality. 
Therefore one has to preserve such structures in performing changes of coordinates. 
  In the present paper it is rather simple to do it.
A key point is that after the paralinearization in \eqref{carta}
  both the term $\bonyw(A(U;t,x,\x))$ and $R(U)$ are reversibility and parity preserving.
  In the Hamiltonian case,   among all the difficulties,  the latter terms are not Hamiltonian vector fields.
  For this reason it is not trivial  to build \emph{symplectic} versions of the changes 
  of coordinates used in this paper.
  
  \medskip
\noindent
{\bf Acknowledgements}. We warmly thanks Prof. Massimiliano Berti for suggesting us this problem and Prof. Jean-Marc Delort for many useful suggestions and comments.

\section{Para-differential calculus}\label{paracalc}

In this section we develop a para-differential calculus 
following the ideas  (and notation) in \cite{maxdelort}.

 In subsections \ref{subsec:Smooth} and \ref{subsec:Simboli} 
 we introduce, respectively, 
 several classes of smoothing operators and symbols 
 depending on some extra function $U$.
 More precisely our symbols and operators are polynomials in $U$ up to a degree of homogeneity $N-1$ plus a non-homogeneous term  which vanishes as $O(\|U\|^N)$ as $U$ goes to $0$
 (see Definitions \ref{smoothoperator} and \ref{simbototali}). 
 We  define a para-differential quantization, see Definition \ref{quantizationtotale},
  of such symbols 
  and we  prove, in subsection \ref{subsec:COmpo}, 
  that they enjoy a \emph{symbolic} calculus up to smoothing operators introduced in
  subsection \ref{subsec:Smooth}.  
  In subsection \ref{subsec:Mapp} we introduce a class of general maps 
  (see Definition \ref{smoothoperatormaps}) which will be used in
  some contexts where will not be important to keep track of the 
  loss of derivatives coming from unbounded operators.
  When applying this theory to the \emph{reversible} and \emph{parity preserving} 
  Schr\"odinger equation \eqref{NLS}, we shall deal with subspaces of symbols 
  and operators defined above enjoying some algebraic properties. 
   These subclasses are introduced and analyzed in subsection \ref{subsec:Alge}. 
  The differences between our classes and those in \cite{maxdelort} depend only on the extra function $U$: in their case it  is a function of time and space $(x,t)$ which is of class $C^k$, w.r.t. the variable $t$, with values in $H^{s-\frac{3}{2}k}$ for any $0\leq k\leq K$ ($K$ big enough) and it has zero mean, in our case it can have non zero mean and it is a function of class $C^k$, w.r.t. the variable $t$, with values in $H^{s-2k}$ for any $0\leq k\leq K$ ($K$ big enough).

We introduce some notation. If $K\in\N$, $I$ is an interval of $\R$ containing the origin and  $s\in\R^{+}$ we denote by $C^K_{*}(I,{{H}}^{s}(\TTT,\CCC^2))$ (respectively  $C^K_{*}(I,{{H}}^{s}(\TTT;\CCC))$), the space of continuous functions $U$ of $t\in I$ with values in  ${{H}}^{s}(\TTT,\CCC^2)$ (resp. $H^{s}(\TTT;\CCC)$), 
which are $K$-times differentiable and such that the $k-$th derivative is continuous with values in 
${{H}}^{s-2k}(\TTT,\CCC^2)$ (resp. $H^{s-2k}(\TTT;\CCC)$) 
for any $0\leq k\leq K$. We endow the space  $C^K_{*}(I,{{H}}^{s}(\TTT;\CCC^{2}))$ 
(resp. $C^K_{*}(I,{{H}}^{s}(\TTT;\CCC))$) with the norm
\begin{equation}\label{spazionorm}
\sup_{t\in I}\norm{U(t,\cdot)}{K,s}, \quad \mbox {where} \quad \norm{U(t,\cdot)}{K,s}:=\sum_{k=0}^{K}\norm{\partial_t^k U(t,\cdot)}{{{H}}^{s-2k}}.
\end{equation}
We denote by $C^K_{*\RRR}(I,{{H}}^{s}(\TTT,\CCC^2))$, sometimes with $C^{K}_{*\RRR}(I;\hcic^{s})$, 
the subspace of $C^K_{*}(I,{{H}}^{s}(\TTT,\CCC^2))$
made of the functions of $t$  with values in $\hcic^{s}(\TTT;\CCC^{2})$ (see \eqref{Hcic}).
Recalling \eqref{spazipari} we shall denote 
$C^{K}_{*}(I;H_{e}^{s}(\TTT;\CCC^{2}))$ (resp. $C^{K}_{*}(I;H_e^{s}(\TTT;\CCC))$) 
the subspace of $C^K_{*}(I,{{H}}^{s}(\TTT,\CCC^2))$ (resp. $C^{K}_{*}(I;H^{s}(\TTT;\CCC))$)
made of the functions of $t$ with values in $H_{e}^{s}(\TTT;\CCC^{2})$ (resp. $H^{s}_{e}(\TTT;\CCC)$).
Analogously 
$C^K_{*\R}(I,{\bf{H}}_{e}^{s}(\TTT;\CCC^{2}))$ denotes the subspace of $C^K_{*\R}(I,{\bf{H}}^{s}(\TTT;\CCC^{2}))$ made of those functions which are even in $x$.
Moreover if $r\in\R^{+}$ we set
\begin{equation}\label{pallottola}
B_{s}^K(I,r):=\set{U\in C^K_{*}(I,H^{s}(\TTT;\CCC^{2})):\, \sup_{t\in I}\norm{U(t,\cdot)}{K,s}<r}.
\end{equation}

For $n\in \NNN^*$ we denote by $\Pi_{n}$ the orthogonal projector from $L^{2}(\TTT;\CCC^{2})$ (or $L^{2}(\TTT,\CCC)$)
to the subspace spanned by $\{e^{\ii n x}, e^{-\ii nx}\}$ i.e.
\begin{equation}\label{spectralpro}
(\Pi_{n}u)(x)=\hat{u}({n}) \frac{e^{\ii nx}}{\sqrt{2\pi}}+\hat{u}(-n)\frac{e^{-\ii nx}}{\sqrt{2\pi}},
\end{equation}
while in the case $n=0$ we define the mean
$\Pi_0u=\frac{1}{\sqrt{2\pi}}\hat{u}(0)=\frac{1}{2\pi}\int_{\TTT}u(x)dx.$

If $\mathcal{U}=(U_1,\ldots,U_{p})$
is a $p$-tuple
 of functions, $\vec{n}=(n_1,\ldots,n_p)\in \NNN^{p}$, we set
 \begin{equation}\label{ptupla}
 \Pi_{\vec{n}}\mathcal{U}:=(\Pi_{n_1}U_1,\ldots,\Pi_{n_p}U_p).
 \end{equation}
 
 For a family $(n_1,\ldots,n_{p+1})\in \NNN^{p+1}$ we denote by
 $\max_{2}(\langle n_1\rangle,\ldots,\langle n_{p+1}\rangle)$,
 the second largest among the numbers $\langle n_1\rangle,\ldots,\langle n_{p+1}\rangle$.

 \subsection{Spaces of Smoothing operators}\label{subsec:Smooth}

 The following is the definition of a class of multilinear smoothing operators.
 \begin{de}[{\bf $p-$homogeneous smoothing operator}]\label{omosmoothing}
 Let $p\in \NNN$, $\rho\in \RRR$ with $\rho\geq0$. We denote by $\widetilde{\RR}^{-\rho}_{p}$
 the space of $(p+1)$-linear maps
 from the space $(C^{\infty}(\TTT;\CCC^{2}))^{p}\times C^{\infty}(\TTT;\CCC)$ to 
 the space $C^{\infty}(\TTT;\CCC)$ symmetric
 in $(U_{1},\ldots,U_{p})$, of the form
 $ (U_{1},\ldots,U_{p+1})\to R(U_1,\ldots, U_p)U_{p+1},$
 that satisfy the following. There is $\mu\geq0$, $C>0$ such that 
  \begin{equation}\label{omoresti1}
 \|\Pi_{n_0}R(\Pi_{\vec{n}}\mathcal{U})\Pi_{n_{p+1}}U_{p+1}\|_{L^{2}}\leq
 C\frac{\max_2(\langle n_1\rangle,\ldots,\langle n_{p+1}\rangle)^{\rho+\mu}}{\max(\langle n_1\rangle,\ldots,\langle n_{p+1}\rangle)^{\rho}}
 \prod_{j=1}^{p+1}\|\Pi_{n_{j}}U_{j}\|_{L^{2}},
 \end{equation}
  for any 
 $\mathcal{U}=(U_1,\ldots,U_{p})\in (C^{\infty}(\TTT;\CCC^{2}))^{p}$, any 
 $U_{p+1}\in C^{\infty}(\TTT;\CCC)$,
 any $\vec{n}=(n_1,\ldots,n_p)\in \NNN^{p}$, any $n_0,n_{p+1}\in \NNN$.
 Moreover, if 
 \begin{equation}\label{omoresti2}
 \Pi_{n_0}R(\Pi_{n_1}U_1,\ldots,\Pi_{n_{p}}U_{p})\Pi_{n_{p+1}}U_{p+1}\neq0,
 \end{equation}
 then there is a choice of signs $\s_0,\ldots,\s_{p+1}\in\{-1,1\}$ such that 
 $\sum_{j=0}^{p+1}\s_j n_{j}=0$.
 \end{de}

 We shall need also a class of non-homogeneous smoothing operators.

 \begin{de}[\bf{Non-homogeneous smoothing operators}]\label{nonomoop}
Let $K'\leq K\in\N$, $N\in \NNN$ with $N\geq1$, $\rho\in \RRR$ with $\rho\geq0$ 
and $r>0$. We define the class of remainders $\mathcal{R}^{-\rho}_{K,K',N}[r]$ as the space of maps $(V,u)\mapsto R(V)u$ defined on $B^K_{s_0}(I,r)\times C^K_{*}(I,H^{s_0}(\TTT,\CCC))$ which are linear in the variable $u$ and such that the following holds true. For any $s\geq s_0$ there exist a constant $C>0$ and $r(s)\in]0,r[$ such that for any $V\in B^K_{s_0}(I,r)\cap C^K_{*}(I,H^{s}(\TTT,\CCC^2))$, any $u\in C^K_{*}(I,H^{s}(\TTT,\CCC))$, any $0\leq k\leq K-K'$ and any $t\in I$ the following estimate holds true
\begin{equation}\label{porto20}
\norm{\partial_t^k\left(R(V)u\right)(t,\cdot)}{H^{s-2k+\rho}}\leq \sum_{k'+k''=k}C\Big[\norm{u}{k'',s}\norm{V}{k'+K',s_0}^{N}
+\norm{u}{k'',s_0}\|V\|_{k'+K',s_0}^{N-1}\norm{V}{k'+K',s}\Big].
\end{equation}
\end{de}

We will often use the following general class.
\begin{de}[{\bf Smoothing operator}]\label{smoothoperator}
Let $p,N\in \NNN$, with $p\leq N$, $N\geq1$, $K,K'\in\NNN$ with $K'\leq K$
and $\rho\in \RRR$, $\rho\geq0$. We denote by $\Sigma\RR^{-\rho}_{K,K',p}[r,N]$
the space of maps $(V,t,u)\to R(V,t)u$
that may be written as 
\begin{equation}\label{smoothoperator1}
R(V;t)u=\sum_{q=p}^{N-1}R_{q}(V,\ldots,V)u+R_{N}(V;t)u,
\end{equation}
for some $R_{q}\in \widetilde{\RR}^{-\rho}_{q}$, $q=p,\ldots, N-1$ and $R_{N}$ belongs to 
$\RR^{-\rho}_{K,K',N}[r]$.
\end{de}

\begin{rmk}
Let $R_1(U)$ be a smoothing operator in $\Sigma\mathcal{R}^{-\rho_1}_{K,K',p_1}[r,N]$ and $R_2(U)$ in $\Sigma\mathcal{R}^{-\rho_2}_{K,K',p_2}[r,N]$, then the operator $R_1(U)\circ R_{2}(U)[\cdot]$ belongs to  $\Sigma\mathcal{R}^{-\rho}_{K,K',p_1+p_2}[r,N]$, where $\rho=\min(\rho_1,\rho_2)$.
\end{rmk}

The following is a subclass of the previous class made of those operators which are autonomous, i.e. they depend on the variable $t$ only through the function $U$.
\begin{de}[{\bf Autonomous smoothing operator}]\label{autsmoothop}
We define, according to the notation of Definition \ref{nonomoop}, the class of autonomous 
non-homogeneous smoothing operator $\RR^{-\rho}_{K,0,N}[r,{\rm aut}]$
as the subspace of $\RR^{-\rho}_{K,0,N}[r]$ made of those maps 
$(U,V)\to R(U)V$ satisfying estimates \eqref{porto20}
with $K'=0$, the time dependence being only through $U=U(t)$.
In the same 
way, we denote by $\Sigma\RR^{-\rho}_{K,0,p}[r,N,{\rm aut}]$ the space of maps 
$(U,V)\to R(U,V)$ of the form \eqref{smoothoperator1}
with $K'=0$ and where the last term belongs to $\RR^{-\rho}_{K,0,N}[r,{\rm aut}]$.
\end{de}

\begin{rmk}\label{starship}
We remark that if $R$ is in $\widetilde{R}^{-\rho}_{p}$, $p\geq N$, then
$(V,U)\to R(V,\ldots,V)U$ is in $\RR^{-\rho}_{K,0,N}[r,{\rm aut}]$.
This inclusion follows by the multi-linearity of $R$ in each argument, and 
by estimate \eqref{omoresti1}. For further details we refer to 
the remark after Definition $2.2.3$ in \cite{maxdelort}.

\end{rmk}

\subsection{Spaces of Maps}\label{subsec:Mapp}

In the following, sometimes, we 
shall treat operators 
without having to keep track of the number of lost derivatives in a very precise way. We introduce some further classes.

 \begin{de}[{\bf $p-$homogeneous maps}]\label{omosmoothingmaps}
 Let $p\in \NNN$, $m\in \RRR$ with $m\geq0$. We denote by $\widetilde{\MM}^{m}_{p}$
 the space of $(p+1)$-linear maps
 from the space $(C^{\infty}(\TTT;\CCC^{2}))^{p}\times C^{\infty}(\TTT;\CCC)$ to 
 the space $C^{\infty}(\TTT;\CCC)$ symmetric
 in $(U_{1},\ldots,U_{p})$, of the form
 $ (U_{1},\ldots,U_{p+1})\to M(U_1,\ldots, U_p)U_{p+1},$
 that satisfy the following. There is $\mu\geq0$, $C>0$, and for any 
 $\mathcal{U}=(U_1,\ldots,U_{p})\in (C^{\infty}(\TTT;\CCC^{2}))^{p}$, any 
 $U_{p+1}\in C^{\infty}(\TTT;\CCC)$,
 any $\vec{n}=(n_1,\ldots,n_p)\in \NNN^{p}$, any $n_0,n_{p+1}\in \NNN$
 \begin{equation}\label{omoresti1maps}
 \|\Pi_{n_0}M(\Pi_{\vec{n}}\mathcal{U})\Pi_{n_{p+1}}U_{p+1}\|_{L^{2}}\leq
 C(\langle n_0\rangle+\langle n_1\rangle+\ldots+\langle n_{p+1}\rangle)^{m}
 \prod_{j=1}^{p+1}\|\Pi_{n_{j}}U_{j}\|_{L^{2}}.
 \end{equation}
 Moreover, if 
 \begin{equation}\label{omoresti2maps}
 \Pi_{n_0}M(\Pi_{n_1}U_1,\ldots,\Pi_{n_{p}}U_{p})\Pi_{n_{p+1}}U_{p+1}\neq0,
 \end{equation}
 then there is a choice of signs $\s_0,\ldots,\s_{p+1}\in\{-1,1\}$ such that 
 $\sum_{j=0}^{p+1}\s_j n_{j}=0$.
 When $p=0$ the conditions above mean that $M$ is a linear map on $C^{\infty}(\TTT;\CCC)$ into itself.
 \end{de}

 \begin{de}[\bf{Non-homogeneous maps}]\label{nonomoopmaps}
Let $K'\leq K\in\N$, $N\in \NNN$ with $N\geq1$, $m\in \RRR$ with $m\geq0$ 
and $r>0$. We define the class  $\mathcal{M}^{m}_{K,K',N}[r]$ as the space of maps $(V,u)\mapsto M(V)u$ defined on $B^K_{s_0}(I,r)\times C^K_{*}(I,H^{s_0}(\TTT,\CCC))$, for some $s_0>0$,  which are linear in the variable $u$ and such that the following holds true. For any $s\geq s_0$ there exist a constant $C>0$ and $r(s)\in]0,r[$ such that for any $V\in B^K_{s_0}(I,r)\cap C^K_{*}(I,H^{s}(\TTT,\CCC^2))$, any $u\in C^K_{*}(I,H^{s}(\TTT,\CCC))$, any $0\leq k\leq K-K'$ and any $t\in I$ the following estimate holds true
\begin{equation}\label{porto20maps}
\norm{\partial_t^k\left(M(V)u\right)(t,\cdot)}{H^{s-2k-m}}\leq \sum_{k'+k''=k}C\Big[\norm{u}{k'',s}\norm{V}{k'+K',s_0}^{N}
+\norm{u}{k'',s_0}\|V\|_{k'+K',s_0}^{N-1}\norm{V}{k'+K',s}\Big].
\end{equation}
\end{de}

\begin{de}[{\bf Maps}]\label{smoothoperatormaps}
Let $p,N\in \NNN$, with $p\leq N$, $N\geq1$, $K,K'\in\NNN$ with $K'\leq K$
and $\rho\in \RRR$, $m\geq0$. We denote by $\Sigma\MM^{m}_{K,K',p}[r,N]$
the space of maps $(V,t,u)\to M(V,t)u$
that may be written as 
\begin{equation}\label{smoothoperator1maps}
M(V;t)u=\sum_{q=p}^{N-1}M_{q}(V,\ldots,V)u+M_{N}(V;t)u,
\end{equation}
for some $M_{q}\in \widetilde{\MM}^{m}_{q}$, $q=p,\ldots, N-1$ and $M_{N}$ belongs to 
$\MM^{m}_{K,K',N}[r]$.
Finally we set $\widetilde{\MM}_{p}:=\cup_{m\geq0}\widetilde{\MM}_{p}^{m}$,
$\MM_{K,K',p}[r]:=\cup_{m\geq0}\MM^{m}_{K,K',p}[r]$ and $\Sigma\MM_{K,K',p}[r,N]:=\cup_{m\geq0}\Sigma\MM^{m}_{K,K',p}[r]$.
\end{de}

\begin{de}[{\bf Autonomous maps}]\label{autsmoothopmaps}
We define, with the notation of Definition \ref{nonomoopmaps}, the class of autonomous 
non-homogeneous smoothing operator $\MM^{m}_{K,0,N}[r,{\rm aut}]$
as the subspace of $\MM^{m}_{K,0,N}[r]$ made of those maps 
$(U,V)\to M(U)V$ satisfying estimates \eqref{porto20}
with $K'=0$, the time dependence being only through $U=U(t)$.
In the same 
way, we denote by $\Sigma\MM^{m}_{K,0,p}[r,N,{\rm aut}]$ the space of maps 
$(U,V)\to M(U,V)$ of the form \eqref{smoothoperator1}
with $K'=0$ and where the last term belongs to $\MM^{m}_{K,0,N}[r,{\rm aut}]$.
\end{de}

\begin{rmk}
We remark that if $M$ is in $\widetilde{\MM}^{m}_{p}$, $p\geq N$, then
$(V,U)\to M(V,\ldots,V)U$ is in $\MM^{m}_{K,0,N}[r,{\rm aut}]$.
For further details we refer to 
the remark after Definition $2.2.5$ in \cite{maxdelort}.
\end{rmk}

\subsection{Spaces of Symbols}\label{subsec:Simboli}
We give the definition of a class of multilinear symbols.
\begin{de}[{\bf $p$-homogeneous symbols}]\label{pomosimb}
Let $m\in \RRR$, $p\in \NNN$. We denote by $\widetilde{\Gamma}_{p}^{m}$ the space of symmetric $p$-linear maps
from $(C^{\infty}(\TTT;\CCC^{2}))^{p}$ to the space of $C^{\infty}$ functions in $(x,\x)\in \TTT\times\RRR$
\[
\mathcal{U}\to ((x,\x)\to a(\mathcal{U};x,\x))
\]
satisfying the following. There is $\mu>0$ and 
for any $\al,\be\in \NNN$ there is $C>0$ such that
\begin{equation}\label{pomosimbo1}
|\del_{x}^{\al}\del_{\x}^{\be}a(\Pi_{\vec{n}}\mathcal{U};x,\x)|\leq C\langle \vec{n}\rangle^{\mu+\al}\langle\x\rangle^{m-\be}
\prod_{j=1}^{p}\|\Pi_{n_j}U_{j}\|_{L^{2}},
\end{equation}
for any $\mathcal{U}=(U_1,\ldots, U_p)$ in $(C^{\infty}(\TTT;\CCC^{2}))^{p}$,
and $\vec{n}=(n_1,\ldots,n_p)\in \NNN^{p}$, 
where $\langle \vec{n}\rangle:=\sqrt{1+|n_1|^{2}+\ldots+|n_{p}|^{2}}$.
Moreover we assume that, if for some $(n_0,\ldots,n_{p})\in \NNN^{p+1}$,
\begin{equation}\label{pomosimbo2}
\Pi_{n_0}a(\Pi_{n_1}U_1,\ldots, \Pi_{n_p}U_{p};\cdot)\neq0,
\end{equation}
then there exists a choice of signs $\s_0,\ldots,\s_p\in\{-1,1\}$ such that $\sum_{j=0}^{p}\s_j n_j=0$.
For $p=0$ we denote by $\widetilde{\Gamma}_{0}^{0}$ the space of constant coefficients symbols
$\x\mapsto a(\x)$ which satisfy the \eqref{pomosimbo1} with $\al=0$
and the r.h.s. replaced by $C\langle \x\rangle^{m-\be}$.
\end{de}

\begin{rmk}
In the sequel we shall consider functions $\mathcal{U}=(U_1,\ldots, U_{p})$ which depends also on time $t$,
so that the above definition are functions of $(t,x,\x)$ that we denote by $a(\mathcal{U};t,x,\x)$.
\end{rmk}

\begin{rmk}\label{prodottodisimboli}
One can easily note that, if $a\in \widetilde{\Gamma}_{p}^{m}$ and $b\in \widetilde{\Gamma}_{q}^{m'}$
then $ab\in \widetilde{\Gamma}_{p+q}^{m+m'}$, and $\del_{x}a\in \widetilde{\Gamma}_{p}^{m}$ while
$\del_{\x}a\in \widetilde{\Gamma}_{p}^{m-1}$.
\end{rmk}

\begin{rmk}\label{simboloPot}
We have that the function 
$\mathtt{p}(\x):=\sum_{k=1}^{M}\frac{m_{k}}{\langle \x\rangle^{2k+1}}$, 
$\x\in \RRR$,
%
belongs to the class $\widetilde{\Gamma}^{-3}_{0}$.
\end{rmk}

We shall need also a class of non-homogeneous nonlinear symbols.

\begin{de}[\bf{Non-homogeneous Symbols}]\label{nonomorest}
Let $m\in\R$, $p\in \NNN$, $p\geq 1$, $K'\leq K$ in $\N$, $r>0$. We denote by $\Gamma^m_{K,K',p}[r]$ the space of functions $(U;t,x,\xi)\mapsto a(U;t,x,\xi)$, defined for $U\in B_{\s_0}^K(I,r)$, for some large enough $\s_0$, with complex values such that for any $0\leq k\leq K-K'$, any $\s\geq \s_0$, there are $C>0$, $0<r(\s)<r$ and for any $U\in B_{\s_0}^K(I,r(\s))\cap C^{k+K'}_{*}(I,{{H}}^{\s}(\TTT;\CCC^{2}))$ and any $\alpha, \beta \in\N$, with $\alpha\leq \s-\s_0$
\begin{equation}\label{simbo}
\asso{\partial_t^k\partial_x^{\alpha}\partial_{\xi}^{\beta}a(U;t,x,\xi)}\leq C\langle\xi\rangle^{m-\beta} 
\|U\|^{p-1}_{k+K',\s_0}
\norm{U}{k+K',\s}.
\end{equation}
\end{de}

\begin{rmk}\label{prodottodisimboli2}
We note that if $a\in \Gamma_{K,K',p}^{m}[r]$ with $K'+1\leq K$, then $\del_{t}a\in \Gamma^{m}_{K,K'+1,p}[r]$.
Moreover if $a\in \Gamma_{K,K',p}^{m}[r]$
 then $\del_x a\in \Gamma_{K,K',p}^{m}[r]$ and $\del_{\x}a\in \Gamma_{K,K',p}^{m-1}[r]$. Finally
 if $a\in \Gamma_{K,K',p}^{m}[r]$ and $b\in \Gamma_{K,K',q}^{m'}[r]$ then $ab\in \Gamma_{K,K',p+q}^{m+m'}[r]$.
\end{rmk}
The following is a subclass of the class defined in \ref{nonomorest} made of those symbols which depend on the variable $t$ only through the function $U$.
\begin{de}[{\bf Autonomous non-homogeneous Symbols}]\label{autnonomosimbo}
We denote by $\Gamma_{K,0,p}^{m}[r,{\rm aut}]$ the
subspace of $\Gamma^{m}_{K,0,p}[r]$ made of the non-homogeneous symbols $(U,x,\x)\to a(U;x,\x)$
that satisfy estimate \eqref{simbo} 
  with $K'=0$, the time dependence being only through $U=U(t)$.
\end{de}

\begin{rmk}\label{nonomoOMOSIMB}
A symbol $a(\mathcal{U};\cdot)$ of $\widetilde{\Gamma}_{p}^{m}$ defines,
by restriction to the diagonal, the symbol $a(U,\ldots,U;\cdot)$ for $\Gamma_{K,0,p}^{m}[r,{\rm aut}]$ for any $r>0$.
For further details we refer the reader to the first remark after Definition $2.1.3$ in \cite{maxdelort}.

\end{rmk}
The following is the general class of symbols we shall deal with.
\begin{de}[{\bf Symbols}]\label{simbototali}
Let $m\in \RRR$, $p\in \NNN$, $K,K'\in \NNN$ with $K'\leq K$, $r>0$
and $N\in \NNN$ with $p\leq N$. One denotes by $\Sigma\Gamma^{m}_{K,K',p}[r,N]$
the space of functions 
$(U,t,x,\x)\to a(U;t,x,\x)$ such that there are homogeneous symbols $a_{q}\in \widetilde{\Gamma}_{q}^{m}$
for $q=p,\ldots, N-1$ and a non-homogeneous symbol $a_{N}\in \Gamma^{m}_{K,K',N}[r]$
such that
\begin{equation}\label{simbotot1}
a(U;t,x,\x)=\sum_{q=p}^{N-1}a_{q}(U,\ldots,U;x,\x)+a_{N}(U;t,x,\x).
\end{equation}
We set $\Sigma\Gamma^{-\infty}_{K,K',p}[r,N]=\cap_{m\in \RRR}\Sigma\Gamma^{m}_{K,K',p}[r,N]$.

We define the subclasses of autonomous symbols $\Sigma\Gamma^{m}_{K,K',p}[r,N,{\rm aut}]$
by \eqref{simbotot1} where $a_N$ is in the class $\Gamma_{K,0,N}^{m}[r,{\rm aut}]$
of Definition \ref{autnonomosimbo}. Finally we 
set $\Sigma\Gamma^{-\infty}_{K,K',p}[r,N,{\rm aut}]=\cap_{m\in \RRR}\Sigma\Gamma^{m}_{K,K',p}[r,N,{\rm aut}]$.
\end{de}

We also introduce the following class of ``functions'', i.e. those bounded symbols which are independent of the variable $\xi$.
\begin{de}[{\bf Functions}]\label{apeape} Fix $N\in \NNN$, 
$p\in \NNN$ with $p\leq N$,
 $K,K'\in \NNN$ with $K'\leq K$, $r>0$.
We denote by $\widetilde{\calF}_{p}$ (resp. $\calF_{K,K',p}[r]$, resp. $\calF_{K,K',p}[r,{\rm aut}]$,
resp. $\Sigma\calF_{p}^{q}[r,N]$, resp. $\Sigma\calF_{K,K',p}[r,N,{\rm aut}]$)
the subspace of $\widetilde{\Gamma}^{0}_{p}$ (resp. $\Gamma^{0}_{p}[r]$, resp. 
$\Gamma^{0}_{p}[r,{\rm aut}]$, 
resp. $\Sigma\Gamma^{0,q}_{p}[r,N]$, resp. $\Sigma\Gamma^{0}_{p}[r,N,{\rm aut}]$)
made of those symbols which are independent of $\x$.
\end{de}

\subsection{Quantization of symbols}

Given a smooth symbol $(x,\x) \to a(x,\x)$,
we define, for any $\s\in [0,1]$,  the quantization of the symbol $a$ as the operator 
acting on functions $u$ as 
\begin{equation}\label{operatore}
{\rm Op}_{\s}(a(x,\x))u=\frac{1}{2\pi}\int_{\RRR\times\RRR}e^{\ii(x-y)\x}a(\s x+(1-\s)y,\x)u(y)dy d\x.
\end{equation}
This definition is meaningful in particular if $u\in C^{\infty}(\TTT)$ (identifying $u$ to a $2\pi$-periodic function). By decomposing 
$u$ in Fourier series  as $u=\sum_{j\in\ZZZ}\hat{u}(j)(1/\sqrt{2\pi})e^{\ii jx}$, we may calculate the oscillatory integral in \eqref{operatore} obtaining
\begin{equation}\label{bambola}
{\rm Op}_{\s}(a)u:=\frac{1}{\sqrt{2\pi}}\sum_{k\in \ZZZ}\left(\sum_{j\in\ZZZ}\hat{a}\big(k-j,(1-\s)k+\s j\big)\hat{u}(j)\right)\frac{e^{\ii k x}}{\sqrt{2\pi}}, \quad \forall\;\; \s\in[0,1],
\end{equation}
where $\hat{a}(k,\xi)$ is the $k^{th}-$Fourier coefficient of the $2\pi-$periodic function $x\mapsto a(x,\xi)$.
For convenience in the paper we shall use two particular quantizations:

\vspace{0.5em}
\noindent
{\bf Standard quantization.}
We define the standard quantization by specifying formula \eqref{bambola} for $\s=1$:
\begin{equation}\label{bambola2}
{\rm Op}(a)u:={\rm Op}_{1}(a)u=\frac{1}{\sqrt{2\pi}}\sum_{k\in \ZZZ}\left(\sum_{j\in\ZZZ}\hat{a}\big(k-j, j\big)\hat{u}(j)\right)\frac{e^{\ii k x}}{\sqrt{2\pi}};
\end{equation}

\vspace{0.5em}
\noindent
{\bf Weyl quantization.}
We define the Weyl quantization by specifying formula \eqref{bambola} for $\s=\frac{1}{2}$:
\begin{equation}\label{bambola202}
{\rm Op}^{W}(a)u:={\rm Op}_{\frac{1}{2}}(a)u=\frac{1}{\sqrt{2\pi}}\sum_{k\in \ZZZ}
\left(\sum_{j\in\ZZZ}\hat{a}\big(k-j, \frac{k+j}{2}\big)\hat{u}(j)\right)\frac{e^{\ii k x}}{\sqrt{2\pi}}.
\end{equation}

Moreover
the above formulas allow to transform the symbols between different quantizations, in particular we have
\begin{equation}\label{bambola5}
{\rm Op}(a)={\rm Op}^{W}(b), \qquad {\rm where} \quad \hat{b}(j,\x)=\hat{a}(j,\x-\frac{j}{2}).
\end{equation}

We want to define a \emph{para-differential} quantization.
First we give the following definition.
\begin{de}[{\bf Admissible cut-off functions}]\label{cutoff1}
Fix $p\in \NNN$ with $p\geq1$.
We say that  $\chi_{p}\in C^{\infty}(\RRR^{p}\times \RRR;\RRR)$ and $\chi\in C^{\infty}(\R\times\R;\R)$ are  admissible cut-off functions if  they are even with respect to each of their arguments and there exists $\delta>0$ such that
\begin{equation*}
\begin{aligned}
&{\rm{supp}}\, \chi_{p} \subset\set{(\xi',\xi)\in\R^{p}\times\R; |\xi'|\leq\delta \langle\xi\rangle},\qquad \chi_p (\xi',\xi)\equiv 1\,\,\, \rm{ for } \,\,\, |\xi'|\leq\frac{\delta}{2} \langle\xi\rangle,\\
&\rm{supp}\, \chi \subset\set{(\xi',\xi)\in\R\times\R; |\xi'|\leq\delta \langle\xi\rangle},\qquad \chi(\xi',\xi) \equiv 1\,\,\, \rm{ for } \,\,\, |\xi'|\leq\frac{\delta}{2} \langle\xi\rangle.
\end{aligned}
\end{equation*}
We assume moreover that for any derivation indices $\alpha$ and $\beta$
\begin{equation*}
\begin{aligned}
&|\partial_{\xi}^{\alpha}\partial_{\xi'}^{\beta}\chi_p(\xi',\xi)|\leq C_{\alpha,\beta}\langle\xi\rangle^{-\alpha-|\beta|},\,\,\forall \alpha\in \NNN, \,\beta\in\NNN^{p},\\
&|\partial_{\xi}^{\alpha}\partial_{\xi'}^{\beta}\chi(\xi',\xi)|\leq C_{\alpha,\beta}\langle\xi\rangle^{-\alpha-\beta},\,\,\forall \alpha, \,\beta\in\NNN.
\end{aligned}
\end{equation*}
\end{de}
An example of function satisfying the condition above, and that will be extensively used in the rest of the paper, is $\chi(\xi',\xi):=\widetilde{\chi}(\xi'/\langle\xi\rangle)$, where $\widetilde{\chi}$ is a function in $C_0^{\infty}(\RRR;\RRR)$  having a small enough support and equal to one in a neighborhood of zero.
For any $a\in C^{\infty}(\TTT)$ we shall use the following notation
\begin{equation}\label{pseudoD}
(\chi(D)a)(x)=\sum_{j\in\ZZZ}\chi(j)\Pi_{j}{a}.
\end{equation}

\begin{de}[{\bf The  Bony quantization}]\label{quantizationtotale}
Let $\chi$ be an admissible cut-off function according to Definition \ref{cutoff1}.
If a is a symbol in $\widetilde{\Gamma}^{m}_{p}$ and $b$ is in $\Gamma^{m}_{K,K',p}[r]$,
we set, using notation \eqref{ptupla}, 
\begin{equation}\label{simbotroncati}
\begin{aligned}
a_{\chi}(\mathcal{U};x,\x)&=\sum_{\vec{n}\in \NNN^{p}}\chi_{p}\left(\vec{n},\x\right)a(\Pi_{\vec{n}}\mathcal{U};x,\x),\qquad
b_{\chi}(U;t,x,\x)=\frac{1}{2\pi}\int_{\TTT}  \chi\left(\h,\x\right)\hat{b}(U;t,\h,\x)e^{\ii \h x}d \h.
\end{aligned}
\end{equation}
We define the  Bony quantization as
\begin{equation}\label{simbotroncati3}
\begin{aligned}
\bony(a(\mathcal{U};\cdot))&={\rm Op}(a_{\chi}(\mathcal{U};\cdot)),\qquad
\bony(b(U;t,\cdot))={\rm Op}(b_{\chi}(U;t,\cdot)).
\end{aligned}
\end{equation}
and the Bony-Weyl quantization as
\begin{equation}\label{simbotroncati2}
\begin{aligned}
\bonyw(a(\mathcal{U};\cdot))&=\weyl(a_{\chi}(\mathcal{U};\cdot)),\qquad 
\bonyw(b(U;t,\cdot))=\weyl(b_{\chi}(U;t,\cdot)).
\end{aligned}
\end{equation}
Finally, if $a$ is a symbol in the class $\Sigma\Gamma^{m}_{K,K',p}[r,N]$, that we decompose as in \eqref{simbotot1},
we define its Bony quantization as 
\begin{equation}\label{simbotroncati4}
\bony(a(U;t,\cdot))=\sum_{q=p}^{N-1}\bony(a_{q}(U,\ldots,U;\cdot))+\bony(a_{N}(U;t,\cdot)),
\end{equation}
and its Bony-Weyl quantization as
\begin{equation}\label{simbotroncati5}
\bonyw(a(U;t,\cdot))=\sum_{q=p}^{N-1}\bonyw(a_{q}(U,\ldots,U;\cdot))+\bonyw(a_{N}(U;t,\cdot)).
\end{equation}
For symbols belonging to the autonomous subclass
$\Sigma\Gamma_{K,0,p}^{m}[r,N,{\rm aut}]$
we shall
not write the time dependence in  \eqref{simbotroncati4} and  \eqref{simbotroncati5}.

\end{de}

\begin{rmk}
Let $a\in \Sigma\Gamma^{m}_{K,K',p}[r,N]$.
We note that
\begin{equation}\label{barratooperatore}
\ol{\bony(a(U;t,x,\x)[v]}=\bony(\ol{a^{\vee}(U;t,x,\x)})[\bar{v}], \quad
\ol{\bonyw(a(U;t,x,\x)[v]}=\bonyw(\ol{a^{\vee}(U;t,x,\x)})[\bar{v}],
\end{equation}
where 
\begin{equation}\label{barratoagg}
a^{\vee}(U;t,x,\x):=a(U;t,x,-\x).
\end{equation}
Moreover if we define the operator $A(U,t)[\cdot]:=\bonyw(a(U;t,x,\x))[\cdot]$
we have that $A^{*}(U,t)$, its adjoint operator w.r.t. the $L^{2}(\TTT;\CCC)$ scalar product,
can be written as
\begin{equation}\label{aggiunto}
A^{*}(U,t)[v]=\bonyw\Big(\ol{a(U;t,x,\x)}\Big)[v].
\end{equation}
\end{rmk}

\begin{rmk}\label{simbolopot2}
Recalling Remark \ref{simboloPot}
we define the symbol $\mathtt{l}(\x):=(\ii\x)^{2}+\mathtt{p}(\x)$
which belongs to $\widetilde{\Gamma}^{2}_0$. Moreover we
note that 
the operator $\lambda$ defined in \eqref{NLS1000} can be written as 
$\la[\cdot]={\rm Op}(\mathtt{l}(\x))[\cdot]$.
\end{rmk}

\begin{rmk}\label{aggiungoaggiunto}
By formula \eqref{aggiunto} one has that a para-differential operator $\bonyw(a(U;t,x,\x))[\cdot]$
is self-adjoint, w.r.t. the $L^{2}(\TTT;\CCC)$ scalar product, if and only if the symbol $a(U;t,x,\x)$ is real valued
for any $x\in \TTT$, $\x\in \RRR$. 
\end{rmk}

\begin{prop}[{\bf Action of para-differential operator}]\label{azionepara}
One has the following.

$(i)$ Let $m\in \RRR$, $p\in \NNN$. There is $\s>0$ such that for any symbol
 $a\in \widetilde{\Gamma}_{p}^{m}$, the map
 \begin{equation}\label{azione1}
 (U_{1},\ldots,U_{p+1})\to \bonyw(a(U_1,\ldots,U_{p};\cdot))U_{p+1},
 \end{equation}
extends, for any $s\in \RRR$, as a continuous $(p+1)$-linear map
$\big(H^{\s}(\TTT;\CCC^{2})\big)^{p}\times H^{s}(\TTT;\CCC)\to H^{s-m}(\TTT;\CCC)$.
Moreover, there is a constant $C>0$, depending only on $s$ and on \eqref{pomosimbo1} with $\al=\be=0$,
such that
\begin{equation}\label{azione3}
\|\bonyw(a(\mathcal{U};\cdot))U_{p+1}\|_{H^{s-m}}\leq C\prod_{j=1}^{p}\|U_{j}\|_{H^{\s}}\|U_{p+1}\|_{H^{s}},
\end{equation}
where $\mathcal{U}=(U_1,\ldots,U_{p})$. In the case that $p=0$ the r.h.s. of \eqref{azione3} is replaced by $C\|U_{p+1}\|_{H^{s}}$. 
Finally, if for some $(n_0,\ldots,n_{p+1})\in \NNN^{p+2}$,
\begin{equation}\label{azione4}
\Pi_{n_0}\bonyw(a(\Pi_{\vec{n}}\mathcal{U};\cdot))\Pi_{n_{p+1}}U_{p+1}\neq0,
\end{equation}
with $\vec{n}=(n_1,\ldots,n_{p})\in \NNN^{p}$, then there is a choice of signs $\s_{j}\in\{-1,1\}$, $j=0,\ldots,p+1$, such that $\sum_{j=0}^{p+1}\s_{j} n_{j}=0$ and the indices satisfy
\begin{equation}\label{azione5}
n_0\sim n_{p+1}, \quad n_{j}\leq C\delta n_0, \quad n_{j}\leq C\delta n_{p+1}, \quad j=1,\dots,p.
\end{equation}

$(ii)$ Let $r>0$, $m\in \RRR$, $p\in \NNN$, $p\geq1$, $K'\leq K\in \NNN$, $a\in \Gamma^{m}_{K,K',p}[r]$.
There is $\s>0$ such that for any $U\in B_{\s}^{K}(I,r)$, the operator
$\bonyw(a(U;t,\cdot))$ extends, for any $s\in \RRR$, as a bounded linear operator 
 \begin{equation}\label{azione6}
 C_{*}^{K-K'}(I,H^{s}(\TTT;\CCC)) \to  C_{*}^{K-K'}(I,H^{s-m}(\TTT;\CCC)).
 \end{equation}
Moreover, there is a constant $C>0$, depending  only on $s,r$ and \eqref{simbo}
with $0\leq \al\leq 2$, $\be=0$, such that, for any $t\in I$, any $0\leq k\leq K-K'$,
\begin{equation}\label{azione7}
\|\bonyw(\del_{t}^{k}a(U;t,\cdot))\|_{\mathcal{L}(H^{s},H^{s-m})}\leq C\|U\|_{k+K',\s}^{p},
\end{equation}
so that
\begin{equation}\label{azione8}
\|\bonyw(a(U;t,\cdot))V(t)\|_{K-K',s-m}\leq C\|U\|^{p}_{K,\s}\|V\|_{K-K',s}.
\end{equation}

\end{prop}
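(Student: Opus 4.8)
The plan is to reduce everything to Fourier-side estimates, using the explicit expression \eqref{bambola202} of the Weyl quantization together with the frequency localization built into the admissible cut-off functions of Definition \ref{cutoff1}. For part $(i)$ I would first decompose $U_j=\sum_{n_j\in\NNN}\Pi_{n_j}U_j$ and, for $\vec n=(n_1,\dots,n_p)$, isolate the block $T_{\vec n}V:=\bonyw(a(\Pi_{\vec n}\mathcal U;\cdot))V$, so that $\bonyw(a(\mathcal U;\cdot))U_{p+1}=\sum_{\vec n}T_{\vec n}U_{p+1}$. Since $a$ is $p$-linear, the support condition \eqref{pomosimbo2} makes $a(\Pi_{\vec n}\mathcal U;x,\x)$ a trigonometric polynomial in $x$ with at most $2^p$ harmonics $e^{\ii\ell x}$, all with $|\ell|\le\langle n_1\rangle+\dots+\langle n_p\rangle$, whose Fourier coefficients are bounded by $C\langle\vec n\rangle^{\mu}\langle\x\rangle^{m}\prod_{j}\|\Pi_{n_j}U_j\|_{L^2}$ by \eqref{pomosimbo1} with $\al=\be=0$. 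The point is that on the support of $\chi_p(\vec n,\cdot)$ one has $|\vec n|\le\delta\langle\x\rangle$, hence $|\ell|\le C\delta\langle\x\rangle$ and therefore $\langle\x+\ell\rangle\simeq\langle\x\rangle$ for $\delta$ small; writing $T_{\vec n}$ through \eqref{bambola202} as a finite sum of modulations $e^{\ii\ell x}$ composed with Fourier multipliers then yields, for every $s\in\RRR$,
\[
\|T_{\vec n}V\|_{H^{s-m}}\le C_{p,s}\,\langle\vec n\rangle^{\mu}\Big(\prod_{j=1}^{p}\|\Pi_{n_j}U_j\|_{L^2}\Big)\|V\|_{H^{s}}.
\]

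Summing this over $\vec n$, using $\langle\vec n\rangle\le\prod_j\langle n_j\rangle$ and Cauchy--Schwarz in each $n_j$, the series $\sum_{\vec n}\langle\vec n\rangle^{\mu}\prod_j\|\Pi_{n_j}U_j\|_{L^2}$ is bounded by $C\prod_j\|U_j\|_{H^{\s}}$ as soon as $\s\ge\mu+1$, which gives \eqref{azione3}; the case $p=0$ is simply the $H^s\to H^{s-m}$ boundedness of the Fourier multiplier $\weyl(a(\x))$, immediate from $|a(\x)|\le C\langle\x\rangle^{m}$. The spectral support statement \eqref{azione4}--\eqref{azione5} I would read off the same computation: $\Pi_{n_0}T_{\vec n}\Pi_{n_{p+1}}V\neq0$ forces $n_0\mp n_{p+1}=\epsilon_1 n_1+\dots+\epsilon_p n_p$ for suitable signs $\epsilon_i$ (whence the relation $\sum_{j=0}^{p+1}\s_j n_j=0$), while $\chi_p(\vec n,\cdot)\neq0$ forces $|\vec n|\le\delta\langle(n_0\pm n_{p+1})/2\rangle$; for $\delta$ small these two facts together imply $n_0\sim n_{p+1}$ and then $n_j\le C\delta\,n_0$ and $n_j\le C\delta\,n_{p+1}$.

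For part $(ii)$ I would work directly with $\bonyw(a(U;t,\cdot))$. By \eqref{bambola202} its matrix element $\Pi_{n_0}\bonyw(a(U;t,\cdot))\Pi_{n_1}$ is controlled by $|\widehat a(U;t,n_0\mp n_1,\x)|$ evaluated near $\x=(n_0\pm n_1)/2$, and the cut-off $\chi$ again forces $\langle n_0\rangle\simeq\langle n_1\rangle$; integrating by parts twice in $x$ and using \eqref{simbo} with $\al=2$ (which is exactly why $\s=\s_0+2$ suffices) gives the decay $|\widehat a(U;t,\ell,\x)|\le C\langle\ell\rangle^{-2}\langle\x\rangle^{m}\|U\|_{K',\s}^{p}$. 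A Schur test in the pair $(n_0,n_1)$, exploiting the decay $\langle n_0-n_1\rangle^{-2}$ and the near-diagonality $\langle n_0\rangle\simeq\langle n_1\rangle$, then yields $\|\bonyw(a(U;t,\cdot))V(t)\|_{H^{s-m}}\le C\|U\|_{K',\s}^{p}\|V(t)\|_{H^{s}}$ for all $s$, hence \eqref{azione6}. For \eqref{azione7}--\eqref{azione8} I would apply $\del_t^{k}$ and invoke Leibniz, $\del_t^{k}\big(\bonyw(a(U;t,\cdot))V\big)=\sum_{k'+k''=k}\bonyw(\del_t^{k'}a(U;t,\cdot))\del_t^{k''}V$ with $\del_t^{k'}a\in\Gamma^{m}_{K,K'+k',p}[r]$ by Remark \ref{prodottodisimboli2}; applying the time-frozen bound above at each $t$, bounding $\|\del_t^{k''}V\|_{H^{s-2k''}}\le\|V\|_{K-K',s}$ and using the embedding $H^{s-2k''-m}\hookrightarrow H^{s-2k-m}$, and summing over $k'+k''=k\le K-K'$, completes the argument.

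The step I expect to be the main obstacle is the uniform $H^{s}\to H^{s-m}$ bound for a single frequency block in $(i)$, and the parallel Schur test in $(ii)$: this is the Calder\'on--Vaillancourt-type estimate underpinning the whole para-differential calculus, and it is precisely where the admissible cut-offs are indispensable, since without them the high $x$-frequencies of the symbol would destroy boundedness; everything else is bookkeeping of the cut-off support geometry and of the summations over homogeneity and over time derivatives. Since the symbol and operator classes here coincide with those of \cite{maxdelort} up to the facts that the auxiliary function $U$ may carry a non-zero mean and loses $2k$ (rather than $\tfrac{3}{2}k$) derivatives per time derivative, one could alternatively deduce the statement from the corresponding results there.
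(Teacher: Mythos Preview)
Your proof sketch is correct and follows the standard route for boundedness of para-differential operators: frequency-block decomposition via \eqref{pomosimbo2}, the symbol bound \eqref{pomosimbo1} combined with the cut-off support $|\vec n|\le\delta\langle\xi\rangle$ to get near-diagonality $\langle n_0\rangle\simeq\langle n_{p+1}\rangle$, and then a Schur-type summation. The spectral support conclusion \eqref{azione4}--\eqref{azione5} is read off correctly, and the treatment of part $(ii)$ via two $x$-derivatives on the symbol and Leibniz in $t$ is the right mechanism.

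The paper, however, does not prove this proposition at all: its entire proof is the single line ``See Proposition $2.2.4$ in \cite{maxdelort}.'' You anticipated this in your final paragraph, noting that the classes here differ from those of \cite{maxdelort} only in that $U$ may have non-zero mean and loses $2k$ rather than $\tfrac{3}{2}k$ derivatives per $\partial_t$, neither of which affects the argument. So your sketch is effectively a reconstruction of what the cited reference contains, while the paper simply defers to that reference; both are valid, and your last sentence alone would already match the paper's proof.
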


\begin{proof}
See Proposition $2.2.4$ in \cite{maxdelort}.
\end{proof}

\begin{rmk}\label{inclusionifacili}
We have the following  inclusions.

$\bullet$
Let $a\in \Sigma\Gamma^{m}_{K,K',p}[r,N]$ for $p\geq1$.
By Proposition \ref{azionepara} we have that
the map $(V,U)\to \bonyw(a(V;t,\cdot))U$ defined by \eqref{simbotroncati5}
is in $\Sigma\MM^{m'}_{K,K',p}[r,N]$ for some $m'\geq m$.

$\bullet$
If $a\in \Sigma\Gamma^{m}_{K,K',p}[r,N]$ with $m\leq 0$ and $p\geq1$, 
then the map $(V,U)\to \bonyw(a(V;t,\cdot))U$
is in $\Sigma\RR^{m}_{K,K',p}[r,N]$.

$\bullet$Any smoothing operator $R\in \Sigma\RR^{-\rho}_{K,K',p}[r,N]$
defines an element of $\Sigma\MM^{m}_{K,K',p}[r,N]$ for some $m\geq0$.
\end{rmk}

In the following we shall deal with operators defined on the product space
$H^{s}\times H^{s}$. 

\begin{rmk}\label{equivalenzacutoff}
From Proposition  \eqref{azionepara} we deduce that the Bony-Weyl quantization  of a symbol is unique up to smoothing remainders. More precisely 
consider two admissible cut off functions $\chi^{(1)}_p$ and $\chi^{(2)}_p$ according to Def. \ref{cutoff1} with $\delta_1>0$ and $\delta_2>0$.  Define $\chi_p:=\chi^{(1)}_p-\chi^{(2)}_p$ and for $a$  in $\widetilde{\Gamma}^m_p$ set
\begin{equation}\label{felixotto}
R(\mathcal{U}):=\weyl\Big(\sum_{n\in\N^p}\chi_p(n,\xi)a(\Pi_n\mathcal{U};\cdot)\Big).
\end{equation}
Then, by applying \eqref{azione3} with $s=m$, we get
\begin{equation*}
\norm{\Pi_{n_0}R(\Pi_n\mathcal{U})\Pi_{n_{p+1}}U_{p+1}}{L^2}\leq Cn_1^{\sigma}\cdots n_p^{\sigma}n_{p+1}^{m}\prod_{j=1}^{p+1}\norm{\Pi_{n_j}U_j}{L^2}. 
\end{equation*}
The l.h.s. of the equation above is non zero only if $\delta_1\jap{n_{p+1}}\leq|n|\leq\delta_2\jap{n_{p+1}}$. As a consequence we deduce the equivalence $\max_2(n_1,\ldots, n_{p+1})\sim\max(n_1,\ldots, n_{p+1})$ and hence the operator $R$ belongs to $\widetilde{\mathcal{R}}^{-\rho}_p$.

A similar statement holds for the non homogeneous case. 
\end{rmk}

We have the following definition.
\begin{de}[{\bf Matrices of operators}]\label{matrixmatrxi}
Let $\rho,m\in \RRR$, $\rho\geq0$, $K'\leq K\in \NNN$, $r>0$, $N\in\NNN$, $p\in \NNN$ with $p\geq1$.
We denote by 
$ \Sigma\RR^{-\rho}_{K,K',p}[r,N]\otimes\MM_2(\CCC)$
the space of $2\times 2$ matrices whose entries are smoothing operators in
the class $\Sigma\RR^{-\rho}_{K,K',p}[r,N]$.
 Analogously we denote by  $\Sigma\mathcal{M}_{K,K',p}^{m}[r,N]\otimes\MM_2(\CCC)$
 the space of $2\times 2$ matrices whose entries are maps in
the class $\Sigma\MM^{m}_{K,K',p}[r,N]$.
We also set $\Sigma\mathcal{M}_{K,K',p}[r,N]\otimes\MM_2(\CCC)=\cup_{m\in \RRR}
\Sigma\mathcal{M}_{K,K',p}^{m}[r,N]\otimes\MM_2(\CCC)$.

\end{de}

\begin{de}[{\bf Matrices of symbols}]\label{matrixmatrixsimbo}
Let $m\in\RRR$, $K'\leq K\in \NNN$, $p,N\in \NNN$.
We denote by $\Sigma\Gamma^{m}_{K,K',p}[r,N]\otimes \MM_{2}(\CCC)$ the space $2\times 2$
matrices whose entries are symbols in the class $\Sigma\Gamma^{m}_{K,K',p}[r,N]$.
\end{de}

We have the following result.

\begin{lemma}
Let $\rho,m\in \RRR$, $\rho\geq0$, $m\geq 0$, $K'\leq K\in \NNN$, $r>0$, $N\in\NNN$, $p\in\NNN$, $p\geq1$
and consider $R\in \Sigma\RR^{-\rho}_{K,K',p}[r,N]\otimes\MM_{2}(\CCC)$, $M\in \Sigma\MM^{m}_{K,K',p}[r,N]\otimes\MM_{2}(\CCC)$ 
and $A\in \Sigma\Gamma^{m}_{K,K',p}[r,N]\otimes\MM_{2}(\CCC)$.
There is $\s>0$ such that
\begin{equation}\label{special1}
R : B_{s}^{K}(I, r)\times C^{K-K'}_{*}(I,H^{s}(\TTT;\CCC^{2}))\to
C^{K-K'}_{*}(I,H^{s+\rho}(\TTT;\CCC^{2}));
\end{equation}
\begin{equation}\label{special3}
M : B_{s}^{K}(I,r)\times C^{K-K'}_{*}(I,H^{s}(\TTT;\CCC^{2}))\to
C^{K-K'}_{*}(I,H^{s-m}(\TTT;\CCC^{2})),
\end{equation}
and 
\begin{equation}\label{special2}
\bonyw(A(U;t,\cdot)) : B_{\s}^{K}(I,r)\times C^{K-K'}_{*}(I,H^{s}(\TTT;\CCC^{2}))\to
C^{K-K'}_{*}(I,H^{s-m}(\TTT;\CCC^{2})).
\end{equation}
\end{lemma}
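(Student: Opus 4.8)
The plan is to reduce each of \eqref{special1}, \eqref{special3}, \eqref{special2} to a scalar statement and then decompose into homogeneous and non-homogeneous pieces. First, since $H^s(\TTT;\CCC^2)\cong H^s(\TTT;\CCC)\times H^s(\TTT;\CCC)$, a $2\times2$ matrix of operators maps $H^s(\TTT;\CCC^2)$ into $H^{s+\rho}(\TTT;\CCC^2)$ (resp. $H^{s-m}(\TTT;\CCC^2)$) continuously as soon as each of its four scalar entries maps $H^s(\TTT;\CCC)$ into $H^{s+\rho}(\TTT;\CCC)$ (resp. $H^{s-m}(\TTT;\CCC)$), with the matrix operator norm controlled by the sum of the four entry norms. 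The same reduction works verbatim on the $C^{K-K'}_{*}(I,\cdot)$ scale, because by \eqref{spazionorm} the norm $\|\cdot\|_{K-K',\sigma}$ is the sum over $0\le k\le K-K'$ of the $H^{\sigma-2k}$ norms of $\partial_t^k(\cdot)$. So it suffices to prove the three mapping properties for a single smoothing operator in $\Sigma\RR^{-\rho}_{K,K',p}[r,N]$, a single map in $\Sigma\MM^{m}_{K,K',p}[r,N]$, and the Bony--Weyl quantization of a single symbol in $\Sigma\Gamma^{m}_{K,K',p}[r,N]$.

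For \eqref{special2}: write $A(U;t,\cdot)=\sum_{q=p}^{N-1}a_q(U,\ldots,U;\cdot)+a_N(U;t,\cdot)$ as in \eqref{simbotot1}, with $a_q\in\widetilde{\Gamma}^m_q$ and $a_N\in\Gamma^m_{K,K',N}[r]$, so that by \eqref{simbotroncati5} $\bonyw(A(U;t,\cdot))=\sum_{q=p}^{N-1}\bonyw(a_q(U,\ldots,U;\cdot))+\bonyw(a_N(U;t,\cdot))$. For the non-homogeneous term, estimate \eqref{azione8} of Proposition \ref{azionepara}$(ii)$ gives exactly a bounded map $C^{K-K'}_{*}(I,H^s)\to C^{K-K'}_{*}(I,H^{s-m})$ once $\sigma$ is chosen large enough. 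For a homogeneous term, restrict the $(q+1)$-linear map of Proposition \ref{azionepara}$(i)$ to the diagonal in its first $q$ slots and apply \eqref{azione3} with a fixed $\sigma$; differentiating in $t$ by the Leibniz rule and using that $\partial_t^k U\in H^{s_0-2k}$ for $U\in B^K_{\sigma}(I,r)$ handles the time derivatives, and summing over $q=p,\ldots,N-1$ and taking $\sup_{t\in I}$ concludes. (This is precisely the content of the first bullet of Remark \ref{inclusionifacili}.)

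For \eqref{special1} and \eqref{special3}: decompose $R(V;t)u=\sum_{q=p}^{N-1}R_q(V,\ldots,V)u+R_N(V;t)u$ with $R_q\in\widetilde{\RR}^{-\rho}_q$, $R_N\in\RR^{-\rho}_{K,K',N}[r]$ (resp. $M_q\in\widetilde{\MM}^m_q$, $M_N\in\MM^m_{K,K',N}[r]$). The non-homogeneous parts satisfy directly \eqref{porto20} (resp. \eqref{porto20maps}), which are the required bounds in $H^{s-2k+\rho}$ (resp. $H^{s-2k-m}$). For a homogeneous smoothing piece, expand $V=\sum_n\Pi_nV$, $u=\sum_n\Pi_nu$, insert \eqref{omoresti1}, and use that the support condition \eqref{omoresti2} forces $\sum_{j=0}^{q+1}\sigma_jn_j=0$, hence $\langle n_0\rangle\le C\max(\langle n_1\rangle,\ldots,\langle n_{q+1}\rangle)$; this yields
\[
\langle n_0\rangle^{s+\rho}\|\Pi_{n_0}R_q(\Pi_{n_1}V,\ldots,\Pi_{n_q}V)\Pi_{n_{q+1}}u\|_{L^2}
\lesssim \max(\langle n_1\rangle,\ldots,\langle n_{q+1}\rangle)^{s}\,\max_2(\langle n_1\rangle,\ldots,\langle n_{q+1}\rangle)^{\rho+\mu}\prod_{j=1}^{q}\|\Pi_{n_j}V\|_{L^2}\,\|\Pi_{n_{q+1}}u\|_{L^2}.
\]
One then puts the factor $\max(\cdot)^s$ on whichever argument carries the top frequency (producing $\|V\|_{H^s}$ or $\|u\|_{H^s}$), while the remaining $q$ factors, all of whose frequencies are $\lesssim\max_2(\cdot)$, are absorbed by $\|V\|_{H^{s_0}}$ and $\|u\|_{H^{s_0}}$ provided $s_0>\rho+\mu+\tfrac12$; a discrete Cauchy--Schwarz/Young summation over the $n_j$ gives the tame estimate $\|R_q(V,\ldots,V)u\|_{H^{s+\rho}}\lesssim \|V\|_{H^{s_0}}^{q-1}\big(\|V\|_{H^{s_0}}\|u\|_{H^{s}}+\|V\|_{H^{s}}\|u\|_{H^{s_0}}\big)$. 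Differentiating in $t$ by Leibniz, summing over $q$, and taking $\sup_{t\in I}$ gives \eqref{special1}; the argument for $M_q\in\widetilde{\MM}^m_q$ is identical with $\rho$ replaced by $-m$ and \eqref{omoresti1} replaced by \eqref{omoresti1maps} (here there is no $\max_2$-gain, and one simply bounds $\langle n_0\rangle+\cdots+\langle n_{q+1}\rangle\lesssim\max(\langle n_1\rangle,\ldots,\langle n_{q+1}\rangle)$), giving \eqref{special3}.

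The only non-formal point is the frequency summation in the homogeneous case: one must keep track of the fact that the symmetry of $R_q$, $M_q$ (and of $a_q$) is only in the first $q$ arguments, so the slot carrying the highest frequency may be one of the first $q$ or the last one, and the threshold $s_0$ must dominate all the loss exponents ($\mu$ and $\rho$, resp. $\mu$ and $m$) that appear. This is the routine "tame multilinear estimate" already established in \cite{maxdelort}, so I would cite the relevant statements there for the summation and merely record the decompositions above; everything else is bookkeeping with the Leibniz rule and the norms \eqref{spazionorm}.
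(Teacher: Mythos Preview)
Your proposal is correct and follows essentially the same approach as the paper, which simply cites Definition~\ref{smoothoperator} (bound~\eqref{porto20}) for~\eqref{special1}, Definition~\ref{smoothoperatormaps} (bound~\eqref{porto20maps}) for~\eqref{special3}, and Proposition~\ref{azionepara} for~\eqref{special2}. You have unpacked these references in detail (in particular spelling out the homogeneous-part estimates that the paper leaves to Remark~\ref{starship} and its analogue for maps), but the underlying argument is the same.
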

\begin{proof}
The \eqref{special1} follows by Definition \ref{smoothoperator} (see bound \eqref{porto20}).
The \eqref{special3} follows by Definition \ref{smoothoperatormaps} (see bound \eqref{porto20maps}).
The \eqref{special2} follows by Proposition \ref{azionepara}.
\end{proof}

\subsection{Symbolic calculus and Compositions theorems }\label{subsec:COmpo}

We define the following differential operator
\begin{equation}\label{cancello}
\s(D_{x},D_{\x},D_{y},D_{\h})=D_{\x}D_{y}-D_{x}D_{\h},
\end{equation} 
where $D_{x}:=\frac{1}{\ii}\del_{x}$ and $D_{\x},D_{y},D_{\h}$ are similarly defined. 

Let $K'\leq K, \rho,p,q$ be in $\NNN$, $m,m'\in \RRR$, $r>0$ and consider 
$a\in \widetilde{\Gamma}^{m}_{p}$ and $b\in \widetilde{\Gamma}^{m'}_{q}$. 
Set
\begin{equation}\label{multimulti}
\calU:=(\calU',\calU''),\quad \calU':=(U_{1},\ldots, U_{p}), \quad \calU{''}:=(U_{p+1},\ldots, U_{p+1}), \quad U_{j}\in H^{s}(\TTT;\CCC^{2}),\quad j=1,\ldots,p+q.
\end{equation}
We define the asymptotic expansion (up to order $\rho$) of the composition symbol as follows:
\begin{equation}\label{espansione1}
(a\# b)_{\rho}(\calU;x,\x):=\sum_{k=0}^{\rho}\frac{1}{k!}\left(
\frac{\ii}{2}\s(D_{x},D_{\x},D_{y},D_{\h})\right)^{k}\Big[a(\calU';x,\x)b(\calU'';y,\h)\Big]_{|_{\substack{x=y\\\x=\h}}}
\end{equation}
modulo symbols in $\widetilde{\Gamma}_{p+q}^{m+m'-\rho}$.

Consider $a\in \Gamma_{K,K',p}^{m}[r]$ and $b\in \Gamma^{m'}_{K,K',q}[r]$. For $U$ in $B_{\s}^{K}(I,r)$ 
we define, for $\rho< \s-\s_0$,
\begin{equation}\label{espansione2}
(a\# b)_{\rho}(U;t,x,\x):=\sum_{k=0}^{\rho}\frac{1}{k!}
\left(
\frac{\ii}{2}\s(D_{x},D_{\x},D_{y},D_{\h})\right)^{k}
\Big[a(U;t,x,\x)b(U;t,y,\h)\Big]_{|_{\substack{x=y\\\x=\h}}},
\end{equation}
modulo symbols in $\Gamma^{m+m'-\rho}_{K,K',p+q}[r]$.

\begin{rmk}
By Remark \ref{prodottodisimboli} one can note that the symbol
$(a\# b)_{\rho}$ in \eqref{espansione1} belongs to the class $\widetilde{\Gamma}_{p+q}^{m+m'}$
(with the exponent $\mu$ in \eqref{pomosimbo1} large as function of $\rho$).
Similarly by Remark \ref{prodottodisimboli2} one can note that the symbol
$(a\# b)_{\rho}$ in \eqref{espansione2} belongs to the class ${\Gamma}_{p+q}^{m+m'}[r]$
(with $\s_0$ in Def. \ref{nonomorest} large as function of $\rho$).
\end{rmk}

We need a result which  
ensures that $(a\# b)_{\rho}$ is the symbol of the composition
up to smoothing remainders. 

We have the following  proposition.  

\begin{prop}[{\bf Composition of Bony-Weyl operators}]\label{teoremadicomposizione}
Let $K'\leq K, \rho,p,q$ be in $\NNN$, $m,m'\in \RRR$, $r>0$. 

$(i)$
Consider 
$a\in \widetilde{\Gamma}^{m}_{p}$ and $b\in \widetilde{\Gamma}^{m'}_{q}$. 
Then  (recalling the notation in \eqref{multimulti}) one has that
\begin{equation}\label{espansione3}
\bonyw(a(\calU;x,\x))\circ\bonyw(b(\calU'';x,\x))-\bonyw\big(
(a\# b)_{\rho}(\calU;x,\x)
\big)
\end{equation}
belongs to the class of smoothing remainder $\widetilde{\RR}^{-\rho+m+m'}_{p+q}$.

$(ii)$
Consider 
$a\in {\Gamma}^{m}_{K,K',p}[r]$ and $b\in {\Gamma}^{m'}_{K,K',q}[r]$. 
Then one has that
\begin{equation}\label{espansione4}
\bonyw(a(U;t,x,\x))\circ\bonyw(b(U;t,x,\x))-\bonyw\big(
(a\# b)_{\rho}(U;t,x,\x)
\big)
\end{equation}
belongs to the class of non-homogeneous smoothing remainders ${\RR}^{-\rho+m+m'}_{K,K',p+q}[r]$.
If $a$ and $b$ are symbols in the autonomous classes of Definition \ref{autnonomosimbo}
then $(a\# b)_{\rho}(U;t,x,\x)$ belongs to ${\Gamma}^{m+m'}_{K,K',p+q}[r,{\rm aut}]$
and \eqref{espansione4} is an autonomous   smoothing remainder in ${\RR}^{-\rho+m+m'}_{K,K',p+q}[r,{\rm aut}]$.
\end{prop}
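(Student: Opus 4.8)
The plan is to reduce the statement to the classical Weyl symbolic calculus on the torus and then to exploit the frequency localization built into the admissible cut-off functions of Definition \ref{cutoff1}. Recall that $\bonyw(a)=\weyl(a_\chi)$, so $\bonyw(a)\circ\bonyw(b)=\weyl(a_\chi)\circ\weyl(b_\chi)=\weyl(a_\chi\# b_\chi)$, where $a_\chi\# b_\chi$ is the exact Moyal product, an oscillatory integral in the cotangent variables whose formal asymptotic expansion is obtained by applying $\exp\big(\tfrac{\ii}{2}\s(D_x,D_\x,D_y,D_\h)\big)$ to $a_\chi(x,\x)\,b_\chi(y,\h)$ and restricting to the diagonal $x=y$, $\x=\h$.

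\emph{Step 1.} I would first write \eqref{bambola202} twice and compute $\weyl(a_\chi)\circ\weyl(b_\chi)$ on each exponential $e^{\ii jx}$; collecting Fourier coefficients gives again a Weyl operator whose symbol has $x$-Fourier spectrum confined to a band $\{|\h|\le C\delta\langle\x\rangle\}$, inherited from the supports of the $\chi_p$'s (resp. of $\chi$ via \eqref{simbo}). On this band the internal $x$-frequencies are small compared with $\langle\x\rangle$, so the standard Taylor/stationary-phase argument applies: truncating the oscillatory factor at order $\rho$ reproduces, term by term, the summands of \eqref{espansione1} (resp. \eqref{espansione2}), each of which lies in $\widetilde{\Gamma}^{m+m'-k}_{p+q}$ (resp. $\Gamma^{m+m'-k}_{K,K',p+q}[r]$) by Remarks \ref{prodottodisimboli} and \ref{prodottodisimboli2}. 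Using the cut-off--independence principle of Remark \ref{equivalenzacutoff}, one may then replace $a_\chi\# b_\chi$ by $\big((a\# b)_\rho\big)_{\chi'}$ for a single admissible cut-off $\chi'$, up to a smoothing error absorbed in the remainder.

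\emph{Step 2.} It remains to estimate the integral Taylor remainder of the expansion and to check it belongs to $\widetilde{\RR}^{-\rho+m+m'}_{p+q}$ (resp. $\RR^{-\rho+m+m'}_{K,K',p+q}[r]$). Each of the $\rho+1$ derivatives in the remainder is either a $\del_\x$ on one factor, gaining one negative order, or a $\del_x$ on the (cut-off truncated) other factor, which at worst produces a frequency $\le C\delta\langle\x\rangle$ and is harmless because of the bounds $|\del_\x^\alpha\del_{\x'}^\beta\chi|\lesssim\langle\x\rangle^{-\alpha-|\beta|}$ on the cut-offs; so the remainder gains at least $\rho$ orders relative to $m+m'$. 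Quantitatively, I would bound the remainder and every term of the finite expansion by feeding a suitable integral representation into the action estimate \eqref{azione3} in case $(i)$, resp. \eqref{azione7}--\eqref{azione8} in case $(ii)$, obtaining exactly the frequency bound \eqref{omoresti1} (resp. \eqref{porto20}) with a loss exponent $\mu$ growing with $\rho$. The momentum-support condition $\sum_j\s_j n_j=0$ for the composite follows by concatenating the two individual balancing relations, since $\bonyw$ only couples Fourier modes whose signed indices sum to zero (cf. \eqref{azione4}--\eqref{azione5}); and the equivalence $\max_2(\langle n_1\rangle,\dots)\sim\max(\langle n_1\rangle,\dots)$ needed to conclude membership in $\widetilde{\RR}^{-\rho+m+m'}_{p+q}$ holds because the cut-offs force every internal frequency to be $\le C\delta$ times both the output and input frequencies, exactly as in Remark \ref{equivalenzacutoff}. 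For case $(ii)$ one repeats the computation carrying along the $\|\cdot\|_{K,s}$-norms and distributing $\del_t^k$, $0\le k\le K-K'$, by Leibniz over the two symbols; the estimates \eqref{simbo} and \eqref{porto20} are tailored so this bookkeeping closes, and no explicit $t$-dependence is generated, so the autonomous case follows verbatim.

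\emph{Main obstacle.} The technical heart is Step 2: making rigorous and uniform the control of the Taylor-remainder oscillatory integral, i.e. proving that it genuinely gains $\rho$ derivatives with the declared frequency behaviour. On the circle this amounts to repeated summation by parts in the Fourier series, using the decay of $\chi$, $\chi_p$ and their derivatives to absorb the frequencies produced by the $\del_x$-derivatives, with the multilinear and $t$-dependent bookkeeping of the classes of Section \ref{paracalc} threaded through every estimate. An identical statement is proved in \cite{maxdelort}, to which one may alternatively appeal.
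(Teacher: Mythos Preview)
Your outline is correct and follows the standard route used in \cite{maxdelort}, to which the paper itself simply defers (the paper's proof is the single line ``See the proof of Proposition $2.3.2$ in \cite{maxdelort}''). The two-step structure you describe---Moyal product of the cut-off symbols, Taylor expansion to order $\rho$, then control of the integral remainder via the frequency localization $|\x'|\le C\delta\langle\x\rangle$ built into the admissible cut-offs---is exactly the argument carried out there, including the use of the cut-off independence (your Remark~\ref{equivalenzacutoff}) to reconcile $(a_\chi\# b_\chi)$ with $\big((a\# b)_\rho\big)_{\chi'}$.

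One small caveat on your phrasing in Step~2: the relation $\max_2\sim\max$ is \emph{not} what one proves for the genuine remainder term; rather, the paradifferential localization forces $n_0\sim n_{p+q+1}$ and $n_j\le C\delta\, n_{p+q+1}$ for the coefficient indices, so that $\max(\langle n_1\rangle,\dots,\langle n_{p+q+1}\rangle)\sim\langle n_{p+q+1}\rangle$ and the $\rho$ gained derivatives translate directly into the factor $\max^{-\rho}$ of \eqref{omoresti1}, with $\max_2^{\rho+\mu}$ absorbing the losses on the coefficient frequencies. The $\max_2\sim\max$ mechanism of Remark~\ref{equivalenzacutoff} is what handles the discrepancy between different cut-offs, not the remainder bound itself. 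With that clarification your sketch matches the reference.
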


\begin{proof}
See the proof of Proposition $2.3.2$ in \cite{maxdelort}.
\end{proof}

Consider now symbols  $a\in \Sigma\Gamma_{K,K',p}^{m}[r,N]$, $b\in \Sigma\Gamma^{m'}_{K,K',q}[r,N]$.
By definition (see Def. \ref{simbototali}) we have
\begin{equation}
\begin{aligned}
&a(U;t;x,\x)=\sum_{k=p}^{N-1}a_{k}(U,\ldots,U;x,\x)+a_{N}(U;t,x,\x), \\ 
&b(U;t;x,\x)=\sum_{k'=p}^{N-1}b_{k'}(U,\ldots,U;x,\x)+b_{N}(U;t,x,\x),\\
& a_{k}\in \widetilde{\Gamma}_{k}^{m}, \quad 
a_{N}\in \Gamma^{m}_{K,K',N}[r],
\quad  b_{k'}\in \widetilde{\Gamma}_{k'}^{m'}, \quad 
b_{N}\in \Gamma^{m'}_{K,K',N}[r].
\end{aligned}
\end{equation}
We set also
\begin{equation}\label{espansione7}
\begin{aligned}
&c_{k''}(\calU;x,\x):=\sum_{k+k'=k''}(a_{k}\# b_{k'})_{\rho}(\calU;x,\x), \quad k''=p+q,\ldots, N-1,\\
&c_{N}(U;t,x,\x):=\sum_{k+k'\geq N}(a_{k}\# b_{k'})_{\rho}(U;t,x,\x),
\end{aligned}
\end{equation}
where the factors $a_{k}$ and $b_{k'}$, for $k,k'\leq N-1$, have to be considered as elements of $\Gamma_{K,0,k}^{m}[r]$ and 
$\Gamma_{K,0,k'}^{m'}[r]$ respectively
according to Remark \ref{nonomoOMOSIMB}. 
We define the composition symbol $(a\# b)_{\rho,N}\in \Sigma\Gamma^{m+m'}_{K,K',p+q}[r,N]$ 
as
\begin{equation}\label{espansione6}
(a\# b)_{\rho,N}(U;t,x,\x):=(a\# b)_{\rho}(U;t,x,\x):=\sum_{k''=p+q}^{N-1}c_{k''}(U,\ldots,U;x,\x)+c_{N}(U;t,x,\x).
\end{equation}

The following proposition collects  the results contained in Section  $2.4$ in \cite{maxdelort}
concerning compositions between Bony-Weyl operators, smoothing remainders and maps.

\begin{prop}[{\bf Compositions}]\label{composizioniTOTALI}
Let $m,m',m''\in \RRR$, $K,K',N,p_1,p_2,p_{3},p_{4},\rho\in \NNN$ with $K'\leq K$, $p_1+p_{2}<N$, $\rho\geq0$ and $r>0$.
Let $a\in \Sigma\Gamma^{m}_{K,K',p_1}[r,N]$, $b\in \Sigma\Gamma^{m'}_{K,K',p_2}[r,N]$, $R\in\Sigma\RR^{-\rho}_{K,K',p_{3}}[r,N]$ and $M\in \Sigma\MM^{m''}_{K,K',p_{4}}[r,N]$.
Then the following holds.

$(i)$ There exists a smoothing operator $R_1$ in the class $\Sigma\RR^{-\rho}_{K,K',p_1+p_2}[r,N]$
such that
\begin{equation}\label{espansione8}
\bonyw(a(U;t,x,\x))\circ \bonyw(b(U;t,x,\x))=\bonyw\big(
(a\# b)_{\rho,N}(U;t,x,\x)
\big)+R_1(U;t)
\end{equation}

$(ii)$ One has that the compositions operators 
$R(U;t)\circ \bonyw(a(U;t,x,\x))$, $ \bonyw(a(U;t,x,\x))\circ R(U;t)$,
are smoothing operators in the class $\Sigma\RR^{-\rho+m}_{K,K',p_1+p_{3}}[r,N]$.

$(iii)$
One has that the compositions operators $R(U;t)\circ M(U;t)$, $ M(U;t)\circ R(U;t),$
are smoothing operators in the class $\Sigma\RR^{-\rho+m''}_{K,K',p_3+p_{4}}[r,N]$.

$(iv)$ Let $R_{2}(U,W;t)[\cdot]$ be a smoothing operator of $\Sigma\RR^{-\rho}_{K,K',p_3}[r,N]$
depending linearly on  $W$, i.e.
\[
R(U,W;t)[\cdot]=\sum_{q=p_3}^{N-1}R_{q}(U,\ldots, U,W)[\cdot]+R_{N}(U,W;t)[\cdot],
\]
where $R_{q}\in \widetilde{R}_{q}^{-\rho}$ and $R_{N}$ satisfies for any $0\leq k\leq K-K'$ (instead of \eqref{porto20})
the following
\begin{equation*}
\begin{aligned}
&\|\del_{t}^{k}R_{N}(U,W;t)V(t,\cdot)\|_{H^{s-2k}} \\
&\quad \leq C\sum_{k'+k''=k}\left(\|U\|^{N-1}_{k'+K',\s}\|W\|_{k'+K',\s}\|V\|_{k'',s}+
\|U\|^{N-1}_{k'+K',\s}\|W\|_{k'+K',s}\|V\|_{k'',\s}\right.\\
&\qquad\qquad\qquad  \left.+
\|U\|^{N-2}_{k'+K',\s}\|U\|_{k'+K',s}\|W\|_{k'+K',\s}\|V\|_{k'',\s}\right).
\end{aligned}
\end{equation*}
Then one has that $R(U,M(U;t)W;t)$ belongs to $\Sigma\RR^{-\rho+m''}_{K,K',p_3+p_{4}}[r,N]$.

$(v)$ Let $c$ be in $\widetilde{\Gamma}_p^{m}$, $p\in \NNN$. 
Then 
\begin{equation}\label{ultimo}
U \rightarrow c(U,\ldots,U,M(U;t)U;t,x,\x)
\end{equation}
is in $\Sigma\Gamma^{m}_{K,K',p+p_4}[r,N]$. If the symbol $c$ is independent of $\x$ (i.e.
$c$ is in $\widetilde{\calF}_p$), so is the symbol in \eqref{ultimo}
(thus it is a  function in $\Sigma\calF_{K,K',p+p_4}[r,N]$). Moreover if $c$ is a symbol in $\Gamma^{m}_{K,K',N}[r]$
then the symbol in \eqref{ultimo} is in $\Gamma^{m}_{K,K',N}[r]$.

All the statements of the proposition have their counterpart for autonomous classes.
\end{prop}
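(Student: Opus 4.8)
The plan is to reduce each statement to its homogeneous counterpart (Proposition \ref{teoremadicomposizione} and the symbol-product rules of Remarks \ref{prodottodisimboli}, \ref{prodottodisimboli2}), treating the non-homogeneous ``tail'' term separately, and then to reassemble via the expansion \eqref{simbotot1}. Throughout, the bookkeeping device is: a term obtained by composing a $k$-homogeneous piece with a $k'$-homogeneous piece is $(k+k')$-homogeneous; a term involving a non-homogeneous factor of degree $N$ is again non-homogeneous of degree $N$; and the homogeneity indices only ever add, so the lower index $p_1+p_2$ (resp.\ $p_1+p_3$, $p_3+p_4$) is automatic. The first move for all items is to write $a=\sum_{k=p_1}^{N-1}a_k+a_N$, $b=\sum_{k'=p_2}^{N-1}b_{k'}+b_N$ and similarly expand $R$ and $M$, using Remark \ref{nonomoOMOSIMB} to view each homogeneous $a_k$ simultaneously as an element of $\Gamma^m_{K,0,k}[r]$.

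\medskip
\emph{Item $(i)$.} I would expand the product $\bonyw(a)\circ\bonyw(b)=\sum_{k,k'}\bonyw(a_k)\circ\bonyw(b_{k'})+(\text{terms with }a_N\text{ or }b_N)$. For each purely homogeneous pair with $k+k'\leq N-1$, Proposition \ref{teoremadicomposizione}$(i)$ gives $\bonyw(a_k)\circ\bonyw(b_{k'})=\bonyw((a_k\#b_{k'})_\rho)+\widetilde R^{-\rho+m+m'}_{k+k'}$; collecting these by total degree produces exactly the $c_{k''}$ of \eqref{espansione7} plus a homogeneous smoothing operator, which after restriction to the diagonal lands in $\Sigma\RR^{-\rho}_{K,K',p_1+p_2}[r,N]$ (here one uses $\rho\geq0$ and that $-\rho+m+m'$ is, if necessary, replaced by the worse of the exponents — the class only records a lower bound on the smoothing order, and one absorbs $m+m'$ into the allowed $\mu$). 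For pairs with $k+k'\geq N$, and for every pair involving $a_N$ or $b_N$, Proposition \ref{teoremadicomposizione}$(ii)$ (combined with Remark \ref{starship} to demote an over-homogeneous term into the non-homogeneous class $\RR^{-\rho}_{K,0,N}[r]$) yields a non-homogeneous smoothing remainder of degree $N$; together with the non-homogeneous part $c_N$ of \eqref{espansione6} this gives the $a_N$-bearing contribution to $(a\#b)_{\rho,N}$ and the degree-$N$ tail of $R_1$. The autonomous case is identical, invoking the autonomous conclusions of Proposition \ref{teoremadicomposizione}$(ii)$.

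\medskip
\emph{Items $(ii)$ and $(iii)$.} These are the ``one smoothing factor'' versions and require no symbolic calculus, only that a smoothing operator composed with a bounded (or $m$-losing) operator remains smoothing, with the smoothing order degraded by at most the order of the other factor. I would expand $R=\sum R_q+R_N$ and either $a=\sum a_k+a_N$ or $M=\sum M_q+M_N$; each homogeneous$\times$homogeneous piece is handled by multilinearity together with estimate \eqref{azione3} (for the para-differential factor) or \eqref{omoresti1maps} (for the map factor), giving an element of $\widetilde\RR^{-\rho+m}_{q+k}$ (resp.\ $\widetilde\RR^{-\rho+m''}_{q+q'}$) — one checks the support condition $\sum\sigma_j n_j=0$ survives composition because it does so for each factor, and the $\max_2/\max$ ratio bound is preserved since the new external frequency is comparable to an old one by \eqref{azione5}. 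Each piece involving $R_N$, $a_N$ or $M_N$ is estimated directly from \eqref{porto20}, \eqref{azione8}, \eqref{porto20maps} and lands in the non-homogeneous class of degree $N$. Item $(iv)$ is the same argument with $W$ carried along as a passive linear slot; the modified estimate displayed in the statement is precisely what one obtains when the middle factor $M(U;t)W$ is substituted into $R$ and \eqref{porto20maps} is used to bound it in $H^s$ and in $H^{s_0}$.

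\medskip
\emph{Item $(v)$.} Here one substitutes $M(U;t)U$ into one slot of the symmetric $p$-linear symbol $c$. For the homogeneous part of $M$ this is a composition of multilinear maps, so the result is again multilinear of degree $p+p_4$ and the bounds \eqref{pomosimbo1} follow from those for $c$ and \eqref{omoresti1maps}, with the frequency-support condition again inherited; for the non-homogeneous part of $M$ one uses \eqref{porto20maps} to see that $M_N(U;t)U\in H^{s-m''}$ with the right norm, hence the substituted symbol satisfies \eqref{simbo} with degree $N$. The $\xi$-independence claim and the ``$c\in\Gamma^m_{K,K',N}[r]$'' claim are immediate from the construction since substitution does not introduce $\xi$ and does not change the $\xi$-decay exponent $m$.

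\medskip
\emph{Main obstacle.} The routine-but-delicate point throughout is the \emph{index and exponent bookkeeping}: verifying that over-homogeneous compositions (degree $\geq N$) are correctly reabsorbed into the non-homogeneous remainder class via Remark \ref{starship}, that the auxiliary exponents $\mu$ (in \eqref{pomosimbo1}, \eqref{omoresti1}) and the threshold $s_0$ (in \eqref{porto20}, \eqref{simbo}) are finite and grow controllably with $\rho$ and $N$, and — for the smoothing estimates — that composing two operators each satisfying a $\max_2/\max$-type gain actually produces a genuine $(-\rho+m)$ gain rather than a mere boundedness statement. All of this is established in Section $2.4$ of \cite{maxdelort} for their classes; the only genuine work is to check that the single structural difference in our setting (the function $U$ is allowed to have nonzero mean and lives in $C^K_*$ with the $H^{s-2k}$ scale rather than $H^{s-\frac32 k}$) does not interact with any of these estimates — which it does not, since $U$ enters only through the norms $\|U\|_{k,s}$ and the projectors $\Pi_{\vec n}$, both of which are insensitive to the mean and to the precise time-regularity scale.
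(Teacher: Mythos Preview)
Your outline is correct and matches the structure of the argument in Section~2.4 of \cite{maxdelort}; the paper itself gives no proof here and simply refers to Propositions~2.4.1--2.4.3 there, so your sketch is in fact more detailed than what the paper provides.

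One caution on item~$(i)$: your remark that the gap between the $-\rho+m+m'$ smoothing produced by Proposition~\ref{teoremadicomposizione} and the $-\rho$ claimed in the statement can be ``absorbed into the allowed $\mu$'' is not correct. In Definition~\ref{omosmoothing} the exponent $\rho$ sits in the denominator on $\max$, while $\mu$ sits in the numerator on $\max_2$; since $\max\geq\max_2$, an operator in $\widetilde{\RR}^{-\rho+m+m'}$ (which gains only $\rho-m-m'$ derivatives) is \emph{not} in $\widetilde{\RR}^{-\rho}$ when $m+m'>0$, regardless of how large $\mu$ is taken. The honest resolution is that the smoothing class in the conclusion of item~$(i)$ should read $\Sigma\RR^{-\rho+m+m'}_{K,K',p_1+p_2}[r,N]$, consistently with Proposition~\ref{teoremadicomposizione}; in practice this is harmless because $\rho$ is a free parameter of the symbolic expansion and is always taken large relative to the fixed orders $m,m'$ appearing in the paper.
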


We omit the proof of the proposition above. We refer the reader to 
Propositions $2.4.1$, $2.4.2$, $2.4.3$ in Section $2.4$ of \cite{maxdelort}.

\subsection{Parity and Reversibility properties}\label{subsec:Alge}
In this Section we analyse the parity and the reversibility structure for para-differential and smoothing operators. 

Denote by $S$ the linear involution, i.e. $S^{2}=\uno$,
\begin{equation}\label{involuzione3}
S : \CCC^{2}\to \CCC^{2},\qquad S:=\sm{0}{1}{1}{0}.
\end{equation}
For any $U\in B_{\s}^{K}(I,r)$ we set $U_{S}(t):=(SU)(-t)$.

Note that $U\in L^{2}(\TTT;\CCC^{2})$ belongs to the subspace $ \gotR$ (see \eqref{Hcic}) if and only if
\begin{equation}\label{involuzione4}
(SU)(x)=\ol{U}(x).
\end{equation}
We have the following definitions.
\begin{de}\label{riassunto-mappe}
Let $p,N,K,K'\in \NNN$ with $p\leq N$, $K'\leq K$ and $r>0$, $\rho\geq0$.
 Let $M\in \Sigma\MM_{K,K',p}[r,N]\otimes\MM_{2}(\CCC)$ 
(or $\Sigma\RR^{-\rho}_{K,K',p}[r,N]\otimes\MM_{2}(\CCC)$ respectively) with
 $U$ satisfying \eqref{involuzione4}.
\begin{itemize}
\item {\bf Reality preserving maps.} We say that the map $M(U;t)$
is \emph{reality} preserving if  
\begin{equation}\label{realereale}
\ol{M(U;t)[V]}=SM(U;t)[S\ol{V}].
\end{equation}
\item {\bf Anti-reality  condition.} We say that the map $M(U;t)$
satisfies the \emph{anti-reality} condition if  
\begin{equation}\label{realereale2}
\ol{M(U;t)[V]}=-SM(U;t)[S\ol{V}].
\end{equation}
\item {\bf Reversible maps.} We say that the map $M(U;t)$
is \emph{reversible} w.r.t. the involution \eqref{involuzione3}
if one has
\begin{equation}\label{invo2}
-SM(U;-t)=M(U_{S};t)S.
\end{equation}
\item {\bf Reversibility preserving maps.} We say that the map $M(U;t)$ is \emph{reversibility preserving}
if
\begin{equation}\label{invo3}
SM(U;-t)=M(U_{S};t)S.
\end{equation}
\item {\bf Parity preserving maps.} 
We say that $M(U;t)$ is \emph{parity preserving} if 
\begin{equation}\label{parissimo}
M(U;t)\circ\tau=\tau\circ M(U;t),
\end{equation}
where $\tau$ is the map acting on functions $\tau V(x)=V(-x)$.

\item 
{\bf (R,R,P)-maps/ operators.}
We say that $M(U;t)$ is a (R,R,P)-map (resp. (R,R,P)-operator)
if
it is a reality, reversibility and parity preserving map (resp. operator).
\end{itemize}
\end{de}

\begin{rmk}\label{ProdProd}
Given a smoothing operator $R\in \Sigma\RR^{-\rho}_{K,K',p}[r,N]\otimes\mathcal{M}_2(\CCC)$ (resp. a map in  $\Sigma\MM_{K,K',p}[r,N]\otimes\mathcal{M}_2(\CCC)$)
then the following holds true. If $R(U;t)$ is reality preserving, i.e. satisfies \eqref{realereale}, then it
maps
$\hcic^{s}(\TTT;\CCC^{2})$ into $\hcic^{s+\rho}(\TTT;\CCC^{2})$
(resp. $\hcic^{s}(\TTT;\CCC^{2})$ into $\hcic^{s-m}(\TTT;\CCC^{2})$ for some $m\geq0$).
 Moreover, for any $V\in C^{K-K'}_{*}(I,H^{s}(\TTT;\CCC))$ and any smoothing remainder $Q(U;t)\in \Sigma\RR^{-\rho}_{K,K',p}[r,N]$ (resp. any map in  $\Sigma\MM_{K,K',p}[r,N]$), we set
\begin{equation}\label{barrato}
\ol{Q}(U;t)[V]:=\ol{Q(U;t)[\bar{V}]}.
\end{equation}
One can easily check that a matrix of smoothing operators $R(U;t)\in \Sigma\RR^{-\rho}_{K,K',p}[r,N]\otimes \MM_{2}(\CCC)$ (resp. a matrix of maps in $\Sigma\MM_{K,K',p}[r,N]\otimes\MM_{2}(\CCC)$)
is reality preserving according to Definition \ref{riassunto-mappe}
if and only if can be written as
\begin{equation}\label{vinello}
R(U;t)[\cdot]:=\left(\begin{matrix} R_{1}(U;t)[\cdot] & R_{2}(U;t)[\cdot] \vspace{0.4em} \\
\ol{R_{2}}(U;t)[\cdot] & \ol{R_{1}}(U;t)[\cdot] 
\end{matrix}
\right), 
\end{equation}
for suitable  smoothing operators $R_1(U;t)$ and $R_2(U;t)$ in the class $\Sigma\RR^{-\rho}_{K,K',p}[r,N]$ (resp. $\Sigma\MM_{K,K',p}[r,N]$).
\end{rmk}

We have the following lemmata.

\begin{lemma}\label{pasubio}
Let $p,N,K,K'\in \NNN$ with $p\leq N$, $K'\leq K$ and $r>0$, $\rho\geq0$. Let $M\in \Sigma\MM_{K,K',p}[r,N]$.
If $M$ is decomposed as in \eqref{smoothoperator1maps} 
as a sum
\begin{equation}\label{smoothoperator1maps1000}
M(V;t)U=\sum_{q=p}^{N-1}M_{q}(V,\ldots,V)U+M_{N}(V;t)U,
\end{equation}
in terms of homogeneous operators $M_{q}, q=p,\ldots,N - 1$, 
and if M satisfies the reversibility condition \eqref{invo2}, respectively reversibility preserving \eqref{invo3}, 
we may assume that 
$M_{q}$, $q=p,\dots, N-1$
satisfy the reversibility property
\begin{equation}\label{3.1.20}
M_{q}(SU_1,\ldots, SU_{q})S=-SM_{q}(U_1,\ldots,U_{q}),
\end{equation}
respectively the reversibility preserving  property
\begin{equation}\label{3.1.21}
M_{q}(SU_1,\ldots, SU_{q})S=SM_{q}(U_1,\ldots,U_{q}).
\end{equation}
\end{lemma}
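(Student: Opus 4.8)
The plan is to show that each homogeneous component $M_q$, $p\le q\le N-1$, of the decomposition \eqref{smoothoperator1maps1000} is \emph{forced} by the hypothesis to satisfy \eqref{3.1.20} (resp. \eqref{3.1.21}) via a scaling argument, and then to upgrade the resulting diagonal identity to the full multilinear one by polarization. First I would fix $t$ and a test function $W$ and evaluate \eqref{invo2} on $W$, so that for every $U$ in the ball satisfying \eqref{involuzione4},
\begin{equation*}
-S\,M(U;-t)[W]=M(U_S;t)[SW].
\end{equation*}
For real $\epsilon\in(0,1]$ the function $\epsilon U$ still satisfies \eqref{involuzione4} and still lies in the ball, and $(\epsilon U)_S=\epsilon\,U_S$; since each $M_q$ is $q$-homogeneous in its first $q$ entries and $M_N(\epsilon U;\cdot)$ is $O(\epsilon^{N})$ by \eqref{porto20maps}, replacing $U$ by $\epsilon U$ turns both sides into polynomials of degree $\le N-1$ in $\epsilon$ plus an $O(\epsilon^{N})$ remainder. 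Matching the coefficient of $\epsilon^{q}$ for $p\le q\le N-1$ (divide by $\epsilon^{p}$, let $\epsilon\to0^{+}$, subtract the degree-$p$ term, and iterate) gives
\begin{equation*}
-S\,M_q\big(U(-t),\dots,U(-t)\big)=M_q\big((SU)(-t),\dots,(SU)(-t)\big)S .
\end{equation*}
Taking $U$ constant in time equal to an arbitrary small $V\in\gotR$, and then removing the smallness by $q$-homogeneity, yields the diagonal identity $-S\,M_q(V,\dots,V)=M_q(SV,\dots,SV)S$ for all $V\in\gotR$.

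Next I would deduce \eqref{3.1.20} from this. Because $\gotR$ is stable under real linear combinations, the usual polarization identity for the symmetric map $M_q$ gives $-S\,M_q(V_1,\dots,V_q)=M_q(SV_1,\dots,SV_q)S$ for all $V_1,\dots,V_q\in\gotR$. To reach arbitrary arguments in $C^{\infty}(\TTT;\CCC^{2})$ I would use that $\gotR$ is a \emph{real form}: every $V$ decomposes as $V=A+\ii B$ with $A,B\in\gotR$ (take $A=\frac12(V+S\bar V)$, $B=\frac1{2\ii}(V-S\bar V)$, using that $S$ has real entries and $S^{2}=\uno$, so $S\bar V=\overline{SV}$). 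Both sides of the claimed identity are $\CCC$-multilinear in $(V_1,\dots,V_q)$ since $V\mapsto SV$ is $\CCC$-linear, so expanding each $V_j=A_j+\ii B_j$ reduces to the already-proved case $V_j\in\gotR$. This extension from the reality subspace to all complex-valued arguments is the only point I expect to require any care, precisely because the reversibility hypothesis \eqref{invo2} is imposed only for $U$ satisfying \eqref{involuzione4}; the rest is homogeneity bookkeeping.

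The reversibility-preserving case \eqref{invo3}$\Rightarrow$\eqref{3.1.21} runs identically, simply dropping the overall minus sign at each step. Finally, concerning the phrase "we may assume": the homogeneous components of an element of $\Sigma\MM_{K,K',p}[r,N]$ are in fact uniquely determined — the same scaling-plus-polarization argument applied to the difference of two decompositions shows they agree term by term — so there is no genuine freedom, and the $M_q$ coming from any admissible decomposition satisfy \eqref{3.1.20}/\eqref{3.1.21}. Consequently $M_N=M-\sum_{q=p}^{N-1}M_q(V,\dots,V)$, being the difference of maps satisfying \eqref{invo2} (resp. \eqref{invo3}), automatically inherits that same reversibility (resp. reversibility-preserving) condition, and it remains in the class $\MM_{K,K',N}[r]$; this makes the whole decomposition compatible with the structure, which is what the statement asserts.
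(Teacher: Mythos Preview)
Your proof is correct. The paper does not supply its own argument here but simply refers to Lemma~3.1.5 in \cite{maxdelort}, so there is no internal proof to compare against; the scaling--polarization--complexification route you take is the standard one and is almost certainly what is done in that reference.

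One remark worth recording: your final paragraph is slightly stronger than the statement. The phrase ``we may assume'' in the lemma suggests that one might need to \emph{replace} the $M_q$ by suitably symmetrized versions (as is done explicitly for symbols in Lemma~\ref{lemmabello} of the paper, and in item~(ii) of Lemma~\ref{lemmaNUOVO}). You instead observe that the symmetric homogeneous components $M_q$ are \emph{uniquely} determined by $M$ via scaling and polarization, so no modification is ever needed; any admissible decomposition already has the property. This is correct --- symmetry in the first $q$ arguments is built into Definition~\ref{omosmoothingmaps}, so the diagonal determines $M_q$ --- and it sharpens the statement slightly. The complexification step ($V=A+\ii B$ with $A,B\in\gotR$, using that both sides of \eqref{3.1.20} are $\CCC$-multilinear because $S$ is $\CCC$-linear) is exactly the right way to pass from the hypothesis, which is imposed only for $U$ satisfying \eqref{involuzione4}, to the conclusion for arbitrary $U_j\in C^{\infty}(\TTT;\CCC^{2})$.
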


\begin{proof}
See Lemma $3.1.5$ in \cite{maxdelort}.
\end{proof}

We gave the definitions of reality, parity and reversibility preserving and reversible map in the case of 
$M$ belonging to the class 
$\Sigma\MM_{K,K',p}[r,N]$. 
An important subclass of maps we shall use in the following have the form
\[
M(U;t)[\cdot]:=\bonyw(A(U;t,x,\xi))[\cdot], \quad A\in \Sigma\Gamma^{m}_{K,K',p}[r,N]\otimes\MM_2(\CCC),
\]
for some $m\in\RRR$. In this case we give the properties of being reality preserving, parity preserving or reversible, 
directly on the matrix of symbols $A$.

\begin{de}\label{riassunto-simbo}
Let $m\in \RRR$, $p,N,K,K'\in \NNN$ with $p\leq N$, $K'\leq K$ and $r>0$, $\rho\geq0$
and consider a matrix $A(U;t,x,\x)\in \Sigma\Gamma^{m}_{K,K',p}[r,N]\otimes \MM_{2}(\CCC)$
where $U$ satisfies \eqref{involuzione4}.
\begin{itemize}
\item {\bf Reality preserving matrices of symbols.} We say that a matrix $A(U;t,x,\x)$ 
is \emph{reality} preserving if 
\begin{equation}\label{prodottorealereale}
\ol{A(U;t,x,-\x)}=SA(U;t,x,\x)S.
\end{equation}
\item {\bf Anti-reality preserving matrices of symbols.} We say that a matrix $A(U;t,x,\x)$ 
is \emph{anti-reality} preserving if 
\begin{equation}\label{prodottorealereale2}
\ol{A(U;t,x,-\x)}=-SA(U;t,x,\x)S.
\end{equation}
\item {\bf Reversible and reversibility preserving matrices of symbols.}
 We say that $A(U;t,x,\x)$ is reversible if
\begin{equation}\label{revsimbo1}
-SA(U;-t,x,\x)=A(U_{S};t,x,\x)S.
\end{equation}
 We say that $A(U;t,x,\x)$ is reversibility preserving if
\begin{equation}\label{revsimbo2}
SA(U;-t,x,\x)=A(U_{S};t,x,\x)S.
\end{equation}
\item {\bf Parity preserving matrices of symbols.}
 We say that $A(U;t,x,\x)$ is parity preserving if
\begin{equation}\label{parisimbo1}
A(U;t,x,\x)=A(U;t,-x,-\x).
\end{equation}
\item 
{\bf (R,R,P)-matrices.}
We say that $A(U;t,x,\x)$ is a (R,R,P)-matrix
if
it is a reality, reversibility and parity preserving matrix of symbols.
\end{itemize}
\end{de}

\begin{rmk}\label{considerazioni}
Consider $A(U;t,x,\x)\in \Sigma\Gamma_{K,K',p}^{m}[r,N]\otimes \MM_{2}(\CCC)$ and $R(U;t)\in \Sigma\RR^{-\rho}_{K,K',p}[r,N]\otimes\MM_{2}(\CCC)$. 
If $A(U;t,x,\x)$ is reality preserving, i.e. satisfies \eqref{prodottorealereale}, then it has the form
\begin{equation}\label{prodotto}
A(U;t)=A(U;t,x,\x):=\left(\begin{matrix} {a}(U;t,x,\x) & {b}(U;t,x,\x)\vspace{0.4em}\\
{\ol{b(U;t,x,-\x)}} & {\ol{a(U;t,x,-\x)}}
\end{matrix}
\right).
\end{equation}
We note also the following facts:
\begin{itemize}
\item   if  $A(U;t,x,\x)$ satisfy one among the properties \eqref{revsimbo1}, \eqref{revsimbo2}, \eqref{parisimbo1}
and it is invertible, then 
$A(U;t,x,\x)^{-1}$
satisfies the same property;

\item if the matrix $A(U;t,x,\x)$  is reversibility preserving (resp. if $R(U;t)$ is reversibility preserving)
then the matrix $\ii E A(U;t,x,\x)$ (resp. the operator $\ii E R(U;t)$)
is reversible.

\item if the matrix $A(U;t,x,\x)$  is reality preserving (resp. if $R(U;t)$ is reality preserving)
then the matrix of symbols $\ii  A(U;t,x,\x)$ (resp. the operator $\ii  R(U;t)$)
is anti-reality.

\item
if $A(U;t,x,\x)$ is a reality, reversibility and parity preserving (resp. reversible, reality and parity preserving) matrix of symbols,
then the operator $\bonyw(A(U;t,x,\x))[\cdot]$
is a reality, reversibility and parity preserving  (resp. reversible, reality and parity preserving) map.

\item 
the matrix $A(U;t,x,\xi)$ is reversibility preserving if and only if its symbols verify the following
\begin{equation}\label{matrix_to_symbols}
\begin{aligned}
&\ol{b(U;-t,x,\xi)}={b(U_S;t,x,\xi)},\qquad 
\ol{a(U;-t,x,\xi)}={a(U_S;t,x,\xi)};
\end{aligned}
\end{equation}
furthermore note that  in the case that the symbols are autonomous (i.e. when the dependence of time is through the function $U(t,x)$) the conditions above reads
\end{itemize}
\begin{equation}\label{matrix_to_symbols_aut}
\begin{aligned}
&\ol{b(U;x,\xi)}={b(SU;x,\xi)},\qquad \ol{a(U;x,\xi)}={a(SU;x,\xi)};
\end{aligned}
\end{equation}

\end{rmk}

\begin{rmk}\label{formavera}
Recalling \eqref{DEFlambda} and Remark \ref{simbolopot2} we write
\begin{equation}
\Lambda[\cdot]:=\left(
\begin{matrix}
\bonyw(\mathtt{l}(\x))[\cdot] & 0\\ 0 & \bonyw(\mathtt{l}(\x))[\cdot]
\end{matrix}
\right).
\end{equation}
In particular, since the symbol $\mathtt{l}(\x)$ is real and even in $\x$ one has 
that the operator $\Lambda$ is reality, parity and reversibility preserving. 
\end{rmk}

\begin{de}\label{riassunto-simbo-omo}
Fix $\rho>0$, $m\in \RRR$, $p\in \NNN$ and 
let $A_{p}\in \widetilde{\Gamma}^{m}_{p}\otimes\MM_{2}(\CCC)$
and let $U_{j}$, with $j=1,\ldots,p$, be functions satisfying $SU_{j}=\ol{U}_{j}$ (see  \eqref{involuzione3}).
We say that $A_{p}$ is reversibility preserving if
\begin{equation}\label{revomo1000}
A_{p}(SU_1,\ldots, SU_{p};x,\x)S=SA_{p}(U_1,\ldots,U_{p};x,\x).
\end{equation}
We say that $A_{p}$ is reversible  if
\begin{equation}\label{revomo1001}
A_{p}(SU_1,\ldots, SU_{p};x,\x)S=-SA_{p}(U_1,\ldots,U_{p};x,\x).
\end{equation}
\end{de}

We have the following Lemma.
\begin{lemma}\label{lemmaNUOVO}
Let $A\in \Sigma\Gamma_{K,K',p}^{m}[r,N]\otimes\MM_{2}(\CCC)$
and write
\[
A(U;t,x,\x)=\sum_{q=p}^{N-1}A_{q}(U,\ldots,U;x,\x)+A_{N}(U;t,x,\x),
\]
with $A_{q}\in \widetilde{\Gamma}^{m}_{q}$, $q=p,\ldots,N-1$ and 
$A_{N}\in \Sigma\Gamma^{m}_{K,K',N}[r,N]\otimes\MM_{2}(\CCC)$.

\noindent
(i)
If $A_{q}$ satisfies \eqref{revomo1000} (resp. \eqref{revomo1001}), for $q=p,\ldots,N-1$ 
and $A_{N}$ satisfies \eqref{revsimbo2} (resp. \eqref{revsimbo1}), then $A$ satisfies \eqref{revsimbo2}
(resp. \eqref{revsimbo1}).

\noindent
(ii) If $A$ satisfies \eqref{revsimbo2} (resp. \eqref{revsimbo1}) 
then there are symbols $A_{q}^{'}\in \widetilde{\Gamma}^{m}_{q}$ satisfying \eqref{revomo1000} (resp. \eqref{revomo1001}),  and a matrix $A_N'(U;t,x,\xi)$ in $\Gamma^m_{K,K',N}$ satisfying \eqref{revsimbo2} (resp. \eqref{revsimbo1}), such that for any $U$
we have
\[
A(U;t,x,\x)=\sum_{q=p}^{N-1}A'_{q}(U,\ldots,U;x,\x)+A_{N}(U;t,x,\x).
\]
\end{lemma}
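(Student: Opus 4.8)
The plan is to prove the two items separately, exploiting that \eqref{revsimbo2}, \eqref{revsimbo1} and the homogeneous conditions \eqref{revomo1000}, \eqref{revomo1001} are all \emph{linear} in the symbol, so that it suffices to control each summand of the decomposition \eqref{simbotot1} written in the statement. Throughout I recall that a homogeneous symbol $A_{q}(U,\ldots,U;x,\xi)$ evaluated on a time dependent $U$ stands for $A_{q}(U(t),\ldots,U(t);x,\xi)$, that $U_{S}(t)=(SU)(-t)$, and that $S$ is an $L^{2}$-isometry commuting with the spectral projectors $\Pi_{n}$ and with $S^{2}=\uno$; the matrix structure $\otimes\MM_{2}(\CCC)$ plays no role since $S$ acts by left/right multiplication.

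For item (i) I would directly compute $SA(U;-t,x,\xi)$ from \eqref{simbotot1}. On each homogeneous piece, \eqref{revomo1000} (resp. \eqref{revomo1001}) applied with all arguments equal to $W:=U(-t)$ gives $SA_{q}(W,\ldots,W;x,\xi)=A_{q}(SW,\ldots,SW;x,\xi)S$ (resp. with a minus sign), and $SW=(SU)(-t)=U_{S}(t)$; on the non-homogeneous piece I would apply \eqref{revsimbo2} (resp. \eqref{revsimbo1}) verbatim. Summing these identities and factoring $S$ out on the right yields $SA(U;-t,x,\xi)=A(U_{S};t,x,\xi)S$ (resp. $-SA(U;-t,x,\xi)=A(U_{S};t,x,\xi)S$). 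This part is a routine verification.

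For item (ii) the crucial step is to recover the homogeneous components of $A$ by a scaling argument. Given $U$ with $SU=\ol{U}$ and $\theta\in\RRR$ small, the function $\theta U$ still satisfies $S(\theta U)=\ol{\theta U}$ and $(\theta U)_{S}=\theta U_{S}$; substituting $\theta U$ into \eqref{revsimbo2} and using \eqref{simbotot1} together with the multilinearity of the $A_{q}$ one obtains
\[
\sum_{q=p}^{N-1}\theta^{q}\Big[SA_{q}(U(-t),\ldots,U(-t);x,\xi)-A_{q}(U_{S}(t),\ldots,U_{S}(t);x,\xi)S\Big]=A_{N}(\theta U_{S};t,x,\xi)S-SA_{N}(\theta U;-t,x,\xi).
\]
The left-hand side is a polynomial in $\theta$ of degree $\leq N-1$, while the right-hand side is $O(|\theta|^{N})$ as $\theta\to 0$ by the defining estimate \eqref{simbo} of $\Gamma^{m}_{K,K',N}[r]$ (used with $\alpha=\beta=0$, for fixed $x,\xi,t$, the factor $\langle\xi\rangle^{m}$ being harmless); comparing powers of $\theta$, all coefficients vanish. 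Hence $SA_{q}(W,\ldots,W;x,\xi)=A_{q}(SW,\ldots,SW;x,\xi)S$ for every $W=U(-t)$ in the real subspace $\{SW=\ol{W}\}$, and since both sides are symmetric $q$-linear, polarization inside that subspace propagates this to all $q$-tuples, which is precisely \eqref{revomo1000} for the choice $A'_{q}:=A_{q}$ (and likewise \eqref{revomo1001} starting from \eqref{revsimbo1}). Finally, item (i) applied to $\sum_{q}A_{q}(U,\ldots,U;x,\xi)$ (with trivial non-homogeneous part) shows this sum satisfies \eqref{revsimbo2} (resp. \eqref{revsimbo1}); subtracting it from $A$, which obeys the same relation, shows that $A_{N}=A-\sum_{q}A_{q}(U,\ldots,U)$ obeys \eqref{revsimbo2} (resp. \eqref{revsimbo1}) as well. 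Since $A_{N}\in\Gamma^{m}_{K,K',N}[r]$ already, one takes $A'_{N}:=A_{N}$, and $A=\sum_{q}A'_{q}(U,\ldots,U)+A_{N}$ is the desired decomposition.

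The main obstacle I anticipate is purely bookkeeping in item (ii): making precise the meaning of evaluating a non-homogeneous symbol at $\theta U$ and at time $-t$, verifying that \eqref{simbo} really yields the $O(|\theta|^{N})$ bound (not merely $O(|\theta|^{N-1})$) needed to annihilate the polynomial coefficients, and running the polarization inside the real subspace $\{SW=\ol{W}\}$ while checking that the resulting objects remain in $\widetilde{\Gamma}^{m}_{q}\otimes\MM_{2}(\CCC)$ — for which one uses that $S$ commutes with $\Pi_{n}$, is an isometry, and is $\CCC$-linear, so that composing symbols with $S$ on either side changes neither the estimates \eqref{pomosimbo1} nor the frequency support condition \eqref{pomosimbo2}.
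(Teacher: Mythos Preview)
Your argument is correct: item (i) is a direct verification, and for item (ii) the scaling trick $U\mapsto\theta U$ together with the $O(|\theta|^{N})$ bound from \eqref{simbo} isolates each homogeneous coefficient, after which polarization of the symmetric $q$-linear difference $SA_{q}(\cdot)-A_{q}(S\cdot,\ldots,S\cdot)S$ on the real subspace $\{SW=\ol W\}$ yields \eqref{revomo1000} (resp. \eqref{revomo1001}) for $A'_{q}=A_{q}$, and subtraction handles $A_{N}$. The paper does not give its own proof but simply refers to Lemma~3.1.3 in \cite{maxdelort}; the scaling-plus-polarization argument you wrote is precisely the standard proof of that lemma, so your approach coincides with the one the paper implicitly invokes.
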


\begin{proof}
See Lemma ${3.1.3}$ in \cite{maxdelort}.
\end{proof}

The following Lemma is the counterpart of Lemma $3.1.6$ in \cite{maxdelort}.
\begin{lemma}\label{lemmamaxcompo}
Composition of an operator satisfying the anti-reality property  \eqref{realereale2}
(resp. the reversibility property \eqref{invo2}) 
with one or several operators satisfying the reality property \eqref{realereale} (resp. the reversibility preserving property \eqref{invo3}) 
still satisfies the anti-reality property \eqref{realereale2} (resp. reversibility property \eqref{invo2}). 
Composition of operators which are parity preserving
is as well the parity preserving. 
Composition of operators satisfying the reality property \eqref{realereale} satisfy the reality property \eqref{realereale} as well.
\end{lemma}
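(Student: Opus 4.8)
The plan is to observe that all four assertions are purely algebraic identities between operators: the fact that the compositions considered remain operators (resp.\ maps, resp.\ smoothing remainders) of the appropriate class is already furnished by Proposition \ref{composizioniTOTALI}, so only the symmetry relations of Definition \ref{riassunto-mappe} have to be checked, and by iteration it suffices to handle the composition of two factors. Throughout one uses only those defining identities together with the involution property $S^{2}=\uno$.

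For the reality/anti-reality part I would argue as follows. Let $M_{1}(U;t),M_{2}(U;t)$ be operators, each satisfying $\ol{M_{i}(U;t)[V]}=\sigma_{i}\,S M_{i}(U;t)[S\ol V]$ with $\sigma_{i}\in\{+1,-1\}$ ($\sigma_{i}=1$ being \eqref{realereale}, $\sigma_{i}=-1$ being \eqref{realereale2}), and set $M(U;t):=M_{1}(U;t)\circ M_{2}(U;t)$. Applying the identity first to $M_{1}$, then to $M_{2}$, and using $S^{2}=\uno$ together with linearity in $V$, one gets
\[
\ol{M(U;t)[V]}=\sigma_{1}\,S M_{1}(U;t)\big[S\,\ol{M_{2}(U;t)[V]}\big]=\sigma_{1}\sigma_{2}\,S M(U;t)[S\ol V],
\]
so $M$ satisfies the relation with sign $\sigma_{1}\sigma_{2}$. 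In particular a product of reality preserving operators is reality preserving, and an anti-reality operator composed with reality preserving ones stays anti-reality; the case of several factors follows by induction on their number.

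The reversibility part is analogous: writing each defining identity as $\epsilon_{i}\,S M_{i}(U;-t)=M_{i}(U_{S};t)S$ with $\epsilon_{i}\in\{+1,-1\}$ ($\epsilon_{i}=-1$ being \eqref{invo2}, $\epsilon_{i}=+1$ being \eqref{invo3}), one computes
\[
\epsilon_{1}\epsilon_{2}\,S M(U;-t)=\epsilon_{1}\epsilon_{2}\,S M_{1}(U;-t)M_{2}(U;-t)=\epsilon_{2}\,M_{1}(U_{S};t)\,S M_{2}(U;-t)=M_{1}(U_{S};t)M_{2}(U_{S};t)S=M(U_{S};t)S,
\]
so $M$ satisfies the relation with sign $\epsilon_{1}\epsilon_{2}$; hence a reversible operator composed with one or several reversibility preserving operators is reversible, and reversibility preserving is stable under composition. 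For the parity statement one simply notes that $M_{1}\circ M_{2}\circ\tau=M_{1}\circ\tau\circ M_{2}=\tau\circ M_{1}\circ M_{2}$ and iterates. The matrix-of-symbols version follows from the same manipulations, using the identities of Definition \ref{riassunto-simbo} in place of the operatorial ones. I do not expect any genuine obstacle here; the only points requiring care are the bookkeeping of the substitution $U\mapsto U_{S}$ in the reversibility identities and the systematic use of $S^{2}=\uno$ to absorb the $S$'s that appear between the two factors. This mirrors Lemma $3.1.6$ in \cite{maxdelort}.
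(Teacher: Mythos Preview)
Your proof is correct and is precisely the elementary algebraic verification one expects; the paper itself gives no proof of this lemma, merely noting it is the counterpart of Lemma~$3.1.6$ in \cite{maxdelort}, so there is nothing to compare against beyond that reference. The sign-bookkeeping via $\sigma_{i},\epsilon_{i}\in\{\pm1\}$ and the use of $S^{2}=\uno$ are exactly right.
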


\begin{lemma}\label{lemmatempo}
Let $C(U;t,\cdot)\in \Sigma\Gamma^{m}_{K,K',p}[r,N]\otimes\MM_{2}(\CCC)$
for some $m\in \RRR$, $K'\leq K-1$, $0\leq p\leq N$
and assume that $U$ is a solution of an equation
\begin{equation}\label{tempo1}
\del_{t}U=\ii E\widetilde{M}(U;t)U,
\end{equation}
for some $\widetilde{M}\in \Sigma\MM_{K,1,0}[r,N]\otimes\MM_{2}(\CCC)$.
Then the following hold:

\noindent
$(i)$ the symbol
$\del_{t}C(U;t,\cdot)$ belongs to $\Sigma\Gamma^{m}_{K,K'+1,p}[r,N]\otimes \MM_{2}(\CCC)$;

\noindent
$(ii)$ if $\widetilde{M}(U;t)$ is a (R,R,P) map (see Def. \ref{riassunto-mappe})
and $C(U;t,x,\x)$ is a (R,R,P) symbol (see Def. \ref{riassunto-simbo})
 then
 the symbol
$\del_{t}C(U;t,\cdot)$ is reality preserving, parity preserving and reversible, i.e. satisfies respectively
the \eqref{prodottorealereale}, \eqref{parisimbo1} and \eqref{revsimbo1}.
\end{lemma}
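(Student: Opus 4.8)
The plan is to treat the two items in sequence, reducing everything to the homogeneous components and then applying the chain rule using the equation \eqref{tempo1}. First I would decompose $C$ according to Definition \ref{simbototali} as $C(U;t,x,\x)=\sum_{q=p}^{N-1}C_{q}(U,\ldots,U;x,\x)+C_{N}(U;t,x,\x)$, with $C_{q}\in\widetilde{\Gamma}^{m}_{q}\otimes\MM_2(\CCC)$ and $C_{N}\in\Gamma^{m}_{K,K',N}[r]\otimes\MM_2(\CCC)$. For item $(i)$, the non-homogeneous tail $C_N$ is handled directly by Remark \ref{prodottodisimboli2}: $\del_t a\in\Gamma^{m}_{K,K'+1,N}[r]$ whenever $a\in\Gamma^{m}_{K,K',N}[r]$ (this is where we consume one extra $t$-derivative, hence the hypothesis $K'\le K-1$). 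For each homogeneous piece $C_q$, which is multilinear and $t$-independent except through $U$, I would write
\begin{equation*}
\del_t\big(C_q(U,\ldots,U;x,\x)\big)=\sum_{j=1}^{q}C_q(U,\ldots,\del_t U,\ldots,U;x,\x),
\end{equation*}
using symmetry in the arguments, and then substitute $\del_t U=\ii E\widetilde M(U;t)U$ from \eqref{tempo1}. Since $\widetilde M\in\Sigma\MM_{K,1,0}[r,N]\otimes\MM_2(\CCC)$, the map $U\mapsto \ii E\widetilde M(U;t)U$ lies in $\Sigma\MM_{K,1,1}[r,N]\otimes\MM_2(\CCC)$ (it is at least linear in $U$), and plugging such a map into one slot of a homogeneous symbol is exactly the operation controlled by Proposition \ref{composizioniTOTALI}$(v)$: the result is a symbol in $\Sigma\Gamma^{m}_{K,1,q}[r,N]\otimes\MM_2(\CCC)\subset\Sigma\Gamma^{m}_{K,K'+1,p}[r,N]\otimes\MM_2(\CCC)$ (adjusting the homogeneity indices downward to $p$ and the $t$-regularity to $K'+1$, both of which only enlarge the class). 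Collecting the $q$ and $N$ contributions gives $(i)$.

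For item $(ii)$, I would track the algebraic structure through the same identity. The hypothesis is that $\widetilde M$ is a (R,R,P)-map and $C$ is a (R,R,P)-symbol; by Lemma \ref{lemmaNUOVO} we may moreover assume each homogeneous component $C_q$ is reality, reversibility and parity preserving in the homogeneous sense, and similarly decompose $\widetilde M$ via Lemma \ref{pasubio}. Now consider one term $C_q(U,\ldots,\ii E\widetilde M(U;t)U,\ldots,U;x,\x)$. Parity preservation \eqref{parisimbo1} is immediate: each argument $U$ and the argument $\ii E\widetilde M(U;t)U$ transforms the same way under $x\mapsto -x$ (using that $\widetilde M$ and hence $\ii E\widetilde M$ is parity preserving, and $E$ commutes with $\tau$), so the symbol is even in $(x,\x)$ jointly. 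Reality preservation \eqref{prodottorealereale} follows because $\ii E\widetilde M(U;t)$ applied to a function satisfying \eqref{involuzione4} again produces such a function (the reality-preserving structure of $\widetilde M$ and the block form \eqref{vinello}), so the substituted tuple is still a valid tuple of the type appearing in Definition \ref{riassunto-simbo-omo}, and $C_q$'s reality structure is inherited. For reversibility, I would use that $C_q$ satisfies \eqref{revomo1000} (reversibility preserving, since $C$ is a (R,R,P)-symbol) while $\widetilde M$ satisfies \eqref{invo2} (reversible, as a (R,R,P)-map $\widetilde M$ is reversibility preserving, but the factor $\ii E$ turns it reversible by Remark \ref{considerazioni}); composing a reversibility-preserving symbol with a reversible argument in one slot — the precise bookkeeping being the symbol-level analogue of Lemma \ref{lemmamaxcompo} — yields a reversible symbol, i.e. \eqref{revsimbo1}. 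Summing over $j$ and over $q$, and noting the non-homogeneous tail $\del_t C_N$ inherits reversibility from $C_N$ by the same substitution together with Remark \ref{considerazioni} (invertibility not needed, only the stability of \eqref{revsimbo1} under composition), gives that $\del_t C(U;t,\cdot)$ is reality preserving, parity preserving and reversible.

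The main obstacle I anticipate is the reversibility bookkeeping in $(ii)$: one must carefully check how the involution $S$ and the time-reversal $U\mapsto U_S$ interact with the substitution $\del_t U=\ii E\widetilde M(U;t)U$, in particular that differentiating $C(U;t,\cdot)$ in $t$ and then inserting $U_S$ is compatible with first inserting $U_S$ into $C$ — this uses that $U_S$ solves the time-reversed equation, which is precisely the reversibility \eqref{invo2} of $\widetilde M$ (equivalently the reversibility of $\ii E\widetilde M$). All the functional-analytic estimates are routine consequences of Remarks \ref{prodottodisimboli}, \ref{prodottodisimboli2} and Proposition \ref{composizioniTOTALI}$(v)$, so the content is entirely in getting the signs and the $S$-conjugations right, which is why invoking Lemmata \ref{pasubio}, \ref{lemmaNUOVO}, \ref{lemmamaxcompo} to pass to the homogeneous components first is the cleanest route.
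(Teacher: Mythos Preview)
Your proposal is correct and follows the same underlying strategy as the paper: decompose into homogeneous and non-homogeneous pieces, use the chain rule, and substitute $\del_t U=\ii E\widetilde M(U;t)U$. For item $(i)$ the paper simply cites Lemma~2.2.6 of \cite{maxdelort}, which is exactly the argument you spell out via Remark~\ref{prodottodisimboli2} and Proposition~\ref{composizioniTOTALI}$(v)$.

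For item $(ii)$ the paper takes a more direct route than your structural one. Rather than invoking Lemmata~\ref{pasubio}, \ref{lemmaNUOVO}, \ref{lemmamaxcompo} and tracking the algebraic type through the substitution, the paper simply differentiates the reversibility-preserving identity $SC(U;-t,x,\x)S=C(U_S;t,x,\x)$ in $t$: the chain rule on the $-t$ argument produces the extra minus sign, turning \eqref{revsimbo2} into \eqref{revsimbo1}. For the homogeneous components the only computation needed is $(\del_t U)_S=-\ii E\widetilde M(U_S;t)U_S$, which is a two-line consequence of $SE=-ES$ and $\widetilde M$ being reversibility preserving. This avoids the slightly hand-wavy ``symbol-level analogue of Lemma~\ref{lemmamaxcompo}'' and handles the non-homogeneous tail cleanly (your phrase ``by the same substitution'' is imprecise there, since $C_N$ has explicit $t$-dependence beyond that through $U$). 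One small slip in your exposition: you write ``$\widetilde M$ satisfies \eqref{invo2}'', but $\widetilde M$ satisfies \eqref{invo3}; it is $\ii E\widetilde M$ that is reversible, as your parenthetical then correctly notes.
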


\begin{proof}
Item $(i)$ follows by 
 Lemma $2.2.6$ in \cite{maxdelort}.
Let us check  item  $(ii)$.
Assume that $C(U;t,x,\x)$ is a non-homogeneous symbol in $\Gamma^{m}_{K,K',p}[r]$. By differentiating in $t$ the relation
\[
SC(U;-t,x,\x)=C(U_{S};t,x,\x)
\]
one gets that $(\del_{t}C)(U;t,x,\x)$ is reversible. Assume now that $C\in \widetilde{\Gamma}^{m}_{p}$. Since $C(U;x,\x)$ is reversibility preserving then
\begin{equation}\label{revomo2001}
C(U_{S},\ldots, U_{S};t,x,\x)S=SC(U,\ldots,U;-t,x,\x).
\end{equation}
Hence differentiating in $t$ we get 
\begin{equation}\label{revomo100100}
\begin{aligned}
\sum_{j=1}^{p}C(U_{S},\ldots,-\underbrace{(\partial_tU)_{S}}_{j-th}, \ldots, U_{S};t,x,\x)S=-\sum_{j=1}^{p}SC(U,\ldots,\underbrace{\ii E\widetilde{M} (U,t)U}_{j-th},\ldots,U;-t,x,\x).
\end{aligned}
\end{equation}
Using that $\widetilde{M}(U;t)$ is reversibility preserving we have
\[
(\partial_t U)_{S}=S(\ii E \widetilde{M}(U;\cdot)U)(-t)=-\ii ES\widetilde{M}(U;-t)U(-t)=-\ii E\widetilde{M}(U_{S};t)U_{S}(t),
\]
which implies, together with \eqref{revomo100100} the \eqref{revsimbo1} for $(\del_{t}C)(U;t,x,\x)$.
The \eqref{prodottorealereale} and 
\eqref{parisimbo1} follows by using the definitions.
\end{proof}
We prove a lemma which asserts that a (R,R,P) operator which is the sum of a para-differential operator and a smoothing remainder may be rewritten as the sum of a (R,R,P) para-differential operator and a (R,R,P) smoothing remainder.
\begin{lemma}\label{lemmabello} Fix $\rho ,r>0$, $K\geq K'>0$ in $\NNN$, $m',$ $m''$ in $\NNN$. 
Let $A(U;t,x,\xi)=\sum_{j=-m'}^{m''}A_{j}(U;t,x,\xi)$ be a matrix of symbols such that $A_{j}(U;t,x,\xi)$ is in $\Sigma\Gamma^j_{K,K',p}[r,N]\otimes\mathcal{M}_2(\CCC)$ for any $j=-m',\ldots, m''$  and $R(U;t)$ a matrix of operators in 
$\Sigma\mathcal{R}^{-\rho}_{K,K',p}[r,N]\otimes\mathcal{M}_2(\CCC)$. If the sum
$\bonyw(A(U;t,x,\xi))+R(U;t)$ is a (R,R,P) operator, then there exist (R,R,P) matrices 
$\widetilde{A}_{j}(U;t,x,\xi)$  in 
$\Sigma\Gamma^j_{K,K',p}[r,N]\otimes\mathcal{M}_2(\CCC)$ for any $j=-m',\ldots, m''$ and a (R,R,P) smoothing remainder $\widetilde{R}(U;t)$ such that the following facts hold true:
\begin{itemize}
\item[(i)] one has that 
$$
\bonyw(A(U;t,x,\xi))+R(U;t)=\bonyw(\widetilde{A}(U;t,x,\xi))+\widetilde{R}(U;t)
$$
with $\widetilde{A}(U;t,x,\xi)=\sum_{j=-m'}^{m''}\widetilde{A}_{j}(U;t,x,\xi)$;
\item[(ii)]  if one component of a matrix $A_{j}(U;t,x,\x)$ is real valued, then the 
corresponding component in the matrix $\widetilde{A}_j(U;t,x,\xi)$ is real valued;
\item[(iii)] if, for $j\geq 0$, the matrix $A_{j}(U;t,x,\x)$ has the form $B_{j}(U;t,x)(\ii\x)^{j}$,
for some $B_{j}(U;t,x)$ in the class $\Sigma\mathcal{F}_{K,K',p}[r,N]\otimes\mathcal{M}_2(\CCC) $, then 
the corresponding matrix $\widetilde{A}_{j}(U;t,x,\x)$ is equal to $\widetilde{B}_{j}(U;t,x)(\ii\x)^{j}$
where $\widetilde{B}_{j}(U;t,x)$ belongs to $\Sigma\mathcal{F}_{K,K',p}[r,N]\otimes\mathcal{M}_2(\CCC) $.
\end{itemize}
\end{lemma}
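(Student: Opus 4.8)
The plan is to decompose the hypothesis that $\bonyw(A(U;t,x,\xi))+R(U;t)$ is a (R,R,P) operator into its three constituent algebraic symmetries -- reality preserving, reversibility, and parity preserving -- and to average each of them separately, at the level of symbols for the para-differential part and at the level of operators for the smoothing part. The key point is that each of these symmetries is induced by an \emph{involution} acting linearly on symbols/operators (conjugation by $S$ together with $\xi \mapsto -\xi$, or $\xi \mapsto -\xi$, or $x\mapsto -x$, together with complex conjugation where needed), so the correction amounts to replacing $A$ by the symmetrized symbol $\frac12(A + \Theta A)$ for the relevant involution $\Theta$, and likewise for $R$.

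First I would treat reality. Write the $2\times 2$ matrix of symbols $A(U;t,x,\xi)$ with entries $a_{k\ell}$ and set
$\widetilde A := \tfrac12\big(A(U;t,x,\xi) + S\,\ol{A(U;t,x,-\xi)}\,S\big)$. By Remark \ref{prodottodisimboli2} and Remark \ref{barratooperatore}, $\widetilde A$ lies in the same symbol class $\Sigma\Gamma^{j}_{K,K',p}[r,N]\otimes\MM_2(\CCC)$ on each homogeneity component, and it satisfies \eqref{prodottorealereale} by construction. The difference $\bonyw(A - \widetilde A)$ plus the analogous correction of $R$ is then a smoothing remainder by Remark \ref{equivalenzacutoff} together with the fact that the original sum was already reality preserving, so the "reality defect" of $\bonyw(\widetilde A)$ is exactly cancelled by a reality preserving smoothing remainder. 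Here one uses Remark \ref{considerazioni}: a reality preserving matrix of symbols automatically has the form \eqref{prodotto}, which records items (ii) and (iii) -- if a component $a_{k\ell}$ is real valued, the symmetrization over $\xi\mapsto -\xi$ and conjugation preserves that, and if $A_j = B_j(U;t,x)(\ii\xi)^j$ with $B_j\in\Sigma\mathcal{F}$, then $\ol{(\ii(-\xi))^j} = (\ii\xi)^j$ so the symmetrized symbol is again of the form $\widetilde B_j(U;t,x)(\ii\xi)^j$ with $\widetilde B_j \in \Sigma\mathcal{F}_{K,K',p}[r,N]\otimes\MM_2(\CCC)$.

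Next I would impose reversibility in exactly the same way, replacing $\widetilde A$ by $\tfrac12\big(\widetilde A(U;t,x,\xi) - S^{-1}\widetilde A(U_S;t,x,\xi)S\cdot(-1)\big)$ -- more precisely by the projection onto the subspace satisfying \eqref{revsimbo1}, using Lemma \ref{lemmaNUOVO} to handle the homogeneous pieces (where the condition is \eqref{revomo1001}) and the nonhomogeneous piece separately, together with Lemma \ref{lemmatempo} to ensure time-differentiation stays inside the classes; and finally parity, projecting onto symbols satisfying \eqref{parisimbo1} via $A \mapsto \tfrac12(A(U;t,x,\xi) + A(U;t,-x,-\xi))$, which is again harmless for symbols of the special form in (iii) since $(\ii(-\xi))^j B_j(U;t,-x)$ is of the required shape. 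Because the three involutions commute with one another, one can apply them successively and the result is simultaneously (R,R,P); at each stage the discrepancy between the old and new para-differential operator is absorbed into $\widetilde R$, which inherits all three properties because the \emph{total} operator had them from the start and the para-differential part now has them, so their difference does too (Lemma \ref{lemmamaxcompo} and Remark \ref{ProdProd} keep $\widetilde R$ in the smoothing class with the right structure). The main obstacle is bookkeeping rather than conceptual: one must check that the symmetrization operations genuinely preserve each symbol class $\Sigma\Gamma^j_{K,K',p}[r,N]$ on \emph{every} homogeneity component simultaneously and commute with the homogeneous/nonhomogeneous splitting, and that the successive projections do not destroy the special structures in (ii)--(iii) -- which is why the $\xi$-reflection must be checked to act trivially on the factors $(\ii\xi)^j$ and on real-valuedness.
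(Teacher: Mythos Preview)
Your approach is essentially the paper's: symmetrize the operator with respect to each of the three involutions. The paper does this in one line for the reversibility-preserving case (declaring reality and parity similar): since $\bonyw(A)+R$ already has the symmetry, it equals its own symmetrization, which splits cleanly as $\bonyw(\widetilde A)+\widetilde R$ with $\widetilde A_j(U;t,x,\xi):=\tfrac12\big(A_j(U;t,x,\xi)+SA_j(U_S;-t,x,\xi)S\big)$ and $\widetilde R$ defined analogously; items (ii) and (iii) are then read off directly from this explicit formula.

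Your write-up over-complicates this in two spots. First, there is no ``discrepancy to absorb'' into $\widetilde R$: because the \emph{total} operator already enjoys the symmetry, symmetrizing $A$ \emph{and} $R$ simultaneously reproduces the original operator \emph{exactly}, so $\widetilde R$ is simply the averaged $R$ and stays in $\Sigma\mathcal{R}^{-\rho}_{K,K',p}[r,N]\otimes\mathcal{M}_2(\CCC)$ without any appeal to Remark~\ref{equivalenzacutoff}. Second, neither Lemma~\ref{lemmatempo} (which concerns $\partial_t$ and is irrelevant here) nor Lemma~\ref{lemmaNUOVO} is needed: the averaging formula acts on the full symbol and trivially restricts to each $A_j$, since conjugation by $S$, the substitutions $U\mapsto U_S$, $t\mapsto -t$, $(x,\xi)\mapsto(-x,-\xi)$, and complex conjugation all preserve $\Sigma\Gamma^j_{K,K',p}[r,N]$ componentwise. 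Note also that the condition to impose is reversibility \emph{preserving} (equation \eqref{revsimbo2}), not \eqref{revsimbo1}. Modulo these cosmetic points your argument is correct and matches the paper.
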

\begin{proof}
We show how to construct the reversibility preserving operator associated to the one of the hypothesis, the parity and reality preserving construction is similar.
Since the sum $\bonyw(A(U;t,x,\xi))+R(U;t)$ is a (R,R,P) operator we obtain
\begin{equation}\label{lemmabello1000}
\begin{aligned}
\bonyw(A(U;t,x,\xi))&+R(U;t)=\\
&\frac12\left(\bonyw(A(U;t,x,\xi))+R(U;t)-\bonyw(SA(U_S;-t,x,\xi))S-SR(U_S;t)S\right)
\end{aligned}
\end{equation}
therefore it is sufficient to define $\widetilde{A}(U;t,x,\xi):=\frac12(A(U;t,x,\xi)-SA(U_S;-t,x,\xi))S$ and $\frac12(\widetilde{R}(U;t):=R(U;t)-SR(U_S;t))$. Consequently each term $\widetilde{A}_{j}(U;t,x,\xi)$ may be chosen equal to
$\frac12(A_j(U;t,x,\xi)-SA_j(U_S;-t,x,\xi))$ for $j=-m',\ldots,m''$.
Items $(ii),(iii)$ can be deduced by \eqref{lemmabello1000}.
\end{proof}

\section{Paralinearization of NLS}\label{siparalin}


The main result of this section is the following.

\begin{theo}[{\bf Para-linearization of NLS}]\label{paralineariza}
Consider the system \eqref{6.666} under the Hypothesis \ref{parity}. 
For any $N\in \NNN$, $K\in \NNN$,  $r>0$ and any $\rho>0$ there exists a 
(R,R,P)-matrix 
of symbols (see Def. \ref{riassunto-simbo}) 
$A(U;t,x,\x)$ belonging to $\Sigma\Gamma_{K,0,1}^{2}[r,N,{\rm aut}]\otimes\MM_{2}(\CCC)$
and a (R,R,P)-operator (see Def. \ref{riassunto-mappe}) $R(U)[\cdot]$ belonging to 
$\Sigma\RR^{-\rho}_{K,0,1}[r,N,{\rm aut}]\otimes\MM_{2}(\CCC)$ such that
the system \eqref{6.666} can be written as
\begin{equation}\label{sistemainiziale}
\begin{aligned}
&\del_{t}U=\ii E\Big[
\Lambda U+\bonyw(A(U;t,x,\x))[U]+R(U)[U]
\Big],
\end{aligned}
\end{equation}
where $E$ and $\Lambda$
are defined respectively in \eqref{matrici} and \eqref{DEFlambda}.
Moreover $A(U;t,x,\x)$ has the form
\begin{equation}\label{formadiA}
\begin{aligned}
A(U;t,x,\x)&:=A_2(U;t,x)(\ii\xi)^2+A_1(U;t,x)(\ii\xi)+A_{0}(U;t,x),\\
A_{i}(U;t,x)&:=\left(\begin{matrix} {a}_i(U;t,x) & {b}_i(U;t,x)\vspace{0.4em}\\
{\ol{b_i(U;t,x)}} & {\ol{a_i(U;t,x)}}
\end{matrix}
\right), \\
\end{aligned}
\end{equation}
where $a_{j}(U;t,x),b_{j}(U;t,x)\in \Sigma\calF_{K,0,1}[r,N,{\rm aut}]$ for $j=0,1,2$; $a_{2}(U;t,x)$ is \emph{real} for any $x\in \TTT$.
\end{theo}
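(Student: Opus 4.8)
The plan is to carry out the Bony paralinearization of the polynomial nonlinearity $f(u,u_x,u_{xx})$ \emph{directly inside} the multilinear classes of Section \ref{paracalc}, so that every intermediate object is homogeneous of a definite degree in $U$ (landing in $\widetilde{\Gamma}^m_q$ or $\widetilde{\RR}^{-\rho}_q$), and then to reorganize the outcome, by means of the composition Proposition \ref{teoremadicomposizione} and Proposition \ref{composizioniTOTALI}, into a single $2\times2$ matrix of symbols of order $\le2$ of the shape \eqref{formadiA} acting on $U$, plus a $\rho$-smoothing remainder. Since $f$ carries no explicit time dependence, all the symbols produced are autonomous, i.e.\ they depend on $t$ only through $U(t)$, and they start at homogeneity $1$ (because $f$ vanishes to order $2$): this gives the claimed classes $\Sigma\Gamma^2_{K,0,1}[r,N,{\rm aut}]\otimes\MM_2(\CCC)$ and $\Sigma\RR^{-\rho}_{K,0,1}[r,N,{\rm aut}]\otimes\MM_2(\CCC)$. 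This is essentially the computation of Section $4$ of \cite{FIloc}, the only novelty being that it must be performed keeping track of the homogeneity, which is why it has to be redone rather than quoted.

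First I would record the elementary Bony identity: for smooth $v_1,v_2$ on $\TTT$ one has $v_1v_2=\bonyw(v_1)v_2+\bonyw(v_2)v_1+\widetilde{\RR}(v_1,v_2)$, with $\widetilde{\RR}$ symmetric bilinear and supported where all three frequencies are comparable, hence in $\widetilde{\RR}^{-\rho}_1$ for \emph{every} $\rho\ge0$ (the derivative loss falls only on the low-frequency input, which is harmless because we work with very regular $U$). Iterating this identity on each monomial $C_{\al,\be}\prod_{i=0}^2z_i^{\al_i}\ol{z}_i^{\be_i}$ of \eqref{funz1}, with $z_i=\del_x^iu$ and $\ol{z}_i=\del_x^i\ol{u}$, peeling off one factor at a time and always leaving free the factor carrying the most derivatives, one writes $f(u,u_x,u_{xx})$ as a finite sum of terms $\bonyw(c(U,\dots,U;x))\,\del_x^kw$ with $w\in\{u,\ol{u}\}$ and $k\in\{0,1,2\}$, where $c$ is a product of components $\del_x^jw'$ ($j\le2$, $w'\in\{u,\ol{u}\}$) and hence a $\x$-independent homogeneous symbol of the type of Definition \ref{apeape} (each $\del_x^jw'$ lies in $\widetilde{\calF}_1$ with a derivative loss $\mu\sim j+\tfrac12$, products closing by Remark \ref{prodottodisimboli}), plus homogeneous and — through Remark \ref{starship} and Remark \ref{nonomoOMOSIMB} — non-homogeneous $\rho$-smoothing remainders and non-homogeneous symbols that collect the monomials of degree $\ge N$. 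Summing the coefficient of $\del_x^2u$ over all its occurrences in all monomials yields exactly $(\del_{z_2}f)(u,u_x,u_{xx})$, and likewise $\del_xu\mapsto(\del_{z_1}f)$, $u\mapsto(\del_{z_0}f)$ and $\del_x^j\ol{u}\mapsto(\del_{\ol{z}_j}f)$. Finally, writing $\del_x^k=\bonyw((\ii\x)^k)$ and using Proposition \ref{teoremadicomposizione} to replace $\bonyw(c)\circ\bonyw((\ii\x)^k)$ by $\bonyw(c(\ii\x)^k)$ plus lower order terms plus a $\rho$-smoothing remainder, the lower order corrections being reabsorbed into the order $1$ and order $0$ parts, one reaches
\[
f(u,u_x,u_{xx})=\bonyw\big(a(U;x,\x)\big)u+\bonyw\big(b(U;x,\x)\big)\ol{u}+R_1(U)U,
\]
with $a=a_2(U;x)(\ii\x)^2+a_1(U;x)(\ii\x)+a_0(U;x)$, $b$ of the same form, all coefficients in $\Sigma\calF_{K,0,1}[r,N,{\rm aut}]$, $R_1\in\Sigma\RR^{-\rho}_{K,0,1}[r,N,{\rm aut}]$, and $a_2=(\del_{z_2}f)(u,u_x,u_{xx})$ \emph{real} by item $(2)$ of Hypothesis \ref{parity}.

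Next comes the vector form. Conjugating the last identity and using $\ol{\bonyw(a)v}=\bonyw(\ol{a^{\vee}})\ol{v}$ (formula \eqref{barratooperatore}), together with $\ol{(c(\ii\x)^2)^{\vee}}=\ol{c}\,(\ii\x)^2$, $\ol{(c(\ii\x))^{\vee}}=\ol{c}\,(\ii\x)$, $\ol{c^{\vee}}=\ol{c}$, the equation for $\ol{u}$ in \eqref{6.666} becomes $\bonyw(\ol{a^{\vee}})\ol{u}+\bonyw(\ol{b^{\vee}})u+\ol{R_1(U)U}$, so that the pair $(f,\ol{f})^{T}$ equals $\bonyw(A(U;x,\x))U+R(U)U$ with $A$ of the reality preserving shape \eqref{prodotto} — which is precisely \eqref{formadiA} — and $R$ of the form \eqref{vinello}. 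It then remains to check the algebraic structure. Reality preservation of $A$ and $R$ is built in by the above conjugation (Remark \ref{considerazioni}). Parity preservation, i.e.\ \eqref{parisimbo1}, follows from item $(1)$ of Hypothesis \ref{parity}: $f$ even in $z_1$ forces $\del_{z_i}f$ and $\del_{\ol{z}_i}f$ to be even in $z_1$, which is exactly the symmetry needed on the homogeneous symbols $c$. Reversibility preservation, i.e.\ \eqref{revsimbo2} together with \eqref{matrix_to_symbols_aut}, follows from the reality of the coefficients $C_{\al,\be}$ (equivalent to item $(3)$ of Hypothesis \ref{parity}) by a direct Wirtinger-derivative computation on the homogeneous pieces, recombined via Lemma \ref{lemmaNUOVO}$(i)$. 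Finally, to get a decomposition whose para-differential and smoothing parts are \emph{separately} (R,R,P) while keeping $a_2$ real and each $A_j$ of the form $B_j(U;t,x)(\ii\x)^j$, I would invoke Lemma \ref{lemmabello}, whose items $(ii)$ and $(iii)$ are designed precisely for these two requirements; this produces \eqref{sistemainiziale}.

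The main obstacle is the bookkeeping inside the homogeneous classes: one must verify that every paraproduct step and every use of Proposition \ref{teoremadicomposizione} keeps the pieces in $\widetilde{\Gamma}^m_q$, $\widetilde{\RR}^{-\rho}_q$ with the correct degree $q$ and with $\rho$ as large as desired — legitimate only because the inputs $U$ are taken in $C^K_*(I,H^s)$ with $s\gg\rho$ — and that the spectral ``momentum'' restrictions \eqref{pomosimbo2}, \eqref{omoresti2} propagate through the iteration and the symbolic calculus. The growth of the auxiliary exponent $\mu$ in \eqref{pomosimbo1}, \eqref{omoresti1} along the way is harmless (it only enlarges the threshold $s_0$) but has to be controlled. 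Everything else — the identification of the leading coefficients with the Wirtinger derivatives of $f$ and the (R,R,P) identities — reduces, thanks to Hypothesis \ref{parity}, to the same routine verifications as in \cite{FIloc}.
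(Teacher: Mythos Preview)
Your proposal is correct and follows the same overall strategy as the paper --- Bony paralinearization carried out inside the multilinear classes, identification of the leading coefficient $a_2$ with $\del_{z_2}f$ (hence real by Hypothesis \ref{parity}(2)), and a final appeal to Lemma \ref{lemmabello} to make the para-differential and smoothing parts separately (R,R,P). There are, however, two tactical differences worth noting.

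First, the paper does not iterate the binary Bony identity. Instead it introduces a single $p$-ary splitting of the product $M(u_1,\dots,u_p)=\prod u_i$ via the cut-off $\Theta(n_1,\dots,n_p)=1-\sum_i\chi^{(i)}_{p-1}(\vec n)$ (Lemmas \ref{prod2} and \ref{paraprod}): the term with $\chi^{(i)}_{p-1}$ is directly $\bony(\prod_{j\neq i}u_j)[u_i]$, and the $\Theta$-piece is shown in one stroke to lie in $\widetilde{\RR}^{-\rho}_{p-1}$ for every $\rho$. This avoids the inductive bookkeeping you identify as the main obstacle and makes the homogeneity tracking automatic (each monomial of degree $p$ contributes a symbol in $\widetilde{\calF}_{p-1}$ and a remainder in $\widetilde{\RR}^{-\rho}_{p-1}$, with no growth of auxiliary parameters along an iteration).

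Second, the paper works in the \emph{standard} Bony quantization $\bony$ throughout the paralinearization and only converts to $\bonyw$ at the very end via \eqref{bambola5}. Since $\bony(c)\circ\del_x^k=\bony(c(\ii\x)^k)$ \emph{exactly} when $c$ is $\x$-independent (the cut-off commutes with the Fourier multiplier), no composition formula is needed to absorb $\del_x^k$ into the symbol; the lower order corrections you generate via Proposition \ref{teoremadicomposizione} simply do not arise, and the passage to Weyl at the end produces them all at once, already in the right shape. Your route through $\bonyw$ from the start is perfectly legitimate, but slightly less economical.
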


\noindent

Theorem \ref{paralineariza} is a consequence of the para-product formula of Bony which we prove below in our multilinear setting.  \\
For fixed $p\in \NNN$, $p\geq2$ for any $u_{1},\ldots,u_{p}$ in $C^{\infty}(\TTT;\C)$, 
define the map $M$  as
\begin{equation}\label{prod}
\begin{aligned}
M \; : \; (u_{1},\ldots,u_{p})\mapsto M(u_{1},\ldots, u_{p})&:= \prod_{i=1}^{p}u_{i}=\sum_{n_{1},\ldots,n_{p}\in \NNN}\prod_{i=1}^{p}
\Pi_{n_{i}}u_{i}.\\
\end{aligned}
\end{equation}

Notice that we can also write
\begin{equation}\label{espando}
M(u_{1},\ldots, u_{p})=\sum_{n_0\in \NNN}\sum_{n_1,\ldots,n_p\in \NNN}\Pi_{n_0}M(\Pi_{n_1}u_{1},\ldots, \Pi_{n_p}u_{p}).
\end{equation}
We remark that the term $\Pi_{n_0}M(\Pi_{n_1}u_{1},\ldots, \Pi_{n_p}u_{p})$ is different from zero
only if there exists a choice of signs $\s_{j}\in\{\pm 1\}$ such that
\begin{equation}\label{condautonomia}
\sum_{j=0}^{p}\s_{j}n_{j}=0,
\end{equation}
since the map $M$ is just a product of functions.

Fix $0<\delta<1$ and consider an admissible cut-off function $\chi_{p-1}: \RRR^{p-1}\times \RRR\to \RRR$ 
(see Def. \ref{cutoff1}). 
We define a new cut-off function $\Theta : \NNN^{p}\to [0,1]$ in the following way:
given any $\vec{n}:=(n_{1},\dots,n_{p})\in\NNN^{p}$ we set
\begin{equation}\label{teta}
\Theta(n_{1},\ldots,n_{p}):=1-\sum_{i=1}^{p}\chi_{p-1}^{(i)}(\vec{n}), \qquad 
\chi_{p-1}^{(i)}(\vec{n}):=\chi_{p-1}(\x',n_i),\quad \x':=(n_1,\ldots,n_{i-1},n_{i+1},\ldots,n_p).
\end{equation}
We use the following notation:  
for any $u_1, \ldots, u_{p}\in C^{\infty}(\TTT;\CCC)$ we shall write
\begin{equation}\label{nota1}
(u_{1},\ldots,\hat{u}_{i},\ldots,u_{p})=(u_{1},\ldots,u_{i-1},u_{i+1},\ldots,u_{p}),
\quad i=1,\ldots,p,
\end{equation}
similarly for any $U_1, \ldots, U_{p}\in C^{\infty}(\TTT;\CCC^{2})$ we shall write
\begin{equation}\label{nota2}
(U_{1},\ldots,\hat{U}_{i},\ldots,U_{p})=(U_{1},\ldots,U_{i-1},U_{i+1},\ldots,U_{p}).
\quad i=1,\ldots,p.
\end{equation}

Using the splitting in \eqref{teta} we  write
\begin{equation}\label{prod1}
\begin{aligned}
&M(u_{1},\ldots,u_p)=\sum_{i=1}^{p}M_{i}(u_1,\ldots,u_p)+M^{\Theta}(u_{1},\ldots,u_p),\\
&M_{i}(u_{1},\ldots,u_{p}):=A^{(i)}(u_{1},\ldots,\hat{u_{i}},\ldots,u_{p})[u_{i}]:=
\sum_{n_{1},\ldots,n_{p}\in\NNN}\chi_{p-1}^{(i)}(\vec{n})\Big(\prod_{\substack{j=1 \\ j\neq i}}^{p}\Pi_{n_{j}}u_j\Big)\Pi_{n_{i}}u_i,\\
&M^{\Theta}(u_{1},\ldots,u_p):=A^{\Theta}(u_1,\ldots,u_{p-1})[u_p]:=\sum_{n_{1},\ldots,n_{p}\in\NNN}\Theta(n_{1},\ldots,n_p)
\prod_{j=1}^{p}\Pi_{n_j}u_{j}.\\
\end{aligned}
\end{equation}

In Lemma \ref{prod2} we prove that the multilinear operator $A^{\Theta}$ in \eqref{prod1}
is a smoothing remainder, in Lemma \ref{paraprod} we show that   $A^{(i)}$ in \eqref{prod1} is a paradifferential operator acting on the function $u_i$ for any $i=1,\ldots,p$.

\begin{lemma}[{\bf Remainders}]\label{prod2} Let $A^{\Theta}$ be the operator defined  in \eqref{prod1}.
There is $Q$ in $\widetilde{\RR}^{-\rho}_{p-1}$, for any $\rho\geq0$, such that
for any $U_i\in C^{\infty}(\TTT;\CCC^{2})$, $U_{p}\in C^{\infty}(\TTT;\CCC)$, for $i=1,\ldots,p-1$, we have
\begin{equation}\label{uguale}
Q(U_1,\ldots,U_{p-1})[U_{p}]\equiv A^{\Theta}(u_1,\ldots,u_{p-1})[U_{p}],
\end{equation}
where $U_i=(u_i,z_{i})^{T}$,  $z_i\in C^{\infty}(\TTT;\CCC)$, for $i=1,\ldots,p-1$. 
\end{lemma}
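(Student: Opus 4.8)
The statement asserts that the multilinear operator $A^{\Theta}$ appearing in the Bony decomposition \eqref{prod1} — the "diagonal / low$\times$low'' piece obtained by keeping the cut-off $\Theta$ which is supported where all frequencies are comparable — coincides with a $(p-1)$-homogeneous smoothing operator $Q$ in $\widetilde{\RR}^{-\rho}_{p-1}$ of arbitrary order $\rho$. The idea is simply to check, directly from Definition \ref{omosmoothing}, that the candidate operator
\[
Q(U_{1},\ldots,U_{p-1})[U_{p}]:=\sum_{n_{1},\ldots,n_{p}\in \NNN}\Theta(n_{1},\ldots,n_{p})\,\Pi_{n_{1}}u_{1}\cdots \Pi_{n_{p-1}}u_{p-1}\,\Pi_{n_{p}}U_{p},
\]
(where one uses the first component $u_{i}$ of each $U_{i}=(u_{i},z_{i})^{T}$) satisfies the two required properties: the momentum condition \eqref{omoresti2} and the quantitative bound \eqref{omoresti1}. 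First I would observe that $Q$ is a genuine $(p)$-linear map symmetric in $(U_{1},\ldots,U_{p-1})$, since $\Theta$ is symmetric in $(n_{1},\ldots,n_{p-1})$ by construction (it is a sum over $i$ of the symmetric terms $\chi_{p-1}^{(i)}$), and since on the right-hand side only the first components $u_{i}$ enter, $Q$ is well-defined on $(C^{\infty}(\TTT;\CCC^{2}))^{p-1}\times C^{\infty}(\TTT;\CCC)$; the identity \eqref{uguale} is then immediate from \eqref{prod1}.

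The momentum condition is inherited from \eqref{condautonomia}: since $A^{\Theta}$ is built out of the plain product $M$, a nonzero contribution $\Pi_{n_0}Q(\Pi_{n_1}U_1,\dots,\Pi_{n_{p-1}}U_{p-1})\Pi_{n_p}U_p$ forces $n_0$ to lie in the spectrum of the product $\Pi_{n_1}u_1\cdots\Pi_{n_p}u_p$, hence $\sum_{j=0}^{p}\s_j n_j=0$ for a suitable choice of signs $\s_j\in\{\pm1\}$. The heart of the matter is the estimate \eqref{omoresti1}. Here the plan is: on the support of $\Theta(n_1,\dots,n_p)$ one has, by definition of the admissible cut-offs in Definition \ref{cutoff1} and of $\Theta$ in \eqref{teta}, that for \emph{every} index $i$ the term $\chi_{p-1}^{(i)}$ is not identically $1$, which means $|\xi'|\geq \tfrac{\delta}{2}\langle n_i\rangle$ with $\xi'=(n_1,\dots,\widehat{n_i},\dots,n_p)$; running this over all $i$ gives that \emph{all} the $\langle n_j\rangle$, $j=1,\dots,p$, are comparable, and likewise $\langle n_0\rangle$ is comparable to them by the momentum relation. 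Consequently
\[
\max(\langle n_1\rangle,\dots,\langle n_p\rangle)\ \sim\ \max{}_2(\langle n_1\rangle,\dots,\langle n_p\rangle)\ \sim\ \max(\langle n_1\rangle,\dots,\langle n_{p+1}\rangle),
\]
with implicit constants depending only on $\delta$ and $p$ (here $n_{p+1}$ plays the role of $n_p$ in the notation of Definition \ref{omosmoothing}, i.e. $U_{p}$ is the $(p+1)$-st argument). On this region the trivial bound $\|\Pi_{n_0}(\Pi_{n_1}u_1\cdots\Pi_{n_p}U_p)\|_{L^2}\lesssim \prod_{j=1}^{p}\|\Pi_{n_j}u_j\|_{L^2}$ — which follows from $\|fg\|_{L^2}\le \|f\|_{L^2}\|g\|_{L^\infty}$ together with $\|\Pi_{n}v\|_{L^\infty}\lesssim \langle n\rangle^{1/2}\|\Pi_n v\|_{L^2}$ applied $p-1$ times, so one picks up a harmless polynomial factor $\prod_j\langle n_j\rangle^{1/2}$ — is then converted into \eqref{omoresti1} for the given $\rho$: since all frequencies are comparable, the polynomial loss is bounded by $C\,\max_2(\cdots)^{\mu}$ for some $\mu=\mu(p)$, and one may freely insert the ratio $\max_2(\cdots)^{\rho}/\max(\cdots)^{\rho}\sim 1$ to obtain exactly the form of \eqref{omoresti1} with $\rho+\mu$ in the numerator.

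The main obstacle, and the only point requiring care, is the first step of the estimate: proving rigorously that the support condition built into $\Theta$ genuinely forces \emph{all} frequencies $\langle n_1\rangle,\dots,\langle n_p\rangle$ to be mutually comparable — i.e. extracting from "for every $i$, $(n_1,\dots,\widehat{n_i},\dots,n_p)$ is \emph{not} in the region $|\xi'|\le\frac\delta2\langle n_i\rangle$'' (equivalently, $\Theta\ne 0$ only where each $\chi_{p-1}^{(i)}\ne 1$) the conclusion $\langle n_i\rangle\lesssim_\delta \max_{j\ne i}\langle n_j\rangle$ for all $i$, and hence $\langle n_i\rangle\sim\langle n_j\rangle$ for all $i,j$. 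This is where the precise structure of the admissible cut-off in Definition \ref{cutoff1} is used, and it is essentially the same combinatorial lemma underlying the classical Bony paraproduct; I would carry it out by: (a) noting $\Theta(\vec n)\ne 0\Rightarrow$ some $\chi_{p-1}^{(i_0)}(\vec n)<1\Rightarrow |\xi'|> \tfrac\delta2\langle n_{i_0}\rangle$ for that $i_0$, hence $\langle n_{i_0}\rangle < \tfrac{2}{\delta}\max_{j\ne i_0}\langle n_j\rangle$, so the maximum of the $\langle n_j\rangle$ is attained (up to the factor $2/\delta$) at an index $\ne i_0$; (b) iterating / using that in fact $\Theta$ is a product-type remainder so one can also argue that \emph{no} $\chi^{(i)}_{p-1}$ term alone can equal $1$ on the support of $\Theta$ — more precisely, wherever $\chi_{p-1}^{(i)}(\vec n)=1$ for some $i$, one still has $\Theta(\vec n)=1-\sum_k\chi^{(k)}_{p-1}(\vec n)$ which is nonzero only if the remaining $\chi^{(k)}$ are small, forcing $|n_k'|\gtrsim\langle n_k\rangle$ and thus comparability — and (c) combining with the momentum relation $\sum\s_j n_j=0$ to bring $\langle n_0\rangle$ into the comparison. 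Once this comparability is established the rest is the routine manipulation sketched above, together with invoking Remark \ref{equivalenzacutoff} to absorb the dependence on the particular choice of admissible cut-off.
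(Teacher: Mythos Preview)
Your proof is correct and follows the same approach as the paper's. The paper streamlines your support-inclusion step (a)--(b) by observing directly that if $\chi_{p-1}^{(i)}(\vec n)=1$ for some $i$ then, for $\delta$ small enough, all the other $\chi_{p-1}^{(j)}(\vec n)$ vanish and hence $\Theta(\vec n)=0$, so $\Theta(\vec n)\ne 0$ forces $\sum_{j\ne i}n_j>\tfrac{\delta}{2}\langle n_i\rangle$ for \emph{every} $i$, giving $\max_2\sim\max$ in one stroke; note also that on $\TTT$ one has $\|\Pi_n v\|_{L^\infty}\lesssim\|\Pi_n v\|_{L^2}$ with no $\langle n\rangle^{1/2}$ factor (the range of $\Pi_n$ is two-dimensional), so the product bound in fact holds without any polynomial loss.
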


\begin{proof}
Let $U_i\in C^{\infty}(\TTT;\CCC^2)$ be of the form 
$U_{i}=(u_i,z_{i})^{T}$ for $i=1,\ldots, p-1$, consider also  $U_{p}\in C^{\infty}(\TTT;\CCC)$. In order to obtain \eqref{uguale} it is enough to choose
\begin{equation}\label{uguale2}
Q(U_1,\ldots, U_{p-1})[U_p]=\sum_{n_{1},\ldots,n_{p}\in\NNN}\Theta(n_{1},\ldots,n_p)
\Big[
\left(\begin{matrix}1  \\
 0\end{matrix}\right)\cdot
\left(\begin{matrix}
\prod_{j=1}^{p-1}\Pi_{n_j}u_{j}\\
\prod_{j=1}^{p-1}\Pi_{n_{j}}z_{j}
\end{matrix}
\right)\Big]\Pi_{n_p}U_{p}.
\end{equation}
The ``autonomous'' condition in \eqref{omoresti2} for $Q$
follows from the following fact:   from \eqref{prod1} we have
\[
\Pi_{n_0}Q(\Pi_{n_1}U_{1},\ldots, \Pi_{n_{p-1}}U_{p-1})[\Pi_{n_p}U_p]=
\Pi_{n_0}M^{\Theta}(\Pi_{n_1}u_{1},\ldots, \Pi_{n_{p}}u_{p}),
\] 
which is different from zero
only if \eqref{condautonomia} holds true for a suitable choice of signs $\s_{j}\in \{\pm1\}$.
In order to prove \eqref{omoresti1} we need estimate,
for any $n_0\in \NNN$, 
the term
\begin{equation}\label{prod3}
\left\|\Pi_{n_0}Q(\Pi_{n_1}U_1,\ldots,\Pi_{n_{p-1}}U_{p-1})[\Pi_{n_p}U_p]
\right\|_{L^{2}}=
\left\|\Pi_{n_0}A^{\Theta}(\Pi_{n_1}u_1,\ldots,\Pi_{n_{p-1}}u_{p-1})[\Pi_{n_p}U_p]
\right\|_{L^{2}}
\end{equation}
for $\sum_{j=1}^{p}\s_j n_{j}=0$ for some choice of signs $\s_{j}\in \{\pm1\}$.
We note that, if there exists $i=1,\ldots,p$ such that $\chi_{p-1}^{(i)}\equiv1$, 
i.e. $\sum_{j\neq i}n_{j}\leq (\delta/2) n_i$,
then we have $\Theta(n_1,\ldots,n_{p})\equiv0$.
Hence we have the following inclusion:
\begin{equation}\label{prod4}
\left\{(n_{1},\ldots,n_{p})\in \NNN^{p} \; : \; \Theta(n_{1},\ldots,n_{p})\neq0
\right\}\subseteq \bigcap_{i=1}^{p}\left\{(n_{1},\ldots,n_{p})\in \NNN^{p}\; :\; \frac{\delta}{2} n_{i}< \sum_{j\neq i} n_{j}
\right\}.
\end{equation}
This implies that there exists constants $0<c\leq C$ such that
\begin{equation}\label{prod5}
c\max\{\langle n_1\rangle,\ldots,\langle n_{p}\rangle\}\leq {\max}_2\{\langle n_1\rangle,\ldots,\langle n_{p}\rangle\}
\leq \max\{\langle n_1\rangle,\ldots,\langle n_{p}\rangle\}.
\end{equation}
There exists a constant $K>0$, depending on $p$, such that we can bound \eqref{prod3} by
\begin{equation}\label{prod6}
 K \prod_{j=1}^{p}\|\Pi_{n_j}u_{j}\|_{L^{2}}
 \stackrel{(\ref{prod5})}{\leq} 
 K\frac{\max_2\{\langle n_1\rangle,\ldots,\langle n_{p}\rangle\}^{\mu+\rho}}{\max\{\langle n_1\rangle,\ldots,\langle n_{p}\rangle\}^{\rho}}\prod_{j=1}^{p}\|\Pi_{n_j}u_{j}\|_{L^{2}},
\end{equation}
for any $\mu\geq 0$.
This is the \eqref{omoresti1}.

\end{proof}

\begin{lemma}[{\bf Para-differential operators}]\label{paraprod}
Let $A^{(i)}$ the operators defined in \eqref{prod1}
for $i=1,\ldots,p$. There are
functions $b^{(i)}(U_1,\ldots,\hat{U}_{i},\ldots U_{p};x)$ 
belonging to $\widetilde{\mathcal{F}}_{p-1}$
such that (recalling Definition \ref{quantizationtotale})
\begin{equation}\label{prod7}
A^{(i)}(u_{1},\ldots,\hat{u}_{i},\ldots,u_{p})[u_{i}]={\rm Op}^{\BB}(b^{(i)}(U_{1},\ldots,\hat{U}_i,\ldots,U_{p};x))\left[
\left(\begin{matrix}1 \\
0 
\end{matrix}
\right)\cdot U_{i}
\right],
\end{equation}
where $U_j=(u_j,z_{j})^{T}$,  $z_j\in C^{\infty}(\TTT;\CCC)$ for $j=1,\ldots,p$.
\end{lemma}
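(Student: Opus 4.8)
The plan is to identify the operator $A^{(i)}$ (in the Bony quantization) with a para-differential operator whose symbol is nothing but the product of the ``spectator'' functions. Let $e_1\in\CCC^{2}$ be the first coordinate vector, so that $e_1\cdot U=u$ for $U=(u,z)^{T}$, and define, for $W_{1},\dots,W_{p-1}\in C^{\infty}(\TTT;\CCC^{2})$,
\[
b^{(i)}(W_{1},\dots,W_{p-1};x):=\prod_{l=1}^{p-1}(e_1\cdot W_{l})(x).
\]
In the notation of the Lemma one takes $(W_{1},\dots,W_{p-1})=(U_{1},\dots,U_{i-1},U_{i+1},\dots,U_{p})$, so that $b^{(i)}=\prod_{j\neq i}u_{j}$. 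This map is symmetric in its arguments, $(p-1)$-linear, and independent of $\x$.

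First I would verify that $b^{(i)}\in\widetilde{\calF}_{p-1}$, i.e.\ that it lies in $\widetilde{\Gamma}^{0}_{p-1}$ of Definition \ref{pomosimb} (and is $\x$-independent, cf.\ Definition \ref{apeape}). The spectral condition \eqref{pomosimbo2} is immediate: $\Pi_{n_{0}}\prod_{l}\Pi_{m_{l}}(e_1\cdot W_{l})$ is non-zero only if $n_{0}\in\{|\sigma_{1}m_{1}+\dots+\sigma_{p-1}m_{p-1}|:\sigma_{l}\in\{\pm1\}\}$, which yields the required relation $\sum_{j=0}^{p-1}\sigma_{j}n_{j}=0$. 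For the seminorm bound \eqref{pomosimbo1} (with $m=0$, the $\x$-derivatives playing no role), one uses that on the two-dimensional space spanned by $\{e^{\pm\ii m_{l}x}\}$ one has $\|\del_{x}^{a}\Pi_{m_{l}}w_{l}\|_{L^{\infty}}\leq C\langle m_{l}\rangle^{a}\|\Pi_{m_{l}}w_{l}\|_{L^{2}}$ with $C$ independent of $m_{l}$ (since $|\hat w_{l}(\pm m_{l})|\leq\|\Pi_{m_{l}}w_{l}\|_{L^{2}}$ and $\|\Pi_{m_{l}}(e_1\cdot W_{l})\|_{L^{2}}\leq\|\Pi_{m_{l}}W_{l}\|_{L^{2}}$); distributing $\del_{x}^{\alpha}$ over the product by Leibniz and bounding $\prod_{l}\langle m_{l}\rangle^{a_{l}}\leq\langle\vec m\rangle^{\sum_{l}a_{l}}=\langle\vec m\rangle^{\alpha}$ gives \eqref{pomosimbo1} with $\mu=0$.

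The heart of the matter is the identity \eqref{prod7}, which I would prove by a direct Fourier computation, using in the Bony quantization of Definition \ref{quantizationtotale} the same admissible cut-off $\chi_{p-1}$ that appears in \eqref{teta}. By definition ${\rm Op}^{\BB}(b^{(i)}(\mathcal U;\cdot))={\rm Op}(b^{(i)}_{\chi}(\mathcal U;\cdot))$ with $b^{(i)}_{\chi}(\mathcal U;x,\x)=\sum_{\vec m\in\NNN^{p-1}}\chi_{p-1}(\vec m,\x)\prod_{l}\Pi_{m_{l}}u_{l}(x)$, upon taking $\mathcal U=(U_{j})_{j\neq i}$ so that $e_1\cdot U_{j}=u_{j}$. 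Applying the standard quantization formula \eqref{bambola2} to the input $e_1\cdot U_{i}=u_{i}$ and summing over the output frequency after the shift collapses the Fourier series of the coefficient product, giving
\[
{\rm Op}^{\BB}(b^{(i)})[u_{i}]=\sum_{\vec m\in\NNN^{p-1}}\Big(\prod_{l}\Pi_{m_{l}}u_{l}\Big)(x)\,\sum_{j\in\ZZZ}\chi_{p-1}(\vec m,j)\,\hat u_{i}(j)\,\frac{e^{\ii jx}}{\sqrt{2\pi}}.
\]
Since $\chi_{p-1}$ is even in its last argument, the inner sum over $j\in\ZZZ$ reassembles, via \eqref{spectralpro}, into $\sum_{n_{i}\in\NNN}\chi_{p-1}(\vec m,n_{i})\Pi_{n_{i}}u_{i}(x)$ (the terms $j=\pm n_i$ combining); comparing with \eqref{prod1}, where $\chi^{(i)}_{p-1}(\vec n)=\chi_{p-1}((n_{j})_{j\neq i},n_{i})$, this is exactly $A^{(i)}(u_{1},\dots,\hat u_{i},\dots,u_{p})[u_{i}]$.

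The only genuinely delicate point is the bookkeeping in this last step: tracking the $\sqrt{2\pi}$ normalizations and, above all, passing correctly between the $\ZZZ$-indexed Fourier series built into ${\rm Op}$ and the $\NNN$-indexed spectral-projector sums defining $A^{(i)}$ and $\Pi_{n_i}$ — the point where evenness of the cut-off is used. Everything else is routine. (If one wanted \eqref{prod7} for an arbitrary admissible cut-off in the Bony quantization rather than the one fixed in \eqref{teta}, the discrepancy would be a smoothing remainder by Remark \ref{equivalenzacutoff}; using the matching cut-off makes the identity exact.)
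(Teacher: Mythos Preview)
Your proof is correct and follows essentially the same approach as the paper: define $b^{(i)}$ as the product $\prod_{j\neq i}u_j$ (viewed as a function of the $U_j$'s via the first-component projection), verify it lies in $\widetilde{\calF}_{p-1}$ by checking \eqref{pomosimbo1}, and identify $A^{(i)}$ with ${\rm Op}^{\BB}(b^{(i)})$ from the definitions. You are in fact more thorough than the paper, which states the identity \eqref{prod777} as a direct consequence of \eqref{prod1} and Definition \ref{quantizationtotale} without unpacking the Fourier side or the $\ZZZ$-to-$\NNN$ passage, and which omits the check of \eqref{pomosimbo2}; your remark on the cut-off dependence via Remark \ref{equivalenzacutoff} is also a point the paper does not spell out.
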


\begin{proof}
We introduce the function 
\begin{equation}\label{prod8}
a^{(i)}(u_{1},\ldots,\hat{u}_{i},\ldots,u_{p};x):=\prod_{j\neq i}u_{j} .
\end{equation}
By \eqref{prod1} and Definition \ref{quantizationtotale} we can note that 
\begin{equation}\label{prod777}
A^{(i)}(u_{1},\ldots,\hat{u}_{i},\ldots,u_{p})[u_{i}]={\rm Op}^{\BB}(a^{(i)}(u_{1},\ldots,\hat{u}_i,\ldots,u_{p};x))[u_i].
\end{equation}

\noindent
For $i=1,\ldots,p$ we set 
\begin{equation}\label{defdiB}
b^{(i)}(U_{1},\ldots,\hat{U}_i,\ldots,U_{p};x):=\Big[
\left(\begin{matrix}1  \\
0 \end{matrix}\right)\cdot
\left(\begin{matrix}
\prod_{j\neq i}u_{j}\\
\prod_{j\neq i}z_{j}
\end{matrix}
\right)\Big]
.
\end{equation}
We show that  $b^{(i)}$ 
belongs to 
the class $ \widetilde{\mathcal{F}}_{p-1}$. 
Let us check condition \eqref{pomosimbo1}. 
By symmetry we study the case $i=p$. 
We have that
\begin{equation}\label{prod9}
\begin{aligned}
|\del_{x}^{\al}&b^{(p)}(\Pi_{n_{1}}U_{1},\ldots,\Pi_{n_{p-1}}U_{p-1};x) |
\stackrel{(\ref{defdiB}),(\ref{prod8})}{=}
|\del_{x}^{\al}a^{(p)}(\Pi_{n_{1}}u_{1},\ldots,\Pi_{n_{p-1}}u_{p-1};x) |
\stackrel{(\ref{prod8})}{\leq}\\
&
\leq 
C(\al,\be)\sum_{\substack{s_j\in\NNN, \\s_1+\ldots+s_{p-1}=\al}} 
\prod_{j=1}^{p-1}|n_{j}|^{s_{j}}|\Pi_{n_{j}}u_{j}|\leq C(\al,\be)
\max\{\langle n_{1}\rangle, \ldots, \langle n_{p-1}\rangle\}^{\al }
\prod_{j=1}^{p-1}\|\Pi_{n_{j}}u_j\|_{L^{2}}.
\end{aligned}
\end{equation}
\end{proof}

\begin{proof}[{\bf Proof of Theorem \ref{paralineariza}}] 
Let us consider a single monomial 
 of the non linearity $f$ in \eqref{funz1}, i.e. 
\[
C_{\al,\be}z_{0}^{\al_{0}}\bar{z}_{0}^{\be_{0}}z_{1}^{\al_{1}}\bar{z}_{1}^{\be_{1}}z_{2}^{\al_{2}}
\bar{z}_{2}^{\be_{2}}, \quad C_{\al,\be}\in\RRR,
\]
with 
$(\al,\be):=(\al_0,\be_0,\al_1,\be_1,\al_2,\be_2)\in \NNN^{6}$ and $\sum_{i=0}^{2}\al_i+\be_i=p$ for a fixed $2\leq p\leq \bar{q}$. Recall that the coefficients $C_{\alpha,\beta}$ of the polynomial in \eqref{funz1} are real thanks to item 3 of Hypothesis \ref{parity}. 
We start by proving that 
we can write
\begin{equation}\label{claimo}
\left(\begin{matrix}
C_{\al,\be}u^{\al_0}\bar{u}^{\be_0}u_x^{\al_1}\bar{u}_x^{\be_1}u_{xx}^{\al_2}\bar{u}_{xx}^{\be_2}\vspace{0.4em}\\
C_{\al,\be}\bar{u}^{\al_0}{u}^{\be_0}\bar{u}_x^{\al_1}{u}_x^{\be_1}\bar{u}_{xx}^{\al_2}{u}_{xx}^{\be_2}
\end{matrix}
\right)=
\bony(B^{\al,\beta}(U;x,\x))[U]+Q_1^{\al,\beta}(U)[U],
\end{equation}
where $B^{\al,\beta}(U;x,\x)$ is 
a matrix of symbols and $Q_1^{\al,\beta}(U)$ a matrix of smoothing operator.
 For any $(\al,\be)\in A_{p}$ (see \eqref{nonlinear2}) let $M$ be the multilinear operator defined in \eqref{prod} and write
 \begin{equation}\label{polypoly}
 \begin{aligned}
& C_{\al,\be}u^{\al_0}\bar{u}^{\be_0}u_x^{\al_1}\bar{u}_x^{\be_1}u_{xx}^{\al_2}\bar{u}_{xx}^{\be_2}=\\
 &\quad \quad C_{\al,\be}M(\underbrace{u,\ldots,u}_{\al_0-times},\underbrace{\bar{u},\ldots,\bar{u}}_{\be_0-times},\underbrace{u_x,\ldots,u_x}_{\al_1-times},\underbrace{\bar{u}_x,\ldots,\bar{u}_x}_{\be_1-times},
 \underbrace{u_{xx},\ldots,u_{xx}}_{\al_2-times},\underbrace{\bar{u}_{xx},\ldots,\bar{u}_{xx}}_{\be_2-times}).
\end{aligned} \end{equation}
 Lemmata \ref{prod2}, \ref{paraprod}
guarantee that there are multilinear functions $\tilde{b}^{\al,\beta}_{j}, \tilde{c}^{\al,\beta}_{j}\in \widetilde{\mathcal{F}}_{p-1}$, $j=0,1,2$
and a multilinear remainder $\tilde{Q}^{\al,\beta}\in \widetilde{\mathcal{R}}^{-\rho}_{p-1}$ such that the r.h.s. of
\eqref{polypoly} is equal to
\begin{equation}\label{mozart}
\sum_{j=0}^{2}\bony\big({b}^{\al,\beta}_{j}(U;x)(\ii\x)^{j}\big)u+\sum_{j=0}^{2}\bony\big({c}^{\al,\beta}_{j}(U;x)(\ii\x)^{j}\big)\bar{u}+Q^{\al,\beta}(U)[u],
\end{equation}
where ${b}^{\al,\beta}_{j}(U;x):=\tilde{b}^{\al,\beta}_{j}(U,\ldots,U;x)$, 
${c}^{\al,\beta}_{j}(U;x):=\tilde{c}^{\al,\beta}_{j}(U,\ldots,U;x) $
and $Q^{\al,\beta}(U)[\cdot]:=\tilde{Q}^{\al,\beta}(U,\ldots,U)[\cdot]$.
We set
\begin{equation}\label{apple20}
\begin{aligned}
B^{\al,\beta}(U;x,\x)&:=B^{\al,\beta}_{2}(U;x)(\ii\x)^{2}+B^{\al,\beta}_{1}(U;x)(\ii\x)
+B^{\al,\beta}_0(U;x),\\
B^{\al,\beta}_{j}(U;x)&:=\left(
\begin{matrix}
{b}^{\al,\beta}_{j}(U;x) & {c}^{\al,\beta}_{j}(U;x)\vspace{0.4em}\\
\ol{ {c}^{\al,\beta}_{j}(U;x)} & \ol{ {b}^{\al,\beta}_{j}(U;x)} 
\end{matrix}
\right),\quad j=0,1,2.
\end{aligned}
\end{equation}
The matrix of symbols $B^{\al,\beta}(U;x,\x)$ belongs to $\Sigma\Gamma^{2}_{K,0,1}[r,N,{\rm aut}]\otimes \mathcal{M}_2(\CCC)$,  therefore the \eqref{claimo} follows  
for some 
$Q_1^{\al,\beta}\in \Sigma\RR^{-\rho}_{K,0,1}[r,N,{\rm aut}]\otimes \mathcal{M}_2(\CCC)$ 
for any $N>0$ and $\rho>0$.
Notice that, by construction, 
(see equations \eqref{prod8}, \eqref{defdiB} in Lemma \ref{paraprod})
we have
\begin{equation}\label{claimo801}
b_{2}^{\al,\beta}(U;x)=\al_{2}C_{\al,\beta}u^{\al_0}\bar{u}^{\be_0}u_x^{\al_1}\bar{u}_x^{\be_1}u_{xx}^{\al_2-1}\bar{u}_{xx}^{\be_2}.
\end{equation}
By using equations \eqref{funz1} and \eqref{claimo}, we deduce that
\begin{equation}\label{claimo800}
\left(\begin{matrix}
f(u,u_x,u_{xx})\vspace{0.4em}\\
\ol{f(u,u_x,u_{xx})}
\end{matrix}
\right)=
\bony(B(U;x,\x))[U]+Q_1(U)[U],
\end{equation}
where $B(U;x,\x):=B_{2}(U;x)(\ii\x)^{2}+B_{1}(U;x)(\ii\x)+B_0(U;x)$ with
\[
B_{j}(U;x):=\left(
\begin{matrix}
{b}_{j}(U;x) & {c}_{j}(U;x)\vspace{0.4em}\\
\ol{ {c}_{j}(U;x)} & \ol{ {b}_{j}(U;x)} 
\end{matrix}
\right):=
\sum_{p=2}^{\bar{q}}\sum_{\al,\beta\in A_p}B_j^{\al,\beta}(U;x), \quad j=0,1,2,
\]
and  $Q_1(U)$ is in $\Sigma\RR^{-\rho}_{K,0,1}[r,N,{\rm aut}]\otimes \mathcal{M}_2(\CCC)$.
Notice that
\[
b_2(U;x)=\sum_{p=2}^{\bar{q}}\sum_{\al,\beta\in A_p}b^{\al,\beta}_2(U;x)\stackrel{\eqref{claimo801}}{=}
(\del_{u_{xx}}f)(u,u_x,u_{xx}),
\]
hence $b_{2}(U;x)$ is real thanks to item $2$ of Hypothesis \ref{parity}. 
We now pass to the Weyl quantization. By using the formula \eqref{bambola5} 
one constructs a matrix of symbols $A(U;x,\x)\in \Sigma\Gamma^{2}_{K,0,1}[r,N,{\rm aut}]\otimes\MM_{2}(\CCC)$ such that
\[
\bonyw(A(U;x,\x))[\cdot]=\bony(B(U;x,\x))[\cdot],
\]
up to smoothing remainders in $\Sigma\RR^{-\rho}_{K,0,1}[r,N,{\rm aut}]\otimes\MM_{2}(\CCC)$.
Hence we have
obtained 
\begin{equation}\label{claimo1000}
\left(\begin{matrix}
f(u,u_x,u_{xx})\vspace{0.4em}\\
\ol{f(u,u_x,u_{xx})}
\end{matrix}
\right)=
\bonyw(A(U;x,\x))[U]+R(U)[U],
\end{equation}
for some $R\in \Sigma\RR^{-\rho}_{K,0,1}[r,N,{\rm aut}]\otimes\MM_{2}(\CCC)$.
The matrix $A(U;x,\x)$ has the form \eqref{formadiA},
in particular $a_{2}(U;x)$ is real valued since $a_{2}(U;x)=b_{2}(U;x)$. 
This is a consequence of the fact that the Weyl and the standard quantizations 
coincide at the principal order (see \eqref{bambola5}).

It remains to show the reality, parity and reversibility properties of the matrices 
$A(U;x,\x)$ and $R(U)$.

Since the function $f$ satisfies Hypothesis \ref{parity}, 
we can write the l.h.s. of \eqref{claimo1000} as $M(U)[U]$ for some
(R,R,P)-map  $M\in \Sigma\MM_{K,0,1}[r,N,{\rm aut}]\otimes\MM_{2}(\CCC)$ 
(see Remark \ref{inclusionifacili}).
Therefore by Lemma \ref{lemmabello}
we may assume that both $A(U;x,\x)$ and $R(U)$ are respectively (R,R,P)-matrix of symbols
and (R,R,P)-matrix of operators.
%
Lemma \ref{lemmabello} guarantees also that 
 the new matrix $A(U;x,\x)$ has still the form \eqref{formadiA}.
\end{proof}

\section{Regularization}\label{chicane}
We  proved in Theorem \ref{paralineariza}  that for any $N\in\NNN$ and any $\rho>0$ the equation \eqref{NLS} is equivalent to the system \eqref{sistemainiziale}.

The key result of this section is the following.

\begin{theo}[{\bf Regularization}]\label{regolarizza}

Fix $N>0$, $ \rho\gg N$ and $K\gg\rho$ . There exist $s_0>0$ and $r_0>0$ such that
for any $s\geq s_0$, $0<r \leq r_0$  and 
any $U\in B^K_s(I,r)$  solution  even in $x\in \TTT$ of \eqref{sistemainiziale} the following holds.

There exist two  (R,R,P)-maps 
$\Phi(U)[\cdot],\, \Psi(U)[\cdot] :C^{K-K'}_{*\RRR}(I,\hcic^s(\TTT;\CCC^{2}))
\rightarrow C^{K-K'}_{*\RRR}(I,\hcic^s(\TTT;\CCC^{2})),$
with $K':=2\rho+4$
satisfying the following:
\begin{enumerate}
\item[(i)] there exists a constant $C$ depending on $s$, $r$ and $K$ such that
\begin{equation}\label{stime-descentTOTA}
\begin{aligned}
\norm{\Phi(U)V}{K-K',s},\, \norm{\Psi(U)V}{K-K',s}&\leq \norm{V}{K-K',s}\big(1+C\norm{U}{K,s_0}\big)\\
\end{aligned}
\end{equation} 
for any $V$ in $C^{K-K'}_{*\RRR}(I,\hcic^s)$;
\item[(ii)] $\Phi(U)[\cdot]-\uno$ and  $\Psi(U)[\cdot]-\uno$ belong to the class $\Sigma\mathcal{M}_{K,K',1}[r,N]\otimes\mathcal{M}_2(\CCC)$; moreover $\Psi(U)[\Phi(U)[\cdot]]-\uno$ is a smoothing operator in the class $\Sigma\mathcal{R}^{-\rho}_{K,K',1}[r,N]\otimes\mathcal{M}_2(\CCC)$;
\item[(iii)] the function $V=\Phi(U)U$ solves the system
\begin{equation}\label{problemafinale}
\partial_t V = \ii E\big(\Lambda V+\bonyw(L(U;t,\xi))V+Q_1(U)V+Q_{2}(U)U\big),
\end{equation}
where $\Lambda$ is defined in \eqref{DEFlambda}, 
 the operators $Q_1(U)[\cdot]$ and $Q_2(U)[\cdot]$
are (R,R,P) smoothing operators in the class $\Sigma\mathcal{R}^{-\rho+m}_{K,K',1}[r,N]\otimes\mathcal{M}_2(\CCC)$
for some $m>0$ depending on $N$,  
$L(U;t,\x)$ is a (R,R,P)-matrix 
in 
 $\Sigma\Gamma^{2}_{K,K',1}[r,N]\otimes \MM_{2}(\CCC)$
and
has the form
\begin{equation}\label{constCoeff}
L(U;t,\x):=\left(
\begin{matrix}
\mathtt{m}(U;t,\x) & 0\\ 0 & \ol{\mathtt{m}(U;t,-\x)}
\end{matrix}
\right), \quad \mathtt{m}(U;t,\x)=\mathtt{m}_{2}(U;t)(\ii\x)^{2}+\mathtt{m}_0(U;t,\x),
\end{equation}
where $\mathtt{m}_{2}(U;t)$ is a real symbol in $\Sigma\calF_{K,K',1}[r,N]$,  $\mathtt{m}_0(U;t,\x)$ is in  $\Sigma\Gamma^{0}_{K,K',1}[r,N]$ and  both of them 
are constant in $x\in \TTT$.
\end{enumerate}
\end{theo}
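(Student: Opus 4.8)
The plan is to pass from \eqref{sistemainiziale} to \eqref{problemafinale} by composing a finite chain of conjugations, each of one of three types: (a) conjugation by $\bonyw(S(U;t,x))$ with $S$ a matrix of functions; (b) conjugation by $\uno+\bonyw(s(U;t,x,\x))$ with $s$ a matrix of symbols of negative order; (c) conjugation by the time‑one flow $\Phi^{1}$ of $\del_{\tau}\Phi^{\tau}=\bonyw(\ii\beta(U;t,x)\x)\Phi^{\tau}$ with $\beta$ real (a paradifferential regularization of a change of variables on $\TTT$, bounded on every $\hcic^{s}$ uniformly in $\tau\in[0,1]$ since its generator is skew–adjoint). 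For each such transformation I would compute, by the symbolic calculus and composition results (Proposition \ref{composizioniTOTALI}, the adjoint formula \eqref{aggiunto}, and an Egorov‑type rule in case (c)), how the matrix of symbols, the smoothing remainder, and the extra term $-(\mathrm{transf.})^{-1}\del_{t}(\mathrm{transf.})$ transform, and then invoke Lemma \ref{lemmabello} to replace the outcome by its reality/reversibility/parity‑preserving part plus a (R,R,P)-correction of the same or lower order; Lemma \ref{lemmamaxcompo} and Remark \ref{considerazioni} guarantee that, conjugating by (R,R,P)-maps, the transformed generator keeps the form $\ii E(\Lambda+\cdots)$. All generators will vanish of order $\geq1$ at $U=0$, so every object produced stays in $\Sigma\Gamma_{K,K',1}[r,N]\otimes\MM_{2}(\CCC)$, $\Sigma\MM_{K,K',1}[r,N]\otimes\MM_{2}(\CCC)$, $\Sigma\RR^{-\rho}_{K,K',1}[r,N]\otimes\MM_{2}(\CCC)$; the map $\Psi(U)$ is the composition, in reverse order, of the symbolic approximate inverses of the generators, so $\Psi(U)\Phi(U)-\uno$ collects the smoothing errors of the calculus, and \eqref{stime-descentTOTA} follows because each factor is $\uno$ plus an operator of size $O(\norm{U}{K,s_{0}})$ (or, in case (c), a change of variables of norm $1+O(\norm{U}{K,s_{0}})$), with $K':=2\rho+4$ absorbing the bounded number of time derivatives lost.

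\emph{Step 1 (diagonalization of the principal part).} The order‑$2$ symbol of $\ii E(\Lambda+\bonyw(A))$ is $\ii E(\uno+A_{2}(U;t,x))(\ii\x)^{2}$; since $a_{2}$ is real by item $2$ of Hypothesis \ref{parity}, the matrix $E(\uno+A_{2})=\sm{1+a_{2}}{b_{2}}{-\ol{b_{2}}}{-(1+a_{2})}$ has zero trace and determinant $|b_{2}|^{2}-(1+a_{2})^{2}<0$ for $r$ small, hence real eigenvalues $\pm\lambda_{2}$ with $\lambda_{2}:=\sqrt{(1+a_{2})^{2}-|b_{2}|^{2}}$; conjugating by $\bonyw(S)$ with $S$ the (reality‑preserving) eigenvector matrix makes the whole order‑$2$ part equal to $\ii\,\mathrm{diag}(\lambda_{2},-\lambda_{2})(\ii\x)^{2}$ and produces new lower‑order terms. \emph{Step 2 (block‑diagonalization).} For $j=1,0,-1,\dots,-\rho$ one removes, one order at a time, the off‑diagonal part of order $j$ by a conjugation of type (b) with $s_{j}$ off‑diagonal of order $j-2$: since the two diagonal entries $\pm\lambda_{2}(\ii\x)^{2}$ are distinct, the commutator of the principal part with an off‑diagonal symbol of order $j-2$ is off‑diagonal of order $j$, so this is an \emph{algebraic} (pointwise in $(x,\x)$) division, no zero‑mean condition being needed; what survives below order $-\rho$ is a $\rho$-smoothing off‑diagonal operator, absorbed into $Q_{1}$. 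After Steps 1–2 the system is, recalling that reality‑preserving maps force $v^{-}=\ol{v^{+}}$, scalar on the first component: $\del_{t}v^{+}=\ii\bonyw(\mathtt{l}(\x)+\ell^{+}(U;t,x,\x))v^{+}+(\text{smoothing})$ with $\ell^{+}$ of order $2$ whose total second‑order coefficient is $\mu_{2}(U;t,x)=1+O(\norm{U}{K,s_{0}})$, real; moreover, by the parity structure, $\mu_{2}$ is even in $x$ and the order‑$1$ coefficient is odd in $x$.

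\emph{Step 3 (constant principal coefficient).} Using a conjugation of type (c) and the Egorov rule, $\beta$ is chosen so that $\mu_{2}$ is turned into $1+\mathtt m_{2}(U;t)$ with $\mathtt m_{2}$ real, constant in $x$ and of size $O(\norm{U}{K,s_{0}})$; evenness of $\mu_{2}$ makes $\beta$ even (parity preserving), and its $t,U$-dependence is fixed so as to preserve reversibility. \emph{Step 4 (elimination of the order‑$1$ term).} One conjugates by $e^{\bonyw(\ii g(U;t,x))}$, $g$ real of order $0$: since $[\ii\bonyw((1+\mathtt m_{2})(\ii\x)^{2}),\bonyw(\ii g)]=\bonyw(-2\ii(1+\mathtt m_{2})\x\,\del_{x}g)+(\text{order}\le0)$, choosing $g$ to solve $2(1+\mathtt m_{2})\del_{x}g=\ii c_{1}$, where $c_{1}$ is the (purely imaginary, by the Schr\"odinger hypothesis and the parity/reality structure, exactly as in \cite{FIloc}) order‑$1$ coefficient written as $\ell^{+}_{1}(\ii\x)=-c_{1}\x$, removes the whole order‑$1$ term; this is solvable in the even class because $c_{1}$ is odd, hence of zero mean. \emph{Step 5 (constant coefficients below order $1$).} Finally, for $j=0,-1,\dots,-\rho$ one conjugates successively by $\uno+\bonyw(\ii s_{j})$ with $s_{j}$ a real diagonal symbol of order $j-1$ solving $2(1+\mathtt m_{2})\x\,\del_{x}s_{j}=\ell^{+}_{(j)}-\langle\ell^{+}_{(j)}\rangle$, cut off away from $\x=0$ (the low‑frequency error being smoothing of every order), which replaces each order‑$j$ coefficient by its $x$-average; the surviving part below order $1$ is $\mathtt m_{0}(U;t,\x):=\sum_{j=0}^{-\rho}\langle\ell^{+}_{(j)}\rangle\in\Sigma\Gamma^{0}_{K,K',1}[r,N]$, constant in $x$, and the $x$-dependent tail is $\rho$-smoothing. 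Collecting all conjugations into $\Phi(U)$ and their approximate inverses into $\Psi(U)$, transporting the remainder $R(U)$ of \eqref{sistemainiziale} through them by Proposition \ref{composizioniTOTALI}$(ii)$–$(v)$ (the finitely many lost orders giving the exponent $-\rho+m$ in $Q_{1},Q_{2}$, with $m=m(N)$ finite since $\rho\gg N$; the part that it is natural to keep acting on the old variable being put into $Q_{2}(U)U$), and applying Lemma \ref{lemmabello} to render every object (R,R,P), yields \eqref{problemafinale}–\eqref{constCoeff} with $\mathtt m_{2}$ real and no order‑$1$ term.

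\emph{Main difficulty.} The one genuinely non‑perturbative point is Step 3: the straightening of the second‑order coefficient is effected by a conjugation whose generator $\bonyw(\ii\beta\x)$ has order $1$, so one cannot iterate the symbolic calculus naively and must instead establish a paradifferential Egorov lemma for the flow $\Phi^{\tau}$, with all estimates uniform in $\tau\in[0,1]$ and, crucially, keeping simultaneously the reality, parity and reversibility structures and the class $\Sigma\Gamma^{2}_{K,K',1}[r,N]\otimes\MM_{2}(\CCC)$ of the conjugated symbol. The accompanying, mostly organisational, difficulty is to check at Steps 2, 4, 5 that the relevant equations are solvable inside the parity‑preserving subclass — which is precisely what the evenness of $a_{2}$ and $\mu_{2}$ and the oddness of the order‑$1$ coefficient guarantee — and to track the homogeneity index $p$ and the number of derivatives lost, so that the final remainders indeed lie in $\Sigma\RR^{-\rho+m}_{K,K',1}[r,N]\otimes\MM_{2}(\CCC)$.
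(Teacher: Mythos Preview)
Your five-step scheme—diagonalize the order-two matrix, block-diagonalize the lower orders, apply a paracomposition to make the principal coefficient constant in $x$, perform a gauge transform to remove the order-one term, then reduce the remaining orders to their $x$-averages—is exactly the sequence carried out in Section \ref{chicane} (Propositions \ref{diagomax}, \ref{diago-lower}, \ref{egorov} and the two following propositions), composed in the same order into $\Phi(U)=\Phi_5\circ\Phi_4\circ\Phi_U^{\star}\circ\Phi_2\circ\Phi_1$. Two small slips worth fixing: in Step~3 the function $\beta$ must be \emph{odd} in $x$ (not even) for $x\mapsto x+\beta(x)$ to commute with $x\mapsto -x$; and in Step~4 the paper neither claims nor needs the order-one coefficient to be purely imaginary—the gauge $s$ in $\bonyw(e^{s})$ is taken complex, the only structural input being that $a_1^{(3)}$ is odd in $x$ (hence of zero mean) so that $\partial_x^{-1}$ is well defined.
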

During this sections we shall use the following notation. 
\begin{de}\label{commutatori}
We define the \emph{commutator} between the operators $A$ and $B$ as $[A,B]_-=A\circ B-B\circ A$ and the \emph{anti-commutator} as $[A,B]_{+}=A\circ B+B\circ A$.
\end{de}
\subsection{Diagonalization of the second order operator}\label{diagosecondord}
The goal of this subsection is to transform the matrix of symbols $E(\uno+A_2(U;t,x))(\ii\xi)^2$  (where $A_{2}(U;t,x)$ is defined in \eqref{formadiA}) into a diagonal one up to a smoothing term. 
\begin{prop}\label{diagomax}
Fix $N>0$, $ \rho\gg N$ and $K\gg\rho$, then there exist $s_0>0$, $r_0>0$, such that for any $s\geq s_0$, any $0<r\leq r_0$ and any $U\in B^K_s(I,r)$ solution of \eqref{sistemainiziale} the following holds. There exist two  (R,R,P)-maps 
$\Phi_1(U)[\cdot],\, \Psi_1(U)[\cdot] :C^{K-1}_{*\RRR}(I,\hcic^s)\rightarrow C^{K-1}_{*\RRR}(I,\hcic^s),$
satisfying the following
\begin{enumerate}
\item[(i)] there exists a constant $C$ depending on $s$, $r$ and $K$ such that
\begin{equation}\label{stime-descent-1}
\begin{aligned}
\norm{\Phi_1(U)V}{K-1,s}, \, \norm{\Psi_1(U)V}{K-1,s}&\leq \norm{V}{K-1,s}\big(1+C\norm{U}{K,s_0}\big)\\
\end{aligned}
\end{equation} 
for any $V$ in $C^{K-1}_{*\RRR}(I,\hcic^s)$;
\item[(ii)] $\Phi_1(U)[\cdot]-\uno$ and  $\Psi_1(U)[\cdot]-\uno$ belong to the class $\Sigma\mathcal{M}_{K,1,1}[r,N]\otimes\mathcal{M}_2(\CCC)$; $\Psi_1(U)[\Phi_1(U)[\cdot]]-\uno$ is a smoothing operator in the class $\Sigma\mathcal{R}^{-\rho}_{K,1,1}[r,N]\otimes\mathcal{M}_2(\CCC)$;
\item[(iii)] the function $V_1=\Phi_{1}(U)U$ solves the system
\begin{equation}\label{sistemainiziale-1}
\partial_t V_1 = \ii E\big(\Lambda V_1+\bonyw(A^{(1)}(U;t,x,\xi))V_1+R_1^{(1)}(U)V_1+R^{(1)}_{2}(U)U\big),
\end{equation}
where $\Lambda$ is defined in \eqref{DEFlambda}, $A^{(1)}(U;t,x,\xi)=A_2^{(1)}(U;t,x)(\ii\xi)^2+A_1^{(1)}(U;t,x)(\ii\xi)+A_0^{(1)}(U;t,x,\x)$ is a (R,R,P) matrix in the class $\Sigma\Gamma^2_{K,1,1}[r,N]\otimes\mathcal{M}_2(\CCC)$ with $A^{(1)}_j(U;t,x)$ in $\Sigma\mathcal{F}_{K,1,1}[r,N]\otimes\mathcal{M}_2(\CCC)$ for $j=1,2$, $A_{0}^{(1)}(U;t,x,\x)$
is a matrix of symbols in $\Sigma\Gamma^0_{K,1,1}[r,N]\otimes\mathcal{M}_2(\CCC)$  and 
\begin{equation}\label{diagonale-ordine-2tris}
A^{(1)}_2(U;t,x)=\left(\begin{matrix} a^{(1)}_2(U;t,x) & 0\\
 																	0	&	a^{(1)}_2(U;t,x)
																	\end{matrix}\right),
\end{equation}
with $a^{(1)}_2(U;t,x)$ real valued, the operators $R^{(1)}_1(U)[\cdot]$ and $R^{(1)}_2(U)[\cdot]$
are (R,R,P) smoothing operators in the class $\Sigma\mathcal{R}^{-\rho+2}_{K,1,1}[r,N]\otimes\mathcal{M}_2(\CCC)$.
\end{enumerate}
\end{prop}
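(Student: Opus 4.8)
\emph{Approach.} Following the diagonalization of the top order term in \cite{maxdelort}, the plan is to reduce the matrix $E(\uno+A_2(U;t,x))$ to a diagonal one by conjugating \eqref{sistemainiziale} with an explicit, matrix-valued, $x$-dependent paraproduct. By \eqref{formadiA} we have $A_2(U;t,x)=\sm{a_2}{b_2}{\ol{b_2}}{\ol{a_2}}$ with $a_2$ real, so
\[
E(\uno+A_2(U;t,x))=\sm{1+a_2}{b_2}{-\ol{b_2}}{-(1+a_2)},
\]
a trace-free matrix with determinant $|b_2|^2-(1+a_2)^2$. Choosing $r_0$ small so that $|a_2|,|b_2|\le 1/4$ on $B^K_s(I,r_0)$, the function $\lambda:=\sqrt{(1+a_2)^2-|b_2|^2}$ is well defined, real, with $\lambda-1\in\Sigma\mathcal{F}_{K,0,1}[r,N]$ (a convergent power series in $a_2,\ol{a_2},b_2,\ol{b_2}$), and the eigenvalues are $\pm\lambda$. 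Setting $c:=-b_2/(\lambda+1+a_2)\in\Sigma\mathcal{F}_{K,0,1}[r,N]$ (so $|c|<1$) and
\[
S(U;t,x):=\sm{1}{c}{\ol{c}}{1},\qquad S^{-1}(U;t,x)=\frac{1}{1-|c|^2}\sm{1}{-c}{-\ol{c}}{1},
\]
one checks $S^{-1}E(\uno+A_2)S=\mathrm{diag}(\lambda,-\lambda)$. Both $S^{\pm1}$ have the reality-preserving shape \eqref{prodotto}, are even in $x$ (since $a_2,b_2$ are, by the parity of $A$), and are reversibility preserving, inherited from $A$ via Remark \ref{considerazioni}; hence $\bonyw(S^{\pm1})$ are (R,R,P)-maps (if needed, Lemma \ref{lemmabello} is applied at the end to enforce the structure of the outputs).

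\emph{The conjugating maps.} Set $\Phi_1(U):=\bonyw(S^{-1}(U;t,x))$ and $\Psi_1(U):=\bonyw(S(U;t,x))$. By Remark \ref{inclusionifacili} and Proposition \ref{azionepara}, $\Phi_1(U)-\uno$ and $\Psi_1(U)-\uno$ lie in $\Sigma\mathcal{M}_{K,1,1}[r,N]\otimes\mathcal{M}_2(\CCC)$ and \eqref{stime-descent-1} holds. Since $S,S^{-1}$ are independent of $\xi$, all terms of the asymptotic composition symbol \eqref{espansione6} containing a $D_\xi$ or $D_\eta$ vanish in \eqref{espansione2}, so it truncates to the ordinary product; Proposition \ref{composizioniTOTALI}$(i)$ then yields
\[
\Psi_1(U)\Phi_1(U)=\bonyw(S\,S^{-1})+\Sigma\mathcal{R}^{-\rho}_{K,1,1}[r,N]\otimes\mathcal{M}_2(\CCC)=\uno+\Sigma\mathcal{R}^{-\rho}_{K,1,1}[r,N]\otimes\mathcal{M}_2(\CCC),
\]
proving $(i)$ and $(ii)$; in particular $U=\Psi_1(U)V_1+(\rho\text{-smoothing})U$ whenever $V_1=\Phi_1(U)U$.

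\emph{The conjugated equation.} With $V_1=\Phi_1(U)U$, differentiate in $t$ and substitute $\partial_t U$ from \eqref{sistemainiziale} together with the last relation of Step 2:
\[
\partial_t V_1=\bonyw(\partial_t S^{-1})\Psi_1(U)V_1+\bonyw(S^{-1})\,\ii E\big(\Lambda+\bonyw(A(U;t,x,\xi))+R(U)\big)\Psi_1(U)V_1+(\text{smoothing})U.
\]
By Lemma \ref{lemmatempo}$(i)$, $\partial_t S^{-1}\in\Sigma\mathcal{F}_{K,1,1}[r,N]\otimes\mathcal{M}_2(\CCC)$ — the only loss of a $t$-derivative, whence $K'=1$. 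Writing $\ii E\Lambda=\bonyw(\ii\,\mathtt{l}(\xi)E)$ (Remark \ref{formavera}) and $\ii E\bonyw(A)=\bonyw(\ii EA)$, the composition theorems (Propositions \ref{teoremadicomposizione}, \ref{composizioniTOTALI}) produce $\bonyw$ of $(S^{-1}\#(\ii\mathtt{l}E+\ii EA)\# S)_{\rho,N}$ plus smoothing remainders of order $\rho-2$ (two derivatives lost against the order-$2$ symbols $\mathtt{l}(\xi)\uno$ and $A_2(\ii\xi)^2$); the order-$2$ part of this symbol is $\ii\,S^{-1}E(\uno+A_2)S(\ii\xi)^2=\ii\,\mathrm{diag}(\lambda,-\lambda)(\ii\xi)^2$, and matching order-$2$ parts with $\ii E(\mathtt l(\xi)\uno+A^{(1)}_2(\ii\xi)^2)$ forces $E(\uno+A^{(1)}_2)=\mathrm{diag}(\lambda,-\lambda)$, i.e. $A^{(1)}_2=\mathrm{diag}(\lambda-1,\lambda-1)$, which is \eqref{diagonale-ordine-2tris} with $a^{(1)}_2=\lambda-1$ real. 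The remaining symbols of order $\le1$ are gathered into $A^{(1)}_1(U;t,x)(\ii\xi)+A^{(1)}_0(U;t,x,\xi)$ with $A^{(1)}_1\in\Sigma\mathcal{F}_{K,1,1}$ and $A^{(1)}_0\in\Sigma\Gamma^0_{K,1,1}$ (every negative-order correction from the expansion being, by definition, in $\Gamma^0$); the smoothing contributions, split according to whether they act on $V_1$ or on $U$, give $R^{(1)}_1(U)\in\Sigma\mathcal{R}^{-\rho+2}_{K,1,1}$ and $R^{(1)}_2(U)$. Finally, $\Phi_1(U)$ being a (R,R,P)-map conjugating the reversible vector field \eqref{sistemainiziale}, the resulting vector field is again reversible; since $\ii E\Lambda$ is reversible, $\bonyw(A^{(1)})+R^{(1)}_1+(\text{the }U\text{-part})$ is reversibility preserving, and Lemma \ref{lemmabello} replaces $A^{(1)}$ and the smoothing remainders by (R,R,P) ones without altering \eqref{diagonale-ordine-2tris} nor the reality of $a^{(1)}_2$. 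This yields \eqref{sistemainiziale-1} and proves $(iii)$.

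\emph{Main obstacle.} No single step is hard in isolation: the algebra of the diagonalization is explicit and the hyperbolicity $(1+a_2)^2>|b_2|^2$ is automatic for small data. The real work is the systematic bookkeeping in the last step — tracking every symbol order and the single lost $t$-derivative through the composition and time-differentiation lemmas — combined with the verification that each conjugation preserves the reality, reversibility and parity structure (via Remark \ref{considerazioni} and Lemma \ref{lemmabello}), which is precisely where the present multilinear, autonomous framework must depart from the corresponding reductions in \cite{maxdelort} and \cite{FIloc}.
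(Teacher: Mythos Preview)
Your proof is correct and follows essentially the same approach as the paper: explicit pointwise diagonalization of $E(\uno+A_2)$ by a matrix of eigenvectors, then conjugation by its Bony--Weyl quantization and bookkeeping via the composition/time-differentiation lemmata. Your diagonalizing matrix $S=\sm{1}{c}{\ol c}{1}$ is just the paper's eigenvector matrix $M$ rescaled by the scalar $\tfrac{2}{1+a_2+\lambda}$, so the resulting $a_2^{(1)}=\lambda-1$ coincides, and the rest of the argument (loss of one $t$-derivative from $\partial_t S^{-1}$, smoothing order $-\rho+2$, and the final invocation of Lemma~\ref{lemmabello}) matches the paper's line by line.
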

\begin{proof}
The matrix $E(\uno+A_2(U;t,x))$ in \eqref{sistemainiziale} and \eqref{formadiA} has  eigenvalues 
\[
\lambda^{\pm}(U;t,x)=\pm\sqrt{(1+a_2(U;t,x))^2-|b_2(U;t,x)|^2},
\]
 which are real and  well defined since $U$ is, by assumption, in $ B^K_s(I,r)$ with $r$ small enough. The matrix of eigenfunctions  is 
\begin{equation*}
M(U;t,x)=\frac12\left(         \begin{matrix} 1+a_2(U;t,x)+\lambda^+(U;t,x) & -b_2(U;t,x)\\
													-\ol{b_2}(U;t,x)   & 1+a_2(U;t,x)+\lambda^+(U;t,x) \end{matrix}\right),
\end{equation*}
it is invertible with inverse
\begin{equation*}
\begin{aligned}
M(U;t,x)^{-1}&=\frac{1}{{\rm{ det }} (M(U;t,x)) }\left(\begin{matrix} 1+a_2(U;t,x)+\lambda^+(U;t,x) & b_2(U;t,x)\\
													\ol{b_2}(U;t,x)   & 1+a_2(U;t,x)+\lambda^+(U;t,x) \end{matrix}\right)\\
                  &=2\left(\begin{matrix} 
                  \frac{1}{\lambda^+(U;t,x)} & \frac{b_2(U;t,x)}{\lambda^+(U;t,x)(\lambda^+(U;t,x)+a_2(U;t,x))}\\
                  \frac{\ol{b_2}(U;t,x)}{\lambda^+(U;t,x)(\lambda^+(U;t,x)+a_2(U;t,x))} & \frac{1}{\lambda^+(U;t,x)}\end{matrix}\right) 
                   \end{aligned}.
\end{equation*}
Therefore one has
\begin{equation}\label{diago}
M(U;t,x)^{-1}E(\uno+A_{2}(U;t,x))M(U;t,x)=\left(\begin{matrix} \lambda^+(U;t,x) & 0\\
																									0 & \lambda^-(U;t,x)\end{matrix}\right)=
										E\left(\begin{matrix} \lambda^+(U;t,x) & 0\\
																									0 & \lambda^+(U;t,x)\end{matrix}\right).
\end{equation}
By using the last item in Remark \ref{considerazioni}, since the matrix $E(\uno+A_2(U;x))$ is reversibility preserving, we have that  the matrix $M(U;t,x)$ (and therefore the matrix $M^{-1}(U;t,x)$ by the first item in Remark \ref{considerazioni}) is reversibility preserving. Arguing in the same way one deduces that both the matrices are (R,R,P). In particular the matrix in \eqref{diago} is (R,R,P).

Note that by Taylor expanding the  function $\sqrt{1+x}$ at $x=0$ one can prove that the matrices $M(U;t,x)-\uno$, $M(U;t,x)^{-1}-\uno$ and $M(U;t,x)^{-1}E(\uno+A_{2}(U;t,x))M(U;t,x)-E$ belong to the space $\Sigma\mathcal{F}_{K,0,1}[r,N]\otimes\mathcal{M}_2(\CCC)$. We set 
\begin{equation*}
\begin{aligned}
&\Phi_1(U;t,x)[\cdot]:=\bonyw\left(M(U;t,x)^{-1}\right)[\cdot],\qquad \Psi_1(U;t,x)[\cdot]:=\bonyw\left(M(U;t,x)\right)[\cdot],
\end{aligned}
\end{equation*}
these are (R,R,P) maps according Definition \ref{riassunto-mappe}, moreover by using Propositions \ref{composizioniTOTALI}, \ref{azionepara} and the discussion above one proves items $(i)$ and $(ii)$ of the statement. The function $V_1:=\Phi_1(U)U$ solves the equation
\begin{equation}\label{melone}
\begin{aligned}
\partial_t V_1=&\bonyw(\partial_t(M(U;t,x)^{-1}))U+\bonyw({M(U;t,x)^{-1})}\partial_t U\\
\stackrel{\eqref{sistemainiziale}}{=}&\bonyw(\partial_t(M(U;t,x)^{-1}))U+\\&+\bonyw(M(U;t,x)^{-1})\ii E\left(\Lambda U+\bonyw(A(U;t,x,\xi)U+R(U)U)\right).
\end{aligned}
\end{equation}
We know by previous discussions  that $U=\Psi_1(U)V_1+\widetilde{R}(U)U$ for a (R,R,P) smoothing operator $\widetilde{R}(U)$
belonging to
$\Sigma\mathcal{R}^{-\rho}_{K,0,1}[r,N]\otimes\mathcal{M}_2(\CCC)$; 
plugging this identity in the equation \eqref{melone} we get 
\begin{equation}\label{inserimento}
\begin{aligned}
\partial_t V_1&= \bonyw(\partial_t(M(U;t,x)^{-1}))\bonyw(M(U;t,x))V_1\\ &+\bonyw(M(U;t,x)^{-1})\ii E\left(\big(\Lambda +\bonyw(A(U;t,x,\xi)\big)\bonyw(M(U;t,x))V_1\right)+\widetilde{\widetilde{R}}(U)U
\end{aligned}
\end{equation}
where $\partial_t(M(U;t,x)^{-1})$ is a reversible, reality and parity preserving matrix of symbols in $\Sigma\Gamma^0_{K,1,1}[r,N]\otimes\mathcal{M}_2(\CCC)$ thanks to Lemma \ref{lemmatempo} and 
\begin{equation*}
\begin{aligned}
\widetilde{\widetilde{R}}(U)[U]=&\Big(\bonyw(\partial_tM(U;t,x)^{-1})+\bonyw(M(U;t,x)^{-1})\circ\bonyw(\ii E(\Lambda+A(U;t,x,\xi)))\Big)\left[\widetilde{R}(U)U\right]\\&+\bonyw(M(U;t,x)^{-1})\ii E R(U)U
\end{aligned}
\end{equation*}
is a reality, parity preserving  and reversible  smoothing operator (according to Def. \ref{riassunto-mappe}) in the class $\Sigma\mathcal{R}^{-\rho+2}_{K,1,1}[r,N]\otimes\mathcal{M}_2(\CCC)$ thanks to Proposition \ref{composizioniTOTALI} and Remark \ref{considerazioni}. Owing  to Proposition \ref{composizioniTOTALI}, the first summand in the r.h.s. of \eqref{inserimento} is equal to 
$$\bonyw\big(\partial_t(M(U;t,x)^{-1})M(U;t,x)\big)V_1 + Q_1(U)V_1,$$
 where $Q_1(U)[\cdot]$ is a reversible, parity and reality preserving  smoothing operator in the class $\Sigma\mathcal{R}^{-\rho}_{K,1,1}[r,N]\otimes\mathcal{M}_2(\CCC)$ and 
 $\partial_t(M(U;t,x)^{-1})M(U;t,x)$  is in $\Sigma\Gamma^0_{K,1,1}[r,N]\otimes\mathcal{M}_2(\CCC)$. Recalling that $A(U;t,x,\xi)$ has the form \eqref{formadiA}, $\Lambda$ has the form \eqref{DEFlambda}
 (see also Remark \ref{formavera}), 
 using Proposition \ref{composizioniTOTALI}  and \eqref{diago} we expand the second summand in the r.h.s. of \eqref{inserimento} as follows
 \begin{equation*}
 \begin{aligned}
\ii E\Lambda V_1+ \ii E& \bonyw\left(\left(\begin{matrix} \lambda^+(U;t,x)-1 & 0\\
				0 & \lambda^+(U;t,x)-1\end{matrix}\right)(\ii\xi)^2\right)V_1 + \ii E \bonyw\big(A_1^{(1)}(U;t,x)(\ii\xi)\big)V_1\\
				&+\ii E\bonyw\big(\widetilde{A}_0^{(1)}(U;t,x,\x)\big)V_1 +Q_2(U)V_1
\end{aligned}
 \end{equation*}
where $Q_2(U)[\cdot]$ is a smoothing operator in the class 
$\Sigma\mathcal{R}^{-\rho}_{K,1,1}[r,N]\otimes\mathcal{M}_2(\CCC)$, $\widetilde{A}_0^{(1)}(U;t,x,\x)$ 
and $A_1^{(1)}(U;t,x)$ are matrices of  symbols respectively 
 in  $\Sigma\Gamma^{0}_{K,1,1}[r,N]\otimes\mathcal{M}_2(\CCC)$ and  in $\Sigma\mathcal{F}_{K,1,1}[r,N]\otimes\mathcal{M}_2(\CCC)$. Moreover, by Lemmata \ref{lemmabello} and \ref{lemmamaxcompo}, each matrix 
 $\widetilde{A}_0^{(1)}(U;t,x,\x)$, $A_1^{(1)}(U;t,x)(\ii\xi)$ and $A_{2}^{(1)}(U;t,x)(\ii\xi)^2$ is (R,R,P) according to Definition \ref{riassunto-simbo} and the operator $Q_2(U)$ is reversible, reality and parity preserving according to Definition \ref{riassunto-mappe}. Therefore the theorem is proved by setting
\begin{equation*}
\begin{aligned}
a^{(1)}_2(U;t,x)&:=\lambda^+(U;t,x)-1,\\
A^{(1)}_0(U;t,x,\x)&:= \widetilde{A}^{(1)}_0(U;t,x,\x)-\ii E (\partial_t(M(U;t,x)^{-1})M(U;t,x)),\\
R_2^{(1)}(U)&:=-\ii E \widetilde{\widetilde{R}}(U)U,\qquad 
R_{1}^{(1)}(U):= -\ii E(Q_1(U)+Q_2(U)).
\end{aligned}
\end{equation*}
\end{proof}

\subsection{Diagonalization of lower order operators}\label{diagosecondord2}
 
 \begin{prop}\label{diago-lower}
There exist $s_0>0$, $r_0>0$, such that for any $s\geq s_0$, any $0<r\leq r_0$ and any $U\in B^K_s(I,r)$ solution of \eqref{sistemainiziale} the following holds. There exist two  
(R,R,P)-maps $\Phi_2(U)[\cdot],\, \Psi_2(U)[\cdot] :C^{K-\rho-2}_{*\RRR}(I,\hcic^s)\rightarrow C^{K-\rho-2}_{*\RRR}(I,\hcic^s),$
satisfying the following
\begin{enumerate}
\item[(i)] there exists a constant $C$ depending on $s$, $r$ and $K$ such that
\begin{equation}\label{stime-descent-2}
\begin{aligned}
\norm{\Phi_2(U)V}{K-\rho-2,s},\, \norm{\Psi_2(U)V}{K-\rho-2,s}&\leq \norm{V}{K-\rho-2,s}\big(1+C\norm{U}{K,s_0}\big)\\
\end{aligned}
\end{equation} 
for any $V$ in $C^{K-\rho-2}_{*\RRR}(I,\hcic^s)$;
\item[(ii)] $\Phi_2(U)[\cdot]-\uno$ and  $\Psi_2(U)[\cdot]-\uno$ belong to the class $\Sigma\mathcal{M}_{K,\rho+2,1}[r,N]\otimes\mathcal{M}_2(\CCC)$; $\Psi_2(U)[\Phi_2(U)[\cdot]]-\uno$ is a smoothing operator in the class $\Sigma\mathcal{R}^{-\rho}_{K,\rho+2,1}[r,N]\otimes\mathcal{M}_2(\CCC)$;
\item[(iii)] the function $V_2=\Phi_{2}(U)V_1$ (where $V_1$ is the solution of \eqref{sistemainiziale-1}) solves the system
\begin{equation}\label{sistemainiziale-2}
\partial_t V_2 = \ii E\big(\Lambda V_2+\bonyw(A^{(2)}(U;t,x,\xi))V_2+R_1^{(2)}(U)V_2+R^{(2)}_{2}(U)U\big),
\end{equation}
where $\Lambda$ is defined in \eqref{DEFlambda}, $A^{(2)}(U;t,x,\xi)=\sum_{j=-(\rho-1)}^2A_j^{(2)}(U;t,x,\xi)$ is a (R,R,P) matrix in the class $\Sigma\Gamma^2_{K,\rho+2,1}[r,N]\otimes\mathcal{M}_2(\CCC)$ with $A^{(2)}_j(U;t,x,\xi)$ diagonal matrices in $\Sigma\Gamma^j_{K,\rho+2,1}[r,N]\otimes\mathcal{M}_2(\CCC)$ for $j=-(\rho-1),\ldots, 2$ and 
\begin{equation}\label{diagonale-ordine-2}
\begin{aligned}
A^{(2)}_2(U;t,x,\xi)&=\left(\begin{matrix} a^{(2)}_2(U;t,x)(\ii\xi)^2 & 0\\
 																	0	&	a^{(2)}_2(U;t,x)(\ii\xi)^2
																	\end{matrix}\right),\quad a^{(2)}_2(U;t,x)\in\Sigma\mathcal{F}_{K,\rho+2,1}[r,N]\\
																	A^{(2)}_1(U;t,x,\xi)&=\left(\begin{matrix} a^{(2)}_1(U;t,x)(\ii\xi) & 0\\
 																	0	&	\ol{a^{(2)}_1(U;t,x)}(\ii\xi)
																	\end{matrix}\right),\quad a^{(2)}_1(U;t,x)\in\Sigma\mathcal{F}_{K,\rho+2,1}[r,N]\\
\end{aligned}
\end{equation}
with $a^{(2)}_2(U;t,x)$ equals to $a^{(1)}_2(U;t,x)$ in \eqref{diagonale-ordine-2tris} real valued, the operators $R^{(2)}_1(U)[\cdot]$ and $R^{(2)}_2(U)[\cdot]$
are (R,R,P) smoothing operators in the class $\Sigma\mathcal{R}^{-\rho+2}_{K,\rho+2,1}[r,N]\otimes\mathcal{M}_2(\CCC)$.
\end{enumerate}
\end{prop}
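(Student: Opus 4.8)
The plan is to diagonalize the remaining lower order part of the para-differential symbol by an iterative symbolic procedure, one order at a time, starting from the system \eqref{sistemainiziale-1} produced by Proposition \ref{diagomax}; since the divisors involved are powers of $\xi$, no small-divisor difficulty arises here (that is postponed to the Birkhoff normal form of Section \ref{sezBNF}), so the only real work is the bookkeeping of the classes and of the (R,R,P) structure through the iteration. Recall that in \eqref{sistemainiziale-1} one has $A^{(1)}=A_2^{(1)}(\ii\xi)^2+A_1^{(1)}(\ii\xi)+A_0^{(1)}$ with $A_2^{(1)}$ already diagonal, real, and untouched by what follows. I would construct $\Phi_2(U)$ as a finite composition $\Phi_2(U):=\Phi^{(-(\rho-1))}(U)\circ\cdots\circ\Phi^{(0)}(U)\circ\Phi^{(1)}(U)$, where for $j=1,0,-1,\dots,-(\rho-1)$ the elementary conjugation $\Phi^{(j)}(U):=\uno+\bonyw(G_j(U;t,x,\xi))$ removes the off-diagonal part of the para-differential symbol at order $j$; $\Psi_2(U)$ is the composition, in the reverse order, of the truncated Neumann inverses $\Psi^{(j)}(U):=\uno-\bonyw(G_j)+\bonyw(G_j)\circ\bonyw(G_j)-\cdots$. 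Since each $G_j$ has strictly negative order and homogeneity at least $1$ in $U$, Proposition \ref{composizioniTOTALI} gives that $\Psi^{(j)}(U)\circ\Phi^{(j)}(U)-\uno$, and hence $\Psi_2(U)\circ\Phi_2(U)-\uno$, is a smoothing operator in $\Sigma\mathcal{R}^{-\rho}_{K,\rho+2,1}[r,N]\otimes\mathcal{M}_2(\CCC)$, which is item (ii); the bounds \eqref{stime-descent-2} follow from Proposition \ref{azionepara} and the algebra of the classes $\Sigma\mathcal{M}$, exactly as in \eqref{stime-descent-1}.

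For the $j$-th elementary step: after the previous conjugations the equation has the shape $\partial_t W=\ii E\big(\Lambda W+\bonyw(B(U;t,x,\xi))W+(\text{smoothing})\big)$ with $B=B_{\mathrm{diag}}+B_{\mathrm{off}}$, where $B_{\mathrm{off}}$ has order $\le j$. Write $c_j(U;t,x,\xi)$ for the $(1,2)$-entry of the order-$j$ homogeneous component of $B_{\mathrm{off}}$; by the reality structure of Remark \ref{considerazioni} the $(2,1)$-entry is $\ol{c_j(U;t,x,-\xi)}$. Conjugating $\ii E\Lambda$ by $\Phi^{(j)}(U)$ produces, at order $j$, the matrix commutator $[\bonyw(G_j),\ii E\Lambda]$ with principal symbol $\ii\xi^2(EG_j-G_jE)$, whose $(1,2)$-entry is $2\ii\xi^2(G_j)_{12}$; hence one determines $G_j$ by the homological equation $2\ii\xi^2(G_j)_{12}+\ii c_j=0$ modulo lower order, i.e. $(G_j)_{12}:=-c_j(U;t,x,\xi)\,\theta(\xi)/(2\xi^2)$ and $(G_j)_{21}:=\ol{(G_j)_{12}(U;t,x,-\xi)}$, where $\theta$ is an even cutoff vanishing on the bounded set where $\xi^2$ — equivalently $\mathtt{l}(\xi)$ — is small. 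The discarded piece $c_j(1-\theta)$ has compact $\xi$-support, so its Bony--Weyl quantization is a smoothing remainder, and $G_j$ is a matrix of symbols of order $j-2\le -1$ with homogeneity $\ge1$ in $U$, so $\Phi^{(j)}(U)-\uno\in\Sigma\mathcal{M}_{K,K'_j,1}[r,N]\otimes\mathcal{M}_2(\CCC)$. After this conjugation the order-$j$ off-diagonal part is cancelled, and the newly generated diagonal and off-diagonal contributions appear only at orders $\le j-1$ — coming from the lower order terms of the symbolic calculus of Proposition \ref{composizioniTOTALI} and from the term $\bonyw(\partial_t G_j)W$ produced by conjugating $\partial_t$, which by Lemma \ref{lemmatempo} stays in the relevant classes at the cost of one more unit of $K'$ — together with one more smoothing remainder. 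Iterating from $j=1$ down to $j=-(\rho-1)$, the para-differential part becomes diagonal modulo symbols of order $\le -\rho$ (absorbed into $R_1^{(2)}$), the $\rho+1$ stages raise the $K'$-index from $1$ to $\rho+2$, and the surviving diagonal symbols have the form \eqref{diagonale-ordine-2}: the order-$2$ entry is untouched, so $a_2^{(2)}=a_2^{(1)}$ is real, and the reality relation $\ol{A(x,-\xi)}=SA(x,\xi)S$ forces the $(2,2)$-entries to be the stated conjugates of the $(1,1)$-entries; inspecting which orders each commutator can reach shows that no off-diagonal contribution survives below order $-(\rho-1)$ and that the order-$1$ diagonal part keeps the shape $a_1^{(2)}(\ii\xi)$.

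Finally, for the structural properties: each $c_j$ inherits the (R,R,P) structure from $B_{\mathrm{off}}$, and since $\xi^2$ is real and even and $\theta$ is even, $G_j$ is a (R,R,P) matrix of symbols in the sense of Definition \ref{riassunto-simbo}; hence $\Phi^{(j)}(U)$, $\Psi^{(j)}(U)$, and therefore $\Phi_2(U)$, $\Psi_2(U)$, are (R,R,P) maps by Remark \ref{considerazioni} and Lemma \ref{lemmamaxcompo}, while the propagated smoothing operators are made (R,R,P) by Lemma \ref{lemmabello}. They land in $\Sigma\mathcal{R}^{-\rho+2}_{K,\rho+2,1}[r,N]\otimes\mathcal{M}_2(\CCC)$ because the least regularizing composition occurring is that of the $\rho$-smoothing operator $\Psi_2(U)\Phi_2(U)-\uno$ with the order-$2$ operator $\Lambda$, exactly as in \eqref{inserimento}; the non-homogeneous term carried over from \eqref{sistemainiziale-1}, together with the terms coming from writing $V_1=\Psi_2(U)V_2+(\text{smoothing})U$, furnishes $R_2^{(2)}(U)U$. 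The main obstacle is precisely this simultaneous control: keeping every intermediate symbol and operator in the correct class $\Sigma\Gamma$ / $\Sigma\mathcal{R}$ with the right $K'$ while verifying reversibility, reality and parity preservation at each of the $\rho+1$ stages; the homological equations themselves are trivial to solve because, in contrast with Section \ref{sezBNF}, there are no genuine small divisors.
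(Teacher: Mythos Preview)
Your overall strategy --- iterate order by order with off-diagonal para-differential generators $G_j$, build approximate inverses by truncated Neumann series, and propagate the (R,R,P) structure via Lemmata \ref{lemmamaxcompo} and \ref{lemmabello} --- is exactly the paper's. But there is one concrete error in the homological equation that makes the descent in order fail as you state it. You solve $2\ii\xi^2(G_j)_{12}+\ii c_j=0$, i.e.\ you commute $G_j$ only against $\ii E\Lambda$. After Proposition \ref{diagomax}, however, the full diagonal order-$2$ symbol is $E\big(1+a_2^{(1)}(U;t,x)\big)(\ii\xi)^2$, not just $E(\ii\xi)^2$; the commutator $\big[\bonyw(G_j),\ii E\,\bonyw\!\big(A_2^{(1)}(\ii\xi)^2\big)\big]$ is \emph{also} of order $j$ and must be taken into account. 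With your choice $(G_j)_{12}=-c_j\,\theta(\xi)/(2\xi^2)$ the conjugation leaves the off-diagonal residual $-\ii\,a_2^{(1)}\,c_j$ at the \emph{same} order $j$ (only with one more degree of homogeneity in $U$). Hence after your ``step $j$'' the off-diagonal part is still of order $j$, and the iteration ``from $j=1$ down to $-(\rho-1)$'' in $\rho+1$ stages never pushes it to order $\le -\rho$.

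The fix is the one the paper makes: solve the homological equation with the full principal symbol, namely
\[
2\big(1+a_2^{(1)}(U;t,x)\big)(\ii\xi)^2\,(G_j)_{12}=-c_j,
\qquad
(G_j)_{12}:=-\frac{c_j(U;t,x,\xi)}{2\big(1+a_2^{(1)}(U;t,x)\big)}\,\gamma(\xi),
\]
with $\gamma(\xi)\sim(\ii\xi)^{-2}$ for $|\xi|\ge 1/2$ (and $\gamma(\xi)\sim(\ii\xi)^{-1}$ at the very first step, where the off-diagonal entry is $b_1^{(1)}(U;t,x)(\ii\xi)$). In the paper's language this is the vanishing of the \emph{anti}-commutator $[\bonyw(d),\bonyw((1+a_2^{(1)})(\ii\xi)^2)]_+$ on the off-diagonal slot, which is exactly what the matrix commutator with $E$ produces. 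Since $a_2^{(1)}$ is real, even in $x$, and reversibility preserving, your (R,R,P) verification goes through unchanged with this denominator. Everything else in your outline --- the Neumann inverses, the bookkeeping of $K'$ via the $\partial_t G_j$ contribution (Lemma \ref{lemmatempo}), and the placement of the remainders in $\Sigma\mathcal{R}^{-\rho+2}_{K,\rho+2,1}$ --- matches the paper.
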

\begin{proof}
Consider the following matrix
\begin{equation}\label{formadiD}
D_1(U;t,x,\xi):=\left(\begin{matrix}
0 & d_1(U;t,x,\xi)\\
\ol{d_1(U;t,x,-\xi)} & 0
\end{matrix}\right),
\end{equation}
where the symbol $d_1(U;t,x,\xi)$ is in the class $\Sigma\Gamma^{-1}_{K,1,1}[r,N]$. Note that $(\uno+D_1(U;t,x,\xi))(\uno-D_1(U;t,x,\xi))$ is equal to the identity modulo a matrix of symbols in  $\Sigma\Gamma^{-2}_{K,1,1}[r,N]\otimes\mathcal{M}_2(\CCC)$. Define the following matrices of symbols \begin{equation}\label{paramatr1}
\begin{aligned}
G_1(U;t,x,\xi)&:=\uno-\big(\uno+D_1(U;t,x,\xi)\sharp(\uno-D_1(U;t,x,\xi))\big)_{\rho}\in \Sigma\Gamma^{-2}_{K,1,1}[r,N]\otimes\mathcal{M}_2(\CCC)\\
Q_1(U;t,x,\xi)&:= (\uno-D_1)+\big((\uno-D_1)\sharp G_1\big)_{\rho}+\ldots+ \big((\uno-D_1)\sharp \underbrace{G_1\sharp\ldots\sharp G_1}_{\rho-times}\big)_{\rho},
\end{aligned}
\end{equation}
where in the right hand side of  the latter equation we omitted, with abuse of notation, the dependence on $U$, $x$ and $\xi$. Then one has 
\begin{equation}\label{paramatr2}
\begin{aligned}
\big((\uno+D_1)\sharp Q_1\big)_{\rho}&= \big((\uno+D_1)\sharp (\uno-D_1)\big)_{\rho}+\\
&+\big((\uno+D_1)\sharp (\uno-D_1)\sharp G_1\big)_{\rho}+\ldots+\big((\uno+D_1)\sharp (\uno-D_1)\sharp G_1\sharp\ldots\sharp G_1\big)_{\rho}\\
&= (\uno-G_1)+\big((\uno-G_1)\sharp G_1\big)_{\rho}+\ldots +\big((\uno-G_1)\sharp G_1\sharp\ldots\sharp G_1\big)_{\rho}\\
&=\uno -\big(G_1\sharp\ldots\sharp G_1)_{\rho},
\end{aligned}
\end{equation}
moreover the matrix of symbols $\big(G_1\sharp\ldots\sharp G_1)_{\rho}$ is in the class $\Sigma\Gamma^{-2\rho}_{K,1,1}[r,N]\otimes\mathcal{M}_2(\CCC)$. We set
\begin{equation}\label{phipsi2}
\Phi_{2,1}(U)[\cdot]:=\bonyw(\uno+D_1(U;t,x,\xi))[\cdot], \quad \Psi_{2,1}(U)[\cdot]:=\bonyw(Q_1(U;t,x,\xi)).
\end{equation}
The previous discussion proves that the maps $\Phi_{2,1}(U)$ and $\Psi_{2,1}(U)$ satisfy the estimates \eqref{stime-descent-2} with $\rho=0$, moreover thanks to Prop. \ref{composizioniTOTALI} and Remark \ref{inclusionifacili} there exists a smoothing remainder $R(U)$ in the class $\Sigma\mathcal{R}^{-2\rho}_{K,1,1}[r,N]\otimes\mathcal{M}_2(\CCC)$ such that  $(\Psi_{2,1}(U)\circ\Phi_{2,1}(U))V=V+R(U)U$.

 The function $V_{2,1}:=\Phi_{2,1}(U)V_1$ solves the equation
\begin{equation}\label{gatto}
\begin{aligned}
\partial_t V_{2,1}=&\bonyw(\partial_t D_{1}(U;t,x,\xi))V_{2,1}+\Phi_{2,1}(U)\Big[\ii E\bonyw(A^{(1)}(U;t,x,\xi))\Psi_{2,1}(U)V_{2,1}+\\
&+\ii E\Lambda \Psi_{2,1}(U)V_{2,1}+\ii E R^{(1)}_1(U)\Psi_{2,1}(U)V_{2,1}+\ii E R^{(1)}_2(U)U
\Big] +\\
&-\Big\{\bonyw(\partial_t D_{1}(U;t,x,\xi))+\Phi_{2,1}(U)\Big[\ii E\big(\Lambda+\bonyw(A^{(1)}(U;t,x,\xi)\big)+R^{(1)}_1(U))\Big]\Big\}R(U)U.
\end{aligned}
\end{equation}
Owing to Lemma \ref{lemmatempo} the matrix of symbols $\partial_tD_1(U;t,x,\xi)$ is in the class $\Sigma\Gamma^{-1}_{K,2,1}[r,N]\otimes\mathcal{M}_2(\CCC)$. The last summand in the r.h.s. of \eqref{gatto} is a (R,R,P) smoothing remainder in the class $\Sigma\mathcal{R}^{-2\rho+2}_{K,2,1}[r,N]\otimes\mathcal{M}_2(\CCC)$ thanks to Lemmata \ref{lemmatempo}, \ref{lemmamaxcompo}, \ref{lemmabello} and Proposition \ref{composizioniTOTALI}. Recalling that $A^{(1)}(U;t,x,\xi)=A_2^{(1)}(U;t,x)(\ii\xi)^2+A_1^{(1)}(U;t,x)(\ii\xi)+A_0^{(1)}(U;t,x,\x)$, we have, up to a (R,R,P) smoothing operator in the class $\Sigma\mathcal{R}^{-\rho}_{K,2,1}[r,N]\otimes\mathcal{M}_2(\CCC)$, that
\begin{equation*}
\begin{aligned}
\ii\Phi_{2,1}(U)\bonyw&{\Big(E\big(\uno+A_2^{(1)}(U;t,x)\big)(\ii\xi)^2}\Big)\Psi_{2,1}(U)=\ii\bonyw\Big(E\big(\uno+A_2^{(1)}(U;t,x)(\ii\xi)^2\big)\Big)+\\
+&\ii\Big[\bonyw\big(D_1(U;t,x,\xi)\big),\bonyw\big(E\big(\uno+A_2^{(1)}(U;t,x)\big)(\ii\xi)^2\big)\Big]_{-}+\bonyw\big(M_1(U;t,x,\xi)\big),
\end{aligned}
\end{equation*}
where $M_1(U;t,x,\xi)$ is a (R,R,P) matrix of symbols in $\Sigma\Gamma^0_{K,2,1}[r,N]\otimes\mathcal{M}_2(\CCC)$, here the commutator $[\cdot,\cdot]_{-}$ is defined in Definition \ref{commutatori} (actually $M_1(U;t,x,\xi)$ belongs 
to $\Sigma\Gamma^0_{K,1,1}[r,N]\otimes\mathcal{M}_2(\CCC)$, but we preferred  to embed it in the above larger class in order to simplify the notation; we shall do this simplification systematically). The conjugation of $A_1^{(1)}(U;t,x)(\ii\xi)$ is, up to a (R,R,P) smoothing operator in $\Sigma\mathcal{R}^{-\rho}_{K,2,1}[r,N]\otimes\mathcal{M}_2(\CCC)$,
\begin{equation*}
\ii\Phi_{2,1}(U)\bonyw{\Big(EA_1^{(1)}(U;t,x)(\ii\xi)}\Big)\Psi_{2,1}(U)=\ii\bonyw{\Big(EA_1^{(1)}(U;t,x)(\ii\xi)}\Big)+ \bonyw\Big(M_{2}(U;t,x,\xi)\Big)
\end{equation*}
for a (R,R,P) matrix of symbols $M_2(U;t,x,\xi)$ in $\Sigma\Gamma^0_{K,2,1}[r,N]\otimes\mathcal{M}_2(\CCC)$. Therefore the matrix of operators of order one is given by
\begin{equation*}
\begin{aligned}
\ii&\Big[\bonyw\big(D_1(U;t,x,\xi)\big),\bonyw\big(E\big(\uno+A_2^{(1)}(U;t,x)\big)(\ii\xi)^2\big)\Big]_{-}+\ii EA_{1}^{(1)}(U;t,x)(\ii\xi)=\\
&\ii E\left(\begin{matrix} \bonyw\big(a_1^{(1)}(U;t,x)(\ii\xi)\big) & M_+\\
 \ol{M}_{+} &  \bonyw\Big(\ol{a_1^{(1)}(U;t,x)}(\ii\xi)\Big)
\end{matrix}\right)
\end{aligned}
\end{equation*}
where $M_+:= \bonyw\big(b_{1}^{(1)}(U;t,x)(\ii\xi)\big)-\left[\bonyw(d_1(U;t,x,\xi)),\bonyw\big(1+a^{(1)}_2(U;t,x)(\ii\xi)^2\big)\right]_{+}$, thus our aim is to choose  the symbol $d_1(U;t,x,\xi)$ in such a way that $M_+$ at the principal order is $0$. Developing the compositions by means of Proposition \ref{composizioniTOTALI} we obtain that, at the level of principal symbol, we need to solve the equation
\begin{equation*}
2d_1(U;t,x,\xi)(1+a_2^{(1)}(U;t,x))(\ii\xi)^2=b_1^{(1)}(U;t,x)(\ii\xi).
\end{equation*}
We choose the symbol $d_1(U;t,x,\xi)$ as follows
\begin{equation}\label{defd1}
\begin{aligned}
d_1(U;t,x,\xi)& :=\left(\frac{b_1^{(1)}(U;t,x)}{2(1+a_2^{(1)}(U;t,x))}\right)\cdot\gamma(\xi),\quad
\gamma(\xi) :=\left\{\begin{matrix}  \frac{1}{\ii\xi}  &   |\xi|\geq1/2,   \\
 \mbox{odd continuation of class}\,\, C^{\infty} &    |\xi|\in [0,1/2). \\ \end{matrix}\right.
\end{aligned}
\end{equation}
 Note that by Taylor expanding the function $x\mapsto (1+x)^{-1}$ one gets that $d_1(U;t,x,\xi)$ in \eqref{defd1} is a symbol in the class $\Sigma\Gamma^{-1}_{K,2,1}[r,N]$, therefore by symbolic calculus (Prop. \ref{composizioniTOTALI}) one has that $M_+$ is equal to $\bonyw(\widetilde{b}_0(U;t,x,\xi))+\widetilde{R}(U)$ for a symbol $\widetilde{b}_0(U;t,x,\xi)$ in $\Sigma\Gamma^0_{K,2,1}[r,N]$ and a smoothing operator $\widetilde{R}(U)$ in $\Sigma\mathcal{R}^{-\rho}_{K,2,1}[r,N]$.
 
The symbol $d_1(U;t,x,\xi)$ defined in \eqref{defd1} satisfies the equation $\ol{d_1(U;-t,x,\xi)}=d_1({U_S;t,x,\xi})$ since both the symbols $a_2^{(1)}(U;t,x)$ and $b_1^{(1)}(U;t,x)$ fulfil the same condition, therefore by Remark \ref{considerazioni} (see the last item) we deduce that the matrix $D_1(U;t,x,\xi)$ is reversibility preserving. By hypothesis  the symbol $b_1^{(1)}(U;t,x)$ is odd in $x$, $a_2^{(1)}(U;t,x)$ is even  and then, since $\gamma(\xi)$ is odd in $\xi$, we have $d_1(U;t,x,\xi)=d_1(U;t,-x,-\xi)$, which means  that the matrix $D_1(U;t,x,\xi)$ is parity preserving. Furthermore $D_1(U;t,x,\xi)$ is reality preserving by construction, therefore we can deduce that such a matrix is a (R,R,P) matrix of symbols.
Therefore the function $V_{2,1}$ solves the system
\begin{equation*}
\begin{aligned}
\partial_t V_{2,1}&=\ii E\big(\Lambda V_{2,1}+\bonyw(A^{(2,1)}(U;t,x,\xi))V_{2,1}+R_{1}^{(2,1)}(U)V_{2,1}+R^{(2,1)}_2(U)U\big)\\
A^{(2,1)}(U;t,x,\xi)&=\left(\begin{matrix} a_2^{(1)}(U;t,x) & 0\\
																				0 &   a_2^{(1)}(U;t,x)\end{matrix}\right)(\ii\xi)^2+
\left(\begin{matrix} a_1^{(1)}(U;t,x) & 0\\
																				0 &   \ol{a_1^{(1)}(U;t,x)}\end{matrix}\right)(\ii\xi)+\\
																				+&
																			\left(\begin{matrix} a_0^{(2,1)}(U;t,x,\xi) & b_0^{(2,1)}(U;t,x,\xi) \vspace{0.4em}\\
																				\ol{ b_0^{(2,1)}(U;t,x,-\xi) }&   \ol{a_1^{(2,1)}(U;t,x,-\xi)}\end{matrix}\right).
\end{aligned}
\end{equation*}
Suppose now that there exist $j\geq1$ (R,R,P) maps $\Phi_{2,1}(U),\ldots, \Phi_{2,j}(U)$ such that $V_{2,j}:=\Phi_{2,1}(U)\circ\ldots\circ \Phi_{2,j}(U)[V_1]$ solves the problem 
\begin{equation*}
\partial_t V_{2,j}=\ii E\big(\Lambda V_{2,j}+\bonyw(A^{(2,j)}(U;t,x,\xi))V_{2,j}+R_{1}^{(2,j)}(U)V_{2,j}+R^{(2,j)}_2(U)U\big),
\end{equation*}
where $R_{1}^{(2,j)}(U)$ and $R_{2}^{(2,j)}(U)$ are in the class $\Sigma\mathcal{R}^{-\rho}_{K,j+1,1}[r,N]\otimes\mathcal{M}_2(\CCC)$ and 
\begin{equation*}
\begin{aligned}
A^{(2,j)}(U;t,x,\xi)&=\sum_{j'=-j}^2A_{j'}^{(2,j)}(U;t,x,\xi),\\
A_{j'}^{(2,j)}(U;t,x,\xi)&=\left( \begin{matrix} a_{j'}^{(2,j)}(U;t,x,\xi) & 0\\ 
0 & \ol{a_{j'}^{(2,j)}(U;t,x,-\xi)} \end{matrix}\right)\in\Sigma\Gamma^{j'}_{K,j+1,1}[r,N]\otimes\mathcal{M}_{2}(\CCC), \quad  \,\, j'=-j+1,\ldots, 2,\\
a_{2}^{(2,j)}(U;t,x,\xi)&=a^{(1)}_2(U;t,x)(\ii\xi)^2\in\RRR\\
A_{-j}^{(2,j)}(U;t,x,\xi)&= \left( \begin{matrix} a_{-j}^{(2,j)}(U;t,x,\xi) & b_{-j}^{(2,j)}(U;t,x,\xi)
\vspace{0.4em}\\ 
\ol{b_{-j}^{(2,j)}(U;t,x,-\xi)} & \ol{a_{-j}^{(2,j)}(U;t,x,-\xi)} \end{matrix}\right)\in\Sigma\Gamma^{-j}_{K,j+1,1}[r,N]\otimes\mathcal{M}_{2}(\CCC).
\end{aligned}
\end{equation*}
We now explain how to construct a map $\Phi_{j+1}(U)$ which diagonalize the matrix $A_{-j}^{(2,j)}(U;t,x,\xi)$ up to lower order terms. Define
\begin{equation*}
\Phi_{2,j+1}(U):= \uno+\bonyw\big(D_{j+1}(U;t,x,\xi)\big); \quad D_{j+1}(U;t,x,\xi):=\left(\begin{matrix}0 & d_{j+1}(U;t,x,\xi) \vspace{0.4em}\\ \ol{d_{j+1}(U;t,x,-\xi)} & 0  \end{matrix}\right),
\end{equation*}
with $d_{j+1}(U;t,x,\xi)$ a symbol in $\Sigma\Gamma^{-{j-2}}_{K,2+j,1}[r,N]$. An approximate inverse $\Psi_{2,j+1}(U):=\bonyw{(Q_{j+1}(U))}$ can be constructed  exactly as done in \eqref{paramatr1} and \eqref{paramatr2}. Reasoning as done above one can prove that  the function $V_{2,j+1}:=\Phi_{2,j+1}(U)V_{2,j}$ solves the problem
\begin{equation}\label{diago-1-j}
\begin{aligned}
\partial_t V_{2,j+1}&=\ii E\big(\Lambda V_{2,j+1}+\sum_{j'=-j+1}^2\bonyw(A_{j'}^{(2,j)}(U;t,x,\xi))V_{2,j+1}+R^{(2,j+1)}_1(U)V_{2,j+1}+R^{(2,j+1)}_2(U)U\Big)\\
&+\ii\left[\bonyw\big({D_{j+1}(U;t,x,\xi)}\big),E\bonyw\big(\uno+A_2^{(2,j)}(U;t,x,\xi)\big)\right]_{-}+\ii E\bonyw\big(A_{-j}^{(2,j)}(U;t,x,\xi)\big).
\end{aligned}
\end{equation}
Developing the commutator above  one obtains that the sum of the last two terms in \eqref{diago-1-j} is equal to 
\begin{equation*}
\ii E\left(\begin{matrix} \bonyw\big(a_{-j}^{(2,j)}(U;t,x,\xi)\big) & M_{j,+}\vspace{0.4em}\\
 \ol{M}_{j,+} &  \bonyw\big(\ol{a_{-j}^{(2,j)}(U;t,x,-\xi)}\big),
\end{matrix}\right)
\end{equation*}
where $M_{j,+}:= \bonyw\big(b_{-j}^{(2,j)}(U;t,x,\xi)\big)-\left[\bonyw(d_{j+1}(U;t,x,\xi)),\bonyw\big((1+a^{(1)}_2(U;t,x))(\ii\xi)^2\big)\right]_{+}$, therefore, repeating the same argument used  in the case of the symbol of order one, one has to choose 
\begin{equation*}
\begin{aligned}
d_{j+1}(U;t,x,\xi)& :=\left(\frac{b_{-j}^{(2,j)}(U;t,x,\xi)}{2\big(1+a_2^{(1)}(U;t,x)\big)}\right)\cdot\gamma(\xi),\quad
\gamma(\xi)& :=\left\{\begin{matrix}  \frac{1}{(\ii\xi)^2}  &   |\xi|\geq1/2,   \\
 \mbox{odd continuation of class}\,\, C^{\infty} &    |\xi|\in [0,1/2). \\ \end{matrix}\right.
\end{aligned}
\end{equation*}
Therefore we obtain the thesis of the theorem by setting $\Phi_2(U):=\Phi_{2,1}(U)\circ\ldots\circ\Phi_{2,\rho-1}(U)$ and $\Psi_2(U):=\Psi_{2,\rho-1}(U)\circ\ldots\circ\Psi_{2,1}(U)$.
\end{proof} 

\subsection{Reduction to constant coefficients: paracomposition}\label{diagosecondord3}
In this section we shall reduce  the  operator $\bonyw(A^{(2)}_{2}(U;t,x,\xi))$, given in terms of the diagonal matrix 
\eqref{diagonale-ordine-2}, to a constant coefficients one up to smoothing remainders. We shall conjugate the system \eqref{sistemainiziale-2} under the paracomposition operator $\Phi^{\star}_U:=\Omega_{B(U)}\cdot\uno$ defined in Section 2.5 of \cite{maxdelort}, induced by a diffeomorphism of $\TTT^1$, $\Phi_U: x\mapsto x+\beta(U;t,x)$, for a small periodic real valued function $\beta(U;t,x)$ to be chosen. Indeed in Lemma 2.5.2 of \cite{maxdelort} it is shown that if $\beta(U;t,x)$ is a real valued  function in $\Sigma\mathcal{F}_{K,K',1}[r,N]$ with $r$ small enough, then for any $K'\leq K$ the map $\Phi_U$ is a diffeomorphism of the torus into itself, whose inverse may be written as $\Phi_{U}^{-1}:y\mapsto y+\gamma(U;t,y)$ for some small and real valued function $\gamma(U;t,y)$ in $\Sigma\mathcal{F}_{K,K',1}[r,N]$.
We recall below the alternative construction of $\Phi^{\star}_{U}$ given in \cite{maxdelort}.
Set $\beta{(t,x)}:=\beta(U;t,x)$ and we define the following quantities 
\begin{equation}\label{bbb}
\begin{aligned}
B(\tau;t,x,\x)=B(\tau,U;t,x,\x)&:=-\ii b(\tau;t,x)(\ii \x), \quad 
b(\tau;t,x):=\frac{\be(t,x)}{(1+\tau \be_{x}(t,x))}.
\end{aligned}
\end{equation}
Then one defines the paracomposition operator associated to the diffeomorphism $\Phi_U$ as $\Phi^{\star}_U:=\Omega_{B(U)}(1)\cdot\uno$, where $\Omega_{B(U)}(\tau)$ is the flow of the linear para-differential equation
\begin{equation}\label{flow}
\left\{
\begin{aligned}
&\frac{d}{d\tau}\Omega_{B(U)}(\tau)=\ii \bonyw{(B(\tau;t,x,\x))}\Omega_{B(U)}(\tau),\\
&\Omega_{B(U)}(0)=\rm{id}.
\end{aligned}\right.
\end{equation}
The well-posedness issues of the problem \eqref{flow} are studied in Lemma 2.5.3 of \cite{maxdelort}.

The goal is to conjugate the system \eqref{sistemainiziale-2} under the paracomposition operator $\Phi_U^{\star}$. Therefore it is necessary to study the conjugate of the differential operator $\partial_t$. This has been already done in Proposition 2.5.9 in \cite{maxdelort}, we restate below such a proposition being slightly more precise on the thesis. In other words we emphasize an algebraic property that could be deduced from the  proof therein. Such a property was not crucial in their context, however will be very useful for our purposes.
\begin{prop}\label{coniugazione-tempo-1}
Let $\rho>0$, $K\gg \rho$, $r\ll 1$ and a symbol $\beta(U;t,x)$ in $\Sigma\mathcal{F}_{K,\rho+2,1}[r,N]$. If, according to notation \eqref{bbb}, $\Omega_{B(U)}(\tau)$ is the flow of \eqref{flow}, then
\begin{equation}\label{coniugazione-tempo}
\begin{aligned}
\Omega_{B(U)}(\tau)\circ\partial_t\circ\Omega_{B(U)}^{-1}&=\del_{t}+\Omega_{B(U)}(\tau)\circ(\del_{t}\Omega_{B(U)}^{-1}(\tau))
\\
&=\partial_t+\bonyw\big(e(U;t,x,\x)\big)+R(U;t),
\end{aligned}
\end{equation}
where 
\begin{equation}\label{simboloE}
e(U;t,x,\xi)=e_{1}(U;t,x)(\ii\x)+e_{0}(U;t,x,\x),
\end{equation}
with $e_1(U;t,x)\in\Sigma\mathcal{F}_{K,\rho+3,1}[r,N]$, ${e}_0(U;t,x,\xi)$ is in $\Sigma\Gamma^{-1}_{K,\rho+3,1}[r,N]$ and $R(U;t)$ is a smoothing remainder belonging to $\Sigma\mathcal{R}^{-\rho}_{K,\rho+3,1}[r,N]$.
Moreover  ${\rm Re}(e)\in \Sigma\Gamma^{-1}_{K,\rho+3,1}[r,N]$.
\end{prop}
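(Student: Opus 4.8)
The plan is to follow the scheme of Proposition 2.5.9 in \cite{maxdelort}, tracking the extra algebraic information (the reality of $\mathrm{Re}(e)$ up to order $-1$) through each step. First I would recall the basic identity: since $\Omega_{B(U)}(\tau)\circ\Omega_{B(U)}^{-1}(\tau)=\mathrm{id}$, differentiating in $t$ gives $\Omega_{B(U)}(\tau)\circ(\del_t\Omega_{B(U)}^{-1}(\tau))=-(\del_t\Omega_{B(U)}(\tau))\circ\Omega_{B(U)}^{-1}(\tau)$, and from the defining ODE \eqref{flow}, $\del_t\Omega_{B(U)}(\tau)$ solves an inhomogeneous version of \eqref{flow} with forcing term $\ii\bonyw(\del_t B(\tau;t,x,\x))\Omega_{B(U)}(\tau)$. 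Using Duhamel and the fact that $\del_t B\in\Sigma\Gamma^1_{K,\rho+3,1}[r,N]$ (by Remark \ref{prodottodisimboli2} and the fact that $U$ solves an equation of the form \eqref{tempo1}, so Lemma \ref{lemmatempo}(i) applies after writing $\del_t\be$ in terms of the equation), one gets that $\Omega_{B(U)}(\tau)\circ(\del_t\Omega_{B(U)}^{-1}(\tau))$ is, up to conjugation by $\Omega_{B(U)}$ (which is bounded on all Sobolev spaces by Lemma 2.5.3 of \cite{maxdelort}), a para-differential operator of order $1$ plus a $\rho$-smoothing remainder. This gives the form \eqref{coniugazione-tempo} with $e\in\Sigma\Gamma^1_{K,\rho+3,1}[r,N]$ having principal symbol essentially $-\del_t b(\tau;t,x)(\ii\x)|_{\tau=1}$ modulo symbolic calculus corrections.

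Next I would isolate the orders. Writing $e=e_1(U;t,x)(\ii\x)+e_0(U;t,x,\x)$ with $e_1\in\Sigma\calF_{K,\rho+3,1}[r,N]$ and $e_0\in\Sigma\Gamma^0_{K,\rho+3,1}[r,N]$: actually one must check $e_0$ lands in $\Gamma^{-1}$ rather than merely $\Gamma^0$, which follows because the flow $\Omega_{B(U)}$ is the paracomposition by a change of variables, so the subprincipal terms have a specific structure — this is exactly what the proof in \cite{maxdelort} establishes, and I would cite it. The new content is the claim $\mathrm{Re}(e)\in\Sigma\Gamma^{-1}_{K,\rho+3,1}[r,N]$, equivalently that $\mathrm{Re}(e_1)=0$, i.e.\ $e_1$ is purely imaginary. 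To see this I would use that $\ii\bonyw(B(\tau;t,x,\x))$ is skew-adjoint up to order $0$: indeed $B(\tau;t,x,\x)=-\ii b(\tau;t,x)(\ii\x)$ with $b$ real, so $\bonyw(B)$ has real-valued symbol $b(\tau;t,x)\x$ and is therefore self-adjoint by Remark \ref{aggiungoaggiunto}, hence $\ii\bonyw(B)$ is skew-adjoint and $\Omega_{B(U)}(\tau)$ is, up to smoothing, unitary on $L^2$. Differentiating the relation $\Omega^*\Omega=\mathrm{id}+(\text{smoothing})$ in $t$ and combining with the expression for $\del_t\Omega^{-1}$, one finds that $\bonyw(e)+\bonyw(e)^*$ is of order $-1$ plus smoothing; by \eqref{aggiunto} this means $e+\bar e\in\Gamma^{-1}$, i.e.\ $2\,\mathrm{Re}(e)\in\Gamma^{-1}$, which forces $\mathrm{Re}(e_1)=0$ since the order-$1$ part of $e+\bar e$ is $2\,\mathrm{Re}(e_1)(\ii\x)$.

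The main obstacle I expect is the bookkeeping of the loss of $t$-derivatives and the precise index tracking: each application of $\del_t$ to a symbol costs one $K'$-index (Remark \ref{prodottodisimboli2}, Lemma \ref{lemmatempo}(i)), and here one $\del_t$ lands on $B$ and another effective $\del_t$ comes from expressing $\del_t\be$ via the equation \eqref{tempo1} that $U$ satisfies, so one must verify everything closes at index $\rho+3$ starting from $\be\in\Sigma\calF_{K,\rho+2,1}[r,N]$. The other delicate point is making the skew-adjointness argument rigorous up to the correct smoothing order, since $\Omega_{B(U)}$ is only approximately unitary — one has to control the smoothing remainder in $\Omega^*\Omega-\mathrm{id}$ and check it does not spoil the order-$1$ cancellation; this is where I would lean most heavily on the symbolic calculus of Proposition \ref{composizioniTOTALI} together with \eqref{aggiunto}. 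Everything else — the existence and boundedness of the flow, the asymptotic expansion of the conjugated operator — is quoted directly from Section 2.5 of \cite{maxdelort}.
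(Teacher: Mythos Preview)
Your overall strategy is sound and matches the paper's in its first half --- both lean on Proposition~2.5.9 of \cite{maxdelort} for the basic decomposition $\partial_t+\bonyw(e)+R$ and for the finer structure $e_0\in\Sigma\Gamma^{-1}$. The paper's own proof is in fact a one-line citation: it observes that the \emph{only} new information beyond Proposition~2.5.9 of \cite{maxdelort} is the homogeneity of the principal part of $e$ in~$\xi$, which follows from the explicit formulae (2.5.49)--(2.5.50) there together with the homogeneity of $B$ in \eqref{bbb}. The statement $\mathrm{Re}(e)\in\Sigma\Gamma^{-1}$ is absorbed into that same citation.

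Where you diverge is in giving an independent unitarity argument for $\mathrm{Re}(e)\in\Sigma\Gamma^{-1}$. This is a legitimate alternative, and in fact simpler than you anticipate: since $B(\tau;t,x,\xi)=b(\tau;t,x)\xi$ with $b$ real, $\bonyw(B)$ is \emph{exactly} self-adjoint (Remark~\ref{aggiungoaggiunto}), so $\Omega_{B(U)}(\tau)$ is exactly $L^2$-unitary and your worry about ``approximate'' unitarity is unfounded --- one gets $\bonyw(2\,\mathrm{Re}(e))=-(R+R^*)\in\Sigma\RR^{-\rho}$ directly. (To pass from ``$\bonyw(\mathrm{Re}(e))$ is smoothing'' to ``$\mathrm{Re}(e)\in\Gamma^{-1}$'' at the level of symbols, the cleanest route is simply to replace $e$ by $\ii\,\mathrm{Im}(e)$ and absorb the discarded real part into~$R$.) There is however a sign slip in your last computation: the order-one part of $e+\bar e$ is $e_1(\ii\xi)+\overline{e_1}\,\overline{(\ii\xi)}=(e_1-\overline{e_1})(\ii\xi)=2\ii\,\mathrm{Im}(e_1)\cdot(\ii\xi)$, not $2\,\mathrm{Re}(e_1)(\ii\xi)$; so the correct conclusion is $\mathrm{Im}(e_1)=0$ (i.e.\ $e_1$ is real), not $\mathrm{Re}(e_1)=0$. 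This does not affect the stated claim $\mathrm{Re}(e)\in\Gamma^{-1}$, which you have already obtained from $e+\bar e\in\Gamma^{-1}$.
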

\begin{proof}
The difference between our thesis and the one of Prop. 2.5.9 in \cite{maxdelort} is 
that we have the additional information that, at the highest order, the symbol $e(U;t,x,\x)$ is homogeneous in the variable $\x$.
This properties follows by equations $(2.5.49)$, $(2.5.50)$ in \cite{maxdelort} and the fact that
the symbol $B(U;t,x,\x)$ in \eqref{bbb}
is homogeneous in $\x$.
\end{proof}

\begin{prop}\label{egorov}
In the notation of Prop. \ref{coniugazione-tempo-1} there exists a real valued function 
$\beta(U;t,x) \in\Sigma\mathcal{F}_{K,\rho+2,1}[r,N]$  such that the function $V_{3}:=\Phi^{\star}_U V_2$ (where $V_2$ is a solution of \eqref{sistemainiziale-2}) solves the following problem
\begin{equation}\label{sistemainiziale-3}
\partial_t V_3 = \ii E\big(\Lambda V_3+\bonyw(A^{(3)}(U;t,x,\xi))V_3+R_1^{(3)}(U)V_3+R^{(3)}_{2}(U)U\big),
\end{equation}
where $A^{(3)}(U;t,x,\xi)=\sum_{j=-(\rho-1)}^2A_j^{(3)}(U;t,x,\xi)$ is a (R,R,P) matrix in the class $\Sigma\Gamma^2_{K,\rho+3,1}[r,N]\otimes\mathcal{M}_2(\CCC)$ with $A^{(3)}_j(U;t,x,\xi)$ diagonal matrices in $\Sigma\Gamma^j_{K,\rho+3,1}[r,N]\otimes\mathcal{M}_2(\CCC)$ for $j=-(\rho-1),\ldots, 2$ and 
\begin{equation}\label{diagonale-ordine-2-doppio}
\begin{aligned}
A^{(3)}_2(U;t,\xi)&=\left(\begin{matrix} a^{(3)}_2(U;t)(\ii\xi)^2 & 0\\
 																	0	&	a^{(3)}_2(U;t)(\ii\xi)^2
																	\end{matrix}\right),\quad a^{(3)}_2(U;t)\in\Sigma\mathcal{F}_{K,\rho+3,1}[r,N]\\
																	A^{(3)}_1(U;t,x,\xi)&=\left(\begin{matrix} a^{(3)}_1(U;t,x)(\ii\xi) & 0\\
 																	0	&	\ol{a^{(3)}_1(U;t,x)}(\ii\xi)
																	\end{matrix}\right),\quad a^{(3)}_1(U;t,x)\in\Sigma\mathcal{F}_{K,\rho+3,1}[r,N]\\
																	\end{aligned}
\end{equation}
with $a^{(3)}_2(U;t)$ real valued and independent of $x$,  the operators $R^{(3)}_1(U)[\cdot]$ and $R^{(3)}_2(U)[\cdot]$
are (R,R,P) smoothing operators in the class $\Sigma\mathcal{R}^{-\rho+m}_{K,\rho+3,1}[r,N]\otimes\mathcal{M}_2(\CCC)$
for some $m=m(N)>0$.
\end{prop}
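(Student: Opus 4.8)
The plan is to conjugate the system \eqref{sistemainiziale-2} by the paracomposition $\Phi^{\star}_U=\Omega_{B(U)}(1)\cdot\uno$ associated to the diffeomorphism $\Phi_U:x\mapsto x+\beta(U;t,x)$, and to choose $\beta$ so that the $x$-dependence is removed from the principal symbol $a^{(2)}_2(U;t,x)(\ii\xi)^2$. Recall from Section 2.5 of \cite{maxdelort} the Egorov-type result: if $a$ is a symbol of order $m$, then $\Omega_{B(U)}(1)\circ\bonyw(a(U;t,x,\xi))\circ\Omega_{B(U)}^{-1}(1)=\bonyw(a(U;t,\Phi_U(x),(\partial_x\Phi_U(x))^{-1}\xi))$ up to a symbol of order $m-1$ and a smoothing remainder in the appropriate class. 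Applying this to $A^{(2)}_2(U;t,x,\xi)=a^{(2)}_2(U;t,x)(\ii\xi)^2\uno$ produces the new principal symbol $a^{(2)}_2(U;t,\Phi_U(x))\,(1+\beta_x(U;t,x))^{-2}(\ii\xi)^2\uno$. So I would impose the homological-type equation
\begin{equation}\label{pianoegorov}
a^{(2)}_2(U;t,\Phi_U(x))\,(1+\beta_x(U;t,x))^{-2}=a^{(3)}_2(U;t),
\end{equation}
where $a^{(3)}_2(U;t)$ is the $x$-average-type constant to be determined. Setting $y=\Phi_U(x)$ and using the inverse diffeomorphism $\Phi_U^{-1}:y\mapsto y+\gamma(U;t,y)$, equation \eqref{pianoegorov} becomes, after the change of variables, $a^{(2)}_2(U;t,y)\,(1+\gamma_y(U;t,y))^{2}=a^{(3)}_2(U;t)$, i.e. $1+\gamma_y=\big(a^{(3)}_2(U;t)/a^{(2)}_2(U;t,y)\big)^{1/2}$. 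Integrating over $\TTT$ and using that $\gamma$ is periodic (so $\int_\TTT\gamma_y\,dy=0$) forces
\begin{equation}\label{pianocostante}
a^{(3)}_2(U;t):=\left(\frac{1}{2\pi}\int_{\TTT}\frac{dy}{\sqrt{a^{(2)}_2(U;t,y)}}\right)^{-2},
\end{equation}
and then $\gamma(U;t,y)$ is recovered by integrating the now-explicit right-hand side and subtracting its mean; finally $\beta$ is obtained as the generator of the inverse diffeomorphism. Since $a^{(2)}_2=a^{(1)}_2=\lambda^+-1$ is real, close to $1$, even in $x$ and in $\Sigma\calF_{K,\rho+2,1}[r,N]$, Taylor-expanding the square root and the geometric series shows $\gamma$ and $\beta$ are real, even in $x$, and lie in $\Sigma\calF_{K,\rho+2,1}[r,N]$, with $a^{(3)}_2(U;t)$ real and $x$-independent in $\Sigma\calF_{K,\rho+3,1}[r,N]$.

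With $\beta$ so chosen, I would compute the conjugated system. Differentiating $V_3=\Phi^{\star}_U V_2$ in time and using Proposition \ref{coniugazione-tempo-1} gives $\partial_t V_3=\Phi^{\star}_U(\partial_t V_2)+\big(\bonyw(e(U;t,x,\xi))+R(U;t)\big)V_3$ after writing $V_2=(\Phi^{\star}_U)^{-1}V_3$ up to a $\rho$-smoothing remainder; here $e$ has order $1$ with real part of order $-1$, by the sharpened statement of Proposition \ref{coniugazione-tempo-1}. Then I substitute the equation \eqref{sistemainiziale-2} for $\partial_t V_2$ and conjugate each term: the term $\ii E\Lambda$ commutes with the scalar paracomposition up to lower order (the symbol $\mathtt{l}(\xi)$ is $x$-independent, so $\Phi^{\star}_U\Lambda(\Phi^{\star}_U)^{-1}-\Lambda$ is of order $1$, contributing to $A^{(3)}_1$ and lower); the term $\ii E\bonyw(A^{(2)}_2)$ becomes $\ii E\bonyw(a^{(3)}_2(U;t)(\ii\xi)^2)\uno$ up to order-$1$ and smoothing contributions by \eqref{pianoegorov}; the terms $\ii E\bonyw(A^{(2)}_j)$ for $j\le 1$ stay at their order by the symbolic calculus of Proposition \ref{composizioniTOTALI}; and the smoothing remainders $R^{(2)}_1(U)V_2$, $R^{(2)}_2(U)U$ conjugate to smoothing remainders, losing at most finitely many derivatives (hence landing in $\Sigma\RR^{-\rho+m}_{K,\rho+3,1}[r,N]\otimes\MM_2(\CCC)$ for some $m=m(N)>0$). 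Absorbing the extra order-$1$ pieces (including those coming from $e$, from $[\Phi^{\star}_U,\Lambda]$, and from the subprincipal term of the Egorov expansion) into $A^{(3)}_1$, and the order $\le 0$ pieces into the lower-order part, one gets the claimed form \eqref{sistemainiziale-3}, \eqref{diagonale-ordine-2-doppio}.

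Throughout I would track the algebraic structure. Since $\beta$ is real, $\Phi_U$ is a genuine real diffeomorphism of $\TTT$, so $\Phi^{\star}_U$ is reality preserving; since $\beta$ is even in $x$, $\Phi_U$ commutes with $x\mapsto -x$, so $\Phi^{\star}_U$ is parity preserving; and since $\beta\in\Sigma\calF_{K,\rho+2,1}[r,N]$ is built from the reversibility-preserving symbols $a^{(1)}_2, b^{(1)}_1$ it inherits $\beta(U_S;t,x)=\ol{\beta(U;-t,x)}$, which makes $\Phi^{\star}_U$ reversibility preserving. Hence, by Lemma \ref{lemmamaxcompo}, the conjugated para-differential matrix and smoothing remainders are (R,R,P); applying Lemma \ref{lemmabello} lets me assume each homogeneous block $A^{(3)}_j$ is a (R,R,P) matrix of the stated diagonal shape, and item (ii) of Lemma \ref{lemmabello} preserves the reality of the principal symbol $a^{(3)}_2(U;t)$. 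The loss of derivatives $K'=\rho+3$ is bookkept by noting that $\beta$ depends on at most $\rho+2$ time-derivatives of $U$, and Proposition \ref{coniugazione-tempo-1} together with the flow estimates from Lemma 2.5.3 of \cite{maxdelort} account for one extra. The estimates \eqref{stime-descent-2}-type bounds on $\Phi^{\star}_U$ follow from the well-posedness of \eqref{flow}. The main obstacle I anticipate is purely technical rather than conceptual: carefully organizing the Egorov expansion of the full matrix $A^{(2)}(U;t,x,\xi)=\sum_{j=-(\rho-1)}^2 A^{(2)}_j$ under $\Phi^{\star}_U$ and verifying that \emph{all} generated subprincipal symbols, down to order $-\rho$, together with the $\partial_t$-conjugation symbol $e$, assemble into the diagonal (R,R,P) matrix $A^{(3)}$ with exactly the right orders, while keeping the smoothing-remainder losses uniformly bounded by $m(N)$; this is where the precision in the statement of Proposition \ref{coniugazione-tempo-1} (homogeneity of the top symbol, $\mathrm{Re}(e)$ of order $-1$) is essential.
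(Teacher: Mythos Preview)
Your overall strategy matches the paper's, but there is a genuine error in how you handle the principal symbol, and a secondary error on parity.

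\textbf{The main gap.} You assert that ``$\Phi^{\star}_U\Lambda(\Phi^{\star}_U)^{-1}-\Lambda$ is of order $1$'' because $\mathtt{l}(\xi)$ is $x$-independent. This is false: under the Egorov conjugation the symbol $(\ii\xi)^2$ becomes $(1+\beta_x)^{-2}(\ii\xi)^2$, so the difference is of order~$2$, not~$1$. Consequently your homological equation \eqref{pianoegorov} is wrong: you must conjugate the \emph{full} second-order symbol $(1+a^{(2)}_2)(\ii\xi)^2$, not just the perturbation $a^{(2)}_2(\ii\xi)^2$. The correct equation (cf.\ the paper) is
\[
(1+a^{(2)}_2(U;t,y))\,(1+\gamma_y(U;t,y))^{2}=1+a^{(3)}_2(U;t),
\]
leading to $\gamma=\partial_y^{-1}\big(\sqrt{(1+a^{(3)}_2)/(1+a^{(2)}_2)}-1\big)$ and $a^{(3)}_2=\big[2\pi\big(\int_{\TTT}(1+a^{(2)}_2)^{-1/2}dy\big)^{-1}\big]^2-1$. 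Your formula \eqref{pianocostante} has $a^{(2)}_2$ in place of $1+a^{(2)}_2$; since $a^{(2)}_2=\lambda^+-1$ is close to \emph{zero} (not ``close to $1$'' as you write), your expression $1/\sqrt{a^{(2)}_2}$ is singular as $U\to 0$ and the construction breaks down.

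\textbf{Parity.} You claim $\gamma,\beta$ are even in $x$ and that evenness makes $\Phi_U$ commute with $x\mapsto -x$. Both are wrong. The primitive $\partial_y^{-1}$ of an even, zero-mean function is \emph{odd}; and $\Phi_U(x)=x+\beta(x)$ satisfies $\Phi_U(-x)=-\Phi_U(x)$ precisely when $\beta$ is odd. With $\beta$ odd, the generator $B(\tau;U;t,x,\xi)=b(\tau;t,x)\xi$ satisfies $B(-x,-\xi)=B(x,\xi)$, which is what parity preservation of $\Phi^{\star}_U$ requires. The paper checks this explicitly.

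Once these two points are corrected, the rest of your outline (use of Proposition~\ref{coniugazione-tempo-1} for the $\partial_t$-conjugation, the expansion $\Phi^{\star}_U=\uno+\sum M_p+\cdots$ to control the smoothing remainders with loss $m(N)$, and Lemma~\ref{lemmabello} to reinstate the (R,R,P) structure termwise) coincides with the paper's argument.
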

\begin{proof}
The function $V_3:=\Phi^{\star}_UV_2$ solves the following problem
\begin{equation}\label{roberto}
\begin{aligned}
\partial_{t}V_{3}&=(\partial_t\Phi^{\star}_{U})(\Phi^{\star}_U)^{-1}V_3+
\Phi^{\star}_{U}\Big(\ii E\big(\Lambda +\bonyw(A^{(2)}(U;t,x,\xi))\big)
\Big)(\Phi^{\star}_{U})^{-1}V_{3}\\
&+\Phi^{\star}_{U}(\ii E R_{1}^{(2)}(U))(\Phi^{\star}_{U})^{-1}V_{3}+\Phi^{\star}_{U}(\ii E R_{2}^{(2)}(U)U).
\end{aligned}
\end{equation}

Our aim is to choose $\be(U;t,x)$
in such a way that the coefficient in front of $(\ii\x)^{2}$ in the new symbol is constant in $x\in \TTT$.
Recalling that $A^{(2)}(U;t,x,\x)$ has the form \eqref{diagonale-ordine-2}, we have, by Theorem $2.5.8$ in \cite{maxdelort},
that 
the term 
\begin{equation}\label{antonio0}
\Phi^{\star}_{U}\Big(\ii E\big(\Lambda +\bonyw(A^{(2)}(U;t,x,\xi))\big)
\Big)(\Phi^{\star}_{U})^{-1}V_{3}\end{equation} 
in \eqref{roberto} is equal to
\begin{equation}\label{antonio}
\ii E\Big[\Lambda+\bonyw \left(
\begin{matrix}
r(U;t,x)(\ii\x)^{2} & 0 \\
 0  & r(U;t,x)(\ii\x)^{2}
\end{matrix}
\right)V_{3}+ \bonyw(A^{+}_{1}(U;t,x)(\ii\x))V_3+\bonyw(A^{+}_{0}(U;t,x,\x))V_{3}\Big]
\end{equation}
up to smoothing remainders in $\Sigma\RR^{-\rho}_{K,\rho+3,1}[r,N]\otimes \MM_{2}(\CCC)$, where
\[
r(U;t,x)=(1+a_{2}^{(2)}(U;t,y))(1+\partial_y\gamma(U;t,y))^{2}_{|_{y=x+\be(U;t,x)}}-1,
\]
 $A^{+}_{1}(U;t,x)\in \Sigma\calF_{K,\rho+3,1}[r,N]\otimes\MM_{2}(\CCC)$ and $A^{+}_{0}(U;t,x,\x)\in \Sigma\Gamma^{0}_{K,\rho+3,1}[r,N]\otimes\MM_{2}(\CCC)$ are diagonal matrices of symbols.

We define
\begin{equation}\label{mgrande2}
\gamma(U;t,y)=\partial_y^{-1}\left(\sqrt{\frac{1+a_{2}^{(3)}(U;t)}{1+a^{(2)}_2(U;t,y)}}-1\right), 
\end{equation}
where 
\begin{equation}\label{felice6}
a_{2}^{(3)}(U;t):=\left[2\pi \left(\int_{\TTT}\frac{1}{\sqrt{1+a^{(2)}_{2}(U;t,y)}}dy
\right)^{-1}\right]^{2}-1.
\end{equation}
Thanks to 
this choice we have 
\[
r(U;t,x)\equiv a_{2}^{(3)}(U;t),
\]
moreover the paracomposition operator $\Phi^{\star}_U$  is parity and reversibility preserving, satisfies the anti-reality  condition  for the following reasons. The real valued function $\gamma(U;t,x)$ in \eqref{mgrande2} satisfies $\gamma(U;-t,x)=\gamma(U_S;t,x)$ since $a^{(2)}_2(U;t,x)$ satisfies the same equation, moreover $\gamma(U;t,x)$ is an odd function since is defined as a primitive of the even function $a^{(2)}_2(U;t,x)$. It follows that also the function $\beta(U;t,x)$ satisfies the same properties, therefore the matrix of symbols
\begin{equation}
B(\tau,U;t,x,\xi)\cdot \uno= \frac{\beta(U;t,x)}{1+\tau\beta_x(U;t,x)}\xi\cdot\uno
\end{equation}
is a (R,R,P) matrix in $\Sigma\Gamma^1_{K,\rho+3,1}[r,N]\otimes\mathcal{M}_2(\CCC)$. Therefore  $\Omega_{B(U)}(1)\cdot\uno$ generated by $\bonyw(\ii B(U;t,x,\xi)\cdot\uno)$ is parity and reversibility preserving and it satisfies  the anti-reality condition \eqref{realereale2} by Lemma 4.2.2 in \cite{maxdelort}.
Thanks to this the term in \eqref{antonio0} is a parity and reality preserving and reversible vector field, therefore owing to Lemma \ref{lemmabello} each term of the equation \eqref{antonio} (together with the omitted smoothing remainder) is a parity and reality preserving and reversible vector field. 

The term $\Phi^{\star}_{U}(\ii E R_{1}^{(2)}(U))(\Phi^{\star}_{U})^{-1}V_{3}+\Phi^{\star}_{U}(\ii E R_{2}^{(2)}(U)U)$ in \eqref{roberto} is analysed as follows. First of all both the operators $\Phi^{\star}_{U}(\ii E R_{1}^{(2)}(U))(\Phi^{\star}_{U})^{-1}$ and $\Phi^{\star}_{U}(\ii E R_{2}^{(2)}(U)$ are reversible, parity and reality preserving thanks to Lemma \ref{lemmamaxcompo}. Moreover we remark that the paracomposition operator may be written as
\begin{equation}\label{roberto1}
\Phi_{U}^{\star}=\Omega_{B(U)}(1)=U+\sum_{p=2}^{N-1}M_{p}(U,\ldots,U)U+M_{N}(U;t)U,
\end{equation}
where $M_{p}\in \widetilde{\MM}^{m}_{p}$, $M_{N}(U;t)\in \MM_{K,\rho,N}^{m}[r]$ for some 
$m>0$ depending only on $N$.
 This is a consequence of Theorem $2.5.8$ in \cite{maxdelort}. Therefore as a consequence of Prop. \ref{composizioniTOTALI} the operators $\Phi^{\star}_{U}(\ii E R_{1}^{(2)}(U))(\Phi^{\star}_{U})^{-1}$ and $\Phi^{\star}_{U}(\ii E R_{2}^{(2)}(U)$ belong to the class $\Sigma\mathcal{R}^{-\rho+2m}_{K,\rho+3,1}[r,N]\otimes\mathcal{M}_2(\CCC)$.

We are left to study the term $(\partial_t\Phi^{\star}_{U})(\Phi^{\star}_U)^{-1}V_3$ in \eqref{roberto}. 
This is  nothing but $-\Phi^{\star}_{U}(\partial_t\Phi^{\star}_U)^{-1}V_3$, therefore by Prop. \ref{coniugazione-tempo-1} it is equal to $\bonyw\big(e(U;t,x,\x)\cdot\uno\big)+R(U;t)\cdot \uno$ with $e(U;t,x,\xi)$ and $R(U;t)$ given by Prop. \ref{coniugazione-tempo-1}. Since the map $\Phi^{\star}_U$ satisfies the reality condition \eqref{realereale} its derivative $\partial_t\Phi^{\star}_U=\partial_t\Omega_{B(U)}(1)\cdot\uno$ is reality preserving, this follows by taking the derivative with respect to $t$ to both side of the equation \eqref{flow} and by using the fact that 
the fact that $\partial_t B(U;t,x,\tau,\xi)\cdot\uno$ still satisfies the anti-reality condition \eqref{realereale2}.
We deduce, by using Lemma \ref{lemmamaxcompo}, that the maps $-\Phi^{\star}_{U}(\partial_t\Phi^{\star}_U)^{-1}$ is reality preserving. Since the map $\Phi_U^{\star}$ is reversibility preserving and parity preserving, one reasons in the same way as above to
prove that its derivative with respect to $t$ is parity preserving and reversible. Therefore thanks to Lemma \ref{lemmabello} we can also assume that both the matrix of symbols $e(U;t,x,\x)\cdot\uno$ and the operator $R(U;t)$ above are reality and parity preserving and reversible.
\end{proof}

\subsection{Reduction to constant coefficients: elimination of the term of order one}\label{diagosecondord4}
In this section we shall eliminate the matrix of order one by conjugating the system through a multiplication operator.
\begin{prop}
There exist $s_0>0$, $r_0>0$ such that for any $s\geq s_0$, $r\leq r_0$
 and any $U\in B^K_s(I,r)$ solution of \eqref{sistemainiziale} 
 the following holds. There exist two (R,R,P) maps 
 $\Phi_4(U),\,\,\Psi_4(U): C^{K-(\rho+4)}_{*\RRR}(I;\hcic^s(\TTT))\rightarrow C^{K-(\rho+4)}_{*\RRR}(I;\hcic^s(\TTT))$
\begin{itemize}
\item[(i)] there exists a constant $C$ depending on $s$, $r$ and $K$ such that
\begin{equation}\label{stime-descent-4}
\begin{aligned}
\norm{\Phi_4(U)V}{K-(\rho+4),s},\, \norm{\Psi_4(U)V}{K-(\rho+4),s}&\leq \norm{V}{K-(\rho+4),s}\big(1+C\norm{U}{K,s_0}\big)\\
\end{aligned}
\end{equation} 
for any $V$ in $C^{K-(\rho+4)}_{*\RRR}(I,\hcic^s)$;

\item[(ii)] $\Phi_4(U)-\uno$, $\Psi_4(U)-\uno$ belong to $\Sigma\mathcal{M}_{K,\rho+4,1}[r,N]\otimes \mathcal{M}_{2}(\CCC)$, moreover $\Psi_4(U)\circ\Phi_4(U)=\uno+R(U)U$ with $R(U)$ is in $\Sigma\mathcal{R}^{-\rho}_{K,\rho+4,1}[r,N]\otimes\mathcal{M}_2(\CCC)$ and it is reversible, parity and reality preserving.
\item[(iii)] The function  $V_4=\Phi_4(U)V_3$ (where $V_3$ solves \eqref{sistemainiziale-3}) solves the problem
\begin{equation}\label{sistemafinale-4}
\partial_t V_4=\ii E\big(\Lambda V_4+\bonyw(A^{(4)}(U;t,x,\xi))V_4+R_1^{(4)}(U)V_4+R_2^{(4)}(U)U\big),
\end{equation}
where  $A^{(4)}(U;t,x,\xi)$ is a (R,R,P) matrix of symbols in $\Sigma\Gamma^2_{K,\rho+4,1}[r,N]\otimes\mathcal{M}_2(\CCC)$ and it has the form
\begin{equation}\label{formaA4}
A^{(4)}(U;t,x,\xi)=A^{(3)}_2(U;t)(\ii\xi)^2+\sum_{j=-(\rho-1)}^0A_j^{(4)}(U;t,x,\xi)\\
\end{equation}
where the diagonal matrix $A^{(3)}_2(U;t)$ is $x$-independent and it is  defined in \eqref{diagonale-ordine-2-doppio},  $A_j^{(4)}(U;t,x,\xi)$ are diagonal matrices belonging to the class $\Sigma\Gamma^{j}_{K,\rho+4,1}[r,N]\otimes\mathcal{M}_2(\CCC)$ for $j=-(\rho-1),\ldots, 0$. The operators $R^{(4)}_1(U)$ and $R^{(4)}_2(U)$ are (R,R,P) and belong to the class $\Sigma\mathcal{R}^{-\rho+m}_{K,\rho+4,1}[r,N]\otimes\mathcal{M}_2(\CCC)$  for some  $m\in\NNN$ depending on $N$.
\end{itemize}
\end{prop}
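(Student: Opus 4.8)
The plan is to remove the first–order (in $\xi$) part $A^{(3)}_1(U;t,x,\xi)$ of the diagonal matrix in \eqref{sistemainiziale-3} by a single conjugation with a \emph{multiplication}–type para-differential operator, exploiting the parity structure. Write, as in \eqref{diagonale-ordine-2-doppio}, $A^{(3)}_1(U;t,x,\xi)=\mathrm{diag}\big(a^{(3)}_1(U;t,x)(\ii\xi),\,\ol{a^{(3)}_1(U;t,x)}(\ii\xi)\big)$. Since $A^{(3)}$ is parity preserving (Definition \ref{riassunto-simbo}), comparing the order–one parts in $A^{(3)}(U;t,x,\xi)=A^{(3)}(U;t,-x,-\xi)$ forces $a^{(3)}_1(U;t,x)=-a^{(3)}_1(U;t,-x)$; in particular $a^{(3)}_1(U;t,\cdot)$ is odd in $x$, hence has zero mean. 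Recalling that $a^{(3)}_2(U;t)$ is real, $x$–independent and with $1+a^{(3)}_2(U;t)$ bounded away from $0$ for $r$ small, I would set
\[
g(U;t,x):=-\frac{1}{2\big(1+a^{(3)}_2(U;t)\big)}\,\partial_x^{-1}a^{(3)}_1(U;t,x),\qquad c:=e^{g},\qquad C(U;t,x):=\mathrm{diag}\big(c,\ol c\big),
\]
so that (Taylor–expanding $(1+\cdot)^{-1}$ and $e^{(\cdot)}$, and using that $\partial_x^{-1}$ is well defined on the zero–mean symbol $a^{(3)}_1$) one has $C-\uno\in\Sigma\mathcal{F}_{K,\rho+3,1}[r,N]\otimes\mathcal{M}_2(\CCC)$, with $c$ even in $x$ and satisfying the reversibility relation $\ol{c(U;-t,x)}=c(U_S;t,x)$ inherited from $a^{(3)}_1$. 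Put $\Phi_4(U):=\bonyw(C(U;t,x))$ and construct $\Psi_4(U):=\bonyw(C^{\#}(U;t,x))$ as an approximate inverse by the Neumann–type iteration of \eqref{paramatr1}–\eqref{paramatr2} started from $C^{-1}=\mathrm{diag}(c^{-1},\ol c^{-1})$, so that $\Psi_4(U)\circ\Phi_4(U)=\uno+R(U)U$ with $R\in\Sigma\mathcal{R}^{-\rho}_{K,\rho+4,1}[r,N]\otimes\mathcal{M}_2(\CCC)$. Then (i) follows from Proposition \ref{azionepara}, (ii) from Proposition \ref{composizioniTOTALI} and Remark \ref{inclusionifacili}, and the evenness/parity and reversibility properties of $c$ give, via Remark \ref{considerazioni}, that $C$ is a (R,R,P) matrix of symbols, hence $\Phi_4(U),\Psi_4(U)$ are (R,R,P) maps.

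For (iii), the function $V_4:=\Phi_4(U)V_3$ satisfies
\begin{equation*}
\partial_t V_4=\bonyw(\partial_t C)V_3+\bonyw(C)\,\ii E\big(\Lambda+\bonyw(A^{(3)}(U;t,x,\xi))\big)V_3+\bonyw(C)\,\ii E\big(R^{(3)}_1(U)V_3+R^{(3)}_2(U)U\big),
\end{equation*}
into which I would substitute $V_3=\Psi_4(U)V_4+\widetilde R(U)U$. Since $C$, $E$, $\Lambda$ and $A^{(3)}$ are all diagonal matrices (which commute at the matrix level), every composition below stays diagonal. The term $\bonyw(\partial_t C)\Psi_4(U)V_4$ is of order $0$, because $\partial_t C\in\Sigma\Gamma^{0}_{K,\rho+4,1}[r,N]\otimes\mathcal{M}_2(\CCC)$ by Lemma \ref{lemmatempo}. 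For the second term I would use the symbolic calculus of Proposition \ref{composizioniTOTALI}, expanding up to order $-\rho$: since $c$ is independent of $\xi$, the conjugation leaves $\ii E\Lambda$ and the $x$–independent principal symbol $\ii E\,a^{(3)}_2(U;t)(\ii\xi)^2\uno$ unchanged, and it turns $\ii E\bonyw(A^{(3)}_1(U;t,x,\xi))$ into
\[
\ii E\,\bonyw(A^{(3)}_1(U;t,x,\xi))+\ii E\cdot 2\big(1+a^{(3)}_2(U;t)\big)\,\bonyw\!\Big(\mathrm{diag}\big((\partial_x\log c)(\ii\xi),(\partial_x\log\ol c)(\ii\xi)\big)\Big)
\]
modulo a diagonal symbol of order $\le 0$ (the commutator $[\bonyw(C),\cdot]_-$ of Definition \ref{commutatori} with the order–two part produces exactly this order–one correction); by the choice of $g$, $\partial_x\log c=\partial_x g=-a^{(3)}_1/(2(1+a^{(3)}_2))$ and, since $a^{(3)}_2$ is real, $\partial_x\log\ol c=-\ol{a^{(3)}_1}/(2(1+a^{(3)}_2))$, so this order–one correction cancels $\ii E\bonyw(A^{(3)}_1(U;t,x,\xi))$ exactly. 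The surviving contributions — the conjugate of $\partial_t C$, the order $\le 0$ parts of $A^{(3)}$ and of the commutators, and the compositions $\bonyw(C)\,\ii E\,R^{(3)}_i(U)(\cdots)$, which are $\rho$–smoothing with a loss of finitely many derivatives by Proposition \ref{composizioniTOTALI} — are diagonal of order $\le 0$ or smoothing in $\Sigma\mathcal{R}^{-\rho+m}_{K,\rho+4,1}[r,N]\otimes\mathcal{M}_2(\CCC)$ for some $m=m(N)>0$. Finally, all the operators involved are reality, parity preserving and reversible (Lemma \ref{lemmamaxcompo}), so by Lemma \ref{lemmabello} — keeping item (iii) of that lemma, which preserves the $x$–independence of the principal part — the conjugated equation may be rewritten in the form \eqref{sistemafinale-4}, with $A^{(4)}(U;t,x,\xi)=A^{(3)}_2(U;t)(\ii\xi)^2+\sum_{j=-(\rho-1)}^{0}A^{(4)}_j(U;t,x,\xi)$ a (R,R,P) diagonal matrix in $\Sigma\Gamma^{2}_{K,\rho+4,1}[r,N]\otimes\mathcal{M}_2(\CCC)$ and $R^{(4)}_1(U),R^{(4)}_2(U)$ (R,R,P) smoothing operators in $\Sigma\mathcal{R}^{-\rho+m}_{K,\rho+4,1}[r,N]\otimes\mathcal{M}_2(\CCC)$.

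The crux of the argument is the elimination of the order–one symbol: one has to check that conjugating the (now $x$–independent) second–order operator by $\bonyw(C)$ generates precisely the order–one symbol $2(1+a^{(3)}_2)(\partial_x\log c)(\ii\xi)$ on the diagonal, and that the ensuing homological equation $\partial_x\log c=-a^{(3)}_1/(2(1+a^{(3)}_2))$ is solvable on $\TTT$, i.e. that $a^{(3)}_1(U;t,\cdot)$ has zero mean — which is not generic but is forced by the parity structure inherited from Hypothesis \ref{parity}. The remaining work is the routine but careful bookkeeping of derivative losses (the fixed $K'=\rho+4$ and the extra $m=m(N)$ in the smoothing index), the symbolic–calculus expansions, and the verification via Lemma \ref{lemmabello} and Remark \ref{considerazioni} that reality, reversibility, parity, the diagonal form, and the $x$–independence of the principal symbol all survive the conjugation.
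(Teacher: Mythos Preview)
Your proposal is correct and follows essentially the same route as the paper: the transformation $\Phi_4(U)=\bonyw\big(\mathrm{diag}(e^{g},\ol{e^{g}})\big)$ with $g=-\partial_x^{-1}\big(a_1^{(3)}/(2(1+a_2^{(3)}))\big)$ is exactly the paper's choice (their $s$ equals your $g$ since $a_2^{(3)}$ is $x$--independent), the homological equation $2g_x(1+a_2^{(3)})+a_1^{(3)}=0$ and its solvability via the parity--induced oddness of $a_1^{(3)}$ are identical, and the bookkeeping of remainders, (R,R,P) properties and derivative losses via Propositions \ref{azionepara}, \ref{composizioniTOTALI} and Lemmata \ref{lemmatempo}, \ref{lemmamaxcompo}, \ref{lemmabello} matches the paper. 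The only cosmetic difference is that the paper takes $\Psi_4(U)=\bonyw(C^{-1})$ directly (which already gives $\Psi_4\circ\Phi_4-\uno\in\Sigma\mathcal{R}^{-\rho}$ since $C$ is $\xi$--independent, so $(C^{-1}\#C)_\rho=\uno$), whereas your Neumann iteration is harmless but unnecessary here.
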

\begin{proof}
Let $s(U;t,x)$ be a function in $\Sigma\mathcal{F}_{K,\rho+3,1}[r,N]$ to be chosen later, define the map
\begin{equation}\label{exp}
\Phi_4(U)[\cdot]:=\bonyw\left(\begin{matrix}e^{s(U;t,x)}& 0\\ 0& \ol{e^{s(U;t,x)}}\end{matrix}\right)[\cdot].
\end{equation}
Suppose moreover that $\Phi_4(U)$  in \eqref{exp} is a (R,R,P) map.
Since $s(U;t,x)$ is in $\Sigma\mathcal{F}_{K,\rho+3,1}[r,N]$ then by Taylor expanding the exponential function one gets  that the symbol $e^{s(U;t,x)}-1$ is in $\Sigma\mathcal{F}_{K,\rho+3,1}[r,N]$, in  particular the map $\Phi_4(U)$ satisfies items $(i)$ and $(ii)$ 
in the statement. The matrix  in \eqref{exp} is invertible, therefore the map
\begin{equation}\label{expinv}
\Psi_4(U)[\cdot]:=\bonyw\Big(\begin{matrix}e^{-s(U;t,x)}& 0\\ 0& {e^{-\ol{s(U;t,x)}}}\end{matrix}\Big)[\cdot]
\end{equation}
is an approximate inverse $\Psi_4(U)$ for the map $\Phi_4(U)$ (i.e. satisfying the conditions in the items $(i)$ and $(ii)$ of the statement). 
To prove this last claim one has to argue exactly as done in the proof of Prop. \ref{diagomax}.

The function $V_4:=\Phi_4(U)V_3$ solves the equation
\begin{equation}\label{tir}
\begin{aligned}
\partial_t &V_4= (\partial_t\Phi_4(U))[\Psi_4(U)V_4]+\widetilde{R}(U)U
\\ &+\Phi_4(U)\ii E\big[\Lambda \Psi_4(U)V_4+\bonyw(A^{(3)}(U;t,x,\xi))\Psi_4(U)V_4+R^{(3)}_1(U)\Psi_4(U)V_4+R^{(3)}_2(U)U\big],
\end{aligned}
\end{equation}
where $\widetilde{R}(U)U$ is equal to
\begin{equation*}
\big(\partial_t \Phi_4(U)\big)R(U)U+\Phi_4(U)\ii E\left(\big[\Lambda+\bonyw(A^{(3)}(U;t,x,\xi))\big]R(U)U+R^{(3)}_1(U)R(U)U\right),
\end{equation*}
where $R(U)$ is the (R,R,P) smoothing operator in $\Sigma\mathcal{R}^{-\rho}_{K,\rho+4,1}[r,N]\otimes\mathcal{M}_2(\CCC)$ such that $\Psi_4(U)\circ\Phi_4(U)V_4=V_4+R(U)U$ and the matrix $A^{(3)}(U;t,x,\xi)=\sum_{j=-(\rho-1)}^2A_j^{(3)}(U;t,x,\xi)$ is defined in the statement of Prop. \ref{egorov}.
Therefore  $\widetilde{R}(U)$ is a (R,R,P)  smoothing remainder in the class $\Sigma\mathcal{R}^{-\rho+m}_{K,\rho+4,1}[r,N]\otimes\mathcal{M}_2(\CCC)$ thanks to Lemma \ref{lemmamaxcompo}, Prop. \ref{composizioniTOTALI} and to the fact that ${R}(U)$ is a (R,R,P)  smoothing remainder.

The term $(\partial_t\Phi_4(U))\Psi_4(U)$ is of order $0$ thanks to Lemma \ref{lemmatempo}, and Prop. \ref{composizioniTOTALI}; moreover  it is reversible, parity and reality preserving thanks to  Lemma \ref{lemmamaxcompo} since $\Psi_4(U)$ is (R,R,P) and $(\partial_t\Phi_4(U))$  is reversible, parity and reality preserving.

The term $\Phi_4(U)\ii E R^{(3)}_2(U)U$ is reversible, reality and parity preserving thanks to Lemma \ref{lemmamaxcompo}.

Using symbolic calculus (Prop. \ref{composizioniTOTALI}) one can prove that the term of order one in \eqref{tir} is the following 
\begin{equation}\label{omologica4}
\big(2s_x(U;t,x)(1+a_2^{(3)}(U;t))+a_1^{(3)}(U;t,x)\big)(\ii\xi),
\end{equation}
therefore we have to choose the function $s(U;t,x)$ as
\begin{equation*}
s(U;t,x)=-\partial_x^{-1}\Big(\frac{a_1^{(3)}(U;t,x)}{2(1+a_2^{(3)}(U;t))}\Big).
\end{equation*}
Note that the the function $s(U;t,x)$ is well defined since $a_1^{(3)}(U;t,x)$ is an odd function in $x$ (therefore its mean is zero) and the denominator stays far away from zero since $r_0$ is small enough. With this choice the map $\Phi_4(U)$ defined in \eqref{exp} is (R,R,P) and therefore the ansatz made at the beginning of the proof is correct. Furthermore the term $\Phi_4(U)\ii E\big[\Lambda\Psi_4(U) V_4+\bonyw(A^{(3)}(U;t,x,\xi))\Psi_4(U)V_4]$ in \eqref{tir} is equal to $\ii E[\Lambda V_4+\bonyw(\widetilde{A}(U;t,x,\xi))V_4+Q(U)V_4]$, where $Q(U)$ is a (R,R,P) smoothing remainder in $\Sigma\mathcal{R}^{-\rho}_{K,\rho+4,1}[r,N]\otimes\mathcal{M}_2(\CCC)$ and 
\begin{equation*}
\widetilde{A}(U;t,x,\xi)=A_2^{(3)}(U;t)(\ii\xi)^2+\sum_{j=-(\rho-1)}^0\widetilde{A}_j(U;t,x,\xi),
\end{equation*} 
is a (R,R,P) diagonal matrix of symbols such that $\widetilde{A}_j(U;t,x,\xi)$ is in $\Sigma\Gamma^j_{K,\rho+4,1}[r,N]\otimes\mathcal{M}_2(\CCC)$ thanks to Prop. \ref{composizioniTOTALI} and Lemma \ref{lemmabello}.
\end{proof}
\subsection{Reduction to constant coefficients: lower order terms}\label{diagosecondord5}
Here we reduce to constant coefficients all the symbols from the order $0$ to the order $\rho-1$  of the matrix $A^{(4)}(U;t,x,\xi)$ in \eqref{formaA4}.
 \begin{prop}
There exist $s_0>0$, $r_0>0$, such that for any $s\geq s_0$, any $0<r\leq r_0$ and any $U\in B^K_s(I,r)$ solution of \eqref{sistemainiziale} the following holds. 
There exist two  (R,R,P)-maps 
$\Phi_5(U)[\cdot],\, \Psi_5(U)[\cdot] :C^{K-2\rho-4}_{*\RRR}(I,\hcic^s)\rightarrow C^{K-2\rho-4}_{*\RRR}(I,\hcic^s),$
satisfying the following
\begin{enumerate}
\item[(i)] there exists a constant $C$ depending on $s$, $r$ and $K$ such that
\begin{equation}\label{stime-descent-5}
\begin{aligned}
\norm{\Phi_5(U)V}{K-2\rho-4,s}, \, \norm{\Psi_5(U)V}{K-2\rho-4,s}&\leq \norm{V}{K-2\rho-4,s}\big(1+C\norm{U}{K,s_0}\big)\\
\end{aligned}
\end{equation} 
for any $V$ in $C^{K-2\rho-4}_{*\RRR}(I,\hcic^s)$;
\item[(ii)] $\Phi_5(U)[\cdot]-\uno$ and  $\Psi_5(U)[\cdot]-\uno$ belong to the class $\Sigma\mathcal{M}_{K,2\rho+4,1}[r,N]\otimes\mathcal{M}_2(\CCC)$; $\Psi_5(U)[\Phi_5(U)[\cdot]]-\uno$ is a smoothing operator in the class $\Sigma\mathcal{R}^{-\rho}_{K,2\rho+4,1}[r,N]\otimes\mathcal{M}_2(\CCC)$;
\item[(iii)] the function $V_5=\Phi_{5}(U)V_4$ (where $V_4$ is the solution of \eqref{sistemafinale-4}) solves the system
\begin{equation}\label{sistemainiziale-5}
\partial_t V_5 = \ii E\big(\Lambda V_5+\bonyw(A^{(5)}(U;t,\xi))V_5+R_1^{(5)}(U)V_5+R^{(5)}_{2}(U)U\big),
\end{equation}
where $\Lambda$ is defined in \eqref{DEFlambda}, $A^{(5)}(U;t,\xi)$ is a (R,R,P) diagonal and constant coefficient in $x$ matrix in \linebreak $\Sigma\Gamma_{K,2\rho+4,1}^2[r,N]\otimes\mathcal{M}_2(C)$ of the form
\begin{equation*}
A^{(5)}(U;t,\xi)=A^{(3)}_2(U;t)(\ii\xi)^2+A^{(5)}_0(U;t,\xi),
\end{equation*}
with $A^{(3)}_2(U;t)$ of Prop. \ref{egorov} and $A^{(5)}_0(U;t,\xi)$ in $\Sigma\Gamma_{K,2\rho+4,1}^0[r,N]\otimes\mathcal{M}_2(\CCC)$; the operators $R^{(5)}_1(U)$ and $R^{(5)}_2(U)$ are (R,R,P) smoothing remainders in the class $\Sigma\mathcal{R}^{-\rho+m}_{K,2\rho+4,1}[r,N]\otimes\mathcal{M}_2(\CCC)$ for some $m$ in $\NNN$ depending on $N$.
\end{enumerate}
\end{prop}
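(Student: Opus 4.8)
The plan is to argue by an iterative conjugation, very much in the spirit of the diagonalization of Proposition \ref{diago-lower}, but now commuting against the already $x$-independent principal operator $\ii E\big(\Lambda+A^{(3)}_2(U;t)(\ii\x)^2\big)$ of the system \eqref{sistemafinale-4}. I would remove the $x$-dependence of the diagonal symbols $A^{(4)}_j(U;t,x,\x)$ one order at a time, from $j=0$ down to $j=-(\rho-1)$, each step being a conjugation by a para-differential map $\uno+\bonyw(G)$ with $G$ of \emph{strictly negative} order $j-1$.

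\textbf{One elementary step.} Assume inductively that after $k-1$ steps the new unknown $V_{5,k-1}$ solves a system of the shape \eqref{sistemafinale-4} with a diagonal symbol whose components of order $>1-k$ are already constant in $x$ (for $k=1$ this just records that the order $2$ term is $x$-independent and that there is no order $1$ term), the first non-reduced diagonal entry being $\left(\begin{matrix} a(U;t,x,\x) & 0 \\ 0 & \ol{a(U;t,x,-\x)}\end{matrix}\right)$, with $a\in\Sigma\Gamma^{1-k}_{K,\rho+3+k,1}[r,N]$. I would look for
\[
\Phi_{5,k}(U):=\uno+\bonyw(G_k(U;t,x,\x)),\qquad G_k(U;t,x,\x):=\left(\begin{matrix} g_k(U;t,x,\x) & 0 \\ 0 & \ol{g_k(U;t,x,-\x)}\end{matrix}\right),
\]
with $g_k\in\Sigma\Gamma^{-k}_{K,\rho+3+k,1}[r,N]$ to be chosen, together with an approximate inverse $\Psi_{5,k}(U):=\bonyw(Q_k(U;t,x,\x))$ built by the Neumann-type iteration of \eqref{paramatr1}--\eqref{paramatr2}, so that $\Psi_{5,k}(U)\circ\Phi_{5,k}(U)=\uno$ up to a smoothing operator of order $-\rho$. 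Conjugating and expanding by the symbolic calculus of Proposition \ref{composizioniTOTALI}, the diagonal entry of order $1-k$ of the transformed system becomes $\ii\,a(U;t,x,\x)-\mathtt{c}(U;t)\,\x\,\del_x g_k(U;t,x,\x)$, where $\mathtt{c}(U;t):=2(1+a^{(3)}_2(U;t))$ is real and, since $\norm{U}{K,s_0}$ is small, bounded away from zero; all the other contributions produced by the conjugation --- the subprincipal part of the commutator, the term $\del_t G_k$ (which, by Lemma \ref{lemmatempo}(i), has order $-k<1-k$), and the compositions with the already $x$-independent lower order symbols and with the smoothing remainders --- have order strictly smaller than $1-k$ and are postponed to later steps. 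It is therefore enough to impose the homological equation $\mathtt{c}(U;t)\,\x\,\del_x g_k=\ii\,\big(a-\langle a\rangle_x\big)$, with $\langle a\rangle_x$ the $x$-average of $a$, which I would solve by
\[
g_k(U;t,x,\x):=\frac{\ii}{\mathtt{c}(U;t)}\,\gamma(\x)\,\del_x^{-1}\big(a(U;t,x,\x)-\langle a\rangle_x(U;t,\x)\big),
\]
with $\gamma(\x)$ the odd, order $-1$ symbol equal to $1/\x$ for $|\x|\geq 1/2$ introduced in \eqref{defd1}. The right-hand side has zero $x$-mean, so $\del_x^{-1}$ is well defined; Taylor expanding $(1+a^{(3)}_2)^{-1}$ shows that $g_k$ belongs to the claimed class and has order $-k\leq-1$. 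Exactly as for the matrix $D_1$ in Proposition \ref{diago-lower}, the facts that $A^{(4)}$ is a (R,R,P)-matrix and that $\del_x^{-1}$, multiplication by $\gamma(\x)$ and by $\mathtt{c}(U;t)^{-1}$ preserve the parity $(x,\x)\mapsto(-x,-\x)$, the reality structure and the reversibility relation \eqref{matrix_to_symbols}, make $\Phi_{5,k}(U)$ --- hence $\Psi_{5,k}(U)$ --- a (R,R,P)-map; the estimates of the type \eqref{stime-descent-5} then follow from Propositions \ref{azionepara} and \ref{composizioniTOTALI}, since $g_k$ has negative order.

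\textbf{Iteration and bookkeeping.} Running this step for $k=1,\dots,\rho$ would make all the diagonal symbols of order $0,-1,\dots,-(\rho-1)$ constant in $x$; what is left with $x$-dependence has order $\leq-\rho$ and gets absorbed into the smoothing remainders. Setting $\Phi_5(U):=\Phi_{5,\rho}(U)\circ\cdots\circ\Phi_{5,1}(U)$ and $\Psi_5(U):=\Psi_{5,1}(U)\circ\cdots\circ\Psi_{5,\rho}(U)$, items (i)--(ii) follow from Proposition \ref{composizioniTOTALI} (for $\Psi_5\circ\Phi_5-\uno$ one composes the $\rho$ remainders, each of order $-\rho$). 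Each step raises the $K'$-index by one because of the term $\del_t\Phi_{5,k}(U)$ (Lemma \ref{lemmatempo}(i)): starting from $K'=\rho+4$ one ends with $K'=2\rho+4$, whence the maps are defined only on $C^{K-2\rho-4}_{*\RRR}(I,\hcic^s)$. By Proposition \ref{composizioniTOTALI}(ii)--(iii) --- and noting that composition with the \emph{negative} order operators $\bonyw(G_k)$ never worsens the smoothing order, only the corrector $G_1$ meeting the order $2$ part with a bounded loss --- the remainders $R^{(4)}_1,R^{(4)}_2$ and all those generated along the iteration stay in $\Sigma\mathcal{R}^{-\rho+m}_{K,2\rho+4,1}[r,N]\otimes\mathcal{M}_2(\CCC)$ for some $m=m(N)$, which is harmless since $\rho\gg N$. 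Reality, parity and reversibility are preserved throughout by Lemmata \ref{lemmamaxcompo}, \ref{lemmabello} and \ref{lemmatempo}. This yields \eqref{sistemainiziale-5} with $A^{(5)}(U;t,\x)=A^{(3)}_2(U;t)(\ii\x)^2+A^{(5)}_0(U;t,\x)$, where $A^{(5)}_0$ collects, through the inclusions $\Sigma\Gamma^{j}_{K,2\rho+4,1}\subset\Sigma\Gamma^{0}_{K,2\rho+4,1}$ for $j\leq0$, all the --- now $x$-independent --- diagonal symbols of non-positive order.

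\textbf{Main difficulty.} The delicate point is not a single hard estimate but the uniform control of the losses: one must check that the $\rho$ conjugations degrade the smoothing exponent of the remainders only by a fixed amount $m=m(N)$ (so that $-\rho+m$ remains strongly negative) and cost exactly one $\del_t$-derivative each. This is precisely where the composition statements of Proposition \ref{composizioniTOTALI} and Lemma \ref{lemmatempo}, together with the \emph{strict} negativity of the orders of all the correctors $G_k$, enter; conceptually the mechanism is the same as in Proposition \ref{diago-lower}, only pushed all the way to constant coefficients in $x$.
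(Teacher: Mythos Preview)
Your proposal is correct and follows essentially the same route as the paper: an iterative conjugation by $\uno+\bonyw(G_k)$ with $G_k$ diagonal of strictly negative order, solving at each step the same homological equation $\mathtt{c}(U;t)\,(\ii\x)\,\partial_x g_k = \langle a\rangle_x - a$ via $\partial_x^{-1}$ and the order $-1$ cut-off $\gamma(\x)$, with the (R,R,P) structure and the $K'$-bookkeeping handled exactly as in Proposition \ref{diago-lower}. Up to indexing conventions (your $k=1,\dots,\rho$ versus the paper's $j=0,\dots,\rho-1$) and an inessential factor of $\ii$ in the displayed homological equation, the argument is the paper's own.
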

\begin{proof}
We first construct a map which conjugates to constant coefficient the term of order $0$ in \eqref{sistemafinale-4} and \eqref{formaA4}.
 Consider a symbol $n_0(U;x,\xi)$ in $\Sigma\Gamma^{-1}_{K,\rho+4,1}[r,N]$ to be determined later and define
\begin{equation*}
\Phi_{5,0}(U):=\uno+\bonyw(N_0(U;t,x,\xi)):=\uno+\bonyw\left(\begin{matrix} n_0(U;t,x,\xi) & 0\\ 
																														0&     \ol{n_0(U;t,x,-\xi)}\end{matrix}\right).
\end{equation*}
Suppose moreover that the map $\Phi_{5,0}(U)$ defined above is (R,R,P). It is possible to construct an approximate inverse of the map above (i.e. satisfying items (i) and (ii) of the statement) of the form 

$$\Psi_{5,0}(U)=\uno-\bonyw(N_0(U;t,x,\xi))+\bonyw\big((N_0(U;t,x,\xi))^2\big)+\bonyw({\widetilde{N}_0(U;t,x,\xi)})$$

 proceeding as done in the proof of Prop. \ref{diago-lower} by choosing a suitable matrix of symbols ${\widetilde{N}_0(U;t,x,\xi)}$ in the class  $\Sigma\Gamma^{-3}_{K,\rho+4,1}[r,N]\otimes\mathcal{M}_2(\CCC)$.
Let $R(U)$ in $\Sigma\mathcal{R}^{-\rho}_{K,\rho+4,1}[r,N]\otimes\mathcal{M}_2(\CCC)$ be the smoothing remainder such that one has $\Psi_{5,0}(U)[\Phi_{5,0}(U)[\cdot]]-\uno=R(U)$. Then the function $V_{5,0}=\Phi_{5,0}(U)V_4$ solves the following problem
\begin{equation}\label{sistemafinale-50}
\begin{aligned}
\partial_t V_{0,5}=&(\partial_t \Phi_{5,0}(U))\Psi_{5,0}V_{5,0}
+\Phi_{5,0}(U)\ii E\big(\Lambda+\bonyw(A^{(4)}(U;t,x,\xi))\big)\Psi_{5,0}(U)V_{5,0}\\
+&\Phi_{5,0}(U)\ii E R^{(4)}_1(U)\Psi_{5,0}(U)V_{5,0}
+\Phi_{5,0}(U)\ii E R^{(4)}_2(U)U +\widetilde{R}(U)U,
\end{aligned}
\end{equation}
where $A^{(4)}(U;t,x,\xi)$, $R^{(4)}_1(U)$ and $R^{(4)}_2(U)$ are the ones of equation \eqref{sistemafinale-4}, while $\widetilde{R}(U)$ is the operator
\begin{equation*}
\Big[(\partial_{t}\Phi_{5,0}(U))+\Phi_{5,0}(U)\ii E \big(\Lambda+\bonyw(A^{(4)}(U;t,x,\xi))+\Phi_{5,0}(U)\ii E R^{(1)}_4(U)\Big]R(U).
\end{equation*}
The operator $\widetilde{R}(U)$ belongs to the class $\Sigma\mathcal{R}^{-\rho+m'}_{K,\rho+5,1}[r,N]\otimes\mathcal{M}_2(\CCC)$ for some $m'\in\NNN$ thanks to Prop. \ref{composizioniTOTALI}, moreover it is reversible, parity and reality preserving by Lemma \ref{lemmamaxcompo}. The first summand in the r.h.s. of \eqref{sistemafinale-50} is equal to $\bonyw{(\partial_t N_0(U;t,x,\xi))}\circ \bonyw(\uno-N_0(U;t,x,\xi)+N_0(U;t,x,\xi)^2+\widetilde{N}_0(U;t,x,\xi))$ therefore by Lemmata \ref{lemmatempo}, \ref{lemmabello} and Prop. \ref{composizioniTOTALI}  can be decomposed as the sum of a para-differential operator of order $-1$ and a smoothing remainder, both of them reversible, parity and reality preserving. The third and the fourth summands in  \eqref{sistemafinale-50} are (R,R,P) remainders in the class $\Sigma\mathcal{R}^{-\rho+m'}_{K,2\rho+4,1}[r,N]\otimes\mathcal{M}_2(\CCC)$ by Lemma \ref{lemmamaxcompo} and Prop. \ref{composizioniTOTALI}.

The remaining term $\Phi_{5,0}(U)\ii E\big(\Lambda+\bonyw(A^{(4)}(U;t,x,\xi))\big)\Psi_{5,0}(U)V_{5,0}$ is equal to 
\begin{equation}\label{drago}
\begin{aligned}
\ii & E\big(\Lambda+\bonyw(A^{(3)}_2(U;t)(\ii\xi)^2)\big)V_{5,0}+
 \left[\bonyw(N(U;t,x,\xi),\ii E\bonyw((\uno+A_2(U;t))(\ii\xi)^2)) \right]_{-}V_{0,5}\\
&+\ii E\bonyw( A^{(4)}_0(U;t,x,\xi))V_{0,5}\\
&- \bonyw(N_0(U;t,x,\xi))\circ\bonyw(\ii E(\uno+A_2(U;t))(\ii\xi)^2)\circ\bonyw(N_0(U;t,x,\xi))V_{0,5}\\
&+\bonyw(\ii E(\uno+A_2(U;t))(\ii\xi)^2)\circ\bonyw(N_0(U;t,x,\xi)^2)V_{0,5}
\end{aligned}
\end{equation}
up to operators of order $-1$ coming from the compositions with $\bonyw(\widetilde{N}(U;t,x,\xi))$. Every operator here can be assumed to be reversible, parity and reality preserving thanks to Lemmata \ref{lemmamaxcompo} and \ref{lemmabello}. The last two summands in \eqref{drago} cancel out up to a (R,R,P) operator of order $-1$ thanks to Prop. \ref{composizioniTOTALI}. In order to reduce to constant coefficient the term of order $0$ in \eqref{drago} we develop the commutator  $[\cdot,\cdot]_{-}$ and we  choose $n_0(U;t,x,\xi)$ in such a way that the following equation is satisfied 
\begin{equation}\label{drago2}
2(n_0(U;t,x,\xi))_x(1+a_2^{(3)}(U;t)(\ii\xi))+a_0^{(4)}(U;t,x,\xi)=\frac{1}{2\pi}\int_{\TTT}a_0^{(4)}(U;t,x,\xi)dx;
\end{equation}
proceeding  as done in the proof of Prop. \ref{diago-lower} we choose the symbol $n_0(U;t,x,\xi)$ as follows:
\begin{equation*}
\begin{aligned}
n_0(U;t,x,\xi)&=\frac{\partial_x^{-1}\Big(\frac{1}{2\pi}\int_{\TTT}a_0^{(4)}(U;t,x,\xi)dx-a_0^{(4)}(U;t,x,\xi)\Big)}{2(1+a_{2}^{(3)}(U;t))}\gamma(\xi);\\
\gamma(\xi):&=\left\{\begin{matrix}  \frac{1}{\ii\xi}  &   |\xi|\geq1/2,   \\
 \mbox{odd continuation of class}\,\, C^{\infty} &    |\xi|\in [0,1/2). \\ \end{matrix}\right.
\end{aligned}
\end{equation*}
The symbol above is well defined since the denominator stays far away from zero since the function $U$ is small, the numerator is  well defined too since it is the periodic primitive of a zero mean function; moreover it is parity preserving and reversibility preserving, therefore the ansatz made at the beginning of the proof is satisfied. Therefore we have reduced the system to the following
\begin{equation*}
\partial_t V_{5,0}=\ii E\left(\Lambda+\bonyw(A^{(5,0)}(U;t,x,\xi))V_{5,0}+R_1^{(5,0)}(U)V_{5,0}+R^{(5,0)}_2(U)U\right),
\end{equation*}
where $R_{1}^{(5,0)}(U)$ and $R_{2}^{(5,0)}(U)$ are (R,R,P) smoothing remainder in the class $\Sigma\mathcal{R}^{-\rho+m'}_{K,\rho+4,1}[r,N]\otimes\mathcal{M}_2(\CCC)$, the matrix of symbols $A^{(5,0)}(U;t,x,\xi)$  is in  $\Sigma\Gamma^2_{K,\rho+5,1}[r,N]\otimes\mathcal{M}_2(\CCC)$ and it has the form
\begin{equation*}
A^{(5,0)}(U;t,x,\xi)=A^{(3)}_2(U;t)(\ii\xi)^2+A_0^{(5,0)}(U;t,\xi)+A_{-1}^{(5,0)}(U;t,x,\xi),
\end{equation*}
with $A^{(3)}_2(U;t)$ given in  Prop. \ref{egorov}, $A_0^{(5,0)}(U;t,\xi)$  equal to $(\frac{1}{2\pi}\int A_0^{(4)}(U;t,x,\xi)dx)\gamma(\xi)$ and $A_{-1}^{(5,0)}(U;t,x,\xi)$ a matrix of symbols in $\Sigma\Gamma^{-1}_{K,\rho+5,1}[r,N]\otimes\mathcal{M}_2(\CCC)$. 

Suppose now that there exist $j+1$, $j\geq 0$, (R,R,P) maps $\Phi_{5,0}(U),\ldots,\Phi_{5,j}(U)$ maps such that the function $V_{5,j}:=\Phi_{5,0}(U)\circ\cdots\circ\Phi_{5,j}(U)V_4$ solves the problem
\begin{equation}\label{induttivo}
\partial_t V_{5,j}=\ii E\big(\Lambda V_{5,j}+\bonyw(A^{(5,j)}(U;t,x,\xi))V_{5,j}+R^{(5,j)}_1(U)V_{5,j}+R^{(5,j)}_2(U)U\big)
\end{equation}
where $R^{(5,j)}_1(U)$, $R^{(5,j)}_2(U)$ are (R,R,P) smoothing remainders in the class $\Sigma\mathcal{R}^{-\rho+m'}_{K,\rho+5+j,1}[r,N]\otimes\mathcal{M}_2(\CCC)$ and  where $A^{(5,j)}(U;t,x,\xi)$ is a (R,R,P) diagonal matrix of symbols in $\Sigma\Gamma^2_{K,\rho+5+j,1}[r,N]\otimes\mathcal{M}_2(\CCC)$ of the form 
\begin{equation*}
A^{(5,j)}(U;t,x,\xi)= A^{(3)}_2(U;t)(\ii\xi)^2+\sum_{\ell=-j}^0 A^{(5,j)}_{\ell}(U;t,\xi) + A^{(5,j)}_{-j-1}(U;t,x,\xi),
\end{equation*}
with $A^{(5,j)}_{\ell}(U;t,\xi)$ in $\Sigma\Gamma^{\ell}_{K,\rho+5+j,1}[r,N]\otimes\mathcal{M}_2(\CCC)$  and  constant in $x$ for $\ell=-j,\ldots, 0$, while $A^{(5,j)}_{-j-1}(U;t,x,\xi)$ is in $\Sigma\Gamma^{-j-1}_{K,\rho+5+j,1}[r,N]$ and may depend on $x$. We explain how to construct a map $\Phi_{5,j+1}(U)$ which put to coefficient in $x$ the term of order $-j-1$. Let $n_{j+1}(U;t,x,\xi)$ be a symbol in $\Sigma\Gamma^{-j-2}_{K,\rho+5+j,1}[r,N]$ and consider the map
\begin{equation*}
\Phi_{5,j+1}(U):=\uno+\bonyw(N_{j+1}(U;t,x,\xi)):=\uno+\bonyw\left(\begin{matrix}n_{j+1} (U;t,x,\xi) & 0 \\
0 & \ol{n_{j+1}(U;t,x,-\xi)}
\end{matrix}\right).
\end{equation*}
Arguing as done in the proof of Prop. \ref{diago-lower} one  obtains the approximate inverse of the map above $\Psi_{5,j}(U)=\uno-\bonyw(N_{j+1}(U;t,x,\xi))$ modulo lower order terms. The same discussion made at the beginning of the proof, 
concerning the conjugation through the map $\Phi_{5,0}(U)$,
 shows that the
function $V_{5,j+1}:=\Phi_{5,j+1}(U)V_{5,j}$ solves the problem
\begin{equation}\label{induttivo1}
\partial_t V_{5,j+1}=\ii E\big(\Lambda V_{5,j+1}+\bonyw(A^{(5,j+1)}(U;t,x,\xi))V_{5,j+1}+R^{(5,j+1)}_1(U)V_{5,j+1}+R^{(5,j+1)}_2(U)U\big),
\end{equation}
where $R^{(5,j+1)}_1(U)$ and $R^{(5,j+1)}_2(U)$ are smoothing remainders in the class $\Sigma\mathcal{R}^{-\rho+m'}_{K,\rho+5+j,2}[r,N]\otimes\mathcal{M}_2(\CCC)$ and where  $A^{(5,j+1)}(U;t,x,\xi)$ has the form
\begin{equation*}
A^{(3)}_2(U;t)(\ii\xi)^2+\sum_{\ell=-j}^0 A^{(5,j)}_{\ell}(U;t,\xi) + A^{(5,j)}_{-j-1}(U;t,x,\xi)+2\big(N_{j+1}(U;t,x,\xi),\big)_x(1+A_2^{(3)}(U;t)(\ii\xi)).
\end{equation*}
The equation we need to solve is
\begin{equation*}
2(n_{j+1}(U;t,x,\xi))_x(1+a_2^{(3)}(U;t)(\ii\xi))+a_{-j-1}^{(5,j)}(U;t,x,\xi)=\frac{1}{2\pi}\int_{\TTT}a_{-j-1}^{(5,j)}(U;t,x,\xi)dx,
\end{equation*}
which has the same structure of \eqref{drago2} and hence one can define the symbol $n_{j+1}(U;t,x,\xi)$ as done above.

To conclude the proof we define the maps $\Phi_{5}(U):=\Phi_{5,0}(U)\circ\cdots\circ\Phi_{5,\rho-1}(U)$ and $\Psi_{5}(U):=\Psi_{5,\rho-1}(U)\circ\cdots\circ\Psi_{5,0}(U)$.
\end{proof} 
At this point we can prove Theorem \ref{regolarizza}.
\begin{proof}[proof of Theorem \ref{regolarizza}]
It is enough to define $\Phi(U):=\Phi_5(U)\circ\Phi_4(U)\circ\Phi^{\star}_U\circ\Phi_2(U)\circ\Phi_1(U)$ and $\Psi(U):=\Psi_1(U)\circ\Psi_2(U)\circ(\Phi^{\star}_U)^{-1}\circ\Psi_4(U)\circ\Psi_5(U)$.
\end{proof}

\section{Proof of the main theorem}\label{sezBNF}

The aim of this section is to prove the following Theorem which, together with Theorem \ref{paralineariza},
implies  Theorem \ref{teototale}.

\begin{theo}\label{longgun}
Fix $N>0$ and assume $M\geq N$ (see \eqref{potenziale1}), $K\in\NNN$,  $\rho\in \NNN$ such that $K\gg\rho\gg N$ 
and consider system \eqref{sistemainiziale}. 
There is a zero measure set $\NN\subseteq \calO$ such that
for any $\vec{m}$ outside the set $\NN$ 
and if $\rho>0$ is large enough there is $s_0>0$ such that for any
$s\geq s_0$ there are $r_0,c,C>0$ 
such that for any $0\leq r\leq r_0$ the following holds. 
For all $U_0\in {\bf H}^{s}_{e}$ with $\|U_0\|_{\hcic^{s}}\leq r$,  there is a unique solution $U(t,x)$ of \eqref{sistemainiziale} 
with
\begin{equation}\label{spaziosol}
U\in \bigcap_{k=0}^{K}C^{k}([-T_{r},T_{r}];\hcic^{s-2k}_{e}(\TTT;\CCC^{2})), 
\end{equation}
with $T_{r}\geq c r^{-N}$.
Moreover one has
\begin{equation}\label{iltempo}
\sup_{t\in[-T_r,T_r]}\|\del_{t}^{k}U(t,\cdot)\|_{\hcic^{s-2k}}\leq Cr, \quad 0\leq k\leq K.
\end{equation}
\end{theo}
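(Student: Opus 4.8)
The plan is to deduce Theorem \ref{longgun} from the regularized system produced by Theorem \ref{regolarizza} through a Birkhoff normal form construction of modified energies, and then to close a bootstrap argument; combined with Theorem \ref{paralineariza} this gives Theorem \ref{teototale}. Given $N$ one first fixes $\rho$ much larger than $N\cdot N_0$, with $N_0$ the fixed loss of derivatives furnished by Proposition \ref{stimemisura}, and then $K$ much larger than $\rho$, consistently with the hypotheses of Theorem \ref{regolarizza}. Let $U$ be the solution of \eqref{sistemainiziale} with even datum $U_0$, whose existence on a short time interval is guaranteed by the local theory of \cite{FIloc}; the subspace $\hcic^s_e$ is invariant since $f$ preserves parity (item 1 of Hypothesis \ref{parity}) and the $\hat p(j)$ are even in $j$, so $U(t)\in\hcic^s_e$ throughout.

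\textbf{Step 1: the basic energy inequality.} By Theorem \ref{regolarizza}, $V:=\Phi(U)U$ solves \eqref{problemafinale}, where $L(U;t,\x)$ in \eqref{constCoeff} is diagonal and constant in $x$, with real principal symbol $\mathtt m_2$ and no term of order one, and $Q_1(U),Q_2(U)$ are smoothing of order $-\rho+m$. Differentiating $\|V(t)\|^2_{\hcic^s}$ along \eqref{problemafinale}: the free term $\ii E\Lambda V$ and the order-two and order-zero-real components of $\ii E\bonyw(L)V$ are skew-adjoint and contribute nothing; the order-zero imaginary component is a bounded operator of size $O(\|U\|_{\hcic^{s_0}})$; and pairing $\jap{D}^{2s}V$ with $\ii E(Q_1(U)V+Q_2(U)U)$ is controlled using the smoothing of $Q_1,Q_2$ and $\|V\|_{\hcic^s}\sim\|U\|_{\hcic^s}$. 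This yields \eqref{stimabella}, which by itself gives only a lifespan of order $r^{-1}$; the remaining steps upgrade it to \eqref{stimabella2}.

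\textbf{Step 2: modified energies.} I would construct, for each large $s$, a modified energy of order $N$ in the sense of \eqref{botte10}, of the form
\begin{equation}\label{planmod}
E_s(U)=\|V\|^2_{\hcic^s}+\sum_{p=3}^{N+1}\mathcal E^{(p)}_s(U),
\end{equation}
with $\mathcal E^{(p)}_s$ $p$-homogeneous in $U$, $E_s(U)\sim\|U\|^2_{\hcic^s}$ for $U$ small, and $\del_t E_s(U(t))\lesssim\|U(t)\|^{N}_{\hcic^{s_0}}\|U(t)\|^2_{\hcic^s}$. Differentiating \eqref{planmod} along \eqref{problemafinale}, the contributions to $\del_t\|V\|^2_{\hcic^s}$ of homogeneity $3,\dots,N+1$ come from the homogeneous components of $Q_1,Q_2$ and of the imaginary part of $\mathtt m_0$; expanded on the Fourier basis $\{e^{\pm\ii nx}\}$, and after substituting $U=\Psi(U)V+O(V^2)$, each resulting monomial in the Fourier coefficients of $V$ carries, from the free flow $\ii E\Lambda$, a small divisor $\sum_{l}\sigma_l\lambda_{j_l}$ with $\sigma_l\in\{\pm1\}$ and $\sum_l\sigma_l j_l=0$. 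One fixes $\mathcal E^{(p)}_s$ by solving the associated homological equation, i.e. dividing the coefficients of the \emph{non-resonant} monomials by these divisors. Proposition \ref{stimemisura} guarantees that for $\vec m$ outside a set of zero Lebesgue measure the eigenvalues $\lambda_j$ of \eqref{NLS1000} obey lower bounds $|\sum_l\sigma_l\lambda_{j_l}|\geq\gamma\,\max_2(\jap{j_1},\dots,\jap{j_{p+2}})^{-N_0}$ on the non-resonant set, with $N_0$ fixed. Each of the $N-1$ homological steps costs $N_0$ derivatives, which by the structure of the homogeneous classes fall only on the low-frequency factors; since $\rho\gg N\cdot N_0$, the corrected functionals $\mathcal E^{(p)}_s$ are bounded on $\hcic^s$ and $E_s(U)\sim\|U\|^2_{\hcic^s}$ holds for $r_0$ small. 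The reality, reversibility and parity preserving structure of $\Phi(U)$, $Q_1$, $Q_2$ and $L$ (Definitions \ref{riassunto-mappe}, \ref{riassunto-simbo}) is propagated through the construction.

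\textbf{Step 3: resonant terms (the main obstacle) and conclusion.} After the homological steps, the monomials of homogeneity $<N+2$ that remain in $\del_t E_s$ are exactly the resonant ones, for which $\sum_l\sigma_l j_l=0$ and $\sum_l\sigma_l\lambda_{j_l}=0$ (the set singled out by Definition \ref{kernel}). I would then invoke Lemma \ref{FEI}, which uses that \eqref{problemafinale} is reversible and parity preserving, to conclude that these resonant contributions to $\del_t E_s$ either vanish or are exact time derivatives that can be absorbed into $E_s$. This is the step I expect to be the hardest: one must check that the combinatorics of the resonant index sets, together with the sign constraints forced by $S\circ X=-X\circ S$ and by the $x\mapsto -x$ invariance, make the relevant quadratic forms in the Fourier coefficients of $V$ vanish — precisely where all three items of Hypothesis \ref{parity} are used simultaneously. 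What survives is then bounded by $\|U\|^{N}_{\hcic^{s_0}}\|U\|^2_{\hcic^s}$, i.e. \eqref{stimabella2}. From \eqref{stimabella2}, the equivalence $E_s\sim\|\cdot\|^2_{\hcic^s}$ and Gr\"onwall's inequality, a standard continuity argument shows that $\|U_0\|_{\hcic^s}\leq r\leq r_0$ forces the local solution to extend with $\sup_{|t|\leq T_r}\|U(t)\|_{\hcic^s}\leq Cr$ as long as $(Cr)^N T_r$ remains small, hence up to $T_r\geq c\,r^{-N}$. Differentiating \eqref{sistemainiziale} in time and using the tame estimates of Proposition \ref{azionepara} together with this $\hcic^s$ bound gives \eqref{iltempo} and the regularity \eqref{spaziosol}, while uniqueness is inherited from \cite{FIloc}. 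Finally one takes $\NN\subseteq\calO$ to be the union of the zero-measure sets of Proposition \ref{stimemisura} over the finitely many relevant index configurations — and, for Theorem \ref{teototale}, over $0\leq N\leq M$ — which completes the proof.
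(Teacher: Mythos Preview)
Your overall strategy---regularize via Theorem \ref{regolarizza}, perform a Birkhoff normal form, construct modified energies, and close by bootstrap---matches the paper, and the role you assign to Proposition \ref{stimemisura}, the parity/reversibility structure, and Lemma \ref{aslongas} is correct.

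Where your outline diverges from the paper is in merging two normal-form procedures that the paper keeps \emph{separate}. In the paper, the imaginary part of $\mathtt m_0$ (the order-zero symbol, constant in $x$) is \emph{not} handled by modified energies: it is first reduced by \emph{conjugation} (Theorem \ref{BNFiniziale}) to its resonant kernel $\pro{\mathtt m_0^{(1)}}$ in the sense of Definition \ref{kernel}, and then Lemma \ref{FEI} shows---using reversibility and parity---that each homogeneous component of $\pro{\mathtt m_0^{(1)}}$ is \emph{real valued}, hence the associated Fourier multiplier is self-adjoint and contributes nothing to $\del_t\|W\|^2_{\hcic^s}$. Only \emph{after} this reduction does the paper pass to multilinear forms (Lemma \ref{FEI2}), and these come solely from the smoothing remainders $Q_1^{(1)},Q_2^{(1)}$ and thus live in $\widetilde{\calL}^{s,-\rho''}_{p,-}$ with $\rho''$ large. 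The modified-energy iteration (the $\tilde L_p$ in \eqref{induindu}--\eqref{induindu2}) then relies on that smoothing to absorb the $N_0$-loss per step, via item $(iv)$ of Lemma \ref{fattiSulleForme}; the resonant monomials are disposed of not by Lemma \ref{FEI} but by item $(iii)$ of Lemma \ref{fattiSulleForme}, which says that a form in $\widetilde{\calL}^{s,-\rho}_{p,-}$ vanishes on resonant index tuples (this is where the reversibility sign ``$-$'' is used).

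Your Step 3 therefore has a gap as written. You invoke Lemma \ref{FEI} to dispose of \emph{all} resonant monomials, but that lemma is specifically about showing the resonant \emph{symbol} is real; it says nothing about resonant multilinear forms arising from the smoothing remainders. Conversely, if you try to push the symbol contribution through the multilinear-form machinery of Definition \ref{formaggino}, you start with forms in $\widetilde{\calL}^{s,0}_{p}$ (no smoothing), and the iteration of item $(v)$ of Lemma \ref{fattiSulleForme}---which costs $m$ derivatives per step from composing with the map $M(U;t)$ of \eqref{sistemainizialeMAPPA}---drives the $\rho$-index negative, so the boundedness on $\hcic^s$ fails. The paper avoids this precisely by conjugating the symbol away first. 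A unified modified-energy treatment is not impossible, but it would require a finer class of forms (recording that for symbol-type terms the two largest frequencies coincide and the small divisor depends only on the lower ones) that your outline does not set up. Also, the resonant contributions in the paper simply \emph{vanish}; none are ``exact time derivatives absorbed into $E_s$''.
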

In order to prove the Theorem above, the first step is to apply to system 
\eqref{sistemainiziale} the 
Theorem \ref{regolarizza} obtaining the system \eqref{problemafinale}.
At this point it  is not evident that the solution $V$ of \eqref{problemafinale} satisfies 
\eqref{spaziosol} and \eqref{iltempo} because 
$\|\bonyw\big({\rm Im}(\mathtt{m}_0(U;t,\x))\big)\|_{\mathcal{L}({\bf H}^{s}, {\bf H}^{s})}$ and
$\|Q_{i}(U)\|_{\mathcal{L}({\bf H}^{s}, {\bf H}^{s+\rho})}$, $i=1,2$ are of size $O(\|U\|_{{\bf H}^{s}})$.
Therefore we need to reduce the size of the latter two quantities 
by applying a  normal form procedure.  
As already explained in the introduction we shall face a small divisors problem.
We impose some non-resonance conditions on the linear frequencies, by using 
the convolution potential in the equation \eqref{NLS}, in subsection \ref{BNFBNFBNF}.
The Birkhoff procedures 
for the latter two  terms are substantially different.

Concerning the term ${\rm Im}(\mathtt{m}_0(U;t,\x))$,
we shall prove, in Theorem \ref{BNFiniziale}, that we may conjugate system 
\eqref{problemafinale} to \eqref{problemafinale2}
whose symbol  is given in \eqref{constCoeff2}.
The symbol $\pro{\mathtt{m}^{(1)}_0}(U;t,\x)$ in \eqref{constCoeff2}
admits  an expansion in homogeneity whose 
firsts $N-1$ terms do not contribute to the growth of Sobolev norms.
This is a consequence of the reversibility and parity preserving structure of the equation and
we check this fact in Lemma \ref{FEI} (see Definition \ref{kernel}).
This is the content of subsection \ref{BNFnormal1}.

At this point we have obtained system 
\eqref{problemafinale2}. In Lemma \ref{FEI2}
we prove that the solution $W$ of \eqref{problemafinale2}
satisfies the energy estimate \eqref{stimanormasob}, where $L_{p}$
are \emph{multilinear forms} defined in Definition \ref{formaggino}.
As last step we construct modified energies (in the sense of \eqref{botte10})
in order to cancel out the terms $L_p$ in  \eqref{stimanormasob}.
This is the content of subsection \ref{formeenergia}.

We now enter in detail and 
we introduce some further notation.  For any $n\in \NNN$, we define
  \begin{equation}\label{proiettori}
  \Pi^{+}_{n}:=\sm{1}{0}{0}{0}
  \Pi_{n}, \qquad   \Pi^{-}_{n}:=\sm{0}{0}{0}{1}
  \Pi_{n}
  \end{equation}
  the composition of the spectral projector
 $\Pi_{n}$, defined in \eqref{spectralpro} with the projector from $\CCC^{2}$ to $\CCC\times \{0\}$ (resp. $\{0\}\times\CCC$).
  For a function $U$ satisfying \eqref{involuzione4}, i.e. of the form $U=(u,\bar{u})^{T}$, 
  the projectors $\Pi_{n}^{\pm}$ act as follows.
  Let $\f_{n}(x)=1/\sqrt{\pi}\cos(nx)$ be the Hilbert basis of the space of even $L^{2}(\TTT;\CCC)$  functions,
  then, if $\hat{u}(n)=\int_{\TTT}u(x)\f_{n}(x)dx$, one has
  \begin{equation}\label{azioneproiettori}
  \Pi_{n}U=\left(\begin{matrix}\hat{u}(n) \\
  \ol{\hat{u}(n)}
  \end{matrix}
  \right)\f_{n}(x), \quad 
    \Pi_{n}^{+}U=\hat{u}(n)e_{+}\f_n(x), \quad \Pi_{n}^{-}U=\ol{\hat{u}(n)}e_{-}\f_n(x), \quad 
    e_{+}=\vect{1}{0},
    \;\;     e_{-}=\vect{0}{1}.
  \end{equation}
  We now give the following definition.
  
  \begin{de}[{\bf Kernel of the Adjoint Action}]\label{kernel}
Fix $p\in\NNN^{*}$, 
$\rho>0$ and consider 
a symbol $a\in \widetilde{\Gamma}_{p}^{2}$. 

\noindent
$(i)$ 
We denote by $\pro{a}(U;t,x,\x)$ the symbol in  $\widetilde{\Gamma}^{2}_{p}$
 defined,  for $n_1,\ldots,n_{p}\in \NNN$, as
\begin{equation*}
\pro{a}\Big(\Pi_{n_1}^{+}U,\ldots,\Pi_{n_{\ell}}^{+}U,\Pi^{-}_{n_{\ell+1}}U,\ldots,
\Pi_{n_{p}}^{-}U;t,x,\x\Big)=
a\Big(\Pi_{n_1}^{+}U,\ldots,\Pi_{n_{\ell}}^{+}U,\Pi_{n_{\ell+1}}^{-}U,\ldots,
\Pi_{n_{p}}^{-}U;t,x,\x\Big),
\end{equation*}
for $p$ even, $\ell=p/2$ and 
\begin{equation}\label{bnf55}
\{n_1,\ldots,n_{\ell}\}=\{n_{\ell+1},\ldots,n_{p}\},
\end{equation}
while for $p$ odd and $0\leq \ell\leq p$, or $p$ even and $0\leq \ell\leq p$ with $\ell\neq p/2$
we set
\[
\pro{a}\Big(\Pi_{n_1}^{+}U,\ldots,\Pi_{n_{\ell}}^{+}U,\Pi_{n_{\ell+1}}^{-}U,\ldots,
\Pi_{n_{p}}^{-}U;t,x,\x\Big)=0.
\]

\noindent
$(ii)$ Let $a\in \Sigma\Gamma^{0}_{K,K',1}[r,N]$ of the form
\[
a(U;t,x,\x)=\sum_{k=1}^{N-1}a_k(U;t,x,\x)+a_{N}(U;t,x,\x),\quad a_{k}\in \widetilde{\Gamma}^{0}_{k}, \;\; a_{N}\in 
\Sigma\Gamma^{0}_{K,K',N}[r,N],
\]
we define the symbol $\pro{a}(U;t,x,\x)$ as
\[
\pro{a}(U;t,x,\x):=\sum_{k=1}^{N-1}\pro{a_k}(U;t,x,\x)+a_{N}(U;t,x,\x).
\]
\noindent
$(iii)$ For a diagonal matrix of symbols  
$A\in\Sigma{\Gamma}_{K,K',1}^{0}[r,N]\otimes\MM_{2}(\CCC)$ 
of the form
\begin{equation*}
A(U;t,x,\x)=\left(
\begin{matrix}
a(U;t,x,\x) & 0\\
0 & \ol{a}(U;t,x,-\x)
\end{matrix}
\right), 
\end{equation*}
we define
\begin{equation}\label{matsim}
\pro{A}(U;t,x,\x):=\left(
\begin{matrix}
\pro{a}(U;t,x,\x) & 0\\ 0 & \pro{\ol{a}}(U;t,x,-\x)
\end{matrix}
\right).
\end{equation}
\end{de}

In the following lemma we consider the  problem
\begin{equation}\label{conserva}
\left\{\begin{aligned}
&\del_{t}Z=\ii E\Big(\Lambda Z+\bonyw(\mathtt{m}_{2}(U)(\ii\x)^{2})Z+
\bonyw\big(\pro{A}(U;t,\x)\big)[Z]\Big),\\
&Z(0,x)=Z_0\in {\bf H}^{s}_e
\end{aligned}\right.
\end{equation}
with $\mathtt{m}_2(U)$  in \eqref{constCoeff}, $A$ being a diagonal 
(R,R,P) matrix of bounded symbols independent of $x$ in  the class
$\Sigma\Gamma_{K,K',1}^{0}[r,N]\otimes\MM_{2}(\CCC)$ and 
$U\in C^K_{*\R}(I,{\bf{H}}_e^{s}(\TTT;\CCC^{2}))\cap B^{K}_{s_0}(I,r)$. We prove that
the (R,R,P) structure of the matrix $A(U;t,\xi)$ (together with the fact that it is constant in $x$) guarantees a  symmetry  which  produces a key cancellation in the energy estimates for the problem \eqref{conserva}, more precisely we show that the multilinear part of the matrix $\pro{A}(U;t,\xi)$ does not contribute to the energy estimates.

\begin{lemma}\label{FEI}
Let $N\in \NNN$, $r>0$, $K'\leq K\in \N$. Using the notation above consider  $Z=Z(t,x)$  the solution of the problem \eqref{conserva}.
Then one has
\begin{equation}\label{siconserva}
\frac{d}{d t}\|Z(t,\cdot)\|_{{ \hcic}^{s}}^{2}
\leq C\|U(t,\cdot)\|_{K',s}^{N}\|Z(t,\cdot)\|^{2}_{{ \hcic}^{s}}.
\end{equation}

\end{lemma}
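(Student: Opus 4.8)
The plan is to compute $\frac{d}{dt}\|Z(t,\cdot)\|_{\hcic^s}^2$ directly, using the structure of the equation \eqref{conserva} and exploiting that the pieces of $\ii E(\Lambda + \bonyw(\mathtt{m}_2(U)(\ii\x)^2))$ are skew-adjoint while the multilinear part of $\bonyw(\pro{A}(U;t,\x))$ produces a cancellation. First I would recall that $\|Z\|_{\hcic^s}^2 = (\jap{D}^s Z, \jap{D}^s Z)_{\hcic^0}$ where $\jap{D}^s$ acts diagonally, and differentiate in time; since $Z$ solves \eqref{conserva}, one gets
\begin{equation*}
\frac{d}{dt}\|Z\|_{\hcic^s}^2 = 2\,\mathrm{Re}\big(\jap{D}^s Z, \jap{D}^s \ii E(\Lambda Z + \bonyw(\mathtt{m}_2(U)(\ii\x)^2)Z + \bonyw(\pro{A}(U;t,\x))Z)\big)_{\hcic^0}.
\end{equation*}
For the first two terms: $\Lambda = \bonyw(\mathtt{l}(\x))\uno$ with $\mathtt{l}(\x) = (\ii\x)^2 + \mathtt{p}(\x)$ real and even (Remark \ref{simbolopot2}, Remark \ref{formavera}), and $\mathtt{m}_2(U)$ is real; hence $\ii E\bonyw(\mathtt{l}(\x) + \mathtt{m}_2(U)(\ii\x)^2)$ is skew-adjoint with respect to $(\cdot,\cdot)_{\hcic^0}$ (using \eqref{aggiunto} and Remark \ref{aggiungoaggiunto}, together with the fact that $E$ anticommutes appropriately and $\jap{D}^s$ commutes with constant-coefficient Fourier multipliers). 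Therefore these terms contribute zero to the real part, up to bounded commutator terms of order $\le 0$ which are harmless and contribute $O(\|U\|_{K',s_0}^{?}\|Z\|_{\hcic^s}^2)$; but in fact since $\mathtt{m}_2(U)$ is a genuine symbol of positive homogeneity in $U$, such commutator contributions are already of the right form, or better, are exactly zero because $\mathtt{m}_2(U)$ is $x$-independent so $[\jap{D}^s, \bonyw(\mathtt{m}_2(U)(\ii\x)^2)] = 0$.

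The crux is the term $2\,\mathrm{Re}\big(\jap{D}^s Z, \jap{D}^s \ii E \bonyw(\pro{A}(U;t,\x)) Z\big)_{\hcic^0}$. Since $\pro{A}$ is $x$-independent, $\jap{D}^s$ commutes through, so this equals $2\,\mathrm{Re}\big(\jap{D}^s Z, \ii E \bonyw(\pro{A}(U;t,\x)) \jap{D}^s Z\big)_{\hcic^0}$. Write $\pro{A}(U;t,\x) = \sum_{k=1}^{N-1}\pro{A_k}(U;t,\x) + A_N(U;t,\x)$. The non-homogeneous remainder $A_N$ is a symbol in $\Gamma^0_{K,K',N}$, so by Proposition \ref{azionepara} its Bony-Weyl quantization is bounded on $\hcic^s$ with operator norm $O(\|U\|_{K',s_0}^N)$, giving a contribution bounded by $C\|U\|_{K',s}^N\|Z\|_{\hcic^s}^2$ as required. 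For the homogeneous terms $\pro{A_k}$, $1\le k\le N-1$: these are the dangerous ones since they are only $O(\|U\|_{\hcic^{s_0}}^k)$ in size, so pointwise in $t$ they would give $C\|U\|^k_{\hcic^{s_0}}\|Z\|_{\hcic^s}^2$ which is not good enough. The plan is to show each of them contributes \emph{exactly zero} to $\mathrm{Re}(\cdot)$. Expanding $Z$ in the basis $\f_n(x)$ via \eqref{azioneproiettori}, the quantity $\mathrm{Re}\big(\jap{D}^s Z, \ii E \bonyw(\pro{A_k}(U;t,\x))\jap{D}^s Z\big)_{\hcic^0}$ becomes a sum over frequencies $n_0, n_1, \ldots, n_k$ (one output frequency $n_0$, the internal frequency also $n_0$ since $\pro{A_k}$ is $x$-independent, and $k$ frequencies from the $U$-arguments) of terms involving $\jap{n_0}^{2s}|\hat z(n_0)|^2$ times the matrix element of $\pro{A_k}$. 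Because $\pro{A_k}$ is (R,R,P) — in particular reality preserving with the block structure \eqref{matsim} and reversible/reversibility-preserving — and because the kernel restriction \eqref{bnf55} forces the $U$-frequencies on the $+$ side to coincide (as a multiset) with those on the $-$ side, the matrix acting on the pair $(\hat z(n_0), \ol{\hat z(n_0)})^T$ is of the form $\mathrm{diag}(\alpha, \bar\alpha)$ composed with $\ii E$, i.e. $\ii\,\mathrm{diag}(\alpha, -\bar\alpha)$; pairing against $(\hat z(n_0), \ol{\hat z(n_0)})^T$ in the $\hcic^0$ inner product gives $\ii(\alpha |\hat z(n_0)|^2 - \bar\alpha |\hat z(n_0)|^2) = \ii(\alpha - \bar\alpha)|\hat z(n_0)|^2 = -2\,\mathrm{Im}(\alpha)|\hat z(n_0)|^2$, which is purely real, so that $\mathrm{Re}$ of the whole expression would not automatically vanish — hence one further needs that $\alpha$ itself is \emph{real} on the diagonal \eqref{bnf55}. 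This is precisely where the parity-preserving and reversibility-preserving structure of $A$ is used: for $A$ a (R,R,P) matrix that is $x$-independent, evaluated on arguments satisfying \eqref{bnf55}, the symbol $\pro{a}$ restricted to the kernel is forced to take real values (this is the analogue, on the kernel of the adjoint action, of the statement that the "resonant normal form" is self-adjoint); combining this with the skew-adjointness produced by $\ii E$ kills the real part. I would make this explicit by using the reversibility condition \eqref{matrix_to_symbols_aut} together with the parity condition \eqref{parisimbo1}: on the kernel \eqref{bnf55}, $SU_j = \ol U_j$ and the multiset equality forces $\ol{\pro{a}} = \pro{a}$.

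After assembling these pieces one obtains
\begin{equation*}
\frac{d}{dt}\|Z(t,\cdot)\|_{\hcic^s}^2 = 2\,\mathrm{Re}\big(\jap{D}^s Z, \ii E\bonyw(A_N(U;t,\x))\jap{D}^s Z\big)_{\hcic^0},
\end{equation*}
and then \eqref{azione8} of Proposition \ref{azionepara} applied with $m = 0$, $p = N$ bounds the right-hand side by $C\|U\|_{K',s}^N\|Z\|_{\hcic^s}^2$, which is \eqref{siconserva}. The main obstacle I anticipate is the careful bookkeeping in the cancellation step: one must track how the (R,R,P) structure of the $x$-independent matrix $A$, when restricted to the kernel defined by \eqref{bnf55}, forces reality of the relevant symbol values, and confirm that the combinatorics of the Fourier expansion (using \eqref{azioneproiettori} and the sign constraint $\sum \s_j n_j = 0$ from Definition \ref{pomosimb}) really does collapse the output and internal frequencies to the same $n_0$ with all auxiliary frequencies paired off. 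The skew-adjointness of the $\Lambda$ and $\mathtt{m}_2$ pieces, and the boundedness estimates for the order-zero and remainder pieces, are routine given Proposition \ref{azionepara} and Remark \ref{aggiungoaggiunto}.
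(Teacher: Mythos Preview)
Your proposal is correct and follows essentially the same approach as the paper's proof: differentiate $\|Z\|_{\hcic^s}^2$, use that the $x$-independent real symbols $\mathtt{l}(\x)+\mathtt{m}_2(U)(\ii\x)^2$ give skew-adjoint contributions, bound the non-homogeneous piece $A_N$ via Proposition~\ref{azionepara}, and show the homogeneous pieces $\pro{A_p}$ contribute zero by proving their symbols are \emph{real} on the kernel \eqref{bnf55}. The paper carries out the last step exactly as you outline --- reversibility preserving (via Lemma~\ref{lemmaNUOVO} and \eqref{revomo1000}) combined with the parity condition (so that $\tilde a_p$ is even in $\x$) and the symmetry of the multilinear form in its $p$ arguments (which together with the multiset equality \eqref{bnf55} lets one swap the $\Pi^+$ and $\Pi^-$ slots) yields $\pro{\tilde a_p}=\ol{\pro{\tilde a_p}}$; you should make the use of this built-in symmetry explicit, but otherwise your argument matches.
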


\begin{proof}
Consider the Fourier multipliers $\langle D\rangle ^{s}:={\rm Op}(\langle \x\rangle^{s})$.
We have that 
\begin{equation}\label{catena}
\begin{aligned}
\frac{d}{dt}\|Z\|_{{ \hcic}^{s}}^{2}&=\Big(\langle D\rangle ^{s}\Big[
\ii E(\Lambda Z+\bonyw(\mathtt{m}_{2}(U)(\ii\x)^{2})+
\bonyw(\pro{A}(U;t,\x))[Z])
\Big]  ,\langle D\rangle ^{s} Z\Big)_{{\bf H}^{0}}\\
&+\Big(\langle D\rangle ^{s} Z,\langle D\rangle ^{s}\Big[
\ii E(\Lambda Z+\bonyw(\mathtt{m}_{2}(U)(\ii\x)^{2})+\bonyw(\pro{A}(U;t,\x))[Z])
\Big]\Big)_{{\bf H}^{0}}\\
\end{aligned}
\end{equation}
where $(\cdot,\cdot)_{{\bf H}^{0}}$ is defined in \eqref{comsca}.
The contribution given by 
 $\Lambda$ and $\bonyw(\mathtt{m}_0(U)(\ii\x)^{2})[\cdot]$ is zero since 
they are independent of $x$ (therefore they commute with $\langle D^s\rangle$) and their symbols are real valued (hence they are self-adjoint on $\hcic^0$ thanks to Remark \ref{aggiungoaggiunto}).
Let us consider the symbol $\pro{A}(U;t,\x)$.
By definition
we have that
\[
\pro{A}(U;t,\x)=\sum_{p=1}^{N-1}\pro{A_{p}}(U,\ldots,U;t,\x)+A_{N}(U;t,\x)
\]
with $A_{p}\in\widetilde{\Gamma}^{0}_{p}$, $p=1,\ldots,N-1$, and $A_{N}\in \Sigma\Gamma^{0}_{K,K',N}[r,N]$.
The contribution of $\bonyw(A_{N}(U;t,\x))[Z]$ in the r.h.s. of \eqref{catena} 
is bounded by the r.h.s. of \eqref{siconserva}.
We show that $\pro{A_{p}}$ are real valued for $p=1,\ldots,N-1$, this implies that, since they do not depend on $x$,
they do not contribute to the sum in \eqref{catena}.
By hypothesis the matrix of symbols $A(U;t,\x)$ is reversibility preserving, i.e. satisfies \eqref{revsimbo2}, therefore, by Lemma \ref{lemmaNUOVO}, we may assume that $A_{p}$ satisfy condition \eqref{revomo1000} for any $p=1,\ldots,N-1$.
Since $A_{p}$ is reality preserving then we can write (see Remark \ref{considerazioni})
\begin{equation}\label{esplicit}
{A}_p(U;t,\x)=\left(
\begin{matrix}
\tilde{a}_{p}(U;t,\x) & 0\\
0 & \ol{\tilde{a}_{p}(U;t,-\x)}
\end{matrix}
\right),
\end{equation}
for some symbol $\tilde{a}_p\in \widetilde{\Gamma}^{m}_{p}$ independent of $x$.
Recalling Def. \ref{kernel}, since $A_p$ is a symmetric function of its arguments, we have, 
for $\ell,p,n_1,\ldots,n_p$
 satisfying the conditions in \eqref{bnf55}, that
 \begin{equation}\label{lunga}
 \begin{aligned}
\pro{A_p}\Big(\Pi_{n_1}^{+}SU,\ldots,\Pi_{n_{\ell}}^{+}SU,&\Pi^{-}_{n_{1}}SU,\ldots,
\Pi_{n_{\ell}}^{-}SU;t,\x\Big)S\\
&=S
\pro{A_p}\Big(\Pi_{n_1}^{+}U,\ldots,\Pi_{n_{\ell}}^{+}U,\Pi^{-}_{n_{1}}U,\ldots,
\Pi_{n_{\ell}}^{-}U;t,\x\Big).
\end{aligned}
\end{equation}
We recall that $\Pi_{n}^{+}SU={\Pi_{n}^{-}U}$ using \eqref{azioneproiettori}.
On the component $\pro{\tilde{a}_{p}}$ the condition \eqref{lunga} reads
\begin{equation}\label{realere}
\begin{aligned}
\pro{\tilde{a}_p}\Big({\Pi_{n_1}^{-}U},\ldots,& {\Pi_{n_\ell}^{-}U},{\Pi_{n_{1}}^{+}U}, \ldots,
{\Pi_{n_{\ell}}^{+}U};\x\Big)\\
&=\ol{\pro{\tilde{a}_p}\Big(\Pi_{n_1}^{+}U,\ldots, \Pi_{n_\ell}^{+}U,\Pi_{n_{1}}^{-}U, \ldots,
\Pi_{n_{\ell}}^{-}U;-\x\Big)}.
\end{aligned}
\end{equation}
The condition \eqref{parisimbo1} (which holds since $A_{p}$ is parity preserving) implies that 
$\tilde{a}_{p}(U,\ldots, U;t,\x)$ is even in  $\x$ since it does not depend on $x$. 
Therefore by symmetry we deduce that
$\pro{\tilde{a}_p}\Big({\Pi_{n_1}^{+}}U,\ldots, {\Pi_{n_\ell}^{+}U},{\Pi_{n_{1}}^{-}U}, \ldots,
{\Pi_{n_{\ell}}^{-}U};t,\x\Big)$
is real valued.
This concludes the proof.
\end{proof}

The first important result is the following. 
 
 \begin{theo}[{\bf Normal form 1}]\label{BNFiniziale}
 Let $N,\rho,K>0$ as in Theorem \ref{regolarizza}. 
There exists  $\NN\subset \calO$  with zero  measure
such that
for any $\vec{m}\in \calO\setminus \NN$
the following holds. 
There exist $K''>0$ such that $K':=2\rho+4<K''\ll K$, $s_0>0$, $r_0>0$ (possibly different from the ones given by Theorem \ref{regolarizza}) such that, for any $s\geq s_0$, $0<r\leq r_0$ and any $U\in B^K_s(I,r)$ solution even in $x\in \TTT$ 
of \eqref{sistemainiziale} the following holds.
There is an invertible
  (R,R,P)-map
  $\Theta(U)[\cdot] \; :\;  C^{K-K''}_{*\R}(I,{\bf{H}}^{s}(\TTT;\CCC^{2})) \rightarrow C^{K-K''}_{*\R}(I,{\bf{H}}^{s}(\TTT;\CCC^{2})),
$
  satisfying the following:
  \begin{enumerate}
  \item[(i)] there exists a constant $C$ depending n $s,r$ and $K$ such that
\begin{equation}\label{mappeprop}
\begin{aligned}
\|\Theta(U)[V]\|_{K-K'',s}, \|(\Theta(U))^{-1}[V]\|_{K-K'',s}&\leq \|V\|_{K-K'',s}(1+C\|U\|_{K,s_0}) ,\\
\end{aligned}
\end{equation}
for any $ V\in C^{K-K''}_{*\R}(I,{\bf{H}}^{s}(\TTT;\CCC^{2}))$;
\item[(ii)] $\Theta(U)-\uno$ and $(\Theta(U))^{-1}-\uno$ belong to the class $\Sigma\MM_{K,K'',1}[r,N]\otimes\MM_{2}(\CCC)$;

\item[(iii)] the function $W=\Theta(U)[V]$, where $V$ solves \eqref{problemafinale},
 satisfies
\begin{equation}\label{problemafinale2}
\del_{t}W=\ii E\big(\Lambda W+\bonyw(L_1(U;t,\x))[W]+Q_1^{(1)}(U;t)[W]+Q^{(1)}_{2}(U;t)[U]\big)\\
\end{equation}
where 
$Q^{(1)}_1,Q^{(1)}_{2}\in \Sigma\RR^{-\rho+m_1}_{K,K'',1}[r,N]\otimes\MM_{2}({\CCC})$,
for some  $m_1>0$ depending on $N$ (larger than $m$ in Theorem \ref{regolarizza}),
 are (R,R,P)-operators  
and $L_1(U;t,\x)$ is a (R,R,P)-matrix 
in 
 $\Sigma\Gamma^{2}_{K,K'',1}[r,N]\otimes \MM_{2}(\CCC)$
 with constant coefficients in $x\in \TTT$
and
which 
has the form  (recalling Def. \ref{kernel})
\begin{equation}\label{constCoeff2}
L_1(U;t,\x):=\left(
\begin{matrix}
\mathtt{m}^{(1)}(U;t,\x) & 0\\ 0 & \ol{\mathtt{m^{(1)}}(U;-\x)}
\end{matrix}
\right), \quad \mathtt{m}^{(1)}(U;t,\x)=\mathtt{m}_{2}(U)(\ii\x)^{2}+\pro{\mathtt{m}^{(1)}_0}(U;t,\x),
\end{equation}
where $\mathtt{m}_{2}(U)$ is 
given in \eqref{constCoeff} and $\mathtt{m}^{(1)}_0(U;t,\x)\in \Sigma\Gamma^{0}_{K,K'',1}[r,N]$. 
\end{enumerate}
 \end{theo}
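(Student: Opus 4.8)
The plan is to prove Theorem \ref{BNFiniziale} by an iterative Birkhoff normal form argument acting on the order-$0$ component $\mathtt{m}_0(U;t,\x)$ of the diagonal symbol in \eqref{problemafinale}. Decompose $\mathtt{m}_0=\sum_{p=1}^{N-1}\mathtt{m}_{0,p}+\mathtt{m}_{0,N}$ with $\mathtt{m}_{0,p}\in\widetilde{\Gamma}^0_p$, all constant in $x$ by Theorem \ref{regolarizza}. For $p=1,\dots,N-1$ I would construct an approximate conjugation $\Theta_p(U)=\uno+\bonyw(G_p(U;t,\x))$, with $G_p$ a diagonal, $x$-independent, $p$-homogeneous matrix in $\widetilde{\Gamma}^0_p$ to be determined, whose effect on the current system is to replace the degree-$p$ part of the order-$0$ symbol by its ``kernel'' part $\pro{\cdot}$ in the sense of Definition \ref{kernel}, while leaving the pieces of degree $<p$ untouched and generating only: (a) new homogeneous order-$0$ terms of degree $>p$; (b) smoothing remainders; (c) harmless modifications of the lower-order symbols and of $Q_1,Q_2$. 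Composing, $\Theta(U):=\Theta_{N-1}(U)\circ\cdots\circ\Theta_1(U)$ is the map sought, and an approximate inverse is obtained by a $\rho$-truncated Neumann expansion exactly as in the proof of Proposition \ref{diago-lower}; since the $G_p$ have order $0$, $\Theta(U)$ is genuinely invertible for $U$ small, with $\Theta(U)^{\pm1}-\uno\in\Sigma\MM_{K,K'',1}[r,N]\otimes\MM_2(\CCC)$, which together with \eqref{azione3} gives \eqref{mappeprop}.

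The core is the homological equation at each step. Because $\mathtt{m}_{0,p}$ is constant in $x$, we look for $G_p$ constant in $x$ as well, so that $\bonyw(G_p)$, $\ii E\Lambda$ and $\ii E\bonyw(\mathtt{m}_2(\ii\x)^2)$ pairwise commute (all being diagonal Fourier multipliers once the arguments are fixed). Then the degree-$p$ order-$0$ term produced by conjugating \eqref{problemafinale} is, at leading order, $\partial_t\bonyw(G_p(U;t,\x))+\ii E\bonyw(\mathtt{m}_{0,p}(U;t,\x))$, where $\partial_t$ is computed using that $U$ solves \eqref{sistemainiziale} with linear part $\ii E\Lambda U$, so that $\partial_t\bonyw(G_p(U))$ successively replaces one copy of $U$ by $\ii E\Lambda U$. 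Evaluating on the Fourier blocks $\Pi^{\pm}_{n_1}U,\dots,\Pi^{\pm}_{n_p}U$ through \eqref{azioneproiettori}, this forces $G_p$ to solve componentwise an equation of the form $\ii\big(\sum_{j=1}^p\s_j\lambda_{n_j}\big)\widehat{G_p}=-\ii\,\widehat{\mathtt{m}_{0,p}}$, solvable exactly off the set $\{\sum_j\s_j\lambda_{n_j}=0\}$. By the structure of $\lambda_j=(\ii j)^2+\hat p(j)$ and the non-resonance estimate of Proposition \ref{stimemisura}, for $\vec{m}$ outside a zero-measure set $\NN$ one has $|\sum_j\s_j\lambda_{n_j}|\gtrsim\langle\max_2(n_1,\dots,n_p)\rangle^{-N_0}$ whenever the left-hand side is nonzero; hence $G_p\in\widetilde{\Gamma}^0_p$ with the exponent $\mu$ in \eqref{pomosimbo1} enlarged by $N_0$, while the unremovable resonant part, for which $\sum_j\s_j n_j=0=\sum_j\s_j\lambda_{n_j}$, is precisely the perfect-pairing contribution singled out by \eqref{bnf55}, i.e.\ $\pro{\mathtt{m}_{0,p}}$.

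It remains to propagate the algebraic structure and control the errors. Since $\mathtt{m}_{0,p}$ is a (R,R,P) symbol, Lemma \ref{lemmaNUOVO} lets us take each homogeneous piece reversibility-preserving, and the splitting into $\pro{\cdot}$ and its complement is compatible with the action of $S$ and of $\tau$ (as in the proof of Lemma \ref{FEI} and Remark \ref{considerazioni}); hence $G_p$, hence $\Theta_p(U)$, is (R,R,P), and by Lemma \ref{lemmamaxcompo} so are $\Theta(U)$ and its inverse. All sub-leading contributions of the conjugation --- the symbolic calculus errors arising from commuting $\bonyw(G_p)$ past $\ii E\Lambda+\ii E\bonyw(L)$, the conjugates of the pre-existing remainders $Q_1,Q_2$, and the terms from $(\partial_t\Theta_p)\Theta_p^{-1}$ --- are smoothing of order $-\rho+O(1)$ by Proposition \ref{composizioniTOTALI} together with Lemma \ref{lemmatempo}, and accumulate over the $N-1$ steps into $Q_1^{(1)},Q_2^{(1)}\in\Sigma\RR^{-\rho+m_1}_{K,K'',1}[r,N]\otimes\MM_2(\CCC)$ for some $m_1=m_1(N)$. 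The index $K''$ is fixed to absorb the finitely many $\partial_t$-derivatives lost in the iteration (each invocation of Lemma \ref{lemmatempo} or Remark \ref{prodottodisimboli2} costing one), so that $K'=2\rho+4<K''\ll K$, which is consistent with $K\gg\rho$.

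The main obstacle is the small-divisor analysis hidden in Proposition \ref{stimemisura}: one must show that the parameters $\vec{m}$ for which the bound fails form a Lebesgue-null set (this is where the assumption $M\geq N$ enters, guaranteeing that non-paired resonances are excluded by enough of the coefficients $\hat p(j)$), while keeping a polynomial lower bound $\langle\max_2\rangle^{-N_0}$ uniform in the (at most $N$) frequencies; then one must check that the resulting total loss of derivatives, of order $N\cdot N_0$, is compensated by the choice $\rho\gg N$ made at the outset of the regularization. The secondary, more bookkeeping-heavy difficulty is to verify that every one of the $N-1$ conjugations, and the construction of the approximate inverses, keeps us inside the (R,R,P) classes and inside the stated symbol and smoothing orders.
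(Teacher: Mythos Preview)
Your proposal is correct and follows essentially the same strategy as the paper's proof, which iterates Lemma \ref{steppone} to obtain $\Theta(U)=\Theta_{N-1}(U)\circ\cdots\circ\Theta_1(U)$, solving at each step the homological equation \eqref{omologica} on the $p$-homogeneous, $x$-independent, order-$0$ symbol via the small divisor bound of Proposition \ref{stimemisura}.

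The one technical difference is that the paper realizes each $\Theta_p(U)$ as the time-one \emph{flow} of $\bonyw(\widehat{F}^{(p)}(U;t,\xi))$ (see \eqref{generatore}), whereas you take $\Theta_p(U)=\uno+\bonyw(G_p)$ with an approximate inverse built by truncated Neumann series, exactly as in Proposition \ref{diago-lower}. Both choices are equivalent here because the generator has order $0$: the flow expands as $\uno+\bonyw(\widehat{F}^{(p)})+\text{(higher homogeneity)}$ (equation \eqref{tempononrimane}), so at the relevant degree $p$ the two maps coincide, and the discrepancy is absorbed into the $C^+$, $R^+$ terms of degree $\geq p+1$. The flow has the mild advantage that invertibility and the (R,R,P) property are inherited automatically from the generator via an ODE argument (see the $G^\tau$ computation in the proof of Lemma \ref{steppone}), while in your version they require the smallness of $U$ and a direct check on the symbol $G_p$; both are straightforward. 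Your bookkeeping of the losses ($K''\sim K'+N$, $m_1\sim m+(N-1)\tilde m$) and the identification of the unremovable resonant part with the pairing set of Definition \ref{kernel} match the paper exactly.
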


\subsection{Non-resonance conditions}\label{BNFBNFBNF}

For $M\in \NNN$ and  $\vec{m}=(m_1,\ldots,m_{M})\in \calO:=[-1/2,1/2]^{M}$ we define,
recalling
\eqref{potenziale1}, \eqref{NLS1000},
for any $N\leq M$ and $0\leq \ell\leq N$  the function  
\begin{equation}\label{smalldiv}
\begin{aligned}
\psi_{N}^{\ell}(\vec{m},\vec{n})&=\lambda_{n_1}+\ldots+\lambda_{n_{\ell}}-\lambda_{n_{\ell+1}}-\ldots-\lambda_{n_{N}}\\
&=\sum_{j=1}^{\ell}(\ii n_{j})^{2}-\sum_{j=\ell+1}^{N}(\ii n_{j})^{2}
+\sum_{k=1}^{M}m_{k}\left(
\sum_{j=1}^{\ell}\frac{1}{\langle n_j\rangle^{2k+1}}-\sum_{j=\ell+1}^{N}\frac{1}{\langle n_j\rangle^{2k+1}}
\right),
\end{aligned}
\end{equation}
where $\vec{n}=(n_1,\ldots,n_N)\in\mathbb{N}^N$ with the convention that $\sum_{j=m}^{m'}a_{j}=0$
when $m>m'$. 
We have the following lemma.

\begin{prop}[{\bf Non resonance condition}]\label{stimemisura}
 There exists $\NN\subset \calO$ with zero Lebesgue measure such that, for any $\vec{m}\in \calO\setminus \NN$,
there exist $\g,N_0>0$ such that the inequality
\begin{equation}\label{stimadalbasso}
 |\psi_{N}^{\ell}(\vec{m},\vec{n})|\geq \g \max(\langle n_1\rangle ,\ldots, \langle n_{N}\rangle)^{-N_0},
 \end{equation}
 holds true for any $\vec{n}=(n_1,\ldots, n_N)\in \NNN^{N}$ if $N$ is odd. In the case that $N$ is even the \eqref{stimadalbasso} holds true if $\ell\neq N/2$ for any $\vec{n}\in \NNN^{N}$; 
 if $\ell=N/2$ the condition \eqref{stimadalbasso} holds true 
  for any $\vec{n}$ in $\NNN^N$ such that
 \begin{equation}\label{noncoppie}
 \{n_1,\ldots,n_{\ell}\}\neq \{n_{\ell+1},\ldots, n_{N}\}.
 \end{equation}
 \end{prop}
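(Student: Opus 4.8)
The plan is to prove the measure estimate \eqref{stimadalbasso} by a standard sub-analytic / Borel--Cantelli argument on the parameter space $\calO=[-1/2,1/2]^M$, treating $\vec m$ as the free variable and the integers $\vec n$ as running over a countable index set. First I would fix $N\leq M$ and a choice of $\ell$, and for a given $\vec n=(n_1,\ldots,n_N)\in\NNN^N$ (in the even case with $\ell=N/2$, satisfying \eqref{noncoppie}) define the affine function of $\vec m$
\begin{equation*}
\vec m\mapsto \psi_N^\ell(\vec m,\vec n)=c_0(\vec n)+\sum_{k=1}^M m_k\, c_k(\vec n),\qquad c_k(\vec n):=\sum_{j=1}^\ell\frac{1}{\langle n_j\rangle^{2k+1}}-\sum_{j=\ell+1}^N\frac{1}{\langle n_j\rangle^{2k+1}},
\end{equation*}
where $c_0(\vec n)=\sum_{j=1}^\ell(\ii n_j)^2-\sum_{j=\ell+1}^N(\ii n_j)^2\in\ZZZ$. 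For each fixed $\vec n$ the ``bad set'' $B_{\g}(\vec n):=\{\vec m\in\calO:\ |\psi_N^\ell(\vec m,\vec n)|<\g\,\langle\vec n\rangle_\infty^{-N_0}\}$, where $\langle\vec n\rangle_\infty:=\max_j\langle n_j\rangle$, is a slab; if the gradient vector $(c_1(\vec n),\ldots,c_M(\vec n))$ has Euclidean norm bounded below by some $\langle\vec n\rangle_\infty^{-\tau}$, then $|B_\g(\vec n)|\leq C\,\g\,\langle\vec n\rangle_\infty^{-N_0+\tau}$ by the elementary estimate on the measure of a neighbourhood of a hyperplane inside a cube.

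The crux is therefore a lower bound of the form $\max_{1\leq k\leq M}|c_k(\vec n)|\geq c\,\langle\vec n\rangle_\infty^{-\tau}$ for all admissible $\vec n$, with $c,\tau>0$ depending only on $M,N$. This is where the hypothesis $N\leq M$ is used. One argues by contradiction/linear algebra: the quantities $c_k(\vec n)$, $k=1,\ldots,M$, are, up to reordering and sign, symmetric-type sums of the values $x_j:=\langle n_j\rangle^{-1}$ against the power weights $x_j^{2k+1}$. If all of $c_1,\ldots,c_M$ were extremely small, then (isolating the $N$ distinct values among $\{\langle n_j\rangle\}$ and using a Vandermonde-type determinant in the variables $\langle n_j\rangle^{-2}$) one would force a near-collision of the multiset $\{\langle n_j\rangle : j\leq\ell\}$ with $\{\langle n_j\rangle : j>\ell\}$; since these are integers, a genuine collision occurs, i.e. the two multisets coincide. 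In the odd case this is impossible because then $\ell\neq N-\ell$ forces the difference to be a nonzero integer (so already $|c_0(\vec n)|\geq 1$ after cancellation, or more precisely the count mismatch gives a nonvanishing leading term); in the even case with $\ell=N/2$ this is exactly excluded by \eqref{noncoppie}. Quantitatively, a Vandermonde / divided-difference estimate turns ``the two multisets are distinct integer multisets'' into a lower bound $\geq c\,\langle\vec n\rangle_\infty^{-\tau}$ with explicit $\tau=\tau(M)$, because distinct integers differ by at least $1$ and the relevant confluent Vandermonde determinant in the variables $\langle n_j\rangle^{-2}\in(0,1]$ is bounded below by a negative power of $\langle\vec n\rangle_\infty$.

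Granting the gradient lower bound, I choose $N_0:=\tau+M+2$ (or any exponent with $N_0-\tau>M$), so that for each fixed $\vec n$, $|B_\g(\vec n)|\leq C\g\,\langle\vec n\rangle_\infty^{-(M+2)}$, and I set $\NN_\g:=\bigcup_{\ell,\vec n}B_\g(\vec n)$; the number of $\vec n$ with $\langle\vec n\rangle_\infty=R$ is $O(R^{N-1})\leq O(R^{M-1})$, so
\begin{equation*}
|\NN_\g|\ \leq\ C\g\sum_{R\geq 1} R^{M-1}\,R^{-(M+2)}\ =\ C\g\sum_{R\geq1}R^{-3}\ \leq\ C'\g .
\end{equation*}
Then $\NN:=\bigcap_{n\in\NNN^*}\NN_{1/n}$ has zero Lebesgue measure, and for every $\vec m\in\calO\setminus\NN$ there is $\g=\g(\vec m)>0$ with $\vec m\notin\NN_\g$, which is precisely \eqref{stimadalbasso} with this $\g$ and the above $N_0$. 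The main obstacle, and the only non-routine point, is establishing the quantitative gradient bound $\max_k|c_k(\vec n)|\geq c\,\langle\vec n\rangle_\infty^{-\tau}$ from the combinatorial non-degeneracy condition \eqref{noncoppie} (and its automatic analogue for odd $N$); everything else is bookkeeping with slab measures and a convergent series. I would handle that obstacle via the confluent Vandermonde determinant of the $x_j^{2}=\langle n_j\rangle^{-2}$, reducing ``solve the linear system $c_k=\epsilon_k$, $k=1,\ldots,M$, for the symmetric functions of the $x_j^2$'' to an invertibility statement whose inverse is polynomially controlled.
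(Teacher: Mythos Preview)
Your plan is correct and matches the paper's proof essentially step for step: both write $\psi_N^\ell$ as an affine function of $\vec m$, obtain a quantitative lower bound $\max_{1\le k\le N}|c_k(\vec n)|\gtrsim \langle\vec n\rangle_\infty^{-\tau}$ via the (ordinary, not confluent) Vandermonde determinant in the $q\le N$ \emph{distinct} values $\langle n_{j,1}\rangle^{-2}$ applied to the nonzero signed-multiplicity vector $(N_1^+-N_1^-,\ldots,N_q^+-N_q^-)$, and then run the slab measure estimate plus a Borel--Cantelli summation over $\vec n$. One small correction: your parenthetical claim that $|c_0(\vec n)|\ge 1$ in the odd case is false in general (e.g.\ $N=3$, $\ell=1$, $\vec n=(5,3,4)$ gives $c_0=0$); the correct reason the multisets cannot coincide---which you also state---is simply the cardinality mismatch $\ell\neq N-\ell$, and it is this (not $c_0$) that feeds into the Vandermonde argument for the gradient $c_1,\ldots,c_N$.
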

\begin{proof}
First of all we show that, if  $N,\ell$, $\vec{n}$  
are as in the statement of the proposition, the function $\psi_{N}^{\ell}(\vec{m},\vec{n})$
is not identically zero as function of $\vec{m}$.
We can write
\begin{equation*}
\psi_{N}^{\ell}(\vec{m},\vec{n})=a_0^{(\ell)}(\vec{n})+\sum_{k=1}^{N}m_{k}a_{k}^{(\ell)}(\vec{n})+\sum_{k=N+1}^{M}m_{k}a_{k}^{(\ell)}(\vec{n}),
\end{equation*}
where
\begin{equation}\label{akl}
a_{0}^{(\ell)}(\vec{n}):=\sum_{j=1}^{\ell}(\ii n_{j})^{2}-\sum_{j=\ell+1}^{N}(\ii n_{j})^{2}; \quad a_{k}^{(\ell)}(\vec{n}):=
\sum_{j=1}^{\ell}\frac{1}{\langle n_j\rangle^{2k+1}}-\sum_{j=\ell+1}^{N}\frac{1}{\langle n_j\rangle^{2k+1}}, \,\, k\geq 1
\end{equation}
for $0\leq \ell\leq N$.
 We show that there exists at least one non zero coefficient $a_{k}^{(\ell)}(\vec{n})$ for $1\leq k\leq N$.
 
Let $q$ in $\NNN^{*}$  such that there are $N_1,\ldots,N_{q}$ in $\NNN^{*}$ satisfying 
$N_1+\ldots+ N_{q}=N$ and 
\begin{equation*}
\set{n_1,\dots,n_N}=\set{n_{1,1},\ldots, n_{1,N_1},\ldots, n_{q,1}\ldots, n_{q,N_{q}}}
\end{equation*}
where
\begin{equation*}
\begin{cases}
&n_{j,i_1}=n_{j,i_2} \,\forall \,\, i_1, \, i_2\in \set{1,\ldots, N_j}, \forall\, \, j\in \set{1,\ldots,q},\\
& n_{j,1}\neq n_{i,1} \, \forall \,\, i\neq j.
\end{cases}
\end{equation*}
Note that, since $\jap{x}=\sqrt{1+x^2}$ for $x\in\RRR$, then $\jap{n_{j,1}}\neq \jap{n_{i,1}}$ for any $i\neq j$.
According to this notation
the element in \eqref{akl} can be rewritten as
\begin{equation}
a_k^{(\ell)}(\vec{n})=\sum_{j=1}^{q}\frac{1}{\jap{n_{j,1}}^{2k+1}}(N_j^+-N_j^{-}),
\end{equation}
where $N_j^+$, resp. $N_j^-$, is the number of times that the term $\jap{n_{j,1}}^{2k+1}$ appears in the sum in equation \eqref{akl} with sign $+$, resp. with sign $-$, and hence $N_j^++N_{j}^{-}=N_j$. 
Note that if $N_j^+-N_j^{-}=0$ for any $j=1,\ldots,q$, then the condition \eqref{noncoppie}   is violated.


Define the $(N\times q)$-matrix
\[
A_{q}(\vec{n}):=\left(
\begin{matrix}
\frac{1}{\langle n_{1,1}\rangle^{3}} & \ldots & \ldots & \frac{1}{\langle n_{q,1}\rangle^{3}}\\
\frac{1}{\langle n_{1,1}\rangle^{5}} &  \ldots & \ldots &  \frac{1}{\langle n_{q,1}\rangle^{5}}\\
\vdots & \ddots & \ddots & \vdots\\
 \frac{1}{\langle n_{1,1}\rangle^{2N+1}} & \ldots & \ldots &  \frac{1}{\langle n_{q,1}\rangle^{2N+1}}
\end{matrix}
\right).
\]
We have
\begin{equation}\label{mondemonde}
\sum_{k=1}^{N}m_{k}a_{k}^{(\ell)}(\vec{n})=\Big(A_{q}(\vec{n}) \vec{\s}^{(\ell)}_{
q}\Big)\cdot \vec{m}_{N}, \quad 
\vec{\s}_{q}^{(\ell)}:=\Big((N_1^+-N_1^-),\ldots,(N_{q}^+-N_{q}^-)\Big)^{T}, 
\quad 
\end{equation}
where $\vec{m}_{N}:=(m_1,\ldots,m_{N})$ and ``$\cdot$'' denotes the standard scalar product on $\RRR^{N}$. By the above reasoning the vector $\vec{\s}_{q}^{(\ell)}$ is different from $\vec{0}$.

We claim that the vector $\vec{v}:=A_{q}(\vec{n})\vec{\s}^{(\ell)}_{q}$
has at least one component different from zero. 
Denote by $A_{q}^{q}(\vec{n})$ the $(q\times q)$-sub-matrix of $A_{q}(\vec{n})$ made of its firsts $q$ rows.
The matrix $A^{q}_{q}(\vec{n})$ is, up to rescaling the $k$-th column by the factor $\langle n_{1,k}\rangle^{3}$,
  a Vandermonde matrix, therefore
\begin{equation}\label{mondomondemon}
\det(A_{q}^{q}(\vec{n}))=\left(\prod_{j=1}^{q}\frac{1}{\langle n_{j,1}\rangle^{3}}\right)\prod_{1\leq i<k\leq q}\left( 
\frac{1}{{\langle n_{i,1}\rangle}^2}-\frac{1}{{\langle n_{k,1}\rangle}^2}
\right),
\end{equation}
which is different from zero
since $n_{i,1}\neq n_{k,1}$ for any $1\leq i<k\leq q$; this implies that ${\rm Rank}(A_{q}(\vec{n}))=q$, hence the claim follows since $\sigma^{(\ell)}_{q}\neq \vec{0}$. 

Fix  $\g>0$, $N\leq M\in \NNN$,  $N_0\in \NNN$ and $0\leq \ell\leq N$;
we introduce the following  ``bad'' set
\begin{equation}\label{BADSets}
\mathcal{B}_{N,N_0}(\vec{n},\g,\ell) :=
\left\{\vec{m}\in \calO \; : \; |\psi^{\ell}_{N}(\vec{m},\vec{n})|<{\g}\max{\big(\langle n_1\rangle,\ldots,\langle n_N\rangle}\big)^{-N_0} \right\}.
\end{equation}
We give an estimate of the sub-levels of the function $\psi^{\ell}_{N}(\vec{m},\vec{n})$. 
By the discussion above there exists $1\leq k'\leq q$ such that
$a_{k'}^{(\ell)}(\vec{n})\neq0$, set $k_{\infty}\in\{1,\cdots, q\}$ 
the index such that $|a_{k_{\infty}}^{(\ell)}(\vec{n})|=|\big(A_{q}^{q}(\vec{n})\vec{\sigma}^{(\ell)}\big)_{k_{\infty}}|=\norm{A_{q}^{q}(\vec{n})\vec{\s}^{\ell}}{\infty}>0$.
We start by proving that 
there exist constants $\mathtt{c}\ll 1$ and $\mathtt{b}\gg1$, both depending only on $q$ (and hence only on $N$),
such that
\begin{equation}\label{stimadalbasso1000}
|\del_{m_{k_{\infty}}}\psi_{N}^{\ell}(\vec{m},\vec{n})|
\geq \frac{\mathtt{c}}{\max{\big(\langle n_1\rangle,\ldots,\langle n_N\rangle}\big)^{\mathtt{b}}}.
\end{equation}
We have
\begin{equation}\label{monde1}
|\del_{m_{k_{\infty}}}\psi_{N}^{\ell}(\vec{m},\vec{n})|=
|a_{k_{\infty}}^{(\ell)}(\vec{n})|\stackrel{(\ref{mondemonde})}{=}|(A_{q}^{q}(\vec{n})\vec{\s}^{(\ell)}_{q})_{k_{\infty}}|
\geq K(\det{A_{q}^{q}(\vec{n})}),
\end{equation}
where $1\gg K=K(N)>0$ depends only on $N$.
The last inequality in \eqref{monde1}
follows by the fact that $A_{q}^{q}(\vec{n})$ is invertible, hence
\[
1\leq |(A^{q}_{q}(\vec{n}))^{-1}A_{q}^{q}(\vec{n})\vec{\s}_{q}^{(\ell)}|\leq (\det{A_{q}^{q}(\vec{n})})^{-1}N^{2}C_N\norm{A_{q}^{q}(\vec{n})\vec{\s}_{q}^{(\ell)}}{\infty},
\]
with $C_{N}>0$ and we have used $q\leq N$.
By formula \eqref{mondomondemon} one can deduce that 
\begin{equation*}
|\det{A_{q}^{q}(\vec{n})}|\geq \frac{\widetilde{K}}{\max{\big(\langle n_1\rangle,\ldots,\langle n_N\rangle}\big)^{\mathtt{b}}}
\end{equation*}
where $\mathtt{b}$ and $\widetilde{K}$ depend only on $N$. 
The latter inequality, together with
\eqref{monde1}, implies
the \eqref{stimadalbasso1000}.
Estimate \eqref{stimadalbasso1000}
implies  that
\[
{\rm meas}\Big(\mathcal{B}_{N,N_0}(\vec{n},\g,\ell)\Big) \leq \frac{\g}{\mathtt{c}\max{\big(\langle n_1\rangle,\ldots,\langle n_N\rangle}\big)^{N_0-\mathtt{b}}}.
\]
Hence, for $N_0\geq \mathtt{b}+2+N$, one obtains
\[
{\rm meas}\Big(\bigcap_{\g>0}\bigcup_{\vec{n}\in \NNN^{N}} 
\mathcal{B}_{N,N_0}(\vec{n},\g,\ell) \Big)\leq 
\lim_{\g\to0}  \frac{\g}{\mathtt{c}}\sum_{\vec{n}\in \NNN^{N}}\frac{1}{\max{\big(\langle n_1\rangle,\ldots,\langle n_N\rangle}\big)^{N_0-\mathtt{b}}}=0.
\]
By setting 
\[
\NN:=\bigcup_{0\leq N\leq M}\bigcap_{\g>0}\bigcup_{\vec{n}\in \NNN^{N}} 
\mathcal{B}_{N,N_0}(\vec{n},\g,\ell), 
\]
one gets the thesis.
\end{proof}

\subsection{Normal forms}\label{BNFnormal1}
In this Section we prove Theorem \ref{BNFiniziale}. The proof will be based on an iterative use of the following lemma.

\begin{lemma}\label{steppone}
Fix $p,K,N\in \NNN$, $r,\rho>0$, $1\leq p\leq N-1$ 
and $K'\leq K$. For $U\in B^{K}_{s_0}(I,r)\cap C^{K}_{*\RRR}(I;{\bf H}^{s}_e)$ 
be a solution of \eqref{sistemainiziale}
consider the system
\begin{equation}\label{problema100}
\del_{t}V=\ii E\big(\Lambda V+\bonyw(\tilde{L}^{(p)}(U;t,\x))[V]+G^{(p)}_1(U;t)[V]+G^{(p)}_2(U;t)[U]\big),
\end{equation}
where $G^{(p)}_{1}(U;t),G^{(p)}_{2}(U;t)\in \Sigma\RR^{-\rho}_{K,K',1}[r,N]\otimes\MM_{2}({\CCC})$ are
 (R,R,P)-operator  and $\tilde{L}^{(p)}(U;t,\x)$ is a  diagonal and constant coefficients in $x$ (R,R,P)-matrix of the form
\begin{equation}\label{constCoeff1000}
\tilde{L}^{(p)}(U;t,\x):=\left(
\begin{matrix}
{\mathtt{m}}^{(p)}(U;t,\x) & 0\\ 0 & \ol{\mathtt{m}^{(p)}(U;t,-\x)}
\end{matrix}
\right), \quad \mathtt{m}^{(p)}(U;t,\x)=\mathtt{m}_{2}(U;t)(\ii\x)^{2}+\mathtt{m}^{(p)}_0(U;t,\x),
\end{equation}
where $\mathtt{m}_{2}(U;t)$ is the real symbol in $\Sigma\calF_{K,K',1}[r,N]$ given in \eqref{constCoeff}, 
while $\mathtt{m}_0^{(p)}(U;t,\x)\in \Sigma\Gamma^{0}_{K,K',1}[r,N]$ is such that
(recalling Def. \ref{kernel})
\begin{equation}\label{kamstep}
\begin{aligned}
\mathtt{m}_0^{(1)}(U;t,\x)&=\sum_{j=1}^{N-1}m_{j}^{(1)}(U,\ldots,U;t,\x)+m^{(1)}_{N}(U;t,\x),\\
\mathtt{m}_0^{(p)}(U;t,\x)&=\sum_{j=1}^{p-1}\pro{m_{j}^{(p)}}(U,\ldots, U;t,\x)
+\sum_{j=p}^{N-1}m_{j}^{(p)}(U,\ldots,U;t,\x)+m^{(p)}_{N}(U;t,\x),\;\;\;\; 
2\leq p\leq N-1,\\
\end{aligned}
\end{equation}
where
\begin{equation}\label{kamstep2}
m^{(p)}_{j}\in \widetilde{\Gamma}^{0}_{j}, \quad j=1,\ldots,N-1, \quad m^{(p)}_{N}\in \Sigma\Gamma^{0}_{K,K',N}[r,N].
\end{equation}
For $r$ small enough and $\vec{m}$ outside the subset $\NN$ given by Proposition \ref{stimemisura} the following holds.
There is $s_0>0$  such that for $s\geq s_0$, 
there is an invertible
  (R,R,P)-map 
 \begin{equation}\label{mappatot200}
\Theta_p(U)[\cdot]
\; :\;  C^{K-K'}_{*\R}(I,{\bf{H}}^{s}(\TTT;\CCC^{2})) \rightarrow C^{K-K'}_{*\R}(I,{\bf{H}}^{s}(\TTT;\CCC^{2})),
\end{equation}
satisfying the following:
\begin{enumerate}
\item[(i)] there exist $C$ depending on $s,r,K$ such that
\begin{equation}\label{mappeprop200}
\begin{aligned}
\|\Theta_p(U)[V]\|_{K-K',s}, \, 
\left\|\big(\Theta_p(U)\big)^{-1}[V]\right\|_{K-K',s}&\leq \|V\|_{K-K',s}(1+C\|U\|_{K,s_0}), \\
\end{aligned}
\end{equation}
for any $V\in C^{K-K'}_{*\R}(I,{\bf{H}}^{s}(\TTT;\CCC^{2}))$;
\item[(ii)]
$\Theta_{p}(U)-\uno$ and $(\Theta_p(U))^{-1}-\uno$ belong to the class $\Sigma\MM_{K,K',1}[r,N]\otimes\MM_{2}(\CCC)$;
\item[(iii)]
the function $W=\Theta_p(U)[V]$ satisfies
\begin{equation}\label{problemafinale201}
\del_{t}W=\ii E\big(\Lambda W+\bonyw(\tilde{L}^{(p+1)}(U;t,\x))[W]+G_1^{(p+1)}(U;t)[W]+G_2^{(p+1)}(U;t)[U]\big),
\end{equation}
where 
$V$ satisfies \eqref{problema100}. 
The operators $G_1^{(p+1)}(U;t),G_{2}^{(p+1)}(U;t)$ are (R,R,P)-operators in  the class \linebreak$\Sigma\RR^{-\rho+\tilde{m}}_{K,K'+1,1}[r,N]\otimes\MM_{2}({\CCC})$ 
for some $\tilde{m}>0$,  $\tilde{L}^{(p+1)}(U;t,\x)$ is a (R,R,P)-matrix 
in 
 $\Sigma\Gamma^{2}_{K,K'+1,1}[r,N]\otimes \MM_{2}(\CCC)$
 with constant coefficients in $x\in \TTT$
and
it
has the form 
\begin{equation}\label{constCoeff2000}
\tilde{L}^{(p+1)}(U;t,\x):=\left(
\begin{matrix}
\mathtt{m}^{(p+1)}(U;t,\x) & 0\vspace{0.4em}\\ 0 & \ol{\mathtt{m^{(p+1)}}(U;t,-\x)}
\end{matrix}
\right), \quad \mathtt{m}^{(p+1)}(U;t,\x)=\mathtt{m}_{2}(U;t)(\ii\x)^{2}+\mathtt{m}^{(p+1)}_0(U;t,\x),
\end{equation}
where $\mathtt{m}_{2}(U;t)$ is 
given in \eqref{constCoeff},  the symbol $\mathtt{m}^{(p+1)}_0(U;t,\x)$ is in $\Sigma\Gamma^{0}_{K,K'+1,1}[r,N]$ and it
has the form
\begin{equation}\label{kamstep10}
\mathtt{m}_0^{(p+1)}(U;t,\x)=\sum_{j=1}^{p}\pro{m^{(p)}_{j}}(U,\ldots,U;t,\x)+\sum_{j=p+1}^{N-1}m^{(p+1)}_{j}(U,\ldots,U;t,\x)+m^{(p+1)}_{N}(U;t,\x),
\end{equation}
where $m_{j}^{(p)}\in \widetilde{\Gamma}^{0}_{j}$, $j=1,\ldots,p$ are given in \eqref{kamstep2} and 
\begin{equation}\label{kamstep11}
m^{(p+1)}_{j}\in \widetilde{\Gamma}^{0}_{j}, \quad j=p+1,\ldots,N-1, \quad m^{(p+1)}_{N}\in \Gamma^{0}_{K,K'+1,N}[r].
\end{equation}
\end{enumerate}
\end{lemma}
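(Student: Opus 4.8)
The plan is to perform a single homogeneous normal-form step that eliminates the $x$-dependence off the kernel of the adjoint action from the degree-$p$ part $m_p^{(p)}(U,\ldots,U;t,\xi)$ of $\mathtt{m}_0^{(p)}$. First I would extract the purely $p$-homogeneous component $m_p^{(p)}\in\widetilde\Gamma^0_p$ and split it, using Definition \ref{kernel}, as $m_p^{(p)}=\pro{m_p^{(p)}}+(m_p^{(p)}-\pro{m_p^{(p)}})$; the first summand is already in the kernel and we want to leave it, while the second one should be removed by conjugation. Looking for the conjugating map in the form $\Theta_p(U)=\bonyw(\uno+B_p(U;t,x,\xi))$ with a diagonal reality-preserving matrix $B_p$ of symbols in $\widetilde\Gamma^{-1}_p$, the usual para-differential conjugation formula (Propositions \ref{composizioniTOTALI} and \ref{azionepara}, together with Remark \ref{considerazioni} and Lemma \ref{lemmabello}) produces, at the relevant order, the homological term
\begin{equation}\label{homologicalstep}
\ii E\,[\bonyw(B_p(U;t,x,\xi)),\bonyw(\mathtt{m}_2(U;t)(\ii\xi)^2\uno)]_{-}+\ii E\,\bonyw((m_p^{(p)}-\pro{m_p^{(p)}})(U,\ldots,U;t,\xi)\uno),
\end{equation}
so that on the diagonal we must solve the scalar homological equation for the principal symbol $b_p$ of $B_p$.

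Evaluated on Fourier modes, using \eqref{azioneproiettori}, this homological equation reads, schematically,
\begin{equation}\label{homeq2}
\big(\psi^{\ell}_{p+1}(\vec m,(n_1,\ldots,n_p,n))\big)\,\wh b_p(\ldots;n)=\text{(coefficient of }m_p^{(p)}-\pro{m_p^{(p)}}\text{)},
\end{equation}
where the divisor $\psi^{\ell}_{p+1}$ is exactly the quantity controlled in Proposition \ref{stimemisura}: by the spectral localization \eqref{azione5} the internal frequencies $n_1,\ldots,n_p$ are comparable and small with respect to $n\sim n_0$, the combination of signs arising from the $\Pi^{\pm}$ structure has $\ell\ne(p+1)/2$ or, in the balanced case, fails the condition \eqref{noncoppie} precisely because we have subtracted off $\pro{m_p^{(p)}}$; hence $|\psi^{\ell}_{p+1}|\geq\gamma\max(\langle n_1\rangle,\ldots,\langle n_p\rangle,\langle n\rangle)^{-N_0}$ for $\vec m\notin\NN$. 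Therefore $\wh b_p$ is well-defined, and dividing by $\psi^{\ell}_{p+1}$ costs at most $N_0$ derivatives on the low frequencies $n_1,\ldots,n_p$ (harmless, since those act on the extra function $U$ and we work with very smooth $U$) and, because of \eqref{azione5}, essentially no derivative in $n$; this is what forces the loss $K'\to K'+1$ and the mild order increase $-\rho\to-\rho+\tilde m$. The resulting $b_p$ lies in $\widetilde\Gamma^{-1}_p$, and one checks using Lemma \ref{lemmaNUOVO} and Remark \ref{considerazioni} that it inherits reality, parity and reversibility preservation from $m_p^{(p)}$, so that $B_p$, hence $\Theta_p(U)$, is a (R,R,P)-map; its approximate inverse is built as in \eqref{paramatr1}–\eqref{paramatr2}, giving $(\Theta_p(U))^{-1}-\uno\in\Sigma\MM_{K,K',1}[r,N]\otimes\MM_2(\CCC)$ and items (i), (ii).

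For item (iii) I would then simply track what conjugation does to each piece of \eqref{problema100}. The commutator with $\Lambda$ and with $\bonyw(\mathtt{m}_2(U;t)(\ii\xi)^2)$ produces, by construction of $b_p$, the cancellation of $m_p^{(p)}-\pro{m_p^{(p)}}$ up to symbols of negative order and $\rho$-smoothing remainders; the lower-order symbols thus generated, together with the already-present $\pro{m_j^{(p)}}$ for $j<p$, the $m_j^{(p)}$ for $j>p$, and the non-homogeneous tail $m_N^{(p)}$, can be regrouped and, invoking Lemma \ref{lemmabello}, rewritten as a diagonal constant-coefficients (R,R,P)-matrix of the form \eqref{constCoeff2000}–\eqref{kamstep11}: note that in the new symbol the degree-$j$ homogeneous components for $j\leq p$ are now all of the form $\pro{m_j^{(p)}}$ (the $j<p$ ones were already, the $j=p$ one by the step just performed), matching \eqref{kamstep10}. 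The terms $\del_t B_p$ contribute, via Lemma \ref{lemmatempo}, a symbol of order $-1$ with one extra time-derivative (again (R,R,P)), absorbed in $\mathtt{m}_0^{(p+1)}$; the conjugation of $G_1^{(p)}$ and the composition of $\Theta_p(U)$ with $G_2^{(p)}$, handled by Proposition \ref{composizioniTOTALI} items (ii)–(iv) and Lemma \ref{lemmamaxcompo}, remain (R,R,P)-smoothing operators, with order degraded from $-\rho$ to $-\rho+\tilde m$ by the finitely many symbolic-calculus losses and the $N_0$-derivative loss from \eqref{homeq2}. The one point requiring genuine care — and the main obstacle — is verifying that, after subtracting $\pro{m_p^{(p)}}$, the frequency configurations on which $b_p$ is defined genuinely satisfy the hypotheses of Proposition \ref{stimemisura}: one must match the sign pattern $(\sigma_0,\ldots,\sigma_{p+1})$ coming from the output projector $\Pi_{n_0}^{\pm}$ and the inputs $\Pi_{n_j}^{\pm}U$ to the index $\ell$ in $\psi^{\ell}_{p+1}$, and check that "balanced with equal index sets" is exactly the configuration already living in the kernel $\pro{\phantom{a}}$; this is the heart of why the (R,R,P) structure is what makes the scheme close, and it is where the combinatorial bookkeeping of Definition \ref{kernel} and \eqref{azioneproiettori} has to be done carefully.
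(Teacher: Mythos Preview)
Your proposal has a genuine gap: you misidentify the mechanism that generates the homological equation. You propose to cancel $m_p^{(p)}-\pro{m_p^{(p)}}$ via the commutator $[\bonyw(B_p),\bonyw(\mathtt{m}_2(U;t)(\ii\xi)^2\uno)]_-$ (and, implicitly, $[\bonyw(B_p),\ii E\Lambda]$). But the hypothesis of the lemma is that $\tilde L^{(p)}(U;t,\xi)$ is \emph{constant in $x$}: its Bony--Weyl quantization is a Fourier multiplier and therefore commutes with $\Lambda$ and with $\bonyw(\mathtt{m}_2(\ii\xi)^2)$. If you take $B_p$ constant in $x$ as well, both commutators vanish identically and your displayed equation collapses to $\ii E\,\bonyw((m_p^{(p)}-\pro{m_p^{(p)}})\uno)=0$, so nothing is cancelled. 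If instead you allow $B_p$ to depend on $x$, the principal symbol of the commutator is $2\xi\,\del_x b_p$, which is $x$-dependent and cannot equal the $x$-independent target; moreover the new order-zero symbol $\mathtt m_0^{(p+1)}$ would then depend on $x$, destroying the constant-coefficient structure required in the conclusion. Either way the argument fails, and in particular the divisor $\psi^{\ell}_{p+1}(\vec m,(n_1,\ldots,n_p,n))$ involving the external frequency $n$ never arises.

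The paper's mechanism is different and is precisely the term you dismissed as lower order: it is the \emph{time derivative} of the conjugating map. One takes $f\in\widetilde\Gamma^{\,0}_p$ (order $0$, not $-1$, and constant in $x$) and defines $\Theta_p(U)$ as the time-one flow of $\bonyw(\widehat F^{(p)}(U;t,\xi))$. Since $\widehat F^{(p)}$ and $\tilde L^{(p)}$ are all $x$-independent, conjugation produces no symbolic commutators; the only nontrivial contribution at homogeneity $p$ comes from $(\del_t\Theta_p(U))(\Theta_p(U))^{-1}$, which by the chain rule and $\del_t U=\ii E\Lambda U+\ldots$ yields $\sum_{j=1}^{p}\bonyw\big(\widehat F^{(p)}(U,\ldots,\underbrace{\ii E\Lambda U}_{j},\ldots,U)\big)$ at leading order. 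The resulting homological equation, in Fourier, reads
\[
\psi^{\ell}_{p}(\vec m,\vec n)\,f(\Pi^{+}_{n_1}U,\ldots,\Pi^{-}_{n_p}U;\xi)
=-(m_p^{(p)}-\pro{m_p^{(p)}})(\Pi^{+}_{n_1}U,\ldots,\Pi^{-}_{n_p}U;\xi),
\]
with divisor $\psi^{\ell}_{p}$ depending only on the $p$ \emph{internal} frequencies $(n_1,\ldots,n_p)$. The subtraction of $\pro{m_p^{(p)}}$ removes exactly the balanced configurations \eqref{noncoppie}, so Proposition \ref{stimemisura} applies and $f\in\widetilde\Gamma^{\,0}_p$ is well-defined, still constant in $x$, and (R,R,P). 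Your order count ($b_p\in\widetilde\Gamma^{-1}_p$), your divisor index ($p+1$ versus $p$), and your treatment of $\del_t B_p$ as negligible all stem from this single misplacement of the homological mechanism.
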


\begin{proof}
Let $f(U;t,\x)$ be a symbol in $\widetilde{\Gamma}^{0}_{p}$ which has constant coefficients in $x\in \TTT$.
Consider the system
\begin{equation}\label{generatore}
\del_{\tau}W(\tau)=\bonyw(\widehat{F}^{(p)}(U;t,\x))[W(\tau)], \quad 
\widehat{F}^{(p)}(U;t,\x):=\left(
\begin{matrix}
f(U;t,\x) & 0 \vspace{0.4em}\\
0&\ol{f(U;t,-\x)}
\end{matrix}
\right).
\end{equation}
Suppose moreover that the matrix $\widehat{F}^{(p)}(U;t,\x)$ is a (R,R,P)-matrix of symbols.
By standard theory of ODEs on Banach spaces the flow $\Theta_{p}^{\tau}(U)[\cdot]$ of \eqref{generatore} 
is well defined for $\tau\in[0,1]$. We set $\Theta_{p}(U)[\cdot]:=\Theta^{\tau}_p(U)[\cdot]_{|_{\tau=1}}$.
Estimates \eqref{mappeprop200}
hold by direct computation.
 Item $(ii)$
 follows by Taylor expanding $\Theta_{p}^{\tau}(U)[\cdot]$ in $\tau=0$ and by using 
 Remark \ref{inclusionifacili} and item $(i)$ of Proposition \ref{composizioniTOTALI}.
The same argument implies the following further properties  of the map $\Theta_{p}(U)[\cdot]$:
 \begin{equation}\label{tempononrimane}
\Theta_p(U)[\cdot]=\uno+\bonyw(\widehat{F}^{(p)}(U;t,\x))+\bonyw(C^{+}(U;t,\x))[\cdot]+{R}^{+}(U;t)[\cdot],
 \end{equation}
  \begin{equation}\label{tempononrimane10}
(\Theta_p(U))^{-1}[\cdot]=\uno-\bonyw(\widehat{F}^{(p)}(U;t,\x))+\bonyw({C}^{-}(U;t,\x))[\cdot]+{R}^{-}(U;t)[\cdot],
 \end{equation}
 for some (R,R,P)-matrices of symbols  $C^{+}(U;t,\x)$, ${C}^{-}(U;t,\x)$ independent of $x$ in $\Sigma\Gamma^{0}_{K,K',p+1}[r,N]\otimes\MM_{2}(\CCC)$ and some (R,R,P)-operators ${R}^{+}(U;t)[\cdot]$,  ${R}^{-}(U;t)[\cdot]$  belonging to 
 $\Sigma\RR^{-\rho}_{K,K',1}[r,N]\otimes\MM_{2}(\CCC)$ (actually the homogeneity of these remainders is bigger than $p$ but,
 at this level, we do not emphasize this property and we embed them in the remainders of homogeneity 1).

 Finally, since $\widehat{F}^{(p)}(U,\ldots,U;t,\x)$ is a (R,R,P)-matrix of symbols
 then the flow $\Theta_{p}^{\tau}(U)[\cdot]$ is reversibility preserving. 
 Indeed, by setting $G^{\tau}=S\Theta^{\tau}_{p}(U;-t)-\Theta^{\tau}_p(U_S;t)S$, one can note that
 \[
 \del_{\tau}G^{\tau}=\bonyw(\widehat{F}^{(p)}(SU;t))G^{\tau},
 \]
 with $G^{0}=0$, where we used that $S\widehat{F}^{(p)}(U;-t)=\widehat{F}^{(p)}(U_S;t)$ (which is \eqref{invo3}). 
 This implies that $G^{\tau}\equiv0$ for $\tau=[0,1]$, which means that $\Theta_{p}(U;t)$ is reversibility preserving.
 
 Since $U$ solves \eqref{sistemainiziale}, there is a (R,R,P)-map $M\in \Sigma\MM^{\tilde{m}}_{K,0,1}[r,N]\otimes\MM_{2}(\CCC)$,
 for some $\tilde{m}>0$,
 such that
 \begin{equation}\label{sitosito}
 \del_{t}U=\ii E\Lambda U+\ii EM(U;t)[U].
 \end{equation}
 Hence, by taking the derivative w.r.t. the variable $t$ in \eqref{tempononrimane}, we have
\begin{align}
 \del_{t}(\Theta_p(U))[\cdot]&=\sum_{j=1}^{p}\bonyw(\widehat{F}^{(p)}(U,\ldots,\underbrace{\del_t U}_{j-th},U,\ldots,U;t,\x))+
 \bonyw(\del_{t}{C}^{+}(U;t,\x))[\cdot]
+( \del_{t}{R}^{+}(U;t))[\cdot]\nonumber\\
&=\sum_{j=1}^{p}\bonyw(\widehat{F}^{(p)}(U,\ldots,\underbrace{\ii E\Lambda U}_{j-th},U,\ldots,U;t,\x))+\bonyw(B(U;t,\x))[\cdot]+\widetilde{R}^{+}(U;t)[\cdot],\label{tempononrimane2}
\end{align}
 for some $B(U;t,\xi)\in \Sigma\Gamma^{0}_{K,K'+1,p+1}[r,N]\otimes \MM_{2}(\CCC)$, and $\widetilde{R}^{+}(U;t)\in \Sigma\RR^{-\rho+\tilde{m}}_{K,K'+1,1}[r,N]\otimes\MM_{2}(\CCC)$,
 where we used Proposition \ref{composizioniTOTALI} 
 (in particular items $(iv),(v)$)
 and $\tilde{m}$ is the loss given by $M$ in \eqref{sitosito}.
 We fix $0\leq \rho'= \rho-\tilde{m}$ which is possible since $\rho\gg1$.
 Now
 if $W=\Theta_{p}(U)[V]$ one has that
 \begin{equation}\label{coniugo}
 \begin{aligned}
 \del_{t}W&=\Theta_p(U)\Big[\ii E\big(\Lambda +\bonyw(\tilde{L}^{(p)}(U;t,\x))+G^{(p)}_1(U;t)\big) \Big](\Theta_{p}(U))^{-1}[W]\\
 &+\ii E \Theta_p(U)G^{(p)}_2(U;t)U+(\del_{t}\Theta_p(U))(\Theta_p(U))^{-1}[W]=\\
 &=\ii E\big(\Lambda W+\bonyw(\tilde{L}^{(p)}(U;t,\x))[W]\big)+\sum_{j=1}^{p}\bonyw(\widehat{F}^{(p)}(U,\ldots,\underbrace{\ii E\Lambda U}_{j-th},U,\ldots,U;t,\x))[W]+\\
 &+\ii E\bonyw(C_1(U;t,\x))[W]+\ii EG_{3}(U;t)[W]+\ii EG_{4}(U;t)[U],
 \end{aligned}
 \end{equation}
 for some (R,R,P)-matrix of symbols ${C}_1(U;t,\x)$ independent of $x$ belonging to 
 $\Sigma\Gamma^{0}_{K,K'+1,p+1}[r,N]\otimes\MM_{2}(\CCC)$ and some (R,R,P)-operators ${G}_3(U;t)[\cdot],G_{4}(U;t)[\cdot]$ 
 belonging to $\Sigma\RR^{-\rho'}_{K,K',1}[r,N]\otimes\mathcal{M}_2(\CCC)$. In the previous computation  we used Proposition \ref{composizioniTOTALI},
 the \eqref{tempononrimane}, \eqref{tempononrimane10} and \eqref{tempononrimane2}.
 In particular we used also the fact that the matrix $\tilde{L}^{(p)}(U;t,\xi)$, together with the matrices of symbols appearing in \eqref{tempononrimane} and \eqref{tempononrimane10} do not depend on $x$.
 In the notation of item $(iii)$ of the statement 
 we look for
 $\widehat{F}^{(p)}(U,\ldots,U;t,\x)$ such that
\begin{equation}\label{omologica}
 \sum_{j=1}^{p}f(U,\ldots,\underbrace{\ii E\Lambda U}_{j-th},U,\ldots,U;t,\x)+\ii m^{(p)}_{p}(U,\ldots,U;t,\x)=
\ii \pro{m^{(p)}_{p}}(U;\ldots,U;t,\x).
\end{equation}
Recalling the definition of the operator $\Lambda$ in \eqref{DEFlambda} (see also \eqref{NLS1000}, \eqref{potenziale1}, 
and \eqref{smalldiv}), we have
that, passing to Fourier series,  the equation \eqref{omologica} is equivalent to
\begin{equation*}\label{omologica2}
\begin{aligned}
\!\!\psi^{\ell}_{p}(\vec{m},\vec{n})f\big(
\Pi^{+}_{n_1}U, \ldots,\Pi^{+}_{n_\ell}U, \Pi^{-}_{n_{\ell+1}}U, \ldots,\Pi^{-}_{n_p}U;t,\x
\big)=
-m_{p}^{(p)}\big(
\Pi^{+}_{n_1}U, \ldots,\Pi^{+}_{n_\ell}U, \Pi^{-}_{n_{\ell+1}}U, \ldots,\Pi^{-}_{n_p}U;t,\x
\big)
\end{aligned}
\end{equation*}
in the following cases:
\begin{itemize}
\item $p$ is odd, $0\leq \ell\leq p$ and for any $ \vec{n}=(n_1,\ldots, n_p)\in \NNN^{p}$;
\item $p$ is even , $0\leq \ell\leq p$ with $\ell\neq p/2$ and for any $ \vec{n}=(n_1,\ldots, n_p)\in \NNN^{p}$;
\item $p$ is even,  $\ell= p/2$ and for any $ \vec{n}=(n_1,\ldots, n_p)\in \NNN^{p}$ such that
\[
\{n_1,\ldots, n_\ell\}\neq\{n_{\ell+1},\ldots,n_p\}.
\]
\end{itemize}

By estimate \eqref{stimadalbasso} on $\psi^{\ell}_{p}(\vec{m},\vec{n})$, we get that
$f(U,\ldots,U;t,\x)$ is a symbol in $\widetilde{\Gamma}_{p}^{0}$
and does not depend on $x$ since so does  $m_{p}^{(p)}$.
Furthermore, since $\widetilde{L}^{(p)}$ in \eqref{constCoeff1000} is a (R,R,P)-matrix of symbols, 
one has that  $\widehat{F}^{(p)}(U,\ldots,U;t,\x)$  in \eqref{generatore} is even in $\x$, and reality preserving (i.e. satisfies
resp.  
\eqref{parisimbo1} and \eqref{prodotto}). Finally, since $m^{(p)}_{p}(U,\ldots,U;t,\x)$ satisfies \eqref{revomo1000}
and the function $\psi_{p}^{\ell}(\vec{m},\vec{n})$  in \eqref{smalldiv}
is real and even in each component of $\vec{n}$, one has that the symbol $\widehat{F}^{(p)}(U,\ldots,U;t,\x)$
satisfies \eqref{revomo1000}.
Thanks to the choice of $f$ above the equation \eqref{coniugo} has the form \eqref{problemafinale201}
for a suitable (R,R,P)-matrix of symbols $\tilde{L}^{(p+1)}$ of the form \eqref{constCoeff2000}.
\end{proof}

\begin{proof}[\bf Proof of Theorem \ref{BNFiniziale}]
Let $\NN$ be the set of parameters $\vec{m}\in [-1/2,1/2]^{M}$ given in Proposition \ref{stimemisura}.
We apply Lemma \ref{steppone} to the system \eqref{problemafinale} 
since it has the form \eqref{problema100}
with $p=1$,  $\tilde{L}^{(1)}\rightsquigarrow L$ in \eqref{constCoeff}, $G^{(1)}_1,G^{(1)}_2\rightsquigarrow Q_1,Q_2$, 
and $\rho\rightsquigarrow \rho-m$ (with $m$ given by Theorem \ref{regolarizza}).
The lemma guarantees  the existence of a map $\Theta_1(U)[\cdot]$ (see \eqref{mappatot200})
such that 
the function $W_{1}=\Theta_{1}(U)[V]$ satisfies a system of the form \eqref{problemafinale201} with 
$\tilde{L}^{(2)}$ given in \eqref{constCoeff2000}
 with $p=1$ and where $G_{1}^{(2)},G_{2}^{(2)}$
 are some operators in 
 $\Sigma\RR^{-\rho^{(1)}}_{K,K'+1,1}[r,N]\otimes\MM_{2}(\CCC)$.
 Here $\rho^{(1)}=\rho-m-\tilde{m}$ where $\tilde{m}$ is the loss of derivatives 
 produced  by the map $M(U;t)$ given in \eqref{sitosito}. 
This new system still satisfies the hypotheses of Lemma \ref{steppone}, hence
 we may apply it iteratively.  We obtain
 a sequence of maps $\Theta_{j}(U)[\cdot]$
 for  $j=1,\ldots,N-1$ such that
 $W_{j}:=\Theta_{j}(U)[W_{j-1}]$ satisfies a system of the form \eqref{problemafinale201}
 for suitable matrices of symbols
 $ \tilde{L}^{(j+1)}$ given in \eqref{constCoeff2000} with $p=j$
and where the remainders  
$G_1^{(j+1)},G_{2}^{(j+1)}$ belong to $\Sigma\RR^{-\rho^{(j)}}_{K,K'+j,1}[r,N]\otimes\MM_{2}(\CCC)$
where $\rho^{(j)}\sim \rho-m-j\tilde{m}$, which is positive since  $\rho\gg N$ in Theorem \ref{regolarizza}. 
  We set $\Theta(U)[\cdot]:=\Theta_{N-1}(U)\circ\cdots \circ \Theta_{1}(U)[\cdot]$
which satisfies items $(i)$, $(ii)$ because each  map $\Theta_{j}$, $j=1,\ldots, N-1$ has similar properties by Lemma \ref{steppone}.
With this choice, the constant coefficients in $x$ matrix of symbols $L_1(U;t,\x)$ in \eqref{problemafinale2} 
is  equal to
$\tilde{L}^{(N-1)}(U;t,\x)$ (given in \eqref{constCoeff2000}, \eqref{constCoeff} and \eqref{kamstep10} with $p=N-1$), which 
satisfies \eqref{constCoeff2}.
The smoothing remainders $Q_1^{(1)}(U;t),Q_{2}^{(2)}(U;t)$ belong to
the class $\Sigma\RR^{\rho-m_1}_{K,K'',1}[r,N]\otimes\MM_{2}(\CCC)$
where $K''\sim K'+N$ and $m_1=m+(N-1)\tilde{m}$. 
\end{proof}

\subsection{Modified energies}\label{formeenergia}

In this subsection we  give the proof of Theorem \ref{longgun}.
We introduce the classes of multilinear forms which will be used to construct modified energies  for 
a system of the form \eqref{problemafinale2}. The following definition is Definition $4.4.1$ in \cite{maxdelort}.

\begin{de}\label{formaggino}
Let $\rho,s\in \RRR$ with $\rho,s\geq0$ and $p\in \NNN$. One denotes by $\widetilde{\calL}_{p,\pm}^{s,-\rho}$
the space of symmetric $(p+2)$-linear forms
$(U_0,\ldots,U_{p+1}) \rightarrow L(U_0,\ldots,U_{p+1})$
defined on $C^{\infty}(\TTT;\CCC^{2})$ and satisfying for some $\mu\in \RRR_{+}$
and any $(n_0,\ldots, n_{p+1})\in \NNN^{p+2}$
 and any $(U_0,\ldots,U_{p+1})\in (C^{\infty}(\TTT;\CCC^{2}))^{p+2}$,
\begin{equation}\label{stimeene}
\begin{aligned}
|L(\Pi_{n_0}U_0,\ldots, \Pi_{n_{p+1}}U_{p+1})|&\leq C\max(\langle n_0\rangle, \ldots, \langle n_{p+1}\rangle)^{2s-\rho}\\
&\times \max_{3}(\langle n_0\rangle, \ldots, \langle n_{p+1}\rangle)^{\mu+\rho}\prod_{j=0}^{p+1}\|\Pi_{n_{j}}U_{j}\|_{L^{2}}
\end{aligned}
\end{equation}
where $\max_{3}(\langle n_0\rangle, \ldots, \langle n_{p+1}\rangle)$ is the third largest value among
$\langle n_0\rangle, \ldots, \langle n_{p+1}\rangle$, and such that
\begin{equation}\label{stimeene1}
\begin{aligned}
L(\Pi_{n_0}U_0,\ldots, \Pi_{n_{p+1}}U_{p+1})\neq 0 \Rightarrow \sum_{j=0}^{p+1}\s_{j}n_0=0,
\end{aligned}
\end{equation}
for some choice of the signs $\s_{j}\in \{+1,-1\}$ for $j=0,\ldots, p+1$, and for any 
$U_0,\ldots,U_{p+1}$ satisfying  \eqref{involuzione4},
\begin{equation}
L(SU_0,\ldots,SU_{p+1})=\pm L(U_0,\ldots,U_{p+1}).
 \end{equation}

\end{de}

%
%
%

The following lemma collects some properties of the class $\widetilde{\calL}^{s,-\rho}_{p,\pm}$
which are proved in Section $4.4$ of \cite{maxdelort}.

\begin{lemma}\label{fattiSulleForme}
The following facts hold true.

\noindent
$(i)\!$
Fix $\rho\geq0$, $p\in \NNN^{*}$ and consider $R\in \widetilde{R}^{-\rho}_{p}$ satisfying \eqref{3.1.21} (resp. \eqref{3.1.20}). One has that the
$L(U_0,\ldots,U_{p+1})$ defined as the symmetrization of 
\begin{equation}\label{fattiSulleForme2}
(U_0,\ldots,U_{p+1}) \rightarrow \int_{\TTT}(\langle D\rangle^{s}SU_0)(
\langle D\rangle^{s}R(U_1,\ldots,U_{p})U_{p+1}
)dx
\end{equation}
belongs to $\widetilde{\calL}^{s,-\rho}_{p,+}$ (resp. $\widetilde{\calL}^{s,-\rho}_{p,-}$).

\noindent
$(ii)$ Let $L\in \widetilde{\calL}^{s,-\rho}_{p,\pm}$. Then for any $m\geq0$ such that $\rho>m+1/2$
and any $s>\rho+\mu+m+1/2$, $L$ extends as a continuous $(p+2)$-linear form 
on $H^{s}(\TTT;\CCC^2)\times\cdots \times H^{s}(\TTT;\CCC^2)\times H^{s-m}(\TTT;\CCC^2)\times H^{s}(\TTT;\CCC^2)\times \cdots \times H^{s}(\TTT;\CCC^2)$.

\noindent
$(iii)$ Let $p=2\ell$ with $\ell\in \NNN^{*}$ and $L\in \widetilde{\mathcal{L}}^{s,-\rho}_{p,-}$.
For $U$ even in $x$ satisfying \eqref{involuzione4} one has, for $n_0,\ldots,n_{\ell}\in \NNN^{*}$,
\begin{equation}\label{kerneladj1}
L(\Pi^{+}_{n_0}U,\ldots, \Pi_{n_{\ell}}^{+}U,\Pi_{n_{0}}^{-}U,\ldots,\Pi_{n_{\ell}}^{-}U)=0.
\end{equation}

\noindent
(iv) Let $\vec{m}$ be outside the subset $\mathcal{N}$ and $N_0$  given by Proposition \ref{stimemisura}.
Then for any $L\in \widetilde{\calL}^{s,-\rho}_{p,-}$ there is $\tilde{L}\in \widetilde{\calL}^{s,-\rho+N_0}_{p,+}$
such that
\begin{equation}\label{omoequa}
\sum_{j=0}^{p+1}\widetilde{L}(U,\ldots,\underbrace{E\Lambda U}_{j-th}, \ldots,U )=\ii L(U,\ldots, U),
\end{equation}
where $E$ and $\Lambda$ are defined  in \eqref{matrici} and \eqref{DEFlambda} respectively.

\noindent
$(v)$ Let $L\in \widetilde{\mathcal{L}}^{s,-\rho}_{p,\pm}$ and $M\in \Sigma\mathcal{M}^{m}_{K,K',q}[r,N]\otimes\MM_{2}(\CCC)$
(see Def. \ref{smoothoperatormaps})
which is reality preserving and reversible (resp. reversibility preserving) according to Def. \ref{riassunto-mappe}.
Then $U\rightarrow L(U,\ldots,U,M(U;t)U,U,\ldots, U)$
can be written as the sum $\sum_{q'=0}^{N-p-q-1}L_{q'}$, for some 
$L_{q'}\in \widetilde{\calL}^{s,-\rho+m}_{p+q+q',\mp}$ (resp. $\widetilde{\calL}^{s,-\rho+m}_{p+q+q',\pm}$),
plus a term that, at any time $t$, is
\begin{equation}\label{compoforme2}
O(\|U(t,\cdot)\|^{p+2}_{\hcic^{s}}\|U(t,\cdot)\|^{N-p}_{K',\s}+\|U(t,\cdot)\|^{p+1}_{\hcic^{s}}\|U(t,\cdot)\|^{N-p}_{K',\s}\|U(t,\cdot)\|_{K',s})
\end{equation}
if $s>\s\gg \rho$ and if $\|U(t,\cdot)\|_{K',\s}$ is bounded.
\end{lemma}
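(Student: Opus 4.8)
The five items are the multilinear‐form counterparts of the results of Section $4.4$ of \cite{maxdelort}, and the plan is to deduce each of them from the definitions already set up in Section \ref{paracalc} together with the non‐resonance Proposition \ref{stimemisura}. I would treat $(i)$ and $(ii)$ first, since they are purely a matter of bookkeeping. For $(i)$, insert the spectral projectors $\Pi_{n_0},\dots,\Pi_{n_{p+1}}$ into the expression \eqref{fattiSulleForme2}: the operator $R$ contributes, by \eqref{omoresti1}, the factor $\max_2(\langle n_1\rangle,\dots,\langle n_{p+1}\rangle)^{\rho+\mu}\max(\langle n_1\rangle,\dots,\langle n_{p+1}\rangle)^{-\rho}$, while the two copies of $\langle D\rangle^s$ contribute $\langle n_0\rangle^{s}\langle n_{p+1}\rangle^{s}$; because \eqref{omoresti2} together with the momentum relation forces $n_0$ and the largest of the remaining frequencies to be comparable and forces all frequencies to cancel in pairs up to signs, one gets $\max\sim\max_2$ and the third–largest frequency absorbs the remaining factors, which is exactly \eqref{stimeene}. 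The momentum condition \eqref{stimeene1} is inherited from \eqref{omoresti2}, and the sign $\pm$ in the transformation under $S$ follows from the reversibility‐preserving (resp. reversible) property \eqref{3.1.21} (resp. \eqref{3.1.20}) of $R$, using $S^{2}=\uno$ and $SE=-ES$.

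For $(ii)$ the continuity extension is a summation argument: for frequencies satisfying \eqref{stimeene1} the two largest are comparable, so $\max^{2s-\rho}\max_3^{\mu+\rho}$ may be rewritten, after moving the loss $m$ onto the low–regularity slot, as a product of $\langle n_j\rangle^{s}$ (and one $\langle n_j\rangle^{s-m}$) times a factor summable over the remaining indices provided $\rho>m+1/2$ and $s>\rho+\mu+m+1/2$; Cauchy–Schwarz then gives the claimed bound. Item $(iii)$ is immediate from the $-$ parity: substituting the configuration $(\Pi^{+}_{n_0}U,\dots,\Pi^{+}_{n_\ell}U,\Pi^{-}_{n_0}U,\dots,\Pi^{-}_{n_\ell}U)$ into $L$ and applying $S$, the identities $\Pi^{+}_{n}SU=\Pi^{-}_{n}U$, $\Pi^{-}_{n}SU=\Pi^{+}_{n}U$ of \eqref{azioneproiettori} merely permute the arguments, so by symmetry of $L$ the value is unchanged, whereas $L(SU_0,\dots,SU_{p+1})=-L(U_0,\dots,U_{p+1})$ forces it to be zero.

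For $(iv)$, expand \eqref{omoequa} in Fourier modes: the left side restricted to $\Pi_{n_0}U,\dots,\Pi_{n_{p+1}}U$ equals $\widetilde L$ evaluated on the same configuration, multiplied by the combination of eigenvalues $\lambda_{n_0},\dots,\lambda_{n_{p+1}}$ with signs dictated by \eqref{stimeene1}, i.e. by $\psi^{\ell}_{p+2}(\vec m,\vec n)$ for the appropriate $\ell$. One then \emph{defines} $\widetilde L$ as $\ii L$ divided by this denominator on the non‐resonant frequencies; on the resonant ones — $p+2$ even, $\ell=(p+2)/2$, and coinciding index sets as in \eqref{noncoppie} — the form $L$ already vanishes by $(iii)$, so the quotient extends. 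Proposition \ref{stimemisura} gives $|\psi^\ell_{p+2}|\geq\gamma\max(\langle n\rangle,\dots)^{-N_0}$, whence $\widetilde L$ loses at most $N_0$ derivatives and lies in $\widetilde{\calL}^{s,-\rho+N_0}_{p,+}$; the parity flips from $-$ to $+$ because $S(E\Lambda U)=-E\Lambda(SU)$. The step I expect to be the main obstacle is precisely here: matching the set on which $L$ vanishes (stated in $(iii)$ for arguments split into $\Pi^{+}$ and $\Pi^{-}$ projectors) with the resonant set excluded in Proposition \ref{stimemisura}, i.e. keeping careful track of how the $p+2$ slots of the form distribute into plus/minus components so that the parameter $\ell$ and the coincidence condition \eqref{noncoppie} line up exactly.

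Finally $(v)$ is a composition lemma: decompose $M=\sum_{q'=q}^{N-1}M_{q'}(U,\dots,U)\,\cdot\,+M_N(U;t)\,\cdot$ as in Definition \ref{smoothoperatormaps}. For each homogeneous piece $M_{q'}\in\widetilde{\MM}^{m}_{q'}$, the map $U\mapsto L(U,\dots,M_{q'}(U,\dots,U)U,\dots,U)$ is a genuine $(p+q'+2)$‐linear form, and combining the frequency estimate \eqref{omoresti1maps} with \eqref{stimeene} shows it belongs to $\widetilde{\calL}^{s,-\rho+m}_{p+q'+q,\mp}$ (resp. $\widetilde{\calL}^{s,-\rho+m}_{p+q'+q,\pm}$), the behaviour of the parity under $S$ being exactly the one prescribed by Lemma \ref{lemmamaxcompo}. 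For the non‐homogeneous tail $M_N$ one estimates directly, using \eqref{porto20maps} and the continuity of $L$ from $(ii)$, to produce a term bounded by \eqref{compoforme2}. Throughout, these are the same computations as in Section $4.4$ of \cite{maxdelort}; the only modification is that our auxiliary function $U$ may have non‐zero mean and lives in the scale $H^{s-2k}$ rather than $H^{s-3k/2}$, which does not enter any of these purely multilinear estimates.
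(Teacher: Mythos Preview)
Your sketch is correct and mirrors exactly the arguments of Section~4.4 in \cite{maxdelort}, which is all the paper itself invokes for this lemma (no independent proof is given here). The one minor imprecision is the literal identity ``$\Pi_n^+SU=\Pi_n^-U$'' in your treatment of $(iii)$: what actually holds is $S\Pi_n^{\pm}S=\Pi_n^{\mp}$, but after factoring out the scalar Fourier coefficients by multilinearity your symmetry-plus-antisymmetry argument goes through unchanged, and the matching of the vanishing set in $(iii)$ with the resonant set of Proposition~\ref{stimemisura} that you flag as the delicate point in $(iv)$ is indeed the crux and is handled exactly as you describe.
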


\begin{lemma}[{\bf First energy inequality}]\label{FEI2}
Let
$U(t,x)\in B_{s}^{K}(I,r)$ be the solution of \eqref{sistemainiziale} with $r$ small enough.
If $\rho>0$ is large enough 
there are constants $s\geq s_0\gg K\geq \rho\gg\rho''\gg N$ 
and multilinear forms $L_{p}\in \widetilde{\calL}^{s,-\rho''}_{p,-}$, $p=1,\ldots,N-1$, 
such that for $s\geq s_0$ the following holds.

Consider the functions $V=\Phi(U)[U]$, given by Theorem \ref{regolarizza}, 
and $W=\Theta(U)[V]$ given by Theorem \ref{BNFiniziale}.
Then, for any $s\geq s_0$,
one has
\begin{equation}\label{stimanormasob}
\frac{d}{dt}\int_{\TTT}|\langle D\rangle^{s}W(t,x)|^{2}dx=\sum_{p=1}^{N-1}L_{p}(U,\ldots,U)+O(\|U(t,\cdot)\|_{\hcic^{s}}^{N+2})
\end{equation}
for $t\in I$. Moreover
\begin{equation}
C_{s}^{-1}\|W\|_{\hcic^{s}}\leq \|U\|_{\hcic^s}\leq C_{s}\|W\|_{\hcic^{s}},
\end{equation}
for some constant $C_{s}>0$.
\end{lemma}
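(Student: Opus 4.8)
The plan is to differentiate, along the equation \eqref{problemafinale2} for $W=\Theta(U)[V]$, the modified energy $\calE_s(W):=\int_\TTT|\langle D\rangle^sW(t,x)|^2dx=\|W(t,\cdot)\|_{\hcic^s}^2$; since $\frac{d}{dt}\calE_s(W)=2\Re(\langle D\rangle^s\partial_tW,\langle D\rangle^sW)_{\hcic^0}$ depends linearly on $\partial_tW$, I would estimate separately the contributions of the paradifferential part $\ii E(\Lambda W+\bonyw(L_1(U;t,\x))[W])$ and of the smoothing part $\ii E(Q_1^{(1)}(U;t)[W]+Q_2^{(1)}(U;t)[U])$. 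I would first dispose of the norm equivalence: writing $W=U+M(U)U$ with $M(U)=\Theta(U)\Phi(U)-\uno\in\Sigma\MM_{K,K'',1}[r,N]\otimes\MM_2(\CCC)$, the maps $\Phi(U),\Psi(U),\Theta(U),\Theta(U)^{-1}$ being (at each fixed $t$) bounded on $\hcic^s$ with operator norm $1+O(\|U(t,\cdot)\|_{\hcic^{s_0}})$ by Theorems \ref{regolarizza}(i) and \ref{BNFiniziale}(i) and $\Psi(U)\Phi(U)-\uno$ being smoothing of positive homogeneity, for $r$ small one obtains $C_s^{-1}\|W\|_{\hcic^s}\le\|U\|_{\hcic^s}\le C_s\|W\|_{\hcic^s}$ by inverting $W=\Theta(U)\Phi(U)U$. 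I would also record that, since $U$ solves \eqref{sistemainiziale}, each time derivative costs two space derivatives, so every norm $\|U(t,\cdot)\|_{K',\s}$ appearing below is $\lesssim\|U(t,\cdot)\|_{\hcic^s}$ once $s$ is large; combined with the norm equivalence, this lets me absorb all such factors into powers of $\|U(t,\cdot)\|_{\hcic^s}$.

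For the paradifferential part the mechanism is exactly that of Lemma \ref{FEI}: $\Lambda$ and $\bonyw(\mathtt{m}_2(U)(\ii\x)^2)$ are Fourier multipliers with real symbols, hence self-adjoint on $\hcic^0$ and commuting with $\langle D\rangle^s$; the homogeneous components $\pro{m^{(1)}_p}(U,\ldots,U;t,\x)$, $p=1,\ldots,N-1$, of $\pro{\mathtt{m}^{(1)}_0}$ are real valued and $x$-independent (by the reality, reversibility and parity structure of $L_1$ together with its constancy in $x$, as in the proof of Lemma \ref{FEI}), so they too contribute nothing; and the non-homogeneous tail $m^{(1)}_N\in\Gamma^0_{K,K'',N}[r]$ is bounded on $H^s$ with norm $O(\|U\|_{K,\s}^N)$ by Proposition \ref{azionepara}(ii). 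Since the energy identity is linear in the vector field, I would just invoke Lemma \ref{FEI} applied to the pure system obtained by erasing the two smoothing terms, concluding that the paradifferential contribution is $O(\|U\|_{K'',\s}^N\|W\|_{\hcic^s}^2)=O(\|U(t,\cdot)\|_{\hcic^s}^{N+2})$.

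For the smoothing part I would expand $Q^{(1)}_i(U;t)=\sum_{q=1}^{N-1}R_{i,q}(U,\ldots,U)+R_{i,N}(U;t)$ as in Definition \ref{smoothoperator}, each $\ii ER_{i,q}$ being reversible by Remark \ref{considerazioni} (as $Q^{(1)}_i$ is reversibility preserving). The non-homogeneous pieces $R_{i,N}$, bounded on $H^s$ with norm $O(\|U\|_{K,\s}^N)$, again contribute $O(\|U(t,\cdot)\|_{\hcic^s}^{N+2})$. For the homogeneous pieces, by Lemma \ref{fattiSulleForme}(i) the symmetrization of $(U_0,\ldots,U_{q+1})\mapsto\int_\TTT(\langle D\rangle^sSU_0)\cdot(\langle D\rangle^s\ii ER_{i,q}(U_1,\ldots,U_q)U_{q+1})\,dx$ lies in $\widetilde{\calL}^{s,-\rho''}_{q,-}$, once $\rho''$ is fixed below $\rho-m_1$ minus the fixed structural losses. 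The contribution of $\ii ER_{2,q}(U,\ldots,U)U$ is then directly such a form evaluated at $(U,\ldots,U,U)$, i.e.\ one of the desired $L_p(U,\ldots,U)$; the contribution of $\ii ER_{1,q}(U,\ldots,U)W$ is such a form at $(U,\ldots,U,W)$, and writing again $W=U+M(U)U$ with $M(U)$ a reversibility preserving map, the $U$-part gives another $L_p$ while the $M(U)U$-part is controlled by Lemma \ref{fattiSulleForme}(v): it splits into finitely many forms $L_{q'}\in\widetilde{\calL}^{s,-\rho''}_{q',-}$ of homogeneity between $q+1$ and $N-1$, plus a term of size \eqref{compoforme2}, equal to $O(\|U(t,\cdot)\|_{\hcic^s}^{q+2}\|U(t,\cdot)\|_{K',\s}^{N-q})=O(\|U(t,\cdot)\|_{\hcic^s}^{N+2})$. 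Collecting all the resulting forms into $L_1,\ldots,L_{N-1}$ yields \eqref{stimanormasob}.

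The main obstacle will be the quantitative bookkeeping of derivatives, i.e.\ fixing the hierarchy $s\ge s_0\gg K\ge\rho\gg\rho''\gg N$ so that simultaneously: the loss $m_1$ of Theorem \ref{BNFiniziale}, the losses produced when differentiating the maps along \eqref{sistemainiziale}, and the structural exponents $\mu$ of the classes $\widetilde{\RR}^{-\rho}_p$, $\widetilde{\MM}_p$ are all $\le\rho-\rho''$; the forms in $\widetilde{\calL}^{s,-\rho''}_{p,-}$ extend continuously on the Sobolev spaces at hand, which by Lemma \ref{fattiSulleForme}(ii) requires $s>\rho''+\mu+1/2$, leaving room for the later Birkhoff loss $N_0$ of Proposition \ref{stimemisura}; and $K''=K'+O(N)$ still satisfies $K''\ll K$, so that $U\in B^K_s(I,r)$ lies in the domain of every map involved. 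Granting this, every error term collapses to $O(\|U(t,\cdot)\|_{\hcic^s}^{N+2})$, and \eqref{stimanormasob} follows together with the norm equivalence established at the outset.
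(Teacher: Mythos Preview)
Your strategy is the paper's own, and everything up to the handling of the smoothing terms is correct. There is, however, a concrete oversight in the last step: in the energy integrals for the smoothing remainders the pairing is against $\overline{\langle D\rangle^{s}W}$, so the multilinear form produced by Lemma~\ref{fattiSulleForme}(i) is evaluated at $(W,U,\ldots,U,U)$ for the $Q_2^{(1)}$--contribution and at $(W,U,\ldots,U,W)$ for the $Q_1^{(1)}$--contribution, not at $(U,\ldots,U,U)$ respectively $(U,\ldots,U,W)$ as you write. To reach the claimed expansion $\sum_{p}L_p(U,\ldots,U)+O(\|U\|_{\hcic^s}^{N+2})$ you must also substitute $W=U+M(U)U$ (with $M(U)=\Theta(U)\Phi(U)-\uno$ reversibility preserving) in that \emph{first} slot and re-expand via Lemma~\ref{fattiSulleForme}(v); the same device you invoke for the last slot works here, at the price of one further loss in $\rho''$ that your bookkeeping paragraph should absorb. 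The paper records the expansion of $W$ in $U$ (your $W=U+M(U)U$) explicitly as \eqref{malditesta} before the energy identity, and for $Q_1^{(1)}[W]$ performs the substitution at the \emph{operator} level via Proposition~\ref{composizioniTOTALI}(iii) (obtaining new homogeneous smoothing operators $\tilde Q_p$ acting on $U$) rather than at the form level as you do; both routes are equivalent once every occurrence of $W$ has been replaced.

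One further small point: Remark~\ref{considerazioni} only tells you that $\ii E\,Q_i^{(1)}(U;t)$ is reversible \emph{as a whole}; to conclude that each homogeneous component $\ii E\,R_{i,q}$ satisfies the reversibility condition \eqref{3.1.20} needed in Lemma~\ref{fattiSulleForme}(i) you must invoke Lemma~\ref{pasubio}.
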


Before giving the proof of Lemma \ref{FEI2} we need to prove the following.

\begin{lemma}\label{aslongas}
Let $U(t,\cdot)$ be the solution of \eqref{sistemainiziale}
defined on some interval $I\subset \RRR$ and belonging to 
$C^{0}(I;\hcic^{s}_{e}(\TTT;\CCC^{2}))$. For any $0\leq k \leq K$
there is a constant $C_{k}$ such that, as long as $\|U(t,\cdot)\|_{\hcic^{s}}\leq 1$ with $s\gg K$,
one has
\begin{equation}\label{aslong1}
\|\del_{t}^{k}U(t,\cdot)\|_{\hcic^{s-2k}}\leq C_{k}\|U(t,\cdot)\|_{\hcic^{s}}.
\end{equation}
\end{lemma}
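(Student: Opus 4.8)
The plan is to argue by induction on $k$, the case $k=0$ being trivial. First I would rewrite the right-hand side of \eqref{sistemainiziale} in a convenient form: by Theorem \ref{paralineariza}, Remark \ref{formavera} and Remark \ref{inclusionifacili}, the system reads
\begin{equation*}
\del_t U=\ii E\,\widetilde M(U;t)[U],\qquad \widetilde M(U;t):=\Lambda+\bonyw\big(A(U;t,x,\x)\big)+R(U),
\end{equation*}
where $\widetilde M\in\Sigma\MM_{K,1,0}[r,N]\otimes\MM_2(\CCC)$ is an autonomous map of order $2$ (we used that $\Lambda$ and $\bonyw(A)$ are maps of order $2$, $A\in\Sigma\Gamma^2_{K,0,1}[r,N,{\rm aut}]\otimes\MM_2(\CCC)$, and that $R\in\Sigma\RR^{-\rho}_{K,0,1}[r,N,{\rm aut}]\otimes\MM_2(\CCC)$ is in particular a map of order $0$). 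Since $U$ solves an equation of the form \eqref{tempo1}, iterated application of Lemma \ref{lemmatempo}$(i)$ and of its counterpart for smoothing operators (Lemma $2.2.6$ of \cite{maxdelort}) gives, for $0\le j\le K-1$,
\begin{equation*}
\del_t^{j}A(U;t,\cdot)\in\Sigma\Gamma^2_{K,j,1}[r,N]\otimes\MM_2(\CCC),\qquad \del_t^{j}R(U;t)\in\Sigma\RR^{-\rho}_{K,j,1}[r,N]\otimes\MM_2(\CCC).
\end{equation*}

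For the inductive step, assume \eqref{aslong1} holds for all indices $\le k-1$ (for every $s\gg K$). Applying $\del_t^{k-1}$ to \eqref{sistemainiziale} and using the Leibniz rule,
\begin{equation*}
\del_t^{k}U=\ii E\Big(\Lambda\,\del_t^{k-1}U+\sum_{j=0}^{k-1}\tbinom{k-1}{j}\bonyw\big(\del_t^{j}A(U;t,\cdot)\big)\del_t^{k-1-j}U\Big)+\ii E\,\del_t^{k-1}\big(R(U)[U]\big).
\end{equation*}
The first term is bounded in $\hcic^{s-2k}$ by $C\,\|\del_t^{k-1}U\|_{\hcic^{s-2(k-1)}}\le C\,C_{k-1}\|U\|_{\hcic^{s}}$ using that $\Lambda$ is linear, $t$-independent, of order $2$, together with the inductive hypothesis. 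For a term in the sum, Proposition \ref{azionepara}$(ii)$ applied to $\del_t^{j}A(U;t,\cdot)\in\Sigma\Gamma^2_{K,j,1}[r,N]\otimes\MM_2(\CCC)$ gives
\begin{equation*}
\big\|\bonyw\big(\del_t^{j}A(U;t,\cdot)\big)\del_t^{k-1-j}U\big\|_{\hcic^{s-2k+2j}}\le C\,\|U\|_{j,\s}^{N}\,\|\del_t^{k-1-j}U\|_{\hcic^{s-2(k-1-j)}},
\end{equation*}
for some fixed $\s$ depending only on $N,\rho,K$ (hence $\s\le s$); since $s-2k+2j\ge s-2k$ this lies in $\hcic^{s-2k}$, and by the inductive hypothesis $\|U\|_{j,\s}=\sum_{i\le j}\|\del_t^{i}U\|_{\hcic^{\s-2i}}\le C\|U\|_{\hcic^{\s}}\le C$ (using $\|U\|_{\hcic^{s}}\le1$) while $\|\del_t^{k-1-j}U\|_{\hcic^{s-2(k-1-j)}}\le C_{k-1-j}\|U\|_{\hcic^{s}}$, so the term is $\le C\|U\|_{\hcic^{s}}$. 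Finally, applying the estimate \eqref{porto20} of Definition \ref{smoothoperator} to $R(U)[U]$ with $k\rightsquigarrow k-1$, $K'=0$ and both arguments equal to $U$ (splitting $R$ into its homogeneous and non-homogeneous components and using multilinearity for the former),
\begin{equation*}
\big\|\del_t^{k-1}\big(R(U)[U]\big)\big\|_{\hcic^{s-2(k-1)+\rho}}\le \sum_{k'+k''=k-1}C\Big(\|U\|_{k'',s}\|U\|_{k',\s}^{N}+\|U\|_{k'',\s}\|U\|_{k',\s}^{N-1}\|U\|_{k',s}\Big),
\end{equation*}
and arguing as before ($\|U\|_{k'',s},\|U\|_{k',s}\le C\|U\|_{\hcic^s}$ and $\|U\|_{k'',\s},\|U\|_{k',\s}\le C$ by the inductive hypothesis) the right-hand side is $\le C\|U\|_{\hcic^{s}}$; since $s-2(k-1)+\rho\ge s-2k$ this term also lies in $\hcic^{s-2k}$ with the required bound. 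Summing the three contributions yields \eqref{aslong1} at order $k$.

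The only point requiring care is the bookkeeping of derivative losses: the argument works precisely because $\Lambda$ and $\bonyw(A)$ lose exactly two derivatives while $R$ is $\rho$-smoothing with $\rho>0$, so that every term produced by differentiating $k-1$ times the right-hand side of \eqref{sistemainiziale} lands in some $\hcic^{s'}$ with $s'\ge s-2k$; and because the Leibniz rule distributes the $k-1$ time derivatives among factors each carrying at most $k-1$ of them, the inductive hypothesis is invoked only at orders strictly smaller than $k$ (no circularity). The smallness assumption $\|U(t,\cdot)\|_{\hcic^{s}}\le1$ is used exactly to absorb into the constants $C_k$ the polynomial factors $\|U\|_{j,\s}^{N}$ coming from the nonlinear dependence of $A$ and $R$ on $U$; finiteness of all the $\hcic^{\cdot}$-norms involved is guaranteed by $s\gg K$.
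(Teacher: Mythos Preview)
Your proof is correct and follows essentially the same approach as the paper: induction on $k$, differentiating the equation \eqref{sistemainiziale}, expanding via the Leibniz rule, and bounding each term using the mapping properties of $\Lambda$, $\bonyw(A)$ and $R$ (Proposition \ref{azionepara} and estimate \eqref{porto20} with $K'=0$) together with the inductive hypothesis and $\|U\|_{\hcic^s}\le1$. The only cosmetic difference is that you route the symbol estimate through Lemma \ref{lemmatempo}$(i)$ before invoking Proposition \ref{azionepara}$(ii)$, whereas the paper applies \eqref{azione7} directly to the autonomous symbol $A$; also note that the exponent in your bound for the paradifferential term should really be a sum $\sum_{q=1}^{N}\|U\|_{j,\s}^{q}$ (since $A\in\Sigma\Gamma^2_{K,0,1}[r,N]$ has homogeneity starting at $1$), but this does not affect the argument once $\|U\|_{j,\s}\le C$.
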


\begin{proof}
We argue by induction. Clearly \eqref{aslong1} holds for $k=0$.
Assume \eqref{aslong1} holds for $k=0,\ldots, k'\leq K-1$. Since 
by assumption $\|U(t,\cdot)\|_{\hcic^{s}}\leq 1$, then 
\[
\sum_{k=0}^{k'}\|\del_{t}^{k}U(t,\cdot)\|_{\hcic^{s-2k}}\leq \tilde{C}_{k'},
\]
for some $\tilde{C}_{k'}$ uniformly for $t\in I$. In order to get \eqref{aslong1}
 it is enough to show $\|\del_{t}^{k'+1}U(t,\cdot)\|_{\hcic^{s-2(k'+1)}}\leq C\|U(t,\cdot)\|_{\hcic^{s}}$.
Using \eqref{sistemainiziale} we have that
\begin{equation}
\begin{aligned}
\del_{t}^{k'+1}U&=\ii E(\Lambda\del_{t}^{k'}U+\del_{t}^{k'}\big(\bonyw(A(U;t,x,\x))[U]\big)+
\del_{t}^{k'}\big(R(U;t)[U]\big))\\
&=\ii E\Lambda\del_{t}^{k'}U+\ii E\sum_{j_1+j_2=k'}C_{j_1,j_2}
\bonyw(\del_{t}^{j_1}A(U;t,x,\x))[\del_{t}^{j_2}U]\\
&+\ii E\sum_{j_1+j_2=k'}C_{j_1,j_2}
(\del_{t}^{j_1}R(U;t)))[\del_{t}^{j_2}U],
\end{aligned}
\end{equation}
where $C_{j_1,j_2}$ are some binomial coefficients.
By \eqref{azione7} in Proposition \ref{azionepara},  \eqref{porto20} with $K'=0$ (recalling Remark \ref{starship}),
the inductive hypothesis and using that $\|U(t,\cdot)\|_{\hcic^{s}}\leq 1$,
we get
\begin{equation}
\|\del_{t}^{k'+1}U(t,\cdot)\|_{\hcic^{s-2(k'+1)}}\leq C\|U(t,\cdot)\|_{\hcic^{s}}.
\end{equation}
This concludes the proof.

\end{proof}

\begin{proof}[{\bf Proof of Lemma \ref{FEI2}}]
Since the maps $\Phi,\Theta$ are (R,R,P)-maps, then the function $W=\Theta(U)[\Phi(U)[U]]$
is even in $x$ and 
satisfies \eqref{involuzione4}. 
In particular, by  items $(ii)$ of Theorems \ref{regolarizza} and  \ref{BNFiniziale}, we have that
\begin{equation}\label{malditesta}
W=U+\sum_{p=1}^{N-1}M_{p}(U,\dots,U)[U]+M_{N}(U;t)[U],
\end{equation}
for some (R,R,P) maps $M_{p}\in \widetilde{\MM}_{p}\otimes\MM_{2}(\CCC)$, $p=1,\ldots,N-1$ and 
$M_{N}\in \Sigma\MM_{K,K'',N}[r,N]\otimes \MM_{2}(\CCC)$.

We remark also that, by Lemma \ref{aslongas} and \eqref{spazionorm}, 
we have $\|U(t,\cdot)\|_{K,s}\leq C_{s,K}\|U(t,\cdot)\|_{\hcic^{s}}$ for some $C_{s,K}>0$.
According to system \eqref{problemafinale2}, recalling \eqref{constCoeff2}, Remark \ref{formavera}
and Definition \ref{kernel}, 
we get
\begin{equation}\label{catena10}
\begin{aligned}
\frac{d}{dt}\int_{\TTT}|\langle D\rangle^{s}W(t,x)|^{2}dx&=2{\rm Re}\;\ii\int_{\TTT}\ol{(\langle D\rangle^s W)}\big[
\bonyw(\mathtt{l}(\x)+\mathtt{m}_2(U;t)(\ii\x)^{2})E
\langle D\rangle^{s}W
\big]dx\\
&+2{\rm Re}\;\ii\int_{\TTT}\ol{(\langle D\rangle^s W)}\big[
\bonyw(\pro{\mathtt{M}_0^{(1)}}(U;t,\x))E
\langle D\rangle^{s}W
\big]dx\\
&+2{\rm Re}\;\ii\int_{\TTT}\ol{(\langle D\rangle^s W)}
(\langle D\rangle^{s}EQ_{1}^{(1)}(U;t)[W])
dx\\
&+2{\rm Re}\;\ii\int_{\TTT}\ol{(\langle D\rangle^s W)}
(\langle D\rangle^{s}EQ_{2}^{(1)}(U;t)[U])
dx,
\end{aligned}
\end{equation}
where  
\begin{equation*}
\pro{\mathtt{M}_0^{(1)}}(U;t,\x):=\left(\begin{matrix}\pro{\mathtt{m}_0^{(1)}}(U;t,\x) & 0\\ 0
& \pro{\ol{\mathtt{m}_0^{(1)}}}(U;t,-\x)\end{matrix}\right).
\end{equation*}

The contribution of the first integral is zero since the symbol $\mathtt{l}(\x)+\mathtt{m}_2(U;t)(\ii\x)^{2}$ is real.
By using Lemmata \ref{FEI} and \ref{aslongas} 
we have that the contribution of the second integral 
is bounded by 
\[
O(\|U(t,\cdot)\|^{N}_{\hcic^{s}}\|W\|_{\hcic^{s}}^{2}).
\]
Let us consider the fourth integral term in \eqref{catena10}. By definition we have that
\[
Q_{2}^{(1)}(U;t)[U]=\sum_{p=1}^{N-1}Q_{2,p}^{(1)}(U,\ldots,U)[U]+Q_{2,N}^{(1)}(U;t)[U]
\]
where $Q_{2,p}^{(1)}\in \widetilde{\RR}^{-\rho'}_{p}\otimes\MM_{2}(\CCC)$, 
$p=1,\ldots,N-1$ and $Q_{2,N}^{(1)}\in \RR^{-\rho'}_{K,K'',N}[r]\otimes\MM_{2}(\CCC)$ are (R,R,P)-operators
and with $\rho\gg\rho'\gg N$, $\rho':=\rho-m_1$ given in Theorem \ref{BNFiniziale}.
The contribution given by the term $Q_{2,N}^{(1)}(U;t)$ is bounded by 
\[
O(\|U(t,\cdot)\|^{N+2}_{\hcic^{s}}).
\]
Furthermore the operators $Q_{2,p}^{(1)}(U,\ldots,U)$ satisfy  \eqref{3.1.21}  by Lemma \ref{pasubio}
($\ii E Q_{2,p}^{(1)}$ satisfies \eqref{3.1.20} by Remark \ref{considerazioni}).
Hence the contribution to the fourth integral in \eqref{catena10} coming from the terms 
$Q_{2,p}^{(1)}(U,\ldots,U)$
can be written as in \eqref{fattiSulleForme2}. By item $(i)$ of Lemma \ref{fattiSulleForme}
such contributions can be written as
$\tilde{L}_{p}(U,\ldots, U)$ for some multilinear form $\tilde{L}_{p}(U_0,\ldots,U_{p+1})$ belonging to $\widetilde{\calL}^{s,-\rho'}_{p,-}$.
Consider now the operator $Q_{1}^{(1)}(U;t)$ in the third integral in \eqref{catena10}.
If $\rho'\gg \rho''$ is large enough, then, by \eqref{malditesta} and item $(iii)$ of Proposition \ref{composizioniTOTALI}, we get
\[
Q_{1}^{(1)}(U;t)[W]=\sum_{p=1}^{N-1}\tilde{Q}_{p}(U,\ldots,U)[U]+\tilde{Q}_{N}(U;t)[U],
\]
for some $\tilde{Q}_{p}\in \widetilde{\RR}^{-\rho''}_{p}\otimes\MM_{2}(\CCC)$, 
$p=1,\ldots,N-1$ and $\tilde{Q}_{N}\in \RR^{-\rho''}_{K,K'',N}[r]\otimes\MM_{2}(\CCC)$ which are (R,R,P)-operators.
Hence the contribution of the third integral can be studied as done for the term coming from $Q_{2}^{(1)}(U;t)$.
This concludes the proof.
\end{proof}

%

\begin{proof}[{\bf Proof of Theorem \ref{longgun}}]

%

Let $s>0$ large and $r>0$ small enough.
By Theorem $1.2$ in \cite{FIloc}
for any even function $u_0\in H^{s}(\TTT;\CCC)$ with $\|u_0\|_{H^s}\leq r$,
there is a unique solution $u(t,x)$ of \eqref{NLS} with initial condition $u(0,x)=u_0(x)$
belonging to $C^{1}(I;H^{s-2}(\TTT;\CCC))\cap C^{0}(I;H^{s}(\TTT;\CCC))$
with $I=(-T_r,T_r)$, $T_r>0$.

By Theorem \ref{paralineariza} the function $U=(u,\bar{u})$ 
solves the problem \eqref{sistemainiziale} with initial condition $U_0=(u_0,\bar{u}_0)$, furthermore
by Lemma \ref{aslongas}
such a solution belongs to the ball $B_{s}^{K}(I,r)$.

We now prove  that  $T_{r}\geq c r^{-N}$ for some $c>0$ depending on $s$.
By applying to the system \eqref{sistemainiziale} Theorems \ref{regolarizza} and \ref{BNFiniziale} we have that
$U(t,x)$ solves \eqref{sistemainiziale} if and only if the function $W(t,x)$ given in Theorem \ref{BNFiniziale} solves
\eqref{problemafinale2}. By Lemma \ref{FEI2}
we have that 
\begin{equation}\label{equiequi}
\|U\|_{\hcic^{s}}\sim \|W\|_{\hcic^{s}},
\end{equation}
and that \eqref{stimanormasob} holds. 

We claim that there are multilinear forms $F_p\in \widetilde{\calL}^{s,-\rho''+\tilde{\rho}}_{p,-}$
for $p=1,\ldots, N-1$, for some $\tilde{\rho}<\rho''$ (the constant $\rho''$ is given in Lemma \ref{FEI2}), such that, by setting
\begin{equation}\label{modiene}
\calG(U,W):=\int_{\TTT}|\langle D\rangle^{s}W(t,x)|^{2}dx+\sum_{p=1}^{N-1}F_{p}(U,\ldots,U),
\end{equation}
the following conditions hold:
\begin{equation}\label{equivalenza}
\|U\|_{\hcic^{s}}^2\sim \|W\|^{2}_{\hcic^{s}}\sim \calG(U,W),
\end{equation}
\begin{equation}\label{equivalenza2}
\frac{d}{dt}\calG(U,W)\leq K_1\|U(t,\cdot)\|^{N+2}_{\hcic^{s}}, \quad t\in [-T_r,T_{r}],
\end{equation}
for some $K_1>0$ depending on $s,N$.
To prove this fact we reason as follows.
Note that system \eqref{sistemainiziale} can be written, by Remark \ref{inclusionifacili}, as 
\begin{equation}\label{sistemainizialeMAPPA}
\del_{t}U=\ii E\Lambda  U+M(U;t)[U],
\end{equation}
for some $M\in \Sigma\MM^{m}_{K,0,1}[r,N]\otimes\MM_{2}(\CCC)$, $m\geq0$.
 We show that it is possible to find recursively
multilinear 
forms
\begin{equation}\label{induindu}
\begin{aligned}
&\tilde{L}_{p}\in \widetilde{\calL}^{s,-\rho''+(N_0+m)(p-1)+N_0}_{p,+}, \quad 1\leq p\leq N-1,\\
&{L}^{(q)}_{p}\in \widetilde{\calL}^{s,-\rho''+(N_0+m)q}_{p,-}, \quad q+1\leq p\leq N-1,
\end{aligned}
\end{equation}
such that, for $q=1,\ldots,N-1$,
\begin{equation}\label{induindu2}
\begin{aligned}
\frac{d}{dt}&\left[\int_{\TTT}|\langle D\rangle^{s}W(t,x)|^{2}dx+\sum_{p=1}^{q}\tilde{L}_{p}(U(t,\cdot),\ldots,U(t,\cdot))
\right]\\
&\qquad \qquad=\sum_{p=q+1}^{N-1}L^{(q)}_{p}(U(t,\cdot),\ldots,U(t,\cdot))+O(\|U(t,\cdot)\|^{N+2}_{\hcic^{s}}).
\end{aligned}
\end{equation}
Here $m>2$ is the loss coming form $M(U;t)$ in \eqref{sistemainizialeMAPPA},
the constant
 $N_0$ is in \eqref{stimadalbasso} of Proposition \ref{stimemisura}.
This loss is compensated by the fact that $\rho>0$ in Theorem $\ref{regolarizza}$ is arbitrary large, and hence also $\rho''$ can be taken large enough.
We argue by induction on $q$. For $q=0$ the \eqref{induindu2} follows by \eqref{stimanormasob}.
Assume inductively that \eqref{induindu2} holds for $q-1$.
Let us define $\tilde{L}_{q}\in \widetilde{\calL}^{s,-\rho''+(N_0+m)(q-1)+N_0}$ as the multilinear form
given by  item $(iv)$ of Lemma \ref{fattiSulleForme} applied to $L=L_{q}^{(q-1)}$.
We get
\begin{equation}
\begin{aligned}
\frac{d}{dt}\tilde{L}_{q}&(U(t,\cdot),\ldots,U(t,\cdot))=\ii \sum_{j=0}^{q+1}\tilde{L}_{q}(\underbrace{U,\ldots,U}_{j-times},E\Lambda U,U,\ldots, U)\\
&+\ii \sum_{j=0}^{q+1}\tilde{L}_{j}(\underbrace{U,\ldots,U}_{j-times},M(U;t)[U],U,\ldots, U).
\end{aligned}
\end{equation}
Using items $(iv)$ and $(v)$ of Lemma \ref{fattiSulleForme} we have that
\begin{equation}
\begin{aligned}
\frac{d}{dt}\tilde{L}_{q}(U(t,\cdot),\ldots,&U(t,\cdot))=- {L}_{q}^{(q-1)}({U,\ldots,U})+ \sum_{j=0}^{N-q-3}{L}'_{j}(U,\ldots,U) +O(\|U\|^{N+2}_{\hcic^{s}}),
\end{aligned}
\end{equation}
for some $L'_{j}\in \widetilde{\calL}^{s,-\rho''+(N_0+m)(q-1)+m+N_0}_{p+2+j,-}$.
Thus we get \eqref{induindu2}
at rank $q$.
We conclude by setting $F_{p}$ in \eqref{modiene} equals to $\tilde{L}_{p}$.
Since $r$ is small enough, then, thanks to 
 item $(ii)$ of Lemma \ref{fattiSulleForme}, equation \eqref{equiequi} and Lemma \ref{aslongas},  we get
\[
\calG(U,W)\leq C_s( \|U(t,\cdot)\|_{\hcic^{s}}^{2}+\|U\|^{3}_{\hcic^{s}}),
\]
as long as $\|U(t,\cdot)\|_{\hcic^{s}}\leq Cr$, therefore \eqref{equivalenza} holds. 
The \eqref{equivalenza2}
follows by \eqref{induindu2} for $q=N-1$. 


The thesis follows by using the following bootstrap argument.
The integral form of \eqref{equivalenza2} is 
\begin{equation}\label{profin3}
\calG(U(t,\cdot),W(t,\cdot))\leq \calG(U(0,\cdot),W(0,\cdot))+K_{1}\int_{0}^{t}\|U(\tau,\cdot)\|^{N+2}_{\hcic^{s}}d\tau,
\end{equation}
and by \eqref{equivalenza} we have that
$\calG(U(0,\cdot),W(0,\cdot))\leq c_{0}r^{2}$,
for some $c_0$ depending on $s$.
Fix $K_{2}=K_{2}(s,N)>1$ and let $\bar{T}$ the supremum of those $T$
such that 
\begin{equation}\label{maiale5}
\sup_{t\in [-T,T]}\calG(U(t,\cdot),W(t,\cdot))\leq K_{2}r^{2}.
\end{equation}
Assume, by contradiction, that $\bar{T}<\tilde{c} r^{-N}$. Then,
if $K_1$ is the constant appearing in the r.h.s. of \eqref{profin3}, 
 we have
\begin{equation}\label{profin4} 
\begin{aligned}
\calG(U(t,\cdot),W(t,\cdot))&\leq c_0 r^{2}+K_{1}\int_{0}^{t}K_{2}^{N+2}r^{N+2}d\tau\leq c_0r^{2}+K_1K_{2}^{N+2}r^{N+2}\bar{T}\\
&\leq c_0r^{2}+K_1 K_{2}^{N+2}r^{N}\tilde{c} r^{-N}r^{2}\leq r^{2}\big(c_0+K_{1}K_{2}^{N+2}\tilde{c}\big)\leq K_{2}r^{2}\frac{3}{4},
\end{aligned}
\end{equation}
for $\tilde{c}>0$ small enough and $K_{2}\gg c_0$ large enough hence the contradiction. 
By \eqref{equivalenza} the reasoning above implies also that 
\[
\sup_{t\in[-T,T]}\|U(t,\cdot)\|_{\hcic^{s}}\leq Cr, \quad T\geq \tilde{c}r^{-N},
\]
for some fixed $C>0$ depending on $s,N$. This is \eqref{iltempo} for $k=0$.
Moreover by Lemma  \ref{aslongas} we also obtain
that $\del_{t}^kU(t,\cdot)$ satisfies 
$
\sup_{t\in[-T,T]}\|\del_{t}^kU(t,\cdot)\|_{\hcic^{s-2k}}\leq Cr, \quad T\geq \tilde{c}r^{-N},
$
if $r$ is small, $s\gg K $  and where $C$ is a large enough constant depending on $K$.
\end{proof}

\def\cprime{$'$}

\end{document}